\documentclass[final]{amsart}

\usepackage[utf8]{inputenc}
\usepackage[english]{babel}
\usepackage{lmodern}
\usepackage[a4paper, margin=3cm]{geometry}
\usepackage{mathtools}
\usepackage{amsfonts}
\usepackage{amsthm}
\usepackage{amssymb}
\usepackage{listings}
\usepackage{graphicx}
\usepackage{tikz}
\usetikzlibrary{cd}
\usetikzlibrary{babel}
\usepackage{bbold}
\usepackage[colorinlistoftodos,textwidth=2cm,textsize=tiny]{todonotes}
\usepackage[font=small,labelfont=bf]{caption}
\usepackage[subrefformat=parens]{subcaption}
\usepackage{overpic}
\usepackage{ifdraft}

\setlength{\marginparwidth}{2.5cm}

\usepackage{enumitem}
\setlist{leftmargin=6.5mm}
\setenumerate[1]{label={\arabic*.}}

\newtheorem{theorem}{Theorem}[section]
\newtheorem{corollary}[theorem]{Corollary}
\newtheorem{lemma}[theorem]{Lemma}
\newtheorem{proposition}[theorem]{Proposition}

\newtheorem{question}{Question}
\newtheorem*{addendum}{Addendum}

\theoremstyle{definition}
\newtheorem{definition}[theorem]{Definition}
\newtheorem{example}[theorem]{Example}

\theoremstyle{remark}
\newtheorem{remark}[theorem]{Remark}

\graphicspath{ {images/} }
\usepackage{xcolor}
\usepackage[colorlinks,
    linkcolor={red!50!black},
    citecolor={blue!50!black},
    urlcolor={blue!80!black}]{hyperref}

\newcommand{\Z}{\mathbb{Z}}

\newcommand{\R}{\mathbb{R}}
\newcommand{\C}{\mathbb{C}}
\DeclareMathOperator{\lk}{lk}
\DeclareMathOperator{\Ext}{Ext}
\DeclareMathOperator{\Tor}{Tor}
\DeclareMathOperator{\tr}{tr}
\DeclareMathOperator{\ev}{ev}
\newcommand{\sign}{\sigma}
\newcommand{\nul}{\eta}
\DeclareMathOperator{\rank}{rank}
\newcommand{\Tm}{\mathbb{T}^{\mu}_{\ast}}
\DeclareMathOperator{\dsign}{dsign}
\newcommand{\sgn}{\mathrm{sgn}}

\ifoptionfinal{\newcommand{\new}{\textcolor{black}}}{\newcommand{\new}{\textcolor{blue}}}

\title{Torres-type formulas for link signatures}

\author{David Cimasoni}
\address{David Cimasoni -- Section de math\'ematiques, Universit\'e de Gen\`eve, Suisse}
\email{david.cimasoni@unige.ch}

\author{Maciej Markiewicz}
\address{Maciej Markiewicz -- Institute of Mathematics, University of Warsaw, Poland.}
\email{ma.markiewicz3@uw.edu.pl}

\author{Wojciech Politarczyk}
\address{Wojciech Politarczyk -- Institute of Mathematics, University of Warsaw, Poland.}
\email{wpolitarczyk@mimuw.edu.pl}

\date{}

\begin{document}

\makeatletter
   \providecommand\@dotsep{5}
 \makeatother


 
\begin{abstract}
We investigate the limits of the multivariable signature function~$\sigma_L$ of a~$\mu$-component link~$L$ as some variable tends to~$1$ via two different approaches: a three-dimensional and a four-dimensional one. The first uses the definition of~$\sigma_L$ by generalized Seifert surfaces and forms. The second relies on a new extension of~$\sigma_L$ from its usual domain~$(S^1\setminus\{1\})^\mu$ to the full torus~$\mathbb{T}^\mu$ together with
a Torres-type formula for~$\sigma_L$, results which are of independent interest. Among several consequences, we obtain
new estimates on the value of the Levine-Tristram signature of a link close to~$1$.
\end{abstract}

\maketitle

\section{Introduction}
\label{sec:intro}

\subsection{Background on the Levine-Tristram signature}
\label{sub:backgroundLT}

Let~$L$ be an~$m$-component oriented link in the three-sphere~$S^3$,
and let~$A$ be an arbitrary Seifert matrix for~$L$.
The {\em Levine-Tristram signature\/} of~$L$ is the function
\[
\sigma_L\colon S^1\setminus\{1\}\longrightarrow\Z,\quad \omega\longmapsto\sigma(H(\omega))
\]
where
\[
H(\omega)=(1-\omega)A+(1-\overline{\omega})A^T
\]
and~$\sign(H)$ denotes the
signature of the Hermitian matrix~$H$. As one easily checks (see e.g.~\cite{Lic}), this function
does not depend on the choice of the Seifert matrix, and is therefore an invariant of the link~$L$. Similarly, the {\em Levine-Tristram nullity\/}
of~$L$ is the function~$\eta_L\colon S^1\setminus\{1\}\to\Z$ given by~$\eta_L(\omega)=\eta(H(\omega))$, where~$\nul(H)$ stands for the nullity of~$H$.

Since its discovery by Trotter~\cite{Tro} in the case~$\omega=-1$, its study by Murasugi~\cite{Mur}, and its extension by Tristram~\cite{Tri} and Levine~\cite{Lev} to~$S^1\setminus\{1\}$, the Levine-Tristram signature has been the subject of intense activity. Among its numerous remarkable properties, let us mention
the facts that~$\sigma_L$ vanishes if~$L$ is amphicheiral, that it is locally constant on the complement of the roots of the Alexander polynomial~$\Delta_L$, that it provides lower bounds on the unknotting number of~$L$~\cite{Liv}, on its splitting number~\cite{CCZ}, as well as on its Seifert genus, i.e. on the minimal genus of an orientable surface~$S\subset S^3$ with oriented boundary~$\partial S=L$.

More subtly, if~$\omega$ is not the root of any polynomial~$p(t)\in\Z[t,t^{-1}]$ with~$p(1)=\pm 1$, then~$\sigma_L(\omega)$ also provides a lower bound on the {\em topological four-genus\/} of~$L$,
i.e. on the mininal genus of a locally flat orientable surface~$F$ in the four-ball~$B^4$ with oriented boundary~$\partial F= F\cap\partial B^4=L$, see~\cite{NP}. This fact was already noticed by Murasugi using the classical definition of~$\sigma_L$ stated above. However, the current understanding of this phenomenon relies on an alternative interpretation of~$\sigma_L(\omega)$, as the signature of some associated four-dimensional manifold,
an approach pioneered by Rohlin~\cite{Roh} and Viro~\cite{Vir}, see also~\cite{K-T}.
There are several variations on this theme (see e.g.~\cite{Kau} and references therein), but the most practical and now most commonly used one is to consider the intersection form of the four-manifold obtained from~$B^4$ by removing 
a tubular neighborhood of~$F$, with so-called {\em twisted coefficients\/} determined by~$\omega$ (see Section~\ref{sub:twisted} below). Remarkably, this approach is due to Viro once again~\cite{Viro}, some 36 years after his first 
seminal contribution to the subject.
We refer the interested reader to the survey~\cite{Conway-survey} and references therein for more information on the Levine-Tristram signature.

\medskip

Despite all these results, some elementary properties of~$\sigma_L$ remain mysterious.
For example, it is rather frustrating not to have this function naturally extended to the full
circle~$S^1$, as the definition above
yields a trivial signature and ill-defined
nullity at~$\omega=1$.
A related question is the following: what can be said of its value at~$\omega$ close to~$1$?
If~$m=1$, i.e. if the link~$L$ is a knot, then one easily checks that~$\lim_{\omega\to 1}\sigma_L(\omega)$ vanishes,
but in the general case of an~$m$-component link, this elementary approach only yields the inequality~$\vert\lim_{\omega\to 1}\sigma_L(\omega)\vert\le m-1$ (see e.g.~\cite{G-L}).

\medskip

In the recent article~\cite{BZ}, Borodzik and Zarzycki used so-called {\em Hermitian variation structures\/} to show the following result. If~$L=K_1\cup\dots\cup K_m$ is an oriented link
whose Alexander polynomial~$\Delta_L$ does not vanish and is not divisible by~$(t-1)^m$, then 
\[
\lim_{\omega\to 1}\sigma_L(\omega)=\sign(\mathit{Lk}_L)\,,
\]
where~$\mathit{Lk}_L$ denotes the {\em linking matrix\/} of~$L$ defined by
\begin{equation}
    \label{eq:Lk}
(\mathit{Lk}_L)_{ij}=\begin{cases}
\lk(K_i,K_j)&\text{if~$i\neq j$};\\
-\sum_{k\neq i}\lk(K_i,K_k)&\text{if~$i=j$}\,.
\end{cases}
\end{equation}
The assumptions on~$\Delta_L$ are slightly mysterious and the tools rather unorthodox,
but this result puts forward the value~$\sigma_L(1)=\sign(\mathit{Lk}_L)$
as the natural extension of~$\sigma_L$ to
the full circle (a fact that can also
be traced back to the proof of Lemma~5.4 in~\cite{NP}).
Furthermore, this indicates that a naive extension of~$\sigma_L$ to~$S^1$ using the standard
four-dimensional interpretation does not yield the correct answer in general.

\subsection{Results on the Levine-Tristram signature}
\label{sub:resultsLT}

Our first original result on the Levine-Tristram signature is the following inequality (Theorem~\ref{thm:LT}).

\begin{theorem}
\label{thm:intro1}
For any oriented link~$L$, we have
\[
\left|\lim_{\omega\to 1}\sigma_L(\omega)-\sign(\mathit{Lk}_L)\right|\le
\nul(\mathit{Lk}_L)-1-\rank A(L)\,,
\]
where~$A(L)$ denotes the one-variable Alexander module of~$L$.
\end{theorem}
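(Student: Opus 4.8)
The plan is to compute $\lim_{\omega\to 1}\sigma_L(\omega)$ using a generalized Seifert surface together with the behaviour of the associated Hermitian form under specialization, and to compare the result with $\sign(\mathit{Lk}_L)$ via a careful analysis of what happens on the radical. Concretely, pick a $C$-complex $S = S_1\cup\dots\cup S_m$ for $L$ and the corresponding generalized Seifert form; restricting all variables to a single variable $\omega$ recovers (up to the usual change of variables) the one-variable Hermitian matrix $H(\omega)=(1-\omega)A+(1-\overline\omega)A^T$ for a suitable Seifert matrix $A$ of $L$. The first step is therefore to understand $\sigma(H(\omega))$ and $\nul(H(\omega))$ for $\omega$ in a punctured neighbourhood of $1$ on $S^1$. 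Since $\sigma_L$ is locally constant off the roots of $\Delta_L$, the limit stabilizes to a fixed integer $s := \lim_{\omega\to 1}\sigma_L(\omega)$, and the task is to bound $|s-\sign(\mathit{Lk}_L)|$.

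Next I would set up the linear-algebraic comparison. Write $H(\omega) = (1-\omega)A + (1-\overline\omega)A^T$ and factor out the scalar; on the ray $\omega = e^{i\theta}$ with $\theta\to 0^+$, after dividing by $|1-\omega|$ the matrix $H(\omega)$ converges, up to a positive scalar, to the skew-symmetrized/symmetrized combination whose signature picks out exactly $\sign(\mathit{Lk}_L)$ — this is the mechanism behind the identification $\sigma_L(1) = \sign(\mathit{Lk}_L)$ recorded in the introduction (cf.\ Lemma~5.4 of~\cite{NP} and~\cite{BZ}). The point is that $\mathit{Lk}_L$ is, up to congruence, the ``leading term'' of $H(\omega)$ at $\omega=1$, and $A+A^T$ presents the intersection form of the Seifert surface, which is unimodular; so the first-order data at $\omega=1$ is governed by $\mathit{Lk}_L$. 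The discrepancy between $s$ and $\sign(\mathit{Lk}_L)$ is then controlled by the rank of the space on which this leading term degenerates, i.e.\ by $\nul(\mathit{Lk}_L)$, minus a correction coming from the part of the radical already ``seen'' by the Alexander module $A(L)$.

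The key step is to identify that correction precisely. I would argue that on the radical of $\mathit{Lk}_L$ the form $H(\omega)$ is governed by the next-order term, which is (a deformation of) a presentation of $A(L)$; each generator of $A(L)$ contributes a genuine nonzero eigenvalue of $H(\omega)$ for $\omega$ near $1$, hence cannot contribute to any jump of the signature and moreover uses up one dimension of the radical in a ``signature-neutral'' way. Counting dimensions: the radical of $\mathit{Lk}_L$ has dimension $\nul(\mathit{Lk}_L)$, it always contains the distinguished vector $(1,\dots,1)$ (giving the $-1$), and of the remaining $\nul(\mathit{Lk}_L)-1$ dimensions, $\rank A(L)$ of them are pinned by nonzero eigenvalues near $\omega=1$, leaving at most $\nul(\mathit{Lk}_L)-1-\rank A(L)$ dimensions on which the signature can actually move. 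This yields $|s - \sign(\mathit{Lk}_L)| \le \nul(\mathit{Lk}_L)-1-\rank A(L)$.

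The main obstacle is making the ``next-order term is a presentation of $A(L)$'' claim rigorous and in the right basis: one must choose the $C$-complex and the Seifert matrix $A$ compatibly, block-decompose $H(\omega)$ along the radical of $\mathit{Lk}_L$, and show that the Schur complement onto that radical is, for $\omega$ near $1$, congruent to something whose $\omega\to 1$ behaviour is read off from $A(L)$ — in particular that its rank is at least $\rank A(L)$ and that those directions carry no signature jump. Controlling the Schur complement uniformly as $\omega\to 1$, and ruling out that additional radical directions of $\mathit{Lk}_L$ secretly contribute to a jump, is the delicate part; everything else is bookkeeping with signatures and nullities of Hermitian matrices.
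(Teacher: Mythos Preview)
Your proposal takes a three-dimensional (Seifert-matrix) route, whereas the paper's proof is four-dimensional: it first extends $\sigma_L$ and $\eta_L$ to all of $S^1$ via the twisted intersection form of an auxiliary $4$-manifold $W_F$ (Section~\ref{sub:extension}), proves the Torres-type identities $\sigma_L(1)=\sign(\mathit{Lk}_L)$ and $\eta_L(1)=\nul(\mathit{Lk}_L)-1$ by a Novikov--Wall/Maslov computation (Theorems~\ref{thm:Torres} and~\ref{thm:torres-formula-nullity}), and then applies the elementary bound of Lemma~\ref{lemma:limit-signature} together with $\eta_L(\omega)\ge\rank A(L)$ (Lemma~\ref{lem:Alex}). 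This packages your ``delicate part'' into a clean topological statement about $W_F$.

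Your sketch has a genuine gap at the leading-term step. With $\omega=e^{i\theta}$ and $\theta\to 0^+$ one has
\[
H(\omega)=-i\theta\,(A-A^T)+\tfrac{\theta^2}{2}(A+A^T)+O(\theta^3),
\]
so dividing uniformly by $|1-\omega|\sim\theta$ gives the limit $-i(A-A^T)$. This is $i$ times a real skew-symmetric matrix; its eigenvalues come in $\pm$ pairs, hence its signature is $0$, not $\sign(\mathit{Lk}_L)$. (Also, it is $A-A^T$, not $A+A^T$, that represents the intersection form on $H_1(S)$, and for an $m$-component link this form has nullity $m-1$, not $0$.) With this uniform rescaling, Lemma~\ref{lemma:limit-signature} only yields the weaker bound $\bigl|\lim_{\omega\to 1}\sigma_L(\omega)\bigr|\le m-1-\rank A(L)$ of Corollary~\ref{cor:G-L}, not the theorem.

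A three-dimensional proof does exist (the paper says so in Remark~\ref{rem:3d-LT}), but it needs the \emph{non-uniform} block rescaling of Section~\ref{sec:3D}: split $H_1(S)$ into a symplectic part (on which $A-A^T$ is nondegenerate) and the $(m-1)$-dimensional radical spanned by boundary-parallel curves, conjugate $H(\omega)$ by $\mathrm{diag}\bigl(|1-\omega|^{-1/2}I,\;|1-\omega|^{-1}I\bigr)$, and only then pass to the limit. The symplectic block contributes signature $0$ and nullity $0$; the radical block limits to a reduced linking matrix, which is where $\sign(\mathit{Lk}_L)$ and $\nul(\mathit{Lk}_L)-1$ genuinely appear. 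Your Schur-complement instinct points here, but as written you block-decompose along the radical of $\mathit{Lk}_L$ (an $m\times m$ matrix) rather than along the radical of the intersection form on $H_1(S)$, and without the two-speed rescaling the identification of the leading term with $\mathit{Lk}_L$ simply fails.
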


In particular, it implies that~$\lim_{\omega\to 1}\sigma_L(\omega)=\sign(\mathit{Lk}_L)$
for all links with~$\rank A(L)=\nul(\mathit{Lk}_L)-1$.
As shown in Remark~\ref{rem:B-Z}, the equality~$\nul(\mathit{Lk}_L)=1$ is equivalent to the Alexander polynomial~$\Delta_L$ not vanishing and not being divisible by~$(t-1)^m$. Therefore, this theorem
extends the aforementioned result of~\cite{BZ}.
It also implies several immediate and pleasant corollaries, such as the elementary but not so obvious inequalities
\[
\rank A(L)\le \nul(\mathit{Lk}_L)-1
\]
and
\[
\left|\lim_{\omega\to 1}\sigma_L(\omega)\right|\le m-1-\rank A(L)\,,
\]
valid for any oriented link~$L$.

\medskip

As will be explained in Section~\ref{sub:resultsCF},
we have also obtained similar results for more general signatures,
results that can then be applied back to the Levine-Tristram signature. 
To test the power of our methods, we have tried to determine the limit
of the Levine-Tristram signature of
an arbitrary~$2$-component link, showing the following statement (Corollary~\ref{cor:LT-2}).

\begin{corollary}
\label{cor:intro2}
If~$L$ is a 2-component oriented link with linking number~$\ell$
and two-variable Conway function~$\nabla_L$, then its Levine-Tristram signature satisfies
\[
\lim_{\omega\to 1}\sigma_L(\omega)=\begin{cases}
-\sgn(\ell)&\text{if~$\ell\neq 0$, or if~$\nabla_L=0$ (in which case~$\ell=0$);}\\
\sgn(f(1,1))&\text{if~$\ell=0$,~$\nabla_L\neq 0$ and~$f(1,1)\neq 0$};\\
\pm 1\new{\text{ or $0$}}&\text{if~$\ell=0$,~$\nabla_L\neq 0$ and~$f(1,1)= 0$},
\end{cases}
\]
where in the last two cases, we have~$\nabla_L(t_1,t_2)=(t_1-t^{-1}_1)(t_2-t^{-1}_2)f(t_1,t_2)\in\Z[t^{\pm 1}_1,t^{\pm 1}_2]$.
\end{corollary}

Note that the result of~\cite{BZ} covers
precisely the case of non-vanishing linking number, \new{while the last case is classical (see e.g.~\cite{G-L})}; the other cases are new.

Testing our results on~$3$-component links would be an entertaining
exercise that we have not attempted,
but we expect a similar outcome.

\subsection{Background and questions on the multivariable link signature}
\label{sub:backgroundCF}

As is well-known, the Alexander polynomial admits a multivariable extension for links. A slightly less familiar fact is that
the Levine-Tristram also admits such a generalization.
The most natural setting for it is that of colored links, that we now recall.

Let~$\mu$ be a positive integer. A~$\mu${\em-colored link} is an oriented link~$L$ each of whose components is endowed with a {\em color\/} in~$\{1,\dots,\mu\}$ so that all colors are used.
Such a colored link is commonly denoted by~$L=L_1\cup\dots\cup L_\mu$,
with~$L_i$ the sublink of~$L$ consisting of the components of color~$i$.
Two colored links are {\em isotopic\/} if they are related by an ambient isotopy which respects
the orientation and color of all components.
Obviously, a~$1$-colored link is nothing but an oriented link,
while a~$\mu$-component~$\mu$-colored link is an oriented ordered link.
Most of our results hold for arbitrary~$\mu$-colored links, but some of them (e.g. Theorem~\ref{thm:intro6}) are restricted to such ordered links, which we often simply call~{\em$\mu$-component
links}.

Given an arbitrary~$\mu$-colored link~$L$ in~$S^3$,
the {\em multivariable signature\/} of~$L$ is the function
\[
\sigma_L\colon (S^1\setminus\{1\})^\mu\longrightarrow\Z,\quad \omega=(\omega_1,\dots,\omega_\mu)\longmapsto\sigma(H(\omega))\,,
\]
where~$H(\omega)$ is a Hermitian matrix built from
{\em generalized Seifert matrices\/} associated with
generalized Seifert surfaces known as {\em C-complexes\/}, see Section~\ref{sub:C-complex}.
Similarly, the {\em multivariable nullity\/}
of~$L$ is the function~$\eta_L\colon (S^1\setminus\{1\})^\mu\to\Z$ given by~$\eta_L(\omega)=\eta(H(\omega))$.
These invariants were introduced by Cooper~\cite{Cooper} in the~$2$-component~$2$-colored case, and fully developed and studied
in~\cite{C-F}.

As one immediately sees from the definitions, the case~$\mu=1$
recovers the Levine-Tristram signature and nullity, justifying the slight abuse of notation.
However, there is another way
in which these multivariable functions can be applied back to their one-variable
counterparts. Indeed, given any~$\mu$-colored link~$L=L_1\cup\dots\cup L_\mu$,
we have
\begin{equation}
    \label{eq:multi-LT}
    \sigma_{L}(\omega,\dots,\omega)=\sigma_{L^{\text{or}}}(\omega)+\sum_{i<j}\lk(L_i,L_j)
\end{equation}
for all~$\omega\in S^1\setminus\{1\}$, where~$L^{\text{or}}$ denotes the~($1$-colored)
oriented link underlying~$L$ (see~\cite[Proposition~2.5]{C-F}).
As a consequence, this multivariable extension can be a valuable tool even if
one is only interested in the original Levine-Tristram signature.

In a nutshell, all the agreeable properties of the Levine-Tristram signature
mentioned in Section~\ref{sub:backgroundLT} extend to the multivariable setting.
In particular, the function~$\sigma_L$ is constant on the connected components
of the complement in~$(S^1\setminus\{1\})^\mu$ of the zeros of the multivariable
Alexander polynomial~$\Delta_L(t_1,\dots,t_\mu)$, see Theorem~4.1 and Corollary~4.2 of~\cite{C-F}. Also, if~$(\omega_1,\dots,\omega_\mu)$ is not the root of any Laurent polynomial~$p(t_1,\dots,t_\mu)$ with~$p(1,\dots,1)=\pm 1$, then~$\sigma_L(\omega_1,\dots,\omega_\mu)$ and~$\eta_L(\omega_1,\dots,\omega_\mu)$ are invariant under {\em topological concordance\/} of colored links. As in the~$1$-variable case, the understanding of
this fact came in incremental steps (see in particular~\cite[Section~7]{C-F}),
its definitive treatment (and extension to~$0.5$-solvability) being achieved in~\cite{CNT}. Once again, the modern proof relies on an alternative
definition of~$\sigma_L(\omega)$ as the twisted signature of the four-manifold
obtained from~$B^4$ by removing a tubular neighborhood of a union of surfaces~$F=F_1\cup\dots\cup F_\mu$ with~$\partial F_i=F_i\cap\partial B^4=L_i$
for all~$i$.

\medskip

Despite these results, several questions remain unanswered.

\begin{question}
\label{qu:ext}
Is there a natural extension of~$\sigma_L$ and~$\eta_L$
from~$(S^1\setminus\{1\})^\mu$ to the full torus~$\mathbb{T}^\mu$?
\end{question}

As in the~$1$-variable case, the definition via (generalized) Seifert matrices
yields a trivial signature and ill-defined
nullity as soon as some coordinate is equal to~$1$.
Moreover, the `naive' extension of the standard four-dimensional interpretation from~\cite{Viro,DFL,CNT} is in general not well-defined either (see e.g.~\cite[Section~4.4]{DFL}).

\medskip

The second question is relevant to the title of this work.
The celebrated {\em Torres formula\/}~\cite{Tor} relates the multivariable
Alexander polynomial~$\Delta_L$ of a~$\mu$-component ordered link~$L=L_1\cup\dots\cup L_\mu$ to the Alexander polynomial of the~$(\mu-1)$-component link~$L\setminus L_1$
via the equality
\begin{equation}
    \label{eq:Torres}
    \Delta_L(1,t_2,\dots,t_\mu)\;=\;(t_2^{\lk(L_1,L_2)}\cdots t_\mu^{\lk(L_1,L_\mu)}-1)\;\Delta_{L\setminus L_1}(t_2,\dots,t_\mu)
\end{equation}
in~$\Z[t_2,t_2^{-1},\dots,t_\mu,t_\mu^{-1}]$, up to multiplication by units of this ring.
Assuming that a satisfactory answer to Question~\ref{qu:ext} has been found,
is there an anolog of the Torres formula for the multivariable signature and
nullity? In other words:

\begin{question}
    \label{qu:Torres}
    Is there a simple formula relating~$\sigma_L(1,\omega_2,\dots,\omega_\mu)$
    and~$\sigma_{L\setminus L_1}(\omega_2,\dots,\omega_\mu)$, and one relating~$\eta_L(1,\omega_2,\dots,\omega_\mu)$
    and~$\eta_{L\setminus L_1}(\omega_2,\dots,\omega_\mu)$?
\end{question}

\medskip

The third question was already posed in the~$1$-variable context at the end of Section~\ref{sub:backgroundLT}.

\begin{question}
    \label{qu:lim}
    For a fixed~$(\omega_2,\dots,\omega_\mu)$, what can be said of the
    limits~$\lim_{\omega_1\to 1}\sigma_{L}(\omega_1,\omega_2,\dots,\omega_\mu)$~?
\end{question}

Here note the plural in ``limits'': unlike in the~$1$-variable case where the symmetry~$\sigma_L(\overline{\omega})=\sigma_L(\omega)$ ensures that~$\lim_{\omega\to 1}\sigma_L(\omega)$ is well-defined, in the multivariable case
the limit might depend on whether~$\omega_1\in S^1$ tends to~$1$ from above or from below (see e.g. Example~\ref{ex:sign-T}). We shall denote these two limits by~$\omega_1\to 1^+$ and~$\omega_1\to 1^-$.
Note that if one keeps~$\omega'=(\omega_2,\dots,\omega_\mu)\in(S^1\setminus\{1\})^{\mu-1}$ fixed, then these two limits do exist by the locally constant behaviour of signatures described in~\cite[Theorem~4.1]{C-F}. On the other hand, if one allows for any sequence of elements~$\omega\in(S^1\setminus\{1\})^\mu$ converging to~$(1,\omega')$,
then the corresponding limits of signature might not be well-defined (see e.g. Figure~\ref{fig:graph} with~$\omega'$ a third root of unity, and Example~\ref{ex:sign-T}). However,
the estimates that we obtain on what we denote by~$\lim_{\omega\to 1^+}\sigma_L(\omega)$ and~$\lim_{\omega\to 1^-}\sigma_L(\omega)$ hold for any such sequence.

\subsection{Results on the multivariable link signature}
\label{sub:resultsCF}

In short, our work provides rather satisfactory answers to the three questions raised above.

\medskip

First, we extend the signature and nullity to the full torus.
To give a sense that these extensions are ``the right ones'', before giving more ample evidence of this fact below,
we gather in one statement several of their pleasant features.

\begin{theorem}
\label{thm:intro2.5}
Given an arbitrary~$\mu$-colored link~$L$, there exist an extension of the signature~$\sigma_L$ and
of the nullity~$\eta_L$ from~$(S^1\setminus\{1\})^\mu$ to the full torus~$\mathbb{T}^\mu$, which satisfy the following
properties.
\begin{enumerate}
\item The extensions~$\sigma_L\colon\mathbb{T}^\mu\to\Z$ and~$\eta_L\colon\mathbb{T}^\mu\to\Z$ only depend on the isotopy class of the~$\mu$-colored link~$L$ (see Theorem~\ref{thm:extension}).
\item If~$L$ is a ($1$-colored) oriented link, then~$\sigma_L(1)=\sign(\mathit{Lk}_L)$ (see Theorem~\ref{thm:Torres}~(1)).
    \item If~$L=L_1\cup\dots\cup L_\mu$ is a~$\mu$-component link with~$\lk(L_1,L_j)$ not all vanishing, then
 for any~$\omega'\in(S^1\setminus\{1\})^{\mu-1}$ such that~$\Delta_L(1,\omega')\neq 0$, we have
 \[
 \sigma_L(1,\omega')=\frac{1}{2}\left(\lim_{\omega_1\to 1^+}\sigma(\omega_1,\omega')+\lim_{\omega_1\to 1^-}\sigma(\omega_1,\omega')\right)=\sigma_{L\setminus L_1}(\omega')\,.
 \]
 (See Corollary~\ref{cor:limit-of-signature-equality} and Theorem~\ref{thm:Torres}~(3).)
    \item For any~$\omega\in\mathbb{T}^\mu$ (with the possible exception of~$(1,\dots,1)$
if~$\mu\ge 2$), the integer~$\sigma_L(\omega)$ can be obtained
as the signature of a matrix evaluated at~$\omega$ (see Lemma~\ref{lemma:representing-int-forms}).
\end{enumerate}
\end{theorem}

\new{In a nutshell, this extension is defined as follows (see Section~\ref{sub:extension} for details).
We first build the \emph{generalized Seifert surgery} on~$L$,
a closed} three-dimensional manifold~$M_L$ which only depends on the colored link~$L$, and which (in the ordered case) coincides with the manifold defined by Toffoli in~\cite[Construction~4.17]{toffoli}.
\new{The point of this construction is that it admits a natural (though not unique) homomorphism~$\varphi\colon H_1(M_L)\to\Z^\mu$, making~$M_L$ a 
so-called~$\Z^\mu$\emph{-manifold}.
From the pair~$(M_L,\varphi)$, we then define an auxiliary link~$L^\#$ such that~$M_{L^\#}$ is a manifold that bounds over~$\Z^\mu$.
More precisely, we construct a four-dimensional~$\Z^\mu$-manifold~$W_F$ from a union of surfaces~$F\subset B^4$ bounded by~$L^\#$, and show
that~$\partial W_F=M_{L^\#}$ over~$\Z^\mu$.
Finally, the extended signature and nullity of~$L$ are defined} by considering the twisted signature 
and nullity of~$W_F$.
While we provide no further detail in the present introduction, the construction of this \new{extension and the proof of its invariance}
take up a significant portion of this article
and might be considered as its most
technical contribution (see in particular Appendix~\ref{ap:plumbed}).

Some explicit computations yield more evidence that these extensions are very natural indeed.

\begin{example}
Let~$\{L(k)\}_{k\in\Z}$ be the family of~$2$-component links illustrated in Figure~\ref{fig:link-Lk}.
 For~$k\neq 0$ (resp. for~$k=0$) the signature function~$\sigma_{L(k)}\colon(S^1\setminus\{1\})^2\to\Z$ is constant equal to~$1$ (resp.~$0$), while~$\eta_{L(k)}$ is constant equal to~$0$ (resp.~$1$), see Example~\ref{ex:twist}.
 As computed in Examples~\ref{ex:Whitehead} and~\ref{ex:null-L(k)}, the above extensions yield constant functions~$\sigma_L$
 and~$\eta_L$ on~$\mathbb{T}^2\setminus\{(1,1)\}$ in all cases.
\end{example}

\medskip

We now turn to the second question, i.e. to Torres-type formulas for these extended signature and nullity functions. To state these results, it is convenient to make use of the notion of {\em slope\/}, as defined and studied by Degtyarev, Florens and Lecuona in~\cite{DFL}. Without stating the formal definition (see Remark~\ref{rem:slope}), let us recall that given a~$\mu$-colored link~$L=L_1\cup\dots\cup L_\mu=:L_1\cup L'$ with~$L_1=:K$ a knot, the associated slope is a function
assigning a value~$(K/L')(\omega')\in\C\cup\{\infty\}$
to each~$\omega'=(\omega_2,\dots,\omega_\mu)\in(S^1\setminus\{1\})^{\mu-1}$ such that~$\omega_2^{\lk(K,L_2)}\cdots\omega_\mu^{\lk(K,L_\mu)}=1$.
Most importantly for our applications, Theorem~3.2 of~\cite{DFL} asserts that, in generic cases, it can be computed explicitly via the Conway function~$\nabla_L$ of~$L$, see Equation~\eqref{eq:slope}.

We can now state (a particular case of) our Torres formula for the signature
(see Theorem~\ref{thm:Torres}, Remark~\ref{rem:slope} and Remark~\ref{rem:Torres-general} for the full statement).

\begin{theorem}
\label{thm:intro3}
Let~$L=L_1\cup\dots\cup L_\mu=:L_1\cup L'$ be a~$\mu$-colored link with~$\mu\ge 2$ and~$L_1=:K$
a knot. For all~$\omega'\in(S^1\setminus\{1\})^{\mu-1}$, we have
\[
\sigma_L(1,\omega')=
\begin{cases}
\sigma_{L'}(\omega')+\sgn((K/L')(\omega'))&\text{if~$\lk(K,K')=0$ for all~$K'\subset L'$};\\
\sigma_{L'}(\omega')&\text{else},
\end{cases}
\]
where~$\sgn\colon\R\cup\{\infty\}\to\{-1,0,1\}$ denotes the sign function extended via~$\sgn(\infty)=0$.
\end{theorem}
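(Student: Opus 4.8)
The plan is to prove this Torres-type formula by comparing the defining Hermitian forms on a C-complex for $L=K\cup L'$ with those on a C-complex for $L'$, in the limit where the first variable is set to $1$. First I would fix a C-complex $\Sigma=\Sigma_0\cup\Sigma_1\cup\dots\cup\Sigma_\mu$ for $L$, where $\Sigma_0$ is a Seifert surface for the knot $K=L_1$; removing $\Sigma_0$ and the clasp intersections it participates in yields a C-complex $\Sigma'$ for $L'$. The key algebraic input is an explicit block description of the generalized Seifert form of $L$ on $H_1(\Sigma)$ relative to the Mayer–Vietoris-type decomposition coming from $\Sigma_0$, and the observation that specializing $t_1\mapsto 1$ collapses the contribution of $\Sigma_0$ in a controlled way. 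Concretely, with $\omega=(\omega_1,\omega')$ and $\omega_1\to 1$, the Hermitian matrix $H(\omega)$ acquires a block-triangular shape whose ``diagonal'' blocks are, on one hand, the matrix $H'(\omega')$ computing $\sigma_{L'}(\omega')$ and $\eta_{L'}(\omega')$, and on the other hand a small block whose signature and nullity must be identified with the slope term.

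The central step is therefore to identify that residual block. Here I would use the definition of the extension $\sigma_L$ via the twisted signature of the four-manifold $W_F$ from Section~\ref{sub:extension}: setting $\omega_1=1$ corresponds to a degeneration of the twisted coefficient system along the component $K$, and the relevant piece of twisted homology is governed by the linking data $\lk(K,K')$ together with the homology of the infinite cyclic (or free abelian) cover detected by $K$. When some $\lk(K,K')\neq 0$, the character $\omega_2^{\lk(K,L_2)}\cdots\omega_\mu^{\lk(K,L_\mu)}$ is nontrivial near $(1,\omega')$, the extra block is nondegenerate with zero signature (it is metabolic, coming from a ``linking'' pairing), and we get $\sigma_L(1,\omega')=\sigma_{L'}(\omega')$ exactly as in the Torres formula~\eqref{eq:Torres} where the factor $(t_2^{\lk(L_1,L_2)}\cdots-1)$ does not vanish at $(1,\omega')$. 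When all $\lk(K,K')=0$, that character is trivial, the residual block becomes $1$-dimensional (matching the extra factor of $(t-1)$ one expects), and its sign is precisely what the slope $(K/L')(\omega')$ measures — this is where I would invoke Theorem~3.2 of~\cite{DFL}, via Equation~\eqref{eq:slope}, to compute the sign of that block as $\sgn((K/L')(\omega'))$, with the convention $\sgn(\infty)=0$ handling the case where the block degenerates (absorbed into the nullity).

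The main obstacle I expect is precisely this last identification: matching the sign of a one-dimensional Hermitian block, defined through the four-dimensional extension $W_F$, with the slope as defined by Degtyarev–Florens–Lecuona through a completely different (homological/Conway-function) route. This requires a careful bookkeeping of orientations and of the normalization of $\nabla_L$, and a lemma showing that the relevant entry of $H(1,\omega')$ equals (up to a positive real factor) the derivative-type quantity appearing in~\eqref{eq:slope}; I would isolate this as a standalone computational lemma. A secondary technical point is justifying the block-triangular form uniformly as $\omega_1\to 1^+$ and $\omega_1\to 1^-$, i.e. checking that the extension $\sigma_L(1,\omega')$ really is the common value of the two one-sided limits when $\Delta_L(1,\omega')\neq 0$ (as asserted in Example~(3)); this follows from the locally constant behaviour of~\cite[Theorem~4.1]{C-F} together with continuity of the extended form, but it needs to be stated cleanly. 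Finally I would record the parallel nullity statement, where the slope's value $\infty$ contributes $+1$ to $\eta_L(1,\omega')$, completing the Torres package and recovering Example~(1) as the special case $\mu=1$, $L'=\emptyset$, where the residual block is the linking matrix $\mathit{Lk}_L$ itself.
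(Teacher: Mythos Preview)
Your proposal conflates two genuinely different objects: the limit $\lim_{\omega_1\to 1^\pm}\sigma_L(\omega_1,\omega')$ and the extended value $\sigma_L(1,\omega')$. The block-triangular analysis of the C-complex matrix $H(\omega)$ that you describe is exactly the content of Section~\ref{sec:3D} of the paper, and it yields information about the \emph{limits} (Theorem~\ref{thm:limit-of-signature-inequality}); but $H(1,\omega')$ vanishes identically, so no manipulation of $H(\omega)$ alone can produce $\sigma_L(1,\omega')$, which is by definition $\sign_\omega(W_F)$ for the auxiliary four-manifold $W_F=V_F\cup_{P(F)}Y_F$. These two quantities need not agree: Proposition~\ref{prop:limit} only gives the inequality $\left|\lim_{\omega_1\to 1^\pm}\sigma_L-\sigma_L(1,\omega')\right|\le\eta_L(1,\omega')-\rank A(L)$, and the right-hand side is typically positive (see e.g.\ Example~\ref{ex:null-T}, where $\eta_L(1,\omega')=|\ell|-1$).

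The paper's proof is entirely four-dimensional and does not pass through $H(\omega)$ at all. The key tool you are missing is the Novikov--Wall theorem (Theorem~\ref{thm:NW}): one applies it to the decomposition $W_F=V_F\cup_{P(F)}Y_F$, obtaining $\sigma_L(1,\omega')=\sign_\omega(V_F)+\sign_\omega(Y_F)+\mathit{Maslov}(\mathcal{L}_-,\mathcal{L}_0,\mathcal{L}_+)$. A separate Novikov--Wall argument (for $V_{F'}=V_F\cup\nu(F_1^\circ)$) shows $\sign_\omega(V_F)=\sigma_{L'}(\omega')$, and $\sign_\omega(Y_F)=0$ by construction (Lemma~\ref{lemma:P(G)-bound}). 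The correction term is thus a Maslov index of three Lagrangians in $H_1(\partial\nu(L_1);\mathbb{C}^\omega)$, computed via Lemma~\ref{lem:P(G)}: in the non-split case $\mathcal{L}_-=\mathcal{L}_0$ (both spanned by meridians), so the Maslov index vanishes---this is the actual mechanism, not a ``metabolic block''. In the algebraically split case with $L_1$ a knot, $\mathcal{L}_-=\langle\ell_K\rangle$, $\mathcal{L}_0=\langle m_K\rangle$, and $\mathcal{L}_+=\langle\alpha m_K+\beta\ell_K\rangle$; the Maslov index is then $-\sgn(\alpha\overline{\beta})=\sgn((K/L')(\omega'))$ by the \emph{definition} of the slope as $-\alpha/\beta$, with no need to invoke the Conway-function formula~\eqref{eq:slope} (that formula is only a computational tool, recorded in Remark~\ref{rem:slope}).
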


We have also obtained a Torres formula relating~$\eta_L(1,\omega')$
with~$\eta_{L'}(\omega')$, which involves the slope once again.
However, its formulation being rather cumbersome and not particularly illuminating, we refer the reader to Theorem~\ref{thm:torres-formula-nullity} for its statement. 

\medskip

We now turn to the third and last question, namely the estimation of the~$\omega_1\to 1^\pm$ limits of multivariable signatures. Our answer to this question is among the motivations of
the results stated above. In particular, it shows that our extensions
of the signature and nullity functions are sensible ones.

First, and as already mentioned in Theorem~\ref{thm:intro2.5}, these extensions are such that for any given~$\mu$-colored link~$L$,
and for any~$\omega\in\mathbb{T}^\mu$ (with the possible exception of~$(1,\dots,1)$
if~$\mu\ge 2$), the integers~$\sigma_L(\omega)$ and~$\eta_L(\omega)$ can be obtained
as the signature and nullity of a matrix evaluated at~$\omega$ (Lemma~\ref{lemma:representing-int-forms}).
Then, we can use elementary estimates on the difference between the limit
of the signature of a matrix and the signature of a limit (Lemma~\ref{lemma:limit-signature}), together with the aforementioned Torres formulas for the signature and nullity, to obtain the following result (Theorem~\ref{thm:main}).

\begin{theorem}
\label{thm:intro4}
Let \(L=L_{1}\cup L_2\cup\ldots\cup L_{\mu}=:L_1\cup L'\) be a colored link with~$\mu\ge 2$ and~$L_1=:K$ a knot. Consider~\(\omega=(\omega_1,\omega')\in\mathbb{T}^{\mu}\) with~$\omega'\in(S^1\setminus\{1\})^{\mu-1}$.
\begin{enumerate}
    \item 
If there exists a component~$K'\subset L'$ with~$\lk(K,K')\neq 0$, then we have:
\[
\left|\lim_{\omega_1\to 1^\pm}\sigma_L(\omega)-\sigma_{L'}(\omega')\right|\le
\eta_{L'}(\omega')-1+\sum_{K'\subset L'}|\lk(K,K')|-\rank  A(L)\,,
\]
where~$A(L)$ denotes the~$\mu$-variable Alexander module of~$L$.
\item If ~$\lk(K,K')=0$ for all components~$K'\subset L'$, then we have
\[
\left|\lim_{\omega_1\to 1^\pm}\sigma_L(\omega)-\sigma_{L'}(\omega')-s(\omega')\right|\le
\eta_{L'}(\omega')+\varepsilon(\omega')-\rank A(L)\,,
\]
where
\[
s(\omega')=
\begin{cases}
+1&\text{if $(K/L')(\omega')\in(0,\infty)$}\\
-1&\text{if $(K/L')(\omega')\in(-\infty,0)$}\\
0&\text{if $(K/L')(\omega')\in\{0,\infty\}$}
\end{cases}
\quad\text{and}\quad
\varepsilon(\omega')=
\begin{cases}
+1&\text{if $(K/L')(\omega')=0$}\\
-1&\text{if $(K/L')(\omega')=\infty$}\\
0&\text{else.}
\end{cases}
\]
\end{enumerate}
\end{theorem}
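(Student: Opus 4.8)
\textbf{Proof proposal for Theorem~\ref{thm:intro4}.}

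The plan is to reduce everything to a single algebraic statement about limits of signatures of Hermitian matrices, and then feed in the two Torres formulas as ``boundary values''. First I would invoke Lemma~\ref{lemma:representing-int-forms} to obtain, for the colored link~$L$ and in a neighborhood of~$(1,\omega')$ in~$\mathbb{T}^\mu$, a Hermitian matrix~$H(\omega)$ over the appropriate ring of functions such that~$\sigma_L(\omega)=\sigma(H(\omega))$ and~$\eta_L(\omega)=\nul(H(\omega))$ for~$\omega$ in the punctured neighborhood, and also at~$\omega=(1,\omega')$ itself (this is where the extension of Section~\ref{sub:extension} is essential: the naive Seifert-theoretic matrix degenerates at~$\omega_1=1$, but~$H$ does not). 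The size of~$H$ is governed by the data of a C-complex / four-manifold~$W_F$, and the key numerical input will be that~$\rank H - \nul(H(1,\omega')) = \rank A(L) + (\text{correction term})$, where the correction term is exactly the quantity appearing on the right-hand sides — this is the bookkeeping that must be done carefully.

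Next I would apply Lemma~\ref{lemma:limit-signature}, the elementary estimate comparing~$\lim_{\omega_1\to 1^\pm}\sigma(H(\omega))$ with~$\sigma(H(1,\omega'))$: the difference is bounded in absolute value by~$\nul(H(1,\omega'))-\nul_{\mathrm{gen}}$, where~$\nul_{\mathrm{gen}}$ is the generic nullity of~$H$ near~$(1,\omega')$, i.e.~$\rank H$ minus the generic rank. Now~$\sigma(H(1,\omega')) = \sigma_L(1,\omega')$ by construction, and Theorem~\ref{thm:intro3} identifies this with~$\sigma_{L'}(\omega')$ (case~$\lk\neq 0$) or~$\sigma_{L'}(\omega')+\sgn((K/L')(\omega'))$ (case~$\lk=0$); note~$\sgn((K/L')(\omega'))=s(\omega')$. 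Simultaneously,~$\nul(H(1,\omega')) = \eta_L(1,\omega')$, which the nullity Torres formula (Theorem~\ref{thm:torres-formula-nullity}) expresses in terms of~$\eta_{L'}(\omega')$ plus a slope-dependent correction — in case~(1) this is~$\eta_{L'}(\omega')-1+\sum_{K'}|\lk(K,K')|$, and in case~(2) it is~$\eta_{L'}(\omega')+\varepsilon(\omega')$ (the~$-1$ versus~$+1$/$0$ discrepancy being precisely the slope behaviour at~$0$, $\infty$, or a finite nonzero value). Finally the generic nullity~$\nul_{\mathrm{gen}}$ is identified with~$\rank A(L)$, since generically near~$(1,\omega')$ the matrix~$H$ presents the Alexander module. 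Substituting these three identifications into the Lemma~\ref{lemma:limit-signature} inequality yields exactly the two claimed bounds.

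The main obstacle I anticipate is the second identification: showing that the nullity of the extended form~$H$ at the point~$(1,\omega')$ really equals~$\eta_L(1,\omega')$ as defined via~$W_F$, and that the nullity Torres formula applies at this precise boundary point rather than only in the interior~$(S^1\setminus\{1\})^\mu$. This requires that the extension of Section~\ref{sub:extension} be compatible with the matrix~$H$ of Lemma~\ref{lemma:representing-int-forms} not just on the open torus but at the wall~$\{\omega_1=1\}$, and that the slope~$(K/L')(\omega')$ — a priori defined only on the ``characteristic'' subvariety~$\{\omega_2^{\lk(K,L_2)}\cdots\omega_\mu^{\lk(K,L_\mu)}=1\}$ — controls the jump in the way the nullity formula predicts. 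A secondary subtlety is the sign/orientation convention needed so that the~$\pm$ limits land on the same bound: Lemma~\ref{lemma:limit-signature} bounds~$|\lim^{\pm}\sigma - \sigma(\text{limit})|$ symmetrically, so both~$\omega_1\to 1^+$ and~$\omega_1\to 1^-$ obey the same inequality even though the two one-sided limits themselves may differ (cf.\ Example~\ref{ex:sign-T}); I would make this explicit to justify the~$\lim_{\omega_1\to 1^\pm}$ notation in the statement. Everything else — the reduction to matrices, the elementary limit estimate, and the substitution of the Torres formulas — is routine once these compatibilities are in place.
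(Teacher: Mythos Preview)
Your proposal is correct and follows essentially the same route as the paper: the paper packages steps~1--3 as Proposition~\ref{prop:limit} (Lemma~\ref{lemma:representing-int-forms} $+$ Lemma~\ref{lemma:limit-signature} $+$ Lemma~\ref{lem:Alex}), and then substitutes the Torres formulas (Corollary~\ref{cor:Torres-sign} and Theorem~\ref{thm:torres-formula-nullity}) exactly as you describe.

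Two small corrections. First, the generic nullity is not \emph{identified} with~$\rank A(L)$ but only bounded below by it; and the reason is not that ``$H$ presents the Alexander module'' (it represents the intersection form on~$H_2(W_F;\C^\omega)$, which is a different object) but rather the homological argument of Lemma~\ref{lem:Alex}: $\eta_L(\omega)=\dim H_1(M_L;\C^\omega)\ge\rank_{\Lambda_\mu}H_1(M_L;\Lambda_\mu)=\rank A(L)$ via the UCSS and a Mayer--Vietoris comparison. Second, your ``main obstacle'' dissolves on closer reading: by Definition~\ref{def:extension} the extended~$\sigma_L(1,\omega')$ and~$\eta_L(1,\omega')$ \emph{are} the signature and nullity of the intersection form on~$H_2(W_F;\C^{(1,\omega')})$, and Lemma~\ref{lemma:representing-int-forms} guarantees this form is represented by~$H_j(\omega)$ on all of~$U_j=\{\omega_j\neq 1\}$, in particular at~$\omega_1=1$ (since~$\omega'\in\mathbb{T}_*^{\mu-1}$ gives some~$j>1$ with~$\omega_j\neq 1$). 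Your slope concern also evaporates in case~(2): algebraic splitting makes~$\omega_2^{\lk}\cdots\omega_\mu^{\lk}=1$ hold trivially, so the slope is defined on all of~$\mathbb{T}_*^{\mu-1}$.
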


As discussed in Sections~\ref{sub:lim-mult} and~\ref{sub:comparison}, this theorem
is quite powerful in the second, so-called {\em algebraically split\/} case.
Indeed, it implies in particular the following result (Corollary~\ref{cor:alg-split-equality}). 
 
\begin{corollary}
\label{cor:intro5}
        Let~\(L=K\cup L'\) be a~$\mu$-colored link as above, such that~$\lk(K,K')=0$ for all~$K'\subset L'$. Then, we have
\[
\lim_{\omega_1\to 1^+}\sigma_L(\omega_1,\omega')=\lim_{\omega_1\to 1^-}\sigma_L(\omega_1,\omega')=\sigma_{L'}(\omega')+\sgn\left(-\frac{\frac{\partial\nabla_L}{\partial t_1}(1,\sqrt{\omega'})}{\nabla_{L'}(\sqrt{\omega'})}\right)
\]
for all~$\omega'\in(S^1\setminus\{1\})^{\mu-1}$ such that~$\nabla_{L'}(\sqrt{\omega'})\neq 0$ and~$\frac{\partial\nabla_L}{\partial t_1}(1,\sqrt{\omega'})\neq 0$.
\end{corollary}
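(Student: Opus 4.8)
The plan is to derive Corollary~\ref{cor:intro5} as a special case of Theorem~\ref{thm:intro4}(2), combined with the explicit computation of the slope via the Conway function. First I would invoke the algebraically split hypothesis $\lk(K,K')=0$ for all $K'\subset L'$, which places us in case~(2) of Theorem~\ref{thm:intro4}. Under the additional generic assumptions $\nabla_{L'}(\sqrt{\omega'})\neq 0$ and $\frac{\partial\nabla_L}{\partial t_1}(1,\sqrt{\omega'})\neq 0$, I would use the explicit formula for the slope from Theorem~3.2 of~\cite{DFL} (referenced as Equation~\eqref{eq:slope} in the excerpt) to identify
\[
(K/L')(\omega')=-\frac{\frac{\partial\nabla_L}{\partial t_1}(1,\sqrt{\omega'})}{\nabla_{L'}(\sqrt{\omega'})}\,.
\]
In particular, this quotient is a nonzero element of $\R\cup\{\infty\}$; since the numerator is assumed nonzero and the denominator is nonzero, the slope is finite and nonzero, i.e. $(K/L')(\omega')\in(-\infty,0)\cup(0,\infty)$. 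Consequently $s(\omega')=\sgn\!\big((K/L')(\omega')\big)=\pm 1$ and, crucially, $\varepsilon(\omega')=0$ because the slope is neither $0$ nor $\infty$.

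Next I would analyze the error term on the right-hand side of Theorem~\ref{thm:intro4}(2). With $\lk(K,K')=0$ for all $K'\subset L'$ and $\varepsilon(\omega')=0$, the bound reads
\[
\left|\lim_{\omega_1\to 1^\pm}\sigma_L(\omega)-\sigma_{L'}(\omega')-s(\omega')\right|\le \eta_{L'}(\omega')-\rank A(L)\,.
\]
The main point is to show this right-hand side vanishes, i.e. that $\rank A(L)=\eta_{L'}(\omega')$ under the stated genericity hypotheses. Here I would use the interplay between the multivariable Alexander module, the Conway function, and the nullity: the condition $\nabla_{L'}(\sqrt{\omega'})\neq 0$ forces $\eta_{L'}(\omega')$ to equal the rank of the relevant Alexander-type module of $L'$ (so there is no ``extra'' nullity coming from a vanishing Alexander polynomial), while the condition $\frac{\partial\nabla_L}{\partial t_1}(1,\sqrt{\omega'})\neq 0$, interpreted through the Torres formula~\eqref{eq:Torres} and its Conway-function refinement, pins down $\rank A(L)$ to exactly this value. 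I expect that these two genericity conditions together precisely cut out the locus where equality holds in Theorem~\ref{thm:intro4}(2), so that the $\pm$ limits coincide and equal $\sigma_{L'}(\omega')+s(\omega')$.

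Finally, assembling the pieces: the limit from above and the limit from below both lie within distance $0$ of $\sigma_{L'}(\omega')+s(\omega')$, hence both equal that common value, which by the slope identification is exactly $\sigma_{L'}(\omega')+\sgn\!\Big(-\frac{\partial\nabla_L/\partial t_1(1,\sqrt{\omega'})}{\nabla_{L'}(\sqrt{\omega'})}\Big)$. This also yields the asserted equality $\lim_{\omega_1\to 1^+}=\lim_{\omega_1\to 1^-}$ as a byproduct. The step I expect to be the main obstacle is the rank computation $\rank A(L)=\eta_{L'}(\omega')$: it requires carefully matching the homological definition of the Alexander module of $L$ against the analytic non-vanishing conditions on $\nabla_L$ and $\nabla_{L'}$, presumably via the behavior of the Alexander module under the Torres specialization $t_1\mapsto 1$ and a dimension count relating $A(L)$, $A(L')$, and the first-order term $\frac{\partial\nabla_L}{\partial t_1}(1,-)$. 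Everything else is a matter of plugging the slope formula into Theorem~\ref{thm:intro4}(2) and tracking signs.
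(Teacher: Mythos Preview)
Your overall strategy matches the paper's: apply case~(2) of Theorem~\ref{thm:intro4}, use the slope formula~\eqref{eq:slope} to get $s(\omega')=\sgn((K/L')(\omega'))$ and $\varepsilon(\omega')=0$, and then argue that the right-hand side of the inequality is nonpositive. The only issue is that you make the ``main obstacle'' much harder than it is.

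You write that the key step is to establish $\rank A(L)=\eta_{L'}(\omega')$, and you anticipate needing a careful comparison of Alexander modules under the Torres specialization. None of this is necessary. The hypothesis $\nabla_{L'}(\sqrt{\omega'})\neq 0$ is equivalent to $\Delta_{L'}(\omega')\neq 0$, and by the elementary fact that the nullity vanishes away from the zeros of the Alexander polynomial (Lemma~\ref{lem:eta0} in the paper), this gives $\eta_{L'}(\omega')=0$ outright. Together with $\varepsilon(\omega')=0$, the right-hand side of the inequality in Theorem~\ref{thm:intro4}(2) becomes $-\rank A(L)\le 0$. Since the left-hand side is an absolute value, it is forced to vanish, and you never need to know what $\rank A(L)$ actually equals. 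This is exactly how the paper argues, in three lines.
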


In the non-algebraically split case (case~1 in Theorem~\ref{thm:intro4}), the inequality implies a good upper bound on the difference of the two limits (see Corollary~\ref{cor:diff-lim} and Remark~\ref{cor:diff-lim}). However, 
since it does not distinguish these two (possibly different) limits,
it does not allow
for a good estimation of each of these limits, especially if the linking
numbers are large.

\medskip

To address this issue, we have also attacked this question via a totally different approach, namely coming back to the original definition of the signature and nullity
via C-complexes: this is the subject of Section~\ref{sec:3D}, and of the PhD Thesis of the second author~\cite{Mar}.

To put it briefly, the strategy is the same as the one of the classical proof that the limit $\lim_{\omega\to 1^\pm}\sigma_K(\omega)$ vanishes if~$K$ is a knot:
first conjugate the Hermitian matrix~$H(\omega)$ by a suitable diagonal matrix,
and then estimate the difference between the limit of its signature and the signature of its limit. The result can be phrased as follows,
see Theorem~\ref{thm:limit-of-signature-inequality} and its addendum
for the full statement.

\begin{theorem}
\label{thm:intro6}
        For any~\(\mu\)-component link~\(L=L_1\cup\dots\cup L_\mu=:L_1\cup L'\)
        and any~\(\omega'\in(S^1\setminus\{1\})^{\mu-1}\), we have
    \[
        \left|\lim_{\omega_{1}\rightarrow 1^{\pm}}\sigma_{L}(\omega_{1},\omega')-\sigma_{L'}(\omega')\mp\rho_{\ell}(\omega')\right|\le \eta_{L'}(\omega')+\tau_{\ell}(\omega')-\rank A(L)\,,
        \]
        where~$A(L)$ is the multivariable Alexander module of~$L$, while
         \[
             \tau_\ell(\omega')=\begin{cases}
    1 &\text{if~$\omega_2^{\lk(L_1,L_2)}\cdots\omega_\mu^{\lk(L_1,L_\mu)}=1$;}\cr
    0 &\text{else,}
    \end{cases}
    \]
    and~$\rho_\ell\colon(S^1\setminus\{1\})^{\mu-1}\to\Z$ is an explicit function which only depends on~$\{\lk(L_1,L_j)\}_{j\ge 2}$.
    \end{theorem}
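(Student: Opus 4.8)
\textbf{Proof proposal for Theorem~\ref{thm:intro6}.}

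The plan is to follow the classical three-dimensional strategy for knots, adapted to the multivariable C-complex setting. First I would fix a C-complex for the $\mu$-component link $L=L_1\cup L'$ in which the component(s) of color $1$ appear in a controlled way, and write down the associated Hermitian matrix $H(\omega_1,\omega')$ built from the generalized Seifert matrices, so that $\sigma_L(\omega_1,\omega')=\sigma(H(\omega_1,\omega'))$ and $\eta_L(\omega_1,\omega')=\eta(H(\omega_1,\omega'))$ for $(\omega_1,\omega')\in(S^1\setminus\{1\})^\mu$. The key algebraic move is to conjugate $H(\omega_1,\omega')$ by a suitable diagonal matrix $D(\omega_1)$ — with entries powers of $(1-\omega_1)$ on the blocks coming from the color-$1$ generators, and $1$ elsewhere — so that $D(\omega_1)^*H(\omega_1,\omega')D(\omega_1)$ has a limit as $\omega_1\to 1^\pm$ that is a well-defined Hermitian matrix $\widetilde H^\pm(\omega')$. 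Conjugation does not change signature or nullity, so $\sigma_L(\omega_1,\omega')=\sigma(D^*HD)$; the point is that $D^*HD$ extends continuously to $\omega_1=1$, unlike $H$ itself.

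Next I would identify the limit matrix $\widetilde H^\pm(\omega')$ explicitly. Here is where the linking numbers $\ell_j=\lk(L_1,L_j)$ and the function $\rho_\ell$ enter: the surviving blocks after the rescaling and limit should assemble, up to an explicit unimodular change of basis, into (i) a Hermitian form $H'(\omega')$ representing $\sigma_{L'}(\omega')$ and $\eta_{L'}(\omega')$ for the sublink $L'$ — this requires checking that the relevant sub-C-complex of the C-complex for $L$ is a C-complex for $L'$ and that the limiting form agrees with its generalized Seifert form — plus (ii) a small explicit ``linking'' block depending only on the $\ell_j$, whose signature is $\rho_\ell(\omega')$ when $\prod_j\omega_j^{\ell_j}=1$ (and which becomes degenerate/contributes differently otherwise, accounting for the $\tau_\ell$ term and the $\mp$ sign discrepancy between the two limits), together with (iii) a remaining block of controlled rank. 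The sign asymmetry $\mp\rho_\ell$ comes from the fact that $(1-\omega_1)$ approaches $0$ along different rays for $\omega_1\to1^+$ versus $\omega_1\to1^-$, so the cross-terms between the rescaled and unrescaled blocks flip sign in the limit.

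Finally I would apply Lemma~\ref{lemma:limit-signature} — the elementary estimate comparing $\lim_{\omega_1\to1^\pm}\sigma(D^*HD)$ with $\sigma(\widetilde H^\pm(\omega'))$ — which bounds the defect by (roughly) the nullity $\eta(\widetilde H^\pm(\omega'))$ of the limit form. Combining this with the identification of $\widetilde H^\pm$ above yields
\[
\Bigl|\lim_{\omega_1\to1^\pm}\sigma_L(\omega_1,\omega')-\sigma_{L'}(\omega')\mp\rho_\ell(\omega')\Bigr|\le \eta\bigl(\widetilde H^\pm(\omega')\bigr),
\]
and the last step is to bound $\eta(\widetilde H^\pm(\omega'))$ by $\eta_{L'}(\omega')+\tau_\ell(\omega')-\rank A(L)$. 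The term $\eta_{L'}(\omega')$ accounts for the kernel of $H'(\omega')$; the $\tau_\ell(\omega')$ term accounts for the extra degeneracy of the linking block exactly when $\prod_j\omega_j^{\ell_j}=1$; and the $-\rank A(L)$ correction reflects that the generic rank of the Alexander module of $L$ forces a corresponding drop in the nullity of any matrix presenting the (twisted homology underlying the) signature — presumably via a Torres-type relation between the size of the C-complex matrix, $\Delta_L$, and $\Delta_{L'}$ analogous to~\eqref{eq:Torres}, combined with the fact that $\eta_{L'}(\omega')$ already counts $\rank A(L')\ge$ (something involving $\rank A(L)$).

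The main obstacle I expect is Step~2: pinning down the limit form $\widetilde H^\pm(\omega')$ precisely enough, in a basis-free-enough way, to recognize the $\sigma_{L'}$-part and isolate the explicit $\rho_\ell$-block. Getting the diagonal rescaling $D(\omega_1)$ to match the combinatorics of the C-complex (which handles clasps between different-colored components) is delicate, and proving that the surviving block really is a generalized Seifert form for $L'$ — rather than merely something with the right signature up to a bounded error — is what makes the inequality sharp. The bookkeeping for the $-\rank A(L)$ term, tying the algebraic rank of the Alexander module to nullities of the explicit matrices, is the other place where care is needed; I would handle it via the interpretation of nullity as the dimension of the relevant twisted first homology of the link complement (or its infinite cyclic-type cover), where the rank of the Alexander module appears as a generic lower bound.
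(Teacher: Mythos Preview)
Your outline is essentially the paper's approach: conjugate $H(\omega_1,\omega')$ by a diagonal matrix in powers of $(1-\omega_1)$, take the limit, and apply Lemma~\ref{lemma:limit-signature}. Two points deserve sharpening.

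First, the limit matrix is \emph{exactly} block-diagonal, $H'(\omega')\oplus F^\pm(\omega')$, with no third ``remaining block''. The off-diagonal blocks vanish in the limit because the curves in $\mathcal{A}\subset S'$ are disjoint from $S_1$, so the corresponding entries of $H$ carry a full factor $|1-\omega_1|^2$. Consequently $\eta(\widetilde H^\pm)=\eta_{L'}(\omega')+\eta(F^\pm(\omega'))$ on the nose. The $-\rank A(L)$ term does \emph{not} come from bounding $\eta(\widetilde H^\pm)$; it comes from the other side of Lemma~\ref{lemma:limit-signature}, namely from $\lim_{\omega_1\to 1}\eta_L(\omega_1,\omega')\ge\rank A(L)$ (the twisted-homology interpretation you mention at the end).

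Second, the identification $\sigma(F^\pm)=\pm\rho_\ell$ and $\eta(F^\pm)=\tau_\ell$ --- in particular, that these depend only on the linking numbers $\ell_j$ --- is the real content, and your proposal does not indicate how to get it. The paper's mechanism is a link-homotopy invariance argument (Cooper's observation that $F^\pm$ is unchanged under band-twisting), which lets one assume $L_1$ is unknotted and $S_1$ is a disk. Then $F^\pm$ becomes an explicit tridiagonal matrix indexed by clasps, and its signature and nullity are computed directly. Without this reduction, there is no reason the block $F^\pm$ should be independent of the rest of the C-complex, and your Step~2 would stall.
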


This leads in particular to the following result (Corollary~\ref{cor:lim-equal}).

\begin{corollary}
\label{cor:intro7}
If \(L=L_1\cup\dots\cup L_\mu=:L_1\cup L'\) is a \(\mu\)-component link, then we have
        \[\lim_{\omega_{1}\rightarrow 1^{\pm}}\sigma_{L}(\omega_{1},
        \omega')=\sigma_{L'}(\omega')\pm \rho_{\ell}(\omega')\]
        for all \(\omega'\in(S^1\setminus\{1\})^{\mu-1}\) such that
        $\Delta_{L}(1,\omega')\neq 0$.
\end{corollary}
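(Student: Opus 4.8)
The plan is to derive Corollary~\ref{cor:intro7} directly from Theorem~\ref{thm:intro6} by observing that, under the hypothesis $\Delta_L(1,\omega')\neq 0$, the right-hand side of the inequality in Theorem~\ref{thm:intro6} is non-positive, which forces the left-hand side to vanish. So the real work is to establish the inequality
\[
\eta_{L'}(\omega')+\tau_\ell(\omega')-\rank A(L)\le 0
\]
whenever $\Delta_L(1,\omega')\neq 0$, i.e. to show $\rank A(L)\ge \eta_{L'}(\omega')+\tau_\ell(\omega')$ in that case.

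First I would unpack $\rank A(L)$: the multivariable Alexander module $A(L)$ is a torsion module over the Laurent polynomial ring $\Lambda_\mu=\Z[t_1^{\pm1},\dots,t_\mu^{\pm1}]$, and its rank (in the sense used in the theorem — presumably the minimal number of generators, or the rank of a presentation-cokernel) governs how many ``extra'' eigenvalues the representing Hermitian matrix of Lemma~\ref{lemma:representing-int-forms} carries beyond those detected by $\sigma_{L'},\eta_{L'}$. The evaluation $\Delta_L(1,\omega')$ is, up to a unit, the order of the specialized module, and the Torres formula~\eqref{eq:Torres} gives
\[
\Delta_L(1,t_2,\dots,t_\mu)=(t_2^{\lk(L_1,L_2)}\cdots t_\mu^{\lk(L_1,L_\mu)}-1)\,\Delta_{L'}(t_2,\dots,t_\mu).
\]
Thus $\Delta_L(1,\omega')\neq 0$ is equivalent to $\Delta_{L'}(\omega')\neq 0$ \emph{and} the product $\omega_2^{\lk(L_1,L_2)}\cdots\omega_\mu^{\lk(L_1,L_\mu)}\neq 1$, i.e. $\tau_\ell(\omega')=0$; and $\Delta_{L'}(\omega')\neq 0$ is precisely the condition that $\eta_{L'}(\omega')=0$ by the nullity-jump description of~\cite[Theorem~4.1]{C-F}. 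Hence under the hypothesis both $\eta_{L'}(\omega')$ and $\tau_\ell(\omega')$ vanish, and the bound $\eta_{L'}(\omega')+\tau_\ell(\omega')-\rank A(L)\le 0$ follows as soon as $\rank A(L)\ge 0$, which is automatic. Theorem~\ref{thm:intro6} then gives $\bigl|\lim_{\omega_1\to1^\pm}\sigma_L(\omega_1,\omega')-\sigma_{L'}(\omega')\mp\rho_\ell(\omega')\bigr|\le 0$, yielding the claimed equality for both signs.

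The step I expect to be the genuine obstacle is making precise and rigorous the chain ``$\Delta_L(1,\omega')\neq 0$ $\iff$ $\Delta_{L'}(\omega')\neq 0$ and $\tau_\ell(\omega')=0$'' in the colored (rather than ordered) setting, and the identification of $\Delta_{L'}(\omega')\neq 0$ with $\eta_{L'}(\omega')=0$: the Conway/Alexander normalizations, the relation between the one-variable nullity appearing in Theorem~\ref{thm:intro6} and the multivariable Alexander polynomial of $L'$, and the possibility that $L'$ has several components of the same color all require care. I would handle this by invoking the precise statement of Theorem~\ref{thm:limit-of-signature-inequality} and its addendum (which presumably already contain the needed relation between $\rank A(L)$ and the vanishing loci), and by citing the Torres formula~\eqref{eq:Torres} together with~\cite[Theorem~4.1]{C-F}; the remaining bookkeeping is then routine. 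Finally I would note that the two limits $\omega_1\to 1^+$ and $\omega_1\to 1^-$ need not agree in general, but the formula produced here shows their difference equals $2\rho_\ell(\omega')$ on the locus where $\Delta_L(1,\omega')\neq 0$, recovering the consistency with Theorem~\ref{thm:intro3} and Example~(3) of the introduction.
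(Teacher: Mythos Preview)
Your argument is correct and follows the same route as the paper's proof of Corollary~\ref{cor:limit-of-signature-equality}: the Torres formula~\eqref{eq:Torres} shows that $\Delta_L(1,\omega')\neq 0$ forces both $\tau_\ell(\omega')=0$ and $\Delta_{L'}(\omega')\neq 0$, whence $\eta_{L'}(\omega')=0$ by Lemma~\ref{lem:eta0} (equivalently~\cite[Corollary~4.2]{C-F}), so the right-hand side of Theorem~\ref{thm:intro6} is $\le 0$ and the equality follows. Two minor remarks: the Alexander module $A(L)$ is \emph{not} torsion in general (its $\Lambda_\mu$-rank can be positive, e.g.\ for links with vanishing Alexander polynomial), but this does not affect your proof since you only need $\rank A(L)\ge 0$; and since the corollary concerns a $\mu$-component link, each $L_i$ is a knot and the colored-versus-ordered worry does not arise.
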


\medskip

A remarkable fact, discussed in Section~\ref{sub:comparison},
is that the two approaches described above are  complementary.
Indeed, in the algebraically split case, the four-dimensional approach is very powerful
and the three-dimensional one less so. On the other hand, the bigger the linking numbers, the more the 4D approach looses efficiency and the 3D approach gains power.
It is quite amusing to note that in case of total linking number~$\vert\lk(L_1,L_2)\vert+\dots+\vert\lk(L_1,L_\mu)\vert=1$, the two approaches give exactly the same estimate on the
limit of the signature.

\subsection*{Organisation of the article}
Section~\ref{sec:background} deals with the definition of the main objects of interest in this work; in particular, the three and four-dimensional definitions
of the signature and nullity are recalled in Sections~\ref{sub:C-complex} and~\ref{sub:twisted}, together with the Novikov-Wall theorem in Section~\ref{sub:NW}. The first original results appear in Section~\ref{sec:plumbed}, namely technical lemmas on plumbed three-manifolds, whose proofs are provided in Appendix~\ref{ap:plumbed}.

   Section~\ref{sec:3D} contains the results of the three-dimensional approach
   to Question~\ref{qu:lim}, and can be read independently from the rest of the article (apart from Section~\ref{sub:C-complex}). More precisely, Section~\ref{sub:statement} contains the statement of Theorem~\ref{thm:intro6} together with its consequences, including Corollary~\ref{cor:intro7}, while Section~\ref{sub:proof} deals with the proof 
   of this theorem.

In Section~\ref{sec:Torres}, we address Questions~\ref{qu:ext} and~\ref{qu:Torres} above. Indeed, we start in Section~\ref{sub:extension} by constructing the extension
of the signature and nullity functions to the full torus.
Then, in Sections~\ref{sub:Torres-sign} and~\ref{sub:Torres-null}, we prove our Torres-type formulas for these extended signatures and nullity,
in particular Theorem~\ref{thm:intro3}.

Finally, in Section~\ref{sec:4D}, we present the four-dimensional approach to Question~\ref{qu:lim}.
We start in Section~\ref{sub:prelim-lemma} by stating some preliminary lemmas, whose proofs are given in Appendix~\ref{sec:repr-inters-forms}.
The Levine-Tristram signature is studied in Section~\ref{sub:LT}, proving Theorem~\ref{thm:intro1},
and limits of multivariable signatures with all variables tending to~$1$ in Section~\ref{sub:lim-mult-1}. 
More general limits are considered in Section~\ref{sub:lim-mult},
including the proofs of Theorem~\ref{thm:intro6} and of Corollaries~\ref{cor:intro5} and~\ref{cor:intro2}.
Finally, Section~\ref{sub:comparison} contains a discussion of the comparison of the three and four-dimensional approaches.

\subsection*{Acknowledgments}
The authors thank Anthony Conway, \new{Livio Ferretti,} Min Hoon Kim and Gaetan Simian
for helpful discussions, and the anonymous referee for helpful suggestions.
DC acknowledges partial support from the
Swiss NSF grants 200020-20040 and 200021-212085.
MM acknowledges support from University of Warsaw's IDUB program IV.1.2. and partial support from the Swiss NSF grant 200020-20040.
Part of this work
was done while MM was visiting DC at the University of Geneva, whose
hospitality is thankfully acknowledged.

\section{Background and preliminaries}
\label{sec:background}

This section deals with the definition of the main objects of study
together with several preliminary lemmas.
More precisely, we start in Section~\ref{sub:linalg} by recalling
the definition of the signature and nullity of a Hermitian matrix,
and prove an elementary but crucial lemma.
In Section~\ref{sub:C-complex}, we review
the three-dimensional definition of the signature and nullity via C-complexes.
In Section~\ref{sub:twisted}, we then briefly explain the four-dimensional
viewpoint on these invariants,
and recall the Novikov-Wall theorem in Section~\ref{sub:NW}. Finally, Section~\ref{sec:plumbed} contains \new{a review of
plumbed three-manifolds over~$\Z^\mu$, the construction of a closed three-manifold~$M_L$ over~$\Z^\mu$ associated to a~$\mu$-colored link~$L$, as well as a technical lemma on plumbed~$\Z^\mu$-manifolds} whose proof is deferred to Appendix~\ref{ap:plumbed}.

\subsection{Limits of signatures for Hermitian matrices}
\label{sub:linalg}

Recall that a complex-valued square matrix~$H$ is said to be {\em Hermitian\/} if it coincides with its conjugate transpose~$H^*$.

By the spectral theorem, such a matrix can be diagonalized (by a unitary matrix), and the resulting diagonal matrix has real coefficients.
As a consequence, the eigenvalues of~$H$ are real, and one defines
the {\em signature\/} of~$H$ as the integer~$\sign(H)\in\mathbb{Z}$ given by the number of positive eigenvalues of~$H$ minus the number of negative eigenvalues. The {\em nullity\/} of~$H$ is defined as the
non-negative integer~$\nul(H)\in\mathbb{Z}_{\ge 0}$ equal to the number of
vanishing eigenvalues of~$H$.

\medskip

Many of our results are based on the following elementary but crucial lemma, whose easy proof we include for completeness.

	\begin{lemma}\label{lemma:limit-signature}
	    Let \((H(t))_{t\ge 0}\) be a continuous one-parameter family of Hermitian matrices.
	    Then
	    \[\left|\lim_{t \to 0^{+}} \sign(H(t)) - \sign(H(0)) \right| \leq \nul(H(0)) - \lim_{t \to 0^{+}} \nul(H(t))\,.\]
	\end{lemma}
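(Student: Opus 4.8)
The plan is to track eigenvalue trajectories near $t=0$ and bound how many of the eigenvalues that vanish at $t=0$ can ``escape'' to strictly positive or strictly negative values as $t\to 0^+$.

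First I would fix notation. Let $n$ be the size of the matrices, and write $n_+(t), n_-(t), n_0(t)$ for the number of positive, negative, and zero eigenvalues of $H(t)$, so that $\sign(H(t))=n_+(t)-n_-(t)$ and $\nul(H(t))=n_0(t)$, with $n_+(t)+n_-(t)+n_0(t)=n$ for all $t$. Since the family is continuous, a standard perturbation argument (e.g.\ continuity of the spectrum, or of the sorted list of eigenvalues) shows there is $\varepsilon>0$ such that for $0<t<\varepsilon$ the quantities $n_+(t)$, $n_-(t)$, $n_0(t)$ are constant; moreover, for such small $t$, each nonzero eigenvalue of $H(0)$ has a nearby eigenvalue of $H(t)$ of the same sign, which gives $n_+(t)\ge n_+(0)$ and $n_-(t)\ge n_-(0)$. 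Equivalently, $n_0(0)-n_0(t)=\bigl(n_+(t)-n_+(0)\bigr)+\bigl(n_-(t)-n_-(0)\bigr)$, a sum of two nonnegative integers; call them $a$ and $b$ respectively, so $\nul(H(0))-\lim_{t\to 0^+}\nul(H(t))=a+b$.

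Next I would simply compute the difference of signatures in terms of $a$ and $b$. We have
\[
\lim_{t\to 0^+}\sign(H(t))-\sign(H(0)) = \bigl(n_+(t)-n_-(t)\bigr)-\bigl(n_+(0)-n_-(0)\bigr)=a-b
\]
for $0<t<\varepsilon$. Therefore
\[
\left|\lim_{t\to 0^+}\sign(H(t))-\sign(H(0))\right| = |a-b|\le a+b = \nul(H(0))-\lim_{t\to 0^+}\nul(H(t))\,,
\]
which is exactly the claimed inequality.

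The only real content — the ``main obstacle,'' such as it is — is the perturbation-theoretic input: justifying that for all sufficiently small $t>0$ the eigenvalue counts $n_\pm(t)$ are constant and satisfy $n_\pm(t)\ge n_\pm(0)$. I would get this cleanly from the continuity of the roots of the characteristic polynomial, or from the fact that eigenvalues of a continuous Hermitian family can be chosen to depend continuously on $t$: a continuous real-valued function that is nonzero at $t=0$ stays nonzero (and of the same sign) on a neighborhood, which handles the eigenvalues coming from nonzero eigenvalues of $H(0)$; and the finitely many eigenvalues emanating from $0$ each have a locally constant sign on $(0,\varepsilon)$ after possibly shrinking $\varepsilon$, since each is a continuous function with at most finitely many zeros accumulating only at endpoints — or one can invoke that the eigenvalues are (real-)analytic in $t$ when $H$ is, but for a merely continuous family the characteristic-polynomial-root argument suffices. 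Everything else is bookkeeping.
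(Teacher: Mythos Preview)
Your proof is correct and follows essentially the same approach as the paper's: both use continuity of eigenvalues to argue that the counts $n_\pm(t)$ are constant on some $(0,\varepsilon)$ with $n_\pm(t)\ge n_\pm(0)$, and then observe that the signature jump is bounded by the number of eigenvalues that vanish at $t=0$, namely $\nul(H(0))-\lim_{t\to 0^+}\nul(H(t))$. Your version is simply more explicit, introducing $a$ and $b$ and writing the bound as $|a-b|\le a+b$, whereas the paper compresses this into a single sentence. One minor quibble: in your final paragraph the claim that a continuous eigenvalue branch has ``at most finitely many zeros accumulating only at endpoints'' is not true for merely continuous families, but this is not actually needed---the existence of the limits in the statement (for integer-valued functions) already forces eventual constancy of $n_\pm(t)$, and the paper's proof glosses over the same point.
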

	\begin{proof}
	    By continuity, there exists some~$\epsilon>0$ such that \(\rank(H(t))\) is constant for \(t \in (0,\epsilon)\). As a consequence, both \(\sign(H(t))\) and~$\nul(H(t))$ are constant for \(t \in (0,\epsilon)\). At~$t=0$, precisely~$\nul(H(0)) - \lim_{t \to 0^{+}}\nul(H(t))$ eigenvalues vanish, yielding the expected upper bound on the
	    difference of signatures.
	\end{proof}

\subsection{Signature and nullity via C-complexes}
\label{sub:C-complex}

The aim of this section is to briefly recall the original definition of the signature and nullity of a colored link, following~\cite{Cooper,C-F}.

\begin{figure}[tbp]
    \centering
    \includegraphics[width=4cm]{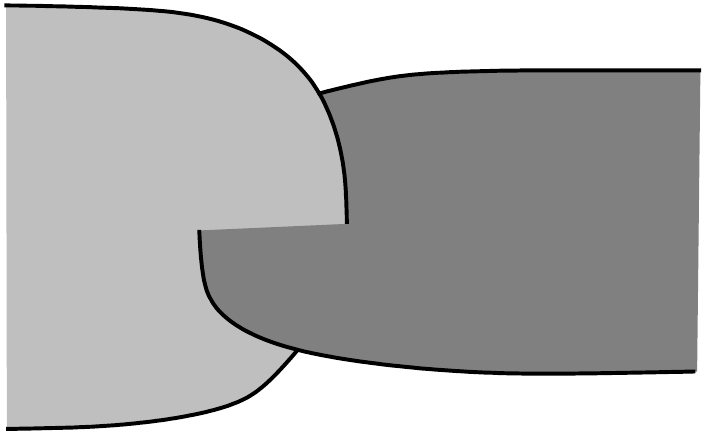}
    \caption{A clasp intersection.}
    \label{fig:clasp}
\end{figure}

\begin{definition}
\label{def:C-cplx}
A {\em C-complex\/} for a~$\mu$-colored link~$L=L_1\cup\dots\cup L_\mu$ is a union~$S=S_1\cup\dots\cup S_\mu$ of surfaces embedded in~$S^3$ such that~$S$ is connected and satisfies the following conditions:
\begin{enumerate}
    \item for all~$i$, the surface~$S_i$ is a connected Seifert surface for~$L_i$;
    \item for all~$i\neq j$, the surfaces~$S_i$ and~$S_j$ are either disjoint or intersect in a finite number of {\em clasps\/}, see Figure~\ref{fig:clasp};
    \item for all~$i,j,k$ pairwise distinct, the intersection~$S_i\cap S_j\cap S_k$ is empty.
\end{enumerate}
Such a C-complex is said to be {\em totally connected} if~$S_i\cap S_j$ is non-empty for all~$i\neq j$.
\end{definition}

The existence of a (totally connected) C-complex for any given colored link is fairly easy to establish, see~\cite{Cim}. On the other hand, the corresponding notion of S-equivalence is more difficult to establish, and the correct version appeared only recently~\cite{DMO}.

These C-complexes, which should be thought of as generalized Seifert surfaces, allow to define {\em generalized Seifert forms\/} as follows.
For any choice of signs~$\varepsilon=(\varepsilon_1,\dots,\varepsilon_\mu)\in\{\pm 1\}^\mu$, let
\[
\alpha^\varepsilon\colon H_1(S)\times H_1(S)\longrightarrow\mathbb{Z}
\]
be the bilinear form given by~$\alpha^\varepsilon(x,y)=\lk(x^\varepsilon,y)$,
where~$x^\varepsilon$ denotes a well-chosen representative of the homology class~$x\in H_1(S)$ pushed-off~$S_i$ in the~$\varepsilon_i$-normal direction (see~\cite{C-F} for a more precise definition). We denote by~$A^\varepsilon$ the corresponding {\em generalized Seifert matrices\/}, defined with respect to a fix basis of~$H_1(S)$.

Consider an element~$\omega=(\omega_1,\dots,\omega_\mu)$ of~$\mathbb{T}^\mu_*\coloneqq(S^1\setminus\{1\})^\mu$, and set
\[
H(\omega)\coloneqq\sum_\varepsilon\prod_{j=1}^\mu(1-\overline{\omega}_j^{\varepsilon_j})A^\varepsilon\,.
\]
Using the identity~$A^{-\varepsilon}=(A^\varepsilon)^T$, one easily checks that~$H(\omega)$ is a Hermitian matrix, and hence admits a well-defined signature~$\sign(H(\omega))\in\mathbb{Z}$ and nullity~$\nul(H(\omega))\in\mathbb{Z}_{\ge 0}$.

\begin{definition}
\label{def:sign-C-complex}
The {\em signature\/} and {\em nullity\/} of the~$\mu$-colored link~$L$ are the functions
\[
\sigma_L,\eta_L\colon \mathbb{T}^\mu_*\longrightarrow \mathbb{Z}
\]
defined by~$\sigma_L(\omega)\coloneqq\sign(H(\omega))$ and~$\eta_L(\omega)\coloneqq\nul(H(\omega))$, respectively.
\end{definition}
The fact that these functions are well-defined invariants, i.e. do not depend on the choice of the C-complex~$S$ for~$L$, relies on the aforementioned notion of S-equivalence~\cite{C-F,DMO}.

Note that for any given colored link, it is not difficult
to find a C-complex and to compute the associated generalized
Seifert matrices: an algorithm has even been recently implemented in~\cite{FKQ}.

\medskip

We now present two (infinite families of) examples that will serve as running examples throughout this article.
 
\begin{figure}[tbp]
 \centering
	    \includegraphics[width=10cm]{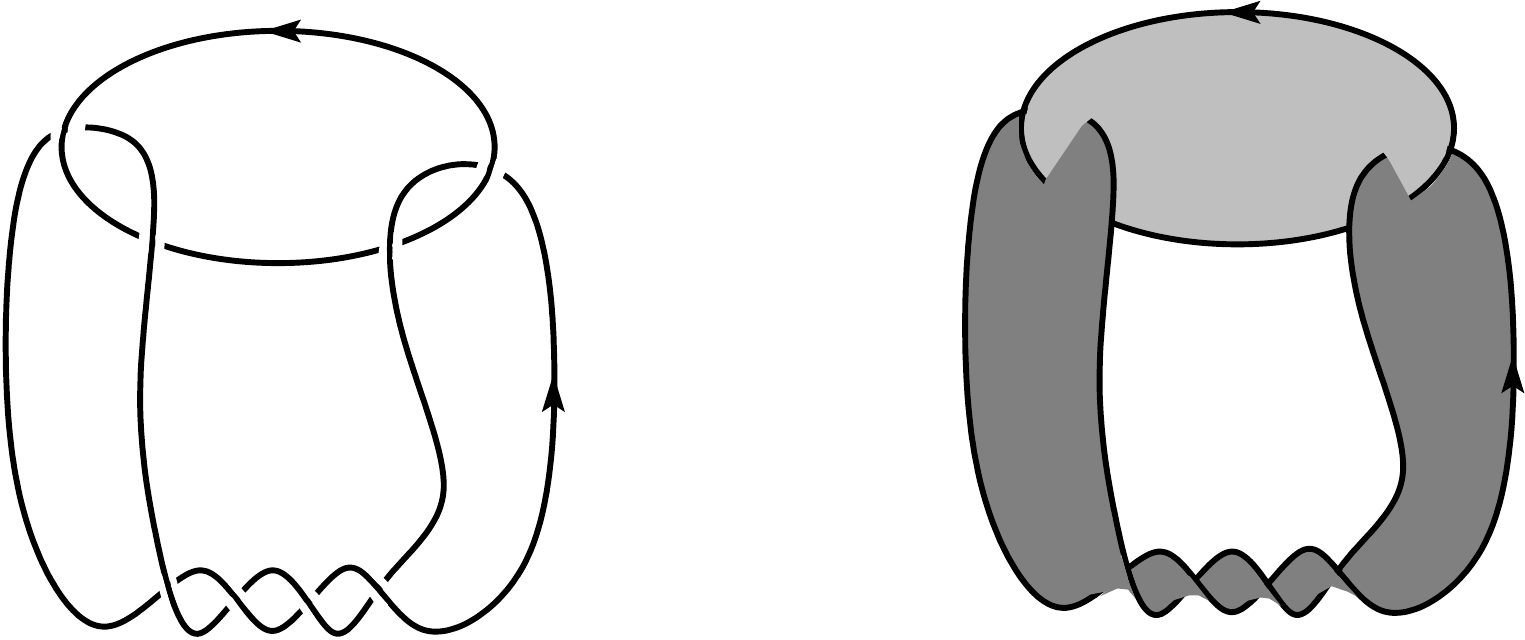}
	    \caption{The link~\(L(k)\), together with an associated C-complex, in the case~$k=2$.}
	    \label{fig:link-Lk}
	\end{figure}

	\begin{example}\label{ex:twist}
	    For any~$k\in\Z$, consider the {\em twist link\/}~\(L(k)\) depicted in the left of Figure~\ref{fig:link-Lk}, where the bottom part consists of~$|k|$ full twists of the same sign as~$k$. For example, the value~$k=0$ yields the trivial link, while~$k=\pm 1$ yields Whitehead links and~$L(2)$ is given in Figure~\ref{fig:link-Lk}.
     
     On the right of this figure, a C-complex
     is given, which has the homotopy type of a circle.
     As one easily checks, the corresponding generalized Seifert matrices are all equal to~$A^\varepsilon=(k)$, leading to the Hermitian matrix
     \[
H(\omega_1,\omega_2) = k(1-\overline{\omega}_1) (1-\overline{\omega}_2) (1-\omega_1)(1-\omega_2) = k\vert 1-\omega_1\vert^2\vert 1-\omega_2\vert^{2}\,,
     \]
and to the constant functions on~$\mathbb{T}^2_*$ given by
\[
\sigma_{L(k)}\equiv\sgn(k)=\begin{cases}
-1&\text{if~$k<0$;}\\
0&\text{if~$k=0$;}\\
1&\text{if~$k>0$,}
\end{cases}
\quad\text{and}\quad
\eta_{L(k)}\equiv\delta_{k0}=\begin{cases}
1&\text{if~$k=0$;}\\
0&\text{else}.
\end{cases}
\]
As a remark that will be used later, note that these generalized Seifert matrices also enable to compute the
Conway function of~$L(k)$ via the main result of~\cite{Cim}.
In these examples, we find
\begin{equation}
    \label{eq:nabla-twist}
\nabla_{L(k)}(t_1,t_2)=k(t_1-t_1^{-1})(t_2-t_2^{-1})\,.
\end{equation}
\end{example}

\begin{example}
\label{ex:torus}
For any~$\ell\in\Z$, let~$T(2,2\ell)$ denote the torus
link depicted in the left of Figure~\ref{fig:torus}.

    First note that for~$\ell=0$, the link~$T(2,2\ell)$ is just the $2$-components unlink whose signature is identically $0$ and whose nullity is identically $1$. Therefore, we can assume that $\ell$ does not vanish.
    In that case, a natural C-complex~$S$ is illustrated in the right of Figure~\ref{fig:torus}.
    The corresponding generalized Seifert matrices, with respect to the natural basis of~$H_1(S)$ given by cycles passing through adjacent clasps, are given by $A^{++}=-\sgn(\ell)T_\ell=(A^{--})^T$, where $T_\ell$ is the $(|\ell|-1)\times(|\ell|-1)$ matrix 
$$T_\ell=\begin{bmatrix}
	        1 & 0 & \ldots & 0 \\
	        1 & 1  & \ldots & 0\\
	        \vdots & \ddots & \ddots & 0\\
	        0 & \ldots & 1 & 1 \\
	        \end{bmatrix},$$
    and $A^{+-}=A^{-+}=0$.
    Without loss of generality, we can now assume that~$\ell$ is positive. Hence,~$\sigma_{T(2,2\ell)}(\omega_1,\omega_2)$ and~$\eta_{T(2,2\ell)}(\omega_1,\omega_2)$ are the signature and nullity of the matrix 
$$H(\omega_1,\omega_2)=(1-\overline{\omega}_1)(1-\overline{\omega}_2)(-T_\ell)+(1-\omega_1)(1-\omega_2)(-T_\ell)^T=\begin{bmatrix}
	        a & b  & \ldots  & 0 \\
	        \bar{b} & a & \ddots & \vdots\\
	        \vdots & \ddots & \ddots & b\\
	        0 & \ldots & \bar{b} & a \\
	        \end{bmatrix}\,,$$
         where~$a=-(1-\overline{\omega}_1)(1-\overline{\omega}_2)(1+\omega_1\omega_2)$ and~$b=-(1-\omega_1)(1-\omega_2)$.

         \begin{figure}[tbp]
        \centering
        \includegraphics[width=10cm]{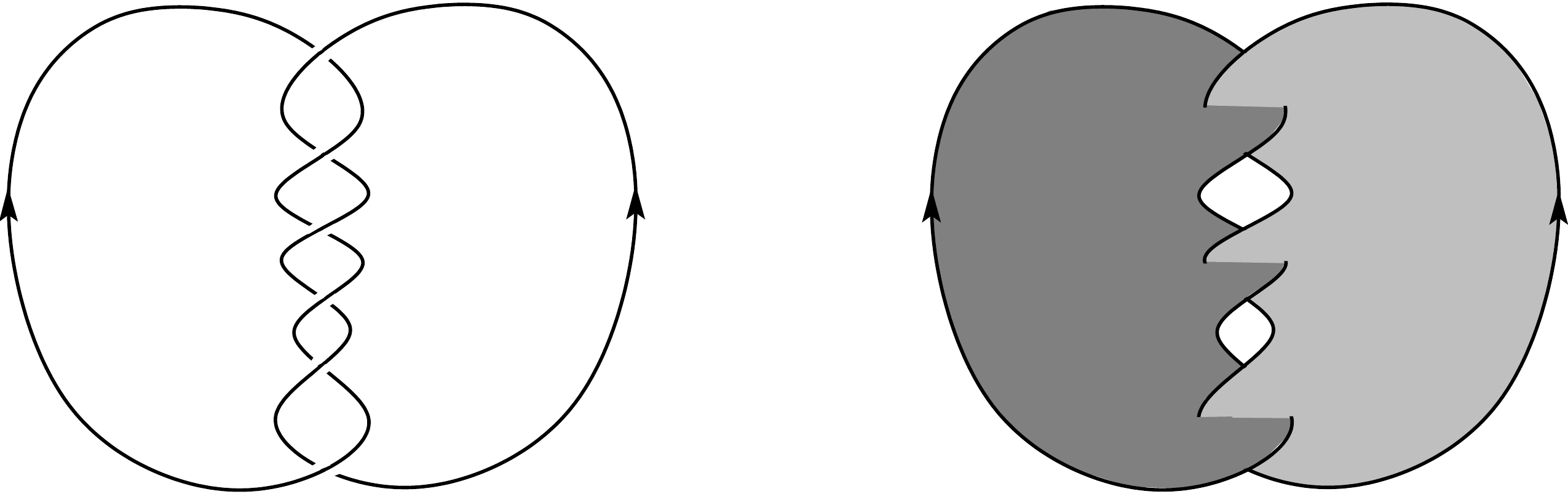}
        \caption{The link~$T(2,2\ell)$ (here with~$\ell=3$)
        together with an associated C-complex.}
        \label{fig:torus}
    \end{figure} 
    
    The eigenvalues of such a matrix are known to be the roots of the second type Chebyshev polynomial~\cite[Theorem 2.2]{KST}, and are given by
\begin{equation*}
a-2|b|\cos{\left(\frac{k\pi}{\ell}\right)}=-(1-\overline{\omega}_1)(1-\overline{\omega}_2)(1+\omega_1\omega_2)-2|1-\omega_1||1-\omega_2|\cos{\left(\frac{k\pi}{\ell}\right)}\,, \quad k=1,\ldots,\ell-1\,.
\end{equation*}
    Writing~$\omega_j=e^{2\pi i\theta_j}$ with~$\theta_j\in (0,1)$ and using the identity~$1-\omega_j=-2i\sin(\pi\theta_j)e^{i\pi\theta_j}$,
    these eigenvalues can be expressed as the positive factor~$8\sin{(\pi\theta_1)}\sin{(\pi\theta_2)}$ multiplied by
\[
\cos{(\pi(\theta_1+\theta_2))}-\cos{\left(\frac{k\pi}{\ell}\right)}\,, \quad k=1,\ldots,\ell-1\,.
\]
    Note that this expression is negative for~$\theta_1+\theta_2\in(\frac{k}{\ell},2-\frac{k}{\ell})$,
 it vanishes for~$\theta_1+\theta_2\in\{\frac{k}{\ell},2-\frac{k}{\ell}\}$,
 and it is positive otherwise.

    This leads to the following formulas, valid for any~$\ell\in\Z$. Writing~$\omega_1=e^{2\pi i \theta_1}$ and~$\omega_2=e^{2\pi i \theta_2}$ with~$\theta_1,\theta_1\in(0,1)$, we have
    \begin{equation}
        \label{eq:torus}
        \sigma_{T(2,2\ell)}(\omega_1,\omega_2)=\sgn(\ell)\cdot f_{|\ell|}(\theta_1+\theta_2)\,,
        \end{equation}
    where~$f_n\colon (0,2)\to\Z$ is determined by~$f_n(2-\theta)=f_n(\theta)$ and
\[
f_n(\theta)=\begin{cases}
                n-2k-1 &\text{if~$\frac{k}{n} < \theta < \frac{k+1}{n}$ with~$k=0,\ldots,n-1$};\\
                n-2k &\text{if~$\theta = \frac{k}{n}$ with~$k =1,\ldots,n-1$};\\
                1-n &\text{if~$\theta = 1$}.
            \end{cases}
\]
Furthermore, the nullity is equal to
\[
\eta_{T(2,2\ell)}(\omega_1,\omega_2)=\begin{cases}
        1 & \text{if~$(\omega_1\omega_2)^\ell=1$ and~$\omega_1\omega_2\neq 1$};\\
        0 & \text{else}\,.
        \end{cases}
\]
The example~$\ell=3$ is illustrated in Figure~\ref{fig:graph}.

Note that these results can also be obtained from the Levine-Tristram signature and nullity of~$T(2,2\ell)$ together with Equation~\eqref{eq:multi-LT} and the fact that~$\sigma_{T(2,2\ell)}$ is locally constant on the complement of the zeros of the Alexander polynomial~$\frac{(t_1t_2)^{\ell}-1}{t_1t_2-1}$.
\end{example}

\begin{figure}[ptb]
    \centering
    \begin{overpic}[width=3.5cm]{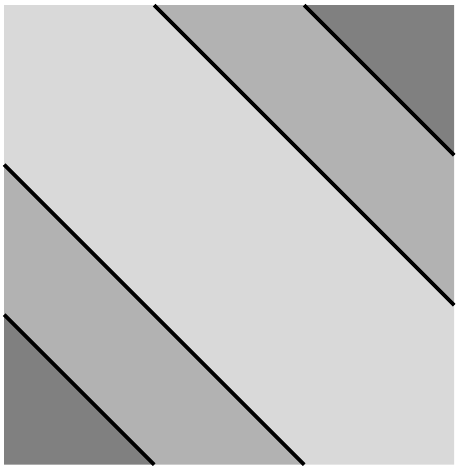}
    \put (8,8){$2$}
    \put (-7,32){$1$}
    \put (23,23){$0$}
    \put (-14,65){$-1$}
    \put (43,46){$-2$}
    \put (102,28){$-1$}
    \put (71,71){$0$}
    \put (102,61){$1$}
    \put (85,85){$2$}
    \end{overpic}
    \caption{The values of~$\sigma_L$ for~$L=T(2,2\ell)$ with~$\ell=3$, on the open torus~$\mathbb{T}^2_*\simeq(0,1)^2$. The function~$\eta_L$ is equal to~$1$ on the diagonals, and vanishes everywhere else.}
    \label{fig:graph}
\end{figure}

\medskip

We will make use of the following result, which is a direct consequence of~\cite[Corollary~4.2]{C-F}.

\begin{lemma}
    \label{lem:eta0}
Let~$L$ be a~$\mu$-colored link. If~$\omega\in\mathbb{T}^{\mu-1}_*$ is such that~$\Delta_L(\omega)\neq 0$, then~$\eta_L(\omega)$ vanishes.
\end{lemma}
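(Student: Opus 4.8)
The plan is to deduce this directly from the relation, established in~\cite{C-F}, between the nullity~$\eta_L$ and the multivariable Alexander polynomial~$\Delta_L$; the statement is what~\cite[Corollary~4.2]{C-F} gives once one tracks the relevant normalizations. Concretely, fix a C-complex~$S$ for~$L$, so that~$\eta_L(\omega)=\nul(H(\omega))=\dim_{\mathbb C}\ker H(\omega)$, and let~$H(t)=\sum_\varepsilon\prod_j(1-t_j^{-\varepsilon_j})A^\varepsilon$ be the ``Laurent polynomial version'' of~$H(\omega)$, obtained by replacing each~$\omega_j$ by an indeterminate~$t_j$. The input I would use, which underlies~\cite[Corollary~4.2]{C-F} (compare also~\cite{Cim}), is that~$\det H(t)$ equals~$\Delta_L(t_1,\dots,t_\mu)$ up to multiplication by a unit of~$\Lambda=\mathbb Z[t_1^{\pm1},\dots,t_\mu^{\pm1}]$ and by a product of factors of the form~$(t_j-1)$.

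Granting this, the argument is immediate. Let~$\omega$ be a point of the open torus with all coordinates~$\omega_j\neq1$ and with~$\Delta_L(\omega)\neq0$. Then the unit and each factor~$(t_j-1)$ take nonzero values at~$\omega$, so~$\det H(\omega)\neq0$; hence~$H(\omega)$ is nonsingular, i.e.~$\eta_L(\omega)=\nul(H(\omega))=0$. Since both~$\eta_L$ and~$\Delta_L$ are link invariants, this is independent of the chosen C-complex.

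The only point requiring care — and where I would lean on~\cite{C-F} rather than recompute — is the exact bookkeeping in the determinant identity above: that~$\Delta_L$ is taken in the convention used throughout the paper (so that, e.g., it vanishes identically for the~$2$-component unlink, making the present lemma vacuous there, but is generically nonzero, as for~$L(k)$ and~$T(2,2\ell)$ in Examples~\ref{ex:twist} and~\ref{ex:torus}), and that the factors dividing~$\det H(t)$ other than~$\Delta_L$ are powers of~$(t_j-1)$ only — in particular not of the form~$(t_j+1)$, which would obstruct the evaluation argument at~$\omega_j=-1$. With that settled, the evaluation step is routine.
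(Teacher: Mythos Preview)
Your proposal is correct and takes essentially the same route as the paper: the paper does not give an independent argument but simply records the lemma as a direct consequence of~\cite[Corollary~4.2]{C-F}, and your write-up spells out the determinant identity underlying that corollary. The only cosmetic point is that the paper phrases the conclusion as an immediate citation rather than unpacking the factorization of~$\det H(t)$, but the content is the same.
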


\medskip

Note that when a variable~$\omega_j$ is equal to~$1$, then the full matrix~$H(\omega)$ vanishes, leading to a vanishing signature and ill-defined nullity. One of our achievements will be to provide a natural extension of these functions to the full torus, see Section~\ref{sub:extension} below. This uses an alternative point of view on the signature and nullity, that we now review.

\subsection{Signature and nullity via twisted intersection forms}
\label{sub:twisted}

We now briefly recall the four-dimensional viewpoint on the signature and nullity, following~\cite{CNT} and referring to Section~\ref{sec:algebr-preliminaries} for details.

\medskip

We first need to make a small detour into homological algebra.
Le~$X$ be a connected CW-complex endowed with a homomorphism~$\pi_1(X)\to\mathbb{Z}^\mu=\mathbb{Z}t_1\oplus\dots\mathbb{Z}t_\mu$ for some~$\mu\ge 1$.
Then, any~$\omega=(\omega_1,\dots,\omega_\mu)\in\mathbb{T}^\mu$ induces a group homomorphism~$\pi_1(X)\to\mathbb{C}^*$ by mapping~$t_i$ to~$\omega_i$. This in turn extends to a
ring homomorphism~$\phi_\omega\colon\mathbb{Z}[\pi_1(X)]\to\mathbb{C}$
such that~$\phi_\omega(g^{-1})=\overline{\phi_\omega(g)}$ for all~$g\in\pi_1(X)$, thus endowing the field~$\mathbb{C}$ with a structure of right-module over the group ring~$\mathbb{Z}[\pi_1(X)]$; we denote this module by~$\mathbb{C}^\omega$. The cellular
chain complex~$C(\widetilde{X})$ of the universal cover~$\widetilde{X}$ of~$X$ being a left module over this same ring, one can consider the complex vector spaces
\[
H_*(X;\mathbb{C}^\omega)\coloneqq H_*\left(\mathbb{C}^\omega\otimes_{\mathbb{Z}[\pi_1(X)]}C(\widetilde{X})\right)\,.
\]
This is one example of a construction known as the {\em homology of~$X$ with twisted coefficients\/}, see Section~\ref{sec:algebr-preliminaries}.

Coming back to low-dimensional topology, let us consider a compact oriented~$4$-manifold~$W$ endowed with a homomorphism~$\pi_1(W)\to\mathbb{Z}^\mu$.
As explained in Section~\ref{sec:algebr-preliminaries}, one can define a {\em twisted intersection pairing\/}
\[
Q_\omega\colon H_2(W;\mathbb{C}^\omega)\times H_2(W;\mathbb{C}^\omega)\longrightarrow\mathbb{C}
\]
that is Hermitian,
so one can consider the associated signature and nullity
\[
\sign_\omega(W)\coloneqq\sign(Q_\omega),\quad\nul_\omega(W)\coloneqq\nul(Q_\omega)\,.
\]

We are finally ready to come back to knots and links.
Let~$L=L_1\cup\dots\cup L_\mu$ be a colored link in~$S^3$.
A {\em bounding surface\/} for~$L$ is a union~$F=F_1\cup\dots\cup F_\mu$ of properly embedded, locally flat, compact,
connected oriented surfaces~$F_i\subset B^4$ which only intersect
each other transversally (in double points), and such that the oriented boundary~$\partial F_i$ is equal to~$L_i$.
These surfaces being locally flat, they admit tubular neighborhoods whose union we denote by~$\nu(F)$. Also, let us write~$V_F$ for the exterior~$B^4\setminus\nu(F)$ of~$F$ in~$B^4$, which intersects~$S^3=\partial B^4$ in the exterior~$X_L\coloneqq S^3\setminus\nu(L)$ of~$L$ in~$S^3$.

As one easily shows, the abelian group~$H_1(V_F;\mathbb{Z})$ is freely generated by the meridians of the surfaces~$F_1,\dots,F_\mu$,
and the inclusion induced homomorphism~$H_1(X_L;\mathbb{Z})\to H_1(V_F;\mathbb{Z})$ is an isomorphism.
As a consequence, we can apply the above technology to these spaces,
yielding in particular a complex vector space~$H_1(X_L;\mathbb{C}^\omega)$
and a~$\mathbb{C}^\omega$-twisted intersection pairing~$Q_\omega$
on~$H_2(V_F;\mathbb{C}^\omega)$ for any~$\omega\in\mathbb{T}^\mu$.

The following result is due to~\cite{CNT}, see also~\cite{C-F,CFT}.
It provides the promised four-dimensional viewpoint on the signature and nullity of a colored link.

\begin{proposition}[\cite{CNT}]
\label{prop:twisted}
For any~$\omega\in\mathbb{T}^\mu_*$ and any bounding surface~$F$ for~$L$,
we have
\[
\sigma_L(\omega)=\sign_\omega(V_F)\quad\text{and}\quad\eta_L(\omega)=\dim H_1(X_L;\mathbb{C}^\omega)\,.
\]
\end{proposition}

It is this point of view on the signature and nullity that we will use
in Sections~\ref{sec:Torres} and~\ref{sec:4D}.

\subsection{The Novikov-Wall theorem}
\label{sub:NW}

The goal of this section is to recall as briefly as possible
the statement of the {\em Novikov-Wall theorem\/}, which plays an important role in this work.

\medskip

Let~$Y$ be an oriented compact~4-manifold
and let~$X_0$ be an oriented compact~3-manifold embedded into~$Y$ so that~$\partial X_0=X_0\cap\partial Y$. Assume that~$X_0$ splits~$Y$ into two manifolds~$Y_-$
and~$Y_+$, with~$Y_-$ such that the induced orientation on its boundary restricted to~$X_0\subset\partial Y_-$
coincides with the given orientation of~$X_0$.
For~$\varepsilon=\pm$, denote by~$X_\varepsilon$ the compact~3-manifold~$\partial Y_\varepsilon\setminus\mathrm{Int}(X_0)$, and
orient it so that~$\partial Y_-=(-X_-)\cup X_0$ and~$\partial Y_+=(-X_0)\cup X_+$. Note that the orientations of~$X_0$,~$X_-$ and~$X_+$ induce the same orientation on the
surface~$\Sigma\coloneqq\partial X_0=\partial X_-=\partial X_+$, as illustrated in Figure~\ref{fig:NW}.

\begin{figure}[tbp]
    \centering
    \begin{overpic}[width=3.5cm]{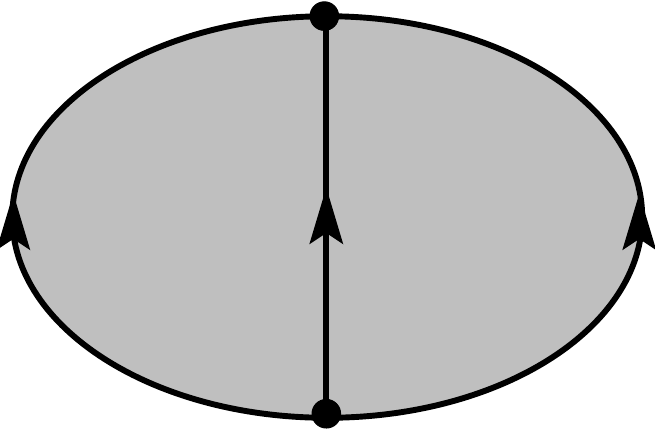}
    \put (-18,30){$X_-$}
    \put (55,25){$X_0$}
    \put (103,30){$X_+$}
    \put (23,30){$Y_-$}
    \put (75,30){$Y_+$}
    \put (45,68){$\Sigma$}
    \end{overpic}
    \caption{The setting of the Novikov-Wall theorem.}
    \label{fig:NW}
\end{figure}

Assume further that~$Y$ is endowed with a homomorphism~$\psi\colon\pi_1(Y)\to\Z^\mu$ for some~$\mu\ge 1$.
As described in Section~\ref{sub:twisted}, any~$\omega\in\mathbb{T}^\mu$ then induces twisted coefficients~$\mathbb{C}^\omega$ on the homology of~$Y$.
Precomposing~$\psi$ with inclusion induced homomorphisms,
we also obtain twisted coefficients on the homology
of submanifolds of~$Y$, coefficients that we also denote by~$\mathbb{C}^\omega$.
Note that the twisted intersection form on~$H\coloneqq H_1(\Sigma;\mathbb{C}^\omega)$ is skew-Hermitian; we denote it by~$(a,b)\mapsto a\cdot b$.
Using Poincar\'e-Lefschetz duality, one checks that for any~$\varepsilon\in\{-,0,+\}$, the
kernel~$\mathcal{L}_\varepsilon$
of the inclusion-induced map~$H\to H_1(X_\varepsilon;\mathbb{C}^\omega)$ is a Lagrangian subspace of~$(H,\,\cdot\,)$.

Given three Lagrangian subspaces~$\mathcal{L}_-,\mathcal{L}_0,\mathcal{L}_+$ of a finite-dimensional complex vector space~$H$
endowed with a skew-Hermitian form~$(a,b)\mapsto a\cdot b$, the associated {\em Maslov index} is the integer
\[
\mathit{Maslov}(\mathcal{L}_-,\mathcal{L}_0,\mathcal{L}_+)=\sign(f)\,,
\]
where~$f$ is the Hermitian form on~$(\mathcal{L}_-+\mathcal{L}_0)\cap\mathcal{L}_+$ defined as follows. 
Given~$a,b\in(\mathcal{L}_-+\mathcal{L}_0)\cap\mathcal{L}_+$,
write~$a=a_-+a_0$ with~$a_-\in\mathcal{L}_-$ and~$a_0\in\mathcal{L}_0$ and set~$f(a,b)\coloneqq a_0\cdot b$.

\begin{theorem}[\cite{Wal}]
\label{thm:NW}
In the setting above and for any~$\omega\in\mathbb{T}^\mu$, we have
\[
\sign_\omega(Y)=\sign_\omega(Y_-)+\sign_\omega(Y_+)+\mathit{Maslov}(\mathcal{L}_-,\mathcal{L}_0,\mathcal{L}_+)\,.
\]
\end{theorem}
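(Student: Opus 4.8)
The plan is to treat this as the statement of Wall's non-additivity theorem, transported verbatim into the twisted-coefficient setting, and to reduce the twisted case to a purely linear-algebraic statement about the Maslov index together with a standard Mayer–Vietoris argument. First I would observe that the coefficient module $\mathbb{C}^\omega$ makes the chain complexes $C_*(\,\cdot\,;\mathbb{C}^\omega)$ of all the pieces ($Y_-$, $Y_+$, $X_-$, $X_0$, $X_+$, $\Sigma$) into complexes of $\mathbb{C}$-vector spaces, and that all the formal machinery — Poincaré–Lefschetz duality with $\mathbb{C}^\omega$ coefficients, the long exact sequences of pairs, the Mayer–Vietoris sequence for $Y=Y_-\cup_{X_0}Y_+$ — works exactly as in the untwisted case because $\mathbb{C}$ is a field and the duality pairings are the $\mathbb{C}^\omega$-twisted ones, which are (skew-)Hermitian by the condition $\phi_\omega(g^{-1})=\overline{\phi_\omega(g)}$ recalled in Section~\ref{sub:twisted}. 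So the theorem is \emph{formally identical} to Wall's original; the only thing to check is that no step of the original proof used $\mathbb{Z}$- or $\mathbb{R}$-coefficients in an essential way rather than coefficients in a field with involution. I would then cite~\cite{Wal} for the topological bookkeeping and concentrate on the algebraic core.

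The key steps, in order, are as follows. (1) Set $H=H_1(\Sigma;\mathbb{C}^\omega)$ with its skew-Hermitian intersection form, and recall (already stated in the excerpt, via Poincaré–Lefschetz duality) that the three kernels $\mathcal{L}_-,\mathcal{L}_0,\mathcal{L}_+$ of $H\to H_1(X_\varepsilon;\mathbb{C}^\omega)$ are Lagrangians. (2) Express $\sign_\omega(Y_\pm)$ and $\sign_\omega(Y)$ in terms of the intersection forms on $H_2$ of the respective manifolds, and use the long exact sequences of the pairs $(Y_\pm,\partial Y_\pm)$ and the Mayer–Vietoris sequence to identify the ``non-degenerate parts'' of these forms and to pin down the defect between $\sign_\omega(Y)$ and $\sign_\omega(Y_-)+\sign_\omega(Y_+)$ as a term depending only on how the images of $H_2(Y_\pm)$ sit inside $H$ — i.e. only on $\mathcal{L}_-$, $\mathcal{L}_0$, $\mathcal{L}_+$. (3) Identify that defect with $\mathit{Maslov}(\mathcal{L}_-,\mathcal{L}_0,\mathcal{L}_+)$ by checking it against the explicit Hermitian form $f(a,b)=a_0\cdot b$ on $(\mathcal{L}_-+\mathcal{L}_0)\cap\mathcal{L}_+$ given in the statement; this is the genuinely linear-algebraic lemma, and it is the same computation as in Wall's paper, valid over any field with involution of characteristic $0$ (so in particular over $\mathbb{C}$ with complex conjugation).

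The main obstacle is step~(2)–(3): one must carefully track the several exact sequences with twisted coefficients and verify that the boundary maps and duality isomorphisms are compatible with the Hermitian structures, so that the ``correction term'' really is a Maslov index of Lagrangians in $H$ and not something larger. In practice this is entirely routine once one accepts that $\mathbb{C}^\omega$ behaves like a field with involution, and I would therefore present the proof as: the statement and proof of Wall~\cite{Wal} apply mutatis mutandis with $\mathbb{Z}$ (or $\mathbb{R}$) replaced by $\mathbb{C}$ and transposition replaced by the conjugate transpose coming from $\phi_\omega$; see also~\cite{CNT, CFT} where the twisted version is used in exactly this form. The only point worth spelling out in the paper is the sign/orientation conventions for $X_-, X_0, X_+, \Sigma$ (already fixed in the paragraph preceding the statement and in Figure~\ref{fig:NW}), so that the Maslov index appears with the sign stated.
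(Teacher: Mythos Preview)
Your proposal is correct and matches the paper's approach: the paper does not give a proof of Theorem~\ref{thm:NW} at all, instead citing~\cite{Wal} and noting in Remark~\ref{rem:NW}(1) that ``this result was originally stated and proved by Wall in the untwisted case, but the proof easily extends.'' Your plan is exactly this remark fleshed out, and in fact gives considerably more detail than the paper does.
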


\begin{remark}
\label{rem:NW}
\begin{enumerate}
    \item This result was originally stated and proved by Wall~\cite{Wal} in the untwisted case, but the proof easily extends.
    \item The version above follows the convention of~\cite[Chapter~IV.3]{Tur}, which yields
a Maslov index equal to the opposite of the one appearing in~\cite{Wal}. This discrepancy is compensated
by a minus sign in the non-additivity theorem of~\cite{Wal}
which does not appear in Theorem~\ref{thm:NW}.
    \item\label{rem:NW:3} We have implicitly been using the ``outward vector first" convention for the induced orientation on the boundary of a manifold: this is necessary to obtain coincidence between the three and four-dimensional versions of the signatures (Proposition~\ref{prop:twisted}).
Another tacit convention is that the oriented meridian~$m_K$ of an oriented knot~$K$ should satisfy~$\lk(K,m_K)=1$; note that the sign of this linking number, and therefore the orientation of the meridian, depends on the orientation of the ambient 3-manifold.
Finally, the longitude~$\ell_K$ of the oriented~$K$ should obviously define the same generator of~$H_1(\nu(K))$ as~$K$. 
Assembling together these conventions,
we obtain that the orientation of~$\partial\nu(K)$ induced by the
orientation of~$X_K$ is such that the intersection form on~$\partial\nu(K)$ satisfied~$m_K\cdot\ell_K=-1$.
\item\label{rem:NW:4} \new{Note that the annihilator of~$f$ contains the space~$(\mathcal{L}_-\cap\mathcal{L}_+)+(\mathcal{L}_0\cap\mathcal{L}_+)$. As a consequence, the Maslov index vanishes as soon as this later space coincides with~$(\mathcal{L}_-+\mathcal{L}_0)\cap\mathcal{L}_+$.}
\end{enumerate}
\end{remark}

\subsection{Plumbed three-manifolds \texorpdfstring{\new{over~$\Z^\mu$}}{over Z^mu}}
\label{sec:plumbed}

The aim of this section is to \new{recall the definition of plumbed~$3$-manifolds, and to use them to define a closed~$3$-manifold~$M_L$
associated to an arbitrary~$\mu$-colored link~$L$, mildly extending~\cite[Construction~4.17]{toffoli}.
We also show that~$M_L$ is naturally (though not uniquely)
equipped with a homomorphism~$\varphi\colon \pi_1(M_L)\to\Z^\mu$,
thus defining an element in the bordism group~$\Omega_3(\Z^\mu)$.
Finally, we state a technical lemma about plumbed
3-manifolds, whose proof can be found in Appendix~\ref{ap:plumbed}.}

\medskip

We start by recalling the definition of these manifolds, following and slightly extending the presentation of~\cite[Section~4.2]{CNT}.

Let~$\Gamma=(V,E)$ be a finite unoriented graph. Following
the classical textbook~\cite{Serre}, we write~$E$ for the set of {\em oriented\/} edges, and~$s,t\colon E\to V$ for the source and target maps, respectively.
The graph is unoriented in the sense that the set~$E$ is endowed with an involution~$e\mapsto\overline{e}$ such that~$\overline{e}\neq e$ and~$s(\overline{e})=t(e)$ for all~$e\in E$.
We call such a graph~$\Gamma$ a {\em plumbing graph\/} if it has no loop (i.e. no edge~$e\in E$ such that~$s(e)=t(e)$) and if it is endowed
with the following decorations.
\begin{enumerate}
    \item[$\bullet$] Each vertex~$v\in V$ is
decorated by a compact, oriented, possibly disconnected surface~$F_v$, possibly with boundary.
    \item[$\bullet$] Each edge~$e\in E$ is labeled by a sign~$\varepsilon(e)=\pm 1$ such that~$\varepsilon(\overline{e})=\varepsilon(e)$, and comes with the specification of a connected component of~$F_{s(e)}$.
\end{enumerate}
Such a plumbing graph~$\Gamma$ determines an oriented 3-dimensional manifold~$P(\Gamma)$ via the following construction.
For each oriented edge~$e\in E$, we choose an embedded open disk~$D_e$
in the corresponding connected component of~$F_{s(e)}$ so that the
disks~$\{D_e\}_{e\in E}$ are disjoint. For each~$v\in V$, we then set
\[
F_v^\circ\coloneqq F_v\setminus\bigsqcup_{s(e)=v}D_e\,.
\]
The associated {\em plumbed $3$-manifold\/} is defined as
\[
P(\Gamma)\coloneqq\Big(\bigsqcup_{v\in V} F_v^\circ\times S^1\Big)/\sim\,,
\]
where each pair of edges~$e,\overline{e}\in E$ yields the following identification of~$F^\circ_{s(e)}\times S^1$ and~$F^\circ_{s(\overline{e})}\times S^1$ along one of their boundary components:
\begin{align}\label{eq:plumbing}
\begin{split}
    (-\partial D_e)\times S^1&\longrightarrow(-\partial D_{\overline{e}})\times S^1\\
    (x,y)&\longmapsto (y^{\new{-}\varepsilon(e)},x^{\new{-}\varepsilon(e)})\,.
    \end{split}
\end{align}
Note that since these homeomorphisms reverse the orientation, the resulting~$3$-manifold~$P(\Gamma)$ is endowed with an orientation which extends the orientation of each~$F^\circ_v\times S^1$. Note also
that the boundary of~$P(\Gamma)$ consists of one torus for each
boundary component of~$\bigsqcup_{v\in V}F_v$.

\medskip

	\new{
    We now come to the construction of the closed~3-manifold~$M_L$. Given a~$\mu$-colored link~$L$,
	consider the plumbing graph~$\Gamma_L$ defined as follows:
	\begin{itemize}
	\item The vertex set of~$\Gamma_L$ is given by the colors~$V=\{1,\dots,\mu\}$, the vertex~$i$ being decorated
	with the surface~$\bigsqcup_{K\subset L_i} D_K$ consisting of disjoint oriented closed discs indexed by the components of color~$i$.
	\item 
	Given two components~$K,K'$ of different colors, the corresponding discs are linked by~$\vert\lk(K,K')\vert$ edges, and every such edge~$e$ is decorated with the sign~$\varepsilon(e)=\sgn(\lk(K,K'))$.
	\end{itemize}
We will write~$P(L)$ for the associated plumbed manifold~$P(\Gamma_L)$. Note that the orientation reversing automorphisms of~$D_K\times S^1$ given by~$(x,y)\mapsto(x,y^{-1})$ define an orientation reversing homeomorphism~$P(L)\to P(\overline{L})$, where~$\overline{L}$ stands for the mirror image of~$L$, and thus an orientation preserving homeomorphism
\begin{equation}\label{eq:P-of-the-mirror}
    P(\overline{L})\simeq -P(L)\,.
\end{equation}}

\new{Note also that this oriented compact~$3$-manifold has boundary~$\partial P(L)=\bigsqcup_{K\subset L} \partial D_K\times S^1$.
Therefore, it is possible to glue~$P(L)$
and~$X_L=S^3\setminus\nu(L)$ along their homeomorphic boundaries, and we do so in the following way. For each component~$K\subset L$, recall that a {\em meridian\/} is an oriented simple closed curve~$m_K\subset \partial\nu(K)$ whose class vanishes in~$H_1(\nu(K))$ and satisfies~$\lk(m_K,K)=1$.
A {\em Seifert longitude\/} is an oriented simple closed curve~$\ell_K\subset \partial\nu(K)$ such that~$[\ell_K]=[K]\in H_1(\nu(K))$ and
\begin{equation}
\label{eq:Seifert}
\lk(\ell_K,L_i)\coloneqq\sum_{K'\subset L_i}\lk(\ell_K,K')=0\,.
\end{equation}
In other words, this is the longitude obtained by the intersection of~$\partial\nu(K)$ with a Seifert surface for the
sublink~$L_i$ of color~$i$, hence the terminology. 
Let us glue~$X_L$ and~$P(L)$ along their boundary via the homeomorphism~$\partial D_K\times S^1\simeq \partial\nu(K)$ obtained by mapping~$\ast_K\times S^1$ (for some~$\ast_K\in\partial D_K$) to a meridian~$m_K$, and~$\partial D_K\times\ast$ (for some~$\ast\in S^1$) to a Seifert longitude~$\ell_K$. The orientations on~$X_L$ and on~$P(L)$ can be seen to induce the same orientation on
the boundary tori (see Remark~\ref{rem:NW}.3). Therefore, we reverse the orientation of~$P(L)$ and define
\[
M_L\coloneqq X_L\cup_\partial -P(L)\,,
\]
which is an oriented closed~3-manifold.
We call it the {\em generalized Seifert surgery\/} on~$L$.}

\new{\begin{example}
\label{exs:ML}
\begin{enumerate}
\item\label{exs:ML-1} If~$L$ is an oriented link (interpreted as a~1-colored link), then~$M_L$ is the so-called {\em Seifert sugery\/}
on~$L$, as defined in~\cite[Definition~5.1]{NP}, hence the terminology. In such a case, and unless~$L$ is a knot, this manifold differs
from~\cite[Construction~4.17]{toffoli}.
\item\label{exs:ML-2} If~$L$ is the~$2$-colored Hopf link, then~$P_L$ is homeomorphic to~$X_L\simeq S^1\times S^1\times [0,1]$, and~$M_L$ is homeomorphic to the~3-dimensional torus.
\item\label{exs:ML-3} If~$L=L_1\cup\dots\cup L_\mu$ is a colored link with all linking numbers
vanishing, then~$M_L$ is the~$0$-surgery on~$L$. (This includes the case of knots.)
For example, if~$L$ is the 3-colored Borromean rings, then~$M_L$ is the~3-dimensional torus once again.
\end{enumerate}
\end{example}}

\new{The main point of this construction is that~$M_L$ is naturally (though not uniquely) a~{\em $\Z^\mu$-manifold\/}, i.e. is endowed with a homomorphism~$\pi_1(M_L)\to\Z^\mu$. More precisely, we have the following result.}

\new{\begin{lemma}
\label{lem:ML}
The homomorphism~$\varphi_X\colon H_1(X_L)\to\Z^\mu$ defined by~$\varphi_X([\gamma])=\left(\lk(\gamma,L_i)\right)_i$ extends to~$\varphi\colon H_1(M_L)\to\Z^\mu$ such that~$\varphi([\ast_i\times S^1])=t_i\in\Z^\mu$ for any~$\ast_i\in D_K$ with~$K\subset L_i$.
\end{lemma}
\begin{proof}
For any component~$K\subset L_i$,
let $\varphi_K\colon H_1(D_K^\circ\times S^1)\to\Z^\mu$ be defined by
\[
\varphi_K([\ast_i\times S^1])=t_i\,,\quad\varphi_K([\partial D_{e}\times\ast])=\varepsilon(e) \, t_j
\]
for all~$\ast_i\in D_K$ with~$K\subset L_i$ and~$\ast\in S^1$,
and for any edge~$e$ with~$s(e)$ the vertex~$K$ and~$t(e)$ a vertex~$K'\subset L_j$. Since this is consistent with the gluing~\eqref{eq:plumbing}, Mayer-Vietoris arguments show that the homomorphisms~$\varphi_K$ can be extended to a well-defined (though in general not unique) homomorphism~$\varphi_P\colon H_1(P(L))\to\Z^\mu$. For each~$\ast_i$ and~$\ast$ as above,
this map satisfies
\[
\varphi_P([\ast_i\times S^1])=t_i=\varphi_X([m_K])\,.
\]
Moreover, since the sign~$\varepsilon(e)$ of an edge~$e$ as above is equal to~$\sgn(\lk(K,K'))$, we get
\begin{align*}
\varphi_P([\partial D_K\times \ast])&=\sum_{e\sim K}\varphi_K([\partial D_{e}\times \ast])=\sum_{e\sim K}\varepsilon(e)\,t_j=
\sum_{j\neq i}\sum_{K'\subset L_j}\varepsilon(e)\vert\lk(K,K')\vert \,t_j\\
&=\sum_{j\neq i}\lk(K,L_j) \,t_j \stackrel{\eqref{eq:Seifert}}{=}\lk(\ell_K,L_i) \,t_i+\sum_{j\neq i}\lk(K,L_j)\, t_j=\sum_{j}\lk(\ell_K,L_j)\, t_j\\
&=\varphi_X([\ell_K])\,.
\end{align*}
Since this is consistent with the gluing~$M_L=X_L\cup_\partial -P(L)$, a Mayer-Vietoris argument concludes the proof.
\end{proof}}

\new{\begin{remark}
\begin{enumerate}
    \item\label{rems:M_L:1} If~$\Gamma$ is a plumbing graph with vertices~$F_1,\dots,F_\mu$,
    we call {\em meridional\/} any homomorphism~$\varphi_P\colon H_1(P(\Gamma))\to\Z^\mu$
    with~$\varphi_P([\ast_i\times S^1])=t_i$ for all~$\ast_i\in F_i$.
    By the arguments from the beginning of the proof of Lemma~\ref{lem:ML}, a plumbed manifold~$P(\Gamma)$ always admits a meridional homomorphism.
    It is unique when the graph~$\Gamma_L$ is a forest, but not unique in general.
    \item\label{rems:M_L:2} Similarly, we also call {\em meridional\/}
    a homomorphism~$\varphi\colon H_1(M_L)\to\Z^\mu$ as in Lemma~\ref{lem:ML},
    i.e. a homomorphism which sends meridians to the appropriate generator of~$\Z^\mu$.
    By Lemma~\ref{lem:ML}, the manifold~$M_L$ always admits a meridional homomorphism.
    It is unique when all linking numbers between components of different colors vanish, but it is not unique in general.
\end{enumerate}
\label{rems:M_L}
\end{remark}}

\medskip

Suppose that~\((M_{1},f_1)\) and~\((M_2,f_2)\) are \(3\)-dimensional, compact, oriented, connected \(\Z^{\mu}\)-manifolds. \new{They are said to be} \emph{\(\Z^{\mu}\)-bordant} if there exists a~\(4\)-dimensional compact oriented \(\Z^{\mu}\)-manifold \((W,f)\) such that \(\partial W = M_{1} \sqcup (-M_{2})\) and \(f_i\colon\pi_{1}(M_{i}) \to \Z^{\mu}\)
  factors through \(f\colon\pi_{1}(W)\to\Z^{\mu}\) for~$i=1,2$.
  \new{The set of corresponding equivalence classes forms an abelian group for the connected sum, usually
  denoted by~$\Omega_3(B\Z^\mu)$ or simply by~$\Omega_3(\Z^\mu)$.
  Of interest to us is the following well-known fact: there is an isomorphism
\begin{align}
\label{eq:bordism}
\begin{split}
    \Omega_3(\Z^\mu)&\longrightarrow H_3(\mathbb{T}^\mu;\Z)=\Z^{{\mu}\choose{3}}\\
    (M,f)&\longmapsto f_*([M])\,,
    \end{split}
\end{align}
where~$[M]\in H_3(M;\Z)$ is the fundamental class of~$M$,
and~$f_*\colon H_3(M;\Z)\to H_3(\mathbb{T}^\mu;\Z)$ is the map induced by
the homotopy class~$M\to B\Z^\mu=\mathbb{T}^\mu$ corresponding to~$f\colon\pi_1(M)\to\Z^\mu$. This follows from the Atiyah-Hirzebruch spectral sequence and the fact that the bordism group~$\Omega_3(\mathrm{pt})$ vanishes, see
e.g.~\cite[Section~3]{DNOP}.}

\new{
\begin{example}
\begin{enumerate}
        \item\label{exs:meridional:1} As we saw above, for~$L$ the~$2$-colored Hopf link, the manifold~$M_L$ is the~$3$-dimensional torus. Moreover, it admits a unique meridional homomorphism, namely the projection~$\varphi\colon H_1(\mathbb{T}^3)\to\Z^2$
        defined by the coloring and orientation of its components.
        The~$\Z^2$-manifold~$(M_L,\varphi)$ bounds the~4-manifold~$S^1\times S^1\times D^2$ over~$\Z^2$.
        \item\label{exs:meridional:2} For~$L$ the~$3$-colored Borromean rings, the manifold~$M_L$ is once again the~$3$-dimensional torus. It admits a unique meridional homomorphism, namely the isomorphism~$\varphi\colon H_1(\mathbb{T}^3)\simeq\Z^3$
        induced by the coloring and orientation of its components.
        By~\eqref{eq:bordism}, the~$\Z^3$-manifold~$(M_L,\varphi)$ generates the bordism group~$\Omega_3(\Z^3)\simeq\Z$.
        In particular, it does {\em not} bound any~$4$-manifold over~$\Z^3$.
    \end{enumerate}
    \label{exs:meridional}
\end{example}
}

  \medskip
  
We will need the following generalization of~\cite[Lemma 4.9]{CNT}.
Following their terminology, we call a plumbing graph
{\em balanced\/} if for any pair of vertices~$v,w\in V$, we have~$\sum_{e=(v,w)}\varepsilon(e)=0$, where the sum is over the set of edges~$e\in E$ with~$s(e)=v$ and~$t(e)=w$.

\new{\begin{lemma}
  \label{lem:Y}
Let~$G$ be a balanced plumbing graph with vertices given by closed oriented surfaces, and let~$\varphi_P\colon H_1(P(G))\to\Z^\mu$ be a meridional homomorphism such
that~$(P(G),\varphi_P)$ bounds over~$\Z^\mu$.
      Then, it bounds a compact connected oriented~\(\Z^{\mu}\)-manifold~\((Y,f)\) such that~\(\pi_{1}(Y) = \Z^{\mu}\),~\(f\) is an isomorphism and~$\sign_\omega(Y)=0$ for
      all~\(\omega \in \mathbb{T}^{\mu}\).
\end{lemma}}

\new{Its proof can be found in Appendix~\ref{ap:plumbed}, where it appears as Corollary~\ref{cor:surjective-ap}, together with several additional technical lemmas on plumbed 3-manifolds.}

\section{Limits of signatures: the 3D approach}
\label{sec:3D}

The purpose of this section is to evaluate the limits of multivariable signatures using their definition via C-complexes described in Section~\ref{sub:C-complex}. More precisely, we start in Section~\ref{sub:statement} by the statement of the results, together
with examples and consequences. The proof of the main
theorem is given in Section~\ref{sub:proof}.
    
\subsection{Main result and consequences}
\label{sub:statement}

Throughout this section, we assume for simplicity that the colored link~$L=L_1\cup L_2\cup\dots\cup L_\mu=:L_1\cup L'$ is a~$\mu$-component link, i.e. that each sublink~$L_i$ is a knot. Note however that we expect our methods to extend to the case of an arbitrary colored link,
see in particular Remark~\ref{rem:3d-LT}.

We shall adopt the notation~$\ell_j:=\lk(L_1,L_j)$ together with~$s_j:=\sgn(\ell_j)$ for~$2\le j\le\mu$, and~$\vert\ell\vert:=\vert\ell_2\vert+\dots+\vert\ell_\mu\vert$.
Also, we write~$\rho\colon\mathbb{T}^2\to\{-1,0,1\}$ for the symmetric function defined by
\begin{equation}
\label{eq:def-rho}
\rho(z_1,z_2):=
    \sgn\left[i(z_1z_2-1)(\overline{z}_1-1)(\overline{z}_2-1)\right]
\end{equation}
for~$z_1,z_2\in S^{1}$,
whose graph is sketched in Figure~\ref{fig:graph-s}.
Note that~$i(z_1z_2-1)(\overline{z}_1-1)(\overline{z}_2-1)$ is real for all~$z_1,z_2\in S^1$, so
its sign~$\rho(z_1,z_2)\in\{-1,0,1\}$ is well-defined.
Moreover, it satisfies the identity~$\rho(z_1,z_2)=-\rho(\overline{z}_1,\overline{z}_2)$ for all~$(z_1,z_2)\in\mathbb{T}^2$.
This extends to a function~$\rho\colon\mathbb{T}^n\to\Z$ via
\begin{equation}
    \label{eq:def-rho-mult}
\rho(z_1,\dots,z_n):=\sum_{j=1}^{n-1} \rho(z_{j},z_{j+1}\cdots z_{n})\,.
\end{equation}

We are now ready to state the main result of this section.

\begin{figure}[tbp]
    \centering
    \begin{overpic}[width=2.5cm]{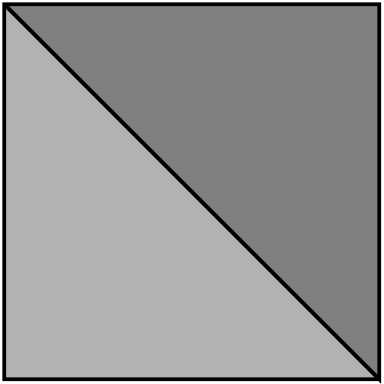}
    \put (65,70){$-1$}
    \put (28,28){$1$}
    \put (-10,90){$0$}
    \end{overpic}
    \caption{The values of~$\rho$ on~$\mathbb{T}^2$ represented as a square with opposite sides identified.}
    \label{fig:graph-s}
\end{figure}
   
\begin{theorem}
\label{thm:limit-of-signature-inequality}
    For a~\(\mu\)-component link~\(L=L_1\cup\dots\cup L_\mu=:L_1\cup L'\) and all~\(\omega'\in\mathbb{T}_*^{\mu-1}\), we have
    \[
        \left|\lim_{\omega_{1}\rightarrow 1^{\pm}}\sigma_{L}(\omega_{1},\omega')-\sigma_{L'}(\omega')\mp\rho_{\ell}(\omega')\right|\le \eta_{L'}(\omega')+\tau_{\ell}(\omega')-\rank A(L)\,,
        \]
         where~$A(L)$ denotes the multivariable Alexander module of~$L$, while~$\rho_\ell$ and~$\tau_\ell$ are given by
         \begin{equation}
         \label{eq:tau-rho}
            \rho_\ell(\omega')=
            \begin{cases}   
        \rho(\underbrace{\omega^{s_2}_2,\dots,\omega^{s_2}_2}_{\vert\ell_2\vert},\dots,\underbrace{\omega^{s_\mu}_\mu,\dots,\omega^{s_\mu}_\mu}_{\vert\ell_\mu\vert})&\text{if~$\vert\ell\vert>0$;}\cr0&\text{else,}
        \end{cases}
        \qquad
        \tau_\ell(\omega')=\begin{cases}
    1 &\text{if~$\omega_2^{\ell_2}\cdots\omega_\mu^{\ell_\mu}=1$;}\cr
    0 &\text{else,}
    \end{cases}
                  \end{equation}
                  for~$\omega'=(\omega_2,\dots,\omega_\mu)\in\mathbb{T}^{\mu-1}_*$.
    \end{theorem}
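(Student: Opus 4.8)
The plan is to follow the classical strategy for showing that $\lim_{\omega\to 1^{\pm}}\sigma_K(\omega)=0$ when $K$ is a knot, but carried out for the multivariable Hermitian matrix $H(\omega)$ coming from a C-complex $S$ for $L$. First I would choose a \emph{totally connected} C-complex $S=S_1\cup\dots\cup S_\mu$ for $L$, which restricts to a C-complex $S'=S_2\cup\dots\cup S_\mu$ for $L'$, and analyse how $H_1(S)$ relates to $H_1(S')$ together with the classes supported near the clasps $S_1\cap S_j$. Setting $\omega=(\omega_1,\omega')$ with $\omega'$ fixed, the factor $\prod_j(1-\overline\omega_j^{\varepsilon_j})$ means that each entry of $H(\omega)$ that ``sees'' the colour~$1$ carries a factor $(1-\overline\omega_1)$ or $(1-\omega_1)$, so one should conjugate $H(\omega_1,\omega')$ by a suitable diagonal matrix $D(\omega_1)$ — with entries $(1-\omega_1)^{-1/2}$-type weights on the part of the basis feeling colour~$1$ — to extract the common vanishing factor and obtain a matrix $\widehat H(\omega_1,\omega')$ that extends continuously to $\omega_1=1$ (this is the multivariable analogue of the standard trick, and the diagonal matrix is where the linking numbers $\ell_j$ and the function $\rho_\ell$ will enter).

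Next I would identify the limit matrix $\widehat H(1,\omega')$ explicitly. The block corresponding to $H_1(S')$ should give exactly $H_{L'}(\omega')$ (the Hermitian matrix computing $\sigma_{L'}(\omega')$ and $\eta_{L'}(\omega')$), while the extra blocks — coming from the clasps between $S_1$ and the other surfaces, and from $H_1(S_1)$ itself — should assemble into a model Hermitian form whose signature is precisely $\pm\rho_\ell(\omega')$ and whose nullity accounts for the $\tau_\ell(\omega')$ term (when $\omega_2^{\ell_2}\cdots\omega_\mu^{\ell_\mu}=1$). Here the running Example~\ref{ex:torus} of the torus links $T(2,2\ell)$ is the key computational model: the tridiagonal Toeplitz matrix there, with eigenvalues governed by $\cos(\pi(\theta_1+\theta_2))-\cos(k\pi/n)$, is exactly the prototype whose signature is the function $f_{|\ell|}$, and the definition of $\rho$ in~\eqref{eq:def-rho} and~\eqref{eq:def-rho-mult} is tailored so that $\rho_\ell(\omega')$ reproduces the relevant signature count. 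So this step amounts to: after conjugation, $\widehat H(1,\omega')$ is block-congruent to $H_{L'}(\omega')\oplus(\text{torus-link model for }\ell_2)\oplus\cdots$, modulo degenerate pieces that contribute only to nullity.

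Then I would apply Lemma~\ref{lemma:limit-signature} to the family $t\mapsto \widehat H(e^{\pm 2\pi i t},\omega')$ (for $t$ near $0^+$), which gives
\[
\left|\lim_{\omega_1\to 1^\pm}\sign(\widehat H(\omega_1,\omega'))-\sign(\widehat H(1,\omega'))\right|\le \nul(\widehat H(1,\omega'))-\lim_{\omega_1\to 1^\pm}\nul(\widehat H(\omega_1,\omega'))\,.
\]
By the previous step the left side is $\big|\lim_{\omega_1\to 1^\pm}\sigma_L(\omega)-\sigma_{L'}(\omega')\mp\rho_\ell(\omega')\big|$, and $\nul(\widehat H(1,\omega'))=\eta_{L'}(\omega')+\tau_\ell(\omega')$ by the identification above. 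It remains to recognise $\lim_{\omega_1\to 1^\pm}\nul(\widehat H(\omega_1,\omega'))$ as $\rank A(L)$: since $\nul H(\omega)=\eta_L(\omega)$ for $\omega\in\mathbb{T}^\mu_*$, one relates the generic nullity of $H$ near the hyperplane $\omega_1=1$ to the rank of the Alexander module $A(L)$ via the usual correspondence between Seifert/C-complex presentations and Alexander modules — this is the standard fact that the nullity over the function field detects the torsion-free rank of the Alexander module. The addendum presumably makes $\rho_\ell$ fully explicit via the sign count in Example~\ref{ex:torus}.

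The main obstacle, I expect, is the bookkeeping in the second step: pinning down the precise congruence class of the limit matrix $\widehat H(1,\omega')$, i.e. showing that the ``colour-1 part'' really decouples (up to congruence and degenerate summands) into the $L'$-block plus the iterated torus-link model, and that the leftover degenerate directions contribute exactly $\tau_\ell(\omega')$ to the nullity and nothing to the signature. This requires a careful choice of basis for $H_1(S)$ adapted to the clasps and the topology of $S_1$, together with the identity $i(z_1z_2-1)(\overline z_1-1)(\overline z_2-1)\in\mathbb{R}$ and its sign analysis, to match $\rho_\ell$ on the nose — including the subtlety that the limit depends on the direction $\omega_1\to 1^\pm$, which is why the $\pm\rho_\ell$ sign appears and must be tracked through the conjugating diagonal matrix $D(\omega_1)$.
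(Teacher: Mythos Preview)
Your overall strategy is right and matches the paper: choose a C-complex, conjugate $H(\omega_1,\omega')$ by a diagonal matrix, take the limit as $\omega_1\to 1^\pm$, and apply Lemma~\ref{lemma:limit-signature}. A minor correction: the conjugating matrix in the paper is two-block, with weight $(1-\omega_1)^{-1}$ on a basis~$\mathcal A$ of $H_1(S')$ and weight $(1-\omega_1)^{-1/2}$ on a complementary set~$\mathcal B$; both blocks need rescaling (not only the ``colour-1 part''), because the $\mathcal A\times\mathcal A$ block carries a factor $|1-\omega_1|^2$. With this, the limit is \emph{exactly} block-diagonal $H'(\omega')\oplus F^\pm(\omega')$, not merely block-congruent modulo degenerate pieces.

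The genuine gap is in the second step. Your picture that the colour-1 block decomposes as a direct sum of per-colour torus-link models is incorrect: $\rho_\ell(\omega')$ is \emph{not} $\sum_{j\ge 2}\rho_{\ell_j}(\omega_j)$ (already for $\mu=3$ and $\ell_2=\ell_3=1$ one gets $\rho(\omega_2,\omega_3)$, which is not identically zero, whereas each $\rho_{1}(\omega_j)$ is). The matrix $F^\pm(\omega')$ depends on all the clasps along $\partial S_1$ at once, and also on the genus of $S_1$; there is no a priori reason it should be congruent to anything simple.

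The paper resolves this with an idea you are missing: $\sign(F^\pm)$ and $\nul(F^\pm)$ are invariant under \emph{link homotopy} of $L_1$ (Cooper's band-twisting observation: the differences $\lk(x^{(+1,\varepsilon')},y)-\lk(x^{(-1,\varepsilon')},y)$ survive a band twist, and only these enter $F^\pm$). This lets one assume $L_1$ is unknotted and take $S_1$ a disk. Then~$\mathcal B$ can be taken to consist of cycles through consecutive clasps, and $F^\pm$ becomes an explicit $(n-1)\times(n-1)$ tridiagonal matrix with entries built from $(1-\omega_{c(k)}^{s(k)})^{-1}$, where $n$ is the number of clasps. Two further invariances (removing adjacent same-colour opposite-sign clasps; swapping adjacent different-colour clasps) show that $\sign(F^\pm),\nul(F^\pm)$ depend only on $(\ell_2,\dots,\ell_\mu)$, and an induction on $n$ identifies them with $\rho_\ell(\omega')$ and $\tau_\ell(\omega')$. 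This is where the specific form of $\rho$ in~\eqref{eq:def-rho}--\eqref{eq:def-rho-mult} arises; it is tailored to the tridiagonal recursion, not to a direct sum of torus-link pieces.

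Finally, the nullity term: you only need the inequality $\lim_{\omega_1\to 1^\pm}\eta_L(\omega_1,\omega')\ge\rank A(L)$, not equality, and the paper obtains it from the lower bound $\eta_L(\omega)\ge\rank A(L)$ (Lemma~\ref{lem:Alex}).
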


The function~$\rho_\ell$ can be presented via the closed formula~\eqref{eq:tau-rho}, but it also
admits the following elementary geometric description.

\begin{addendum}
Given any~$\ell=(\ell_2,\dots,\ell_\mu)\in\Z^{\mu-1}\setminus\{0\}$, let us denote by~$\Sigma_\ell$ the hypersurface
\[
\Sigma_\ell:=\{\omega'\in \mathbb{T}^{\mu-1}_*\,|\,\tau_\ell(\omega')=1\}=\{(\omega_2,\dots,\omega_\mu)\in \mathbb{T}^{\mu-1}_*\,|\,\omega_2^{\ell_2}\cdots\omega_\mu^{\ell_\mu}=1\}\,,
\]
which consists of~$\vert\ell\vert-1$ parallel hyperplans. 
Then, the function~$\rho_\ell\colon\mathbb{T}^{\mu-1}_*\to\Z$ is uniquely determined by the following properties:
\begin{enumerate}
    \item it is constant on the connected components of~$\mathbb{T}^{\mu-1}_*\setminus\Sigma_\ell$ and of~$\Sigma_\ell$;
    \item it takes its maximum value~$\vert\ell\vert-1$ when~$\omega_j\to 1^{s_j}$ for all~$j$ such that~$\ell_j\neq 0$;
    \item moving away from the component of~$\mathbb{T}^{\mu-1}_*\setminus\Sigma_\ell$ described by the second point above, it jumps by~$-1$ when entering a component of~$\Sigma_\ell$ and by~$-1$ when exiting it, eventually reaching its minimal value~$1-\vert\ell\vert$ on the component where~$\omega_j\to 1^{-s_j}$ for all~$j$ such that~$\ell_j\neq 0$. 
\end{enumerate}
\end{addendum}

We defer the proof of Theorem~\ref{thm:limit-of-signature-inequality} and of its addendum to Section~\ref{sub:proof}, and now explore some consequences and examples.

\begin{example}
	\label{ex:correction-2}
 Let us compute the function~$\rho_\ell$ in the case~$\mu=2$, with linking number~$\ell$ of sign~$s$,
 using the formula~\eqref{eq:tau-rho}.
 If~$\ell=0$, then~$\rho_\ell$ is identically zero. For~$\ell\neq 0$, we have
 \[
 \rho_\ell(\omega)=\rho(\overbrace{\omega^s,\dots,\omega^s}^{\vert\ell\vert})=\sum_{j=1}^{\vert\ell\vert-1}\rho(\omega^s,\omega^{s(\vert\ell\vert-j)})=s\cdot\sum_{j=1}^{\vert\ell\vert-1}\rho(\omega,\omega^j)\,,
 \]
where these (empty) sums are understood as vanishing if~$\vert\ell\vert=1$.
 Note that~$\rho(\omega,\omega^j)$ is determined by the following properties: it vanishes at all~$\omega\in S^1\setminus\{1\}$ such that~$\omega^{j}=1$ or~$\omega^{j+1}=1$, is equal to~$1$ for~$\omega\to 1^+$, and
 alternates sign at each zero.
 Writing~$\omega=\exp\left(2 \pi i \theta \right)$ with~$\theta\in(0,1)$, 
 this easily leads to
	\begin{equation}
 \label{eq:rho}
	    	 s\cdot \rho_\ell(\omega)
  = \begin{cases}
|\ell|-(2k+1) & \text{if~$\frac{k}{|\ell|} < \theta < \frac{k+1}{|\ell|}$ with~$k=0,1,\ldots,|\ell|-1$;} \\
\vert\ell\vert-2k & \text{if~$\theta = \frac{k}{|\ell|}$ with~$k = 1,2,\ldots,|\ell|-1$.}
	            \end{cases}
		\end{equation}
As expected, this coincides with the description given in
the addendum, where~$\Sigma_\ell$ consists of the~$\ell^\text{th}$-roots of unity in~$S^1\setminus\{1\}$.
The graph of~$\rho_\ell$ is illustrated in Figure~\ref{fig:rho} (in the case~$\ell=5$).
\end{example}

\begin{example}
 \label{ex:rho-3} 
 Let us now describe the function~$\rho_\ell$ in the case~$\mu=3$, this time using the addendum,
 and assuming for definiteness that the linking numbers~$\ell_2,\ell_3$ are non-negative.
By definition, the hyperplane~$\Sigma_\ell$ is given by
the restriction to~$\mathbb{T}^2_*\simeq (0,1)^2$ of
a torus link of type~$T(\ell_2,\ell_3)$, i.e.~$\vert\ell\vert-1$ parallel lines of slope~$-\ell_2/\ell_3$ dividing~$(0,1)^2$ into~$\vert\ell\vert$ connected components. In the bottom-left corner,~$\rho_\ell$ takes
the value~$\vert\ell\vert-1$, then~$\vert\ell\vert-2$
on the adjacent component of~$\Sigma_\ell$, then~$\vert\ell\vert-3$ on the next slab, and so on, until it reaches the value~$1-\vert\ell\vert$ at the top-right corner.
The examples~$\ell=(2,2)$ and~$\ell=(2,3)$ are drawn in Figure~\ref{fig:rho}.
\end{example}

\begin{figure}[tbp]
        \centering
    \begin{overpic}[width=12cm]{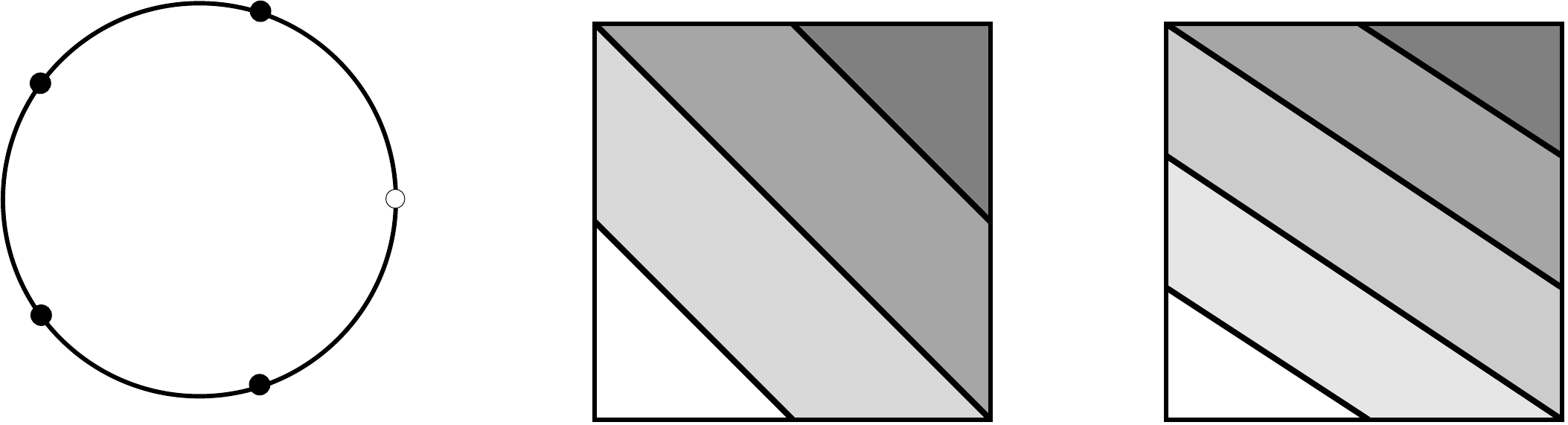}
    \put (23,22){$4$}
    \put (17,27.5){$3$}
    \put (7,27){$2$}
    \put (0,22){$1$}
    \put (-2,13){$0$}
    \put (-2,5){$-1$}
    \put (5,0){$-2$}
    \put (15,-1){$-3$}
    \put (22,5){$-4$}
    \put (41,4){$3$}
    \put (35,12){$2$}
    \put (46,10){$1$}
    \put (36,26){$0$}
    \put (52,15){$-1$}
    \put (47,27){$-2$}
    \put (57,21){$-3$}
    \put (77,2){$4$}
    \put (72,8){$3$}
    \put (82,6){$2$}
    \put (72,17){$1$}
    \put (86,12){$0$}
    \put (70,26){$-1$}
    \put (88,18){$-2$}
    \put (83,26.5){$-3$}
    \put (94,22){$-4$}
    \end{overpic}
        \caption{The graph of~$\rho_\ell$ for~$\ell=5$, for~$\ell=(2,2)$ and for~$\ell=(2,3)$.}
        \label{fig:rho}
    \end{figure}

Theorem~\ref{thm:limit-of-signature-inequality} determines the values of the limits~$\lim_{\omega_{1}\rightarrow 1^{\pm}}\sigma_{L}(\omega)$ in the following case.

 \begin{corollary}\label{cor:limit-of-signature-equality}
        If \(L=L_1\cup\dots\cup L_\mu=:L_1\cup L'\) is a \(\mu\)-component link, then we have
        \[\lim_{\omega_{1}\rightarrow 1^{\pm}}\sigma_{L}(\omega_{1},
        \omega')=\sigma_{L'}(\omega')\pm \rho_{\ell}(\omega')\]
        for all \(\omega'\in\mathbb{T}_*^{\mu-1}\) such that
        $\Delta_{L}(1,\omega')\neq 0$.
    \end{corollary}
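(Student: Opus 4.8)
The plan is to deduce Corollary~\ref{cor:limit-of-signature-equality} directly from Theorem~\ref{thm:limit-of-signature-inequality} by showing that, under the hypothesis $\Delta_L(1,\omega')\neq 0$, the right-hand side of the displayed inequality is zero. First I would bound $\rank A(L)$ from below. Since $\Delta_L$ is (up to units) the order of the torsion part of the Alexander module and $\Delta_L(1,\omega')\neq 0$, the element $(1,\omega')$ is not a zero of $\Delta_L$, so the torsion part of $A(L)\otimes\C$ localised at $(1,\omega')$ vanishes; what remains is the free part, whose rank I claim equals $\mu-1$ for a $\mu$-component link (the Alexander module of a link has rank $\mu-1$ coming from the ``extra'' relation among meridians, or more precisely $H_1$ of the infinite cyclic cover surjects appropriately). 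Hence $\rank A(L)\ge \mu-1$. I would locate the precise statement about the free rank of the multivariable Alexander module in the literature (e.g.\ in terms of the Fox calculus presentation) and cite it, or derive it from the fact that $\Delta_L(1,\dots,1)=0$ forces a free summand.

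Next I would bound $\eta_{L'}(\omega')+\tau_\ell(\omega')$ from above by $\mu-1$. By Lemma~\ref{lem:eta0}, since $\Delta_L(1,\omega')\neq 0$, the Torres formula~\eqref{eq:Torres} gives
\[
\Delta_L(1,\omega')=(\omega_2^{\ell_2}\cdots\omega_\mu^{\ell_\mu}-1)\,\Delta_{L'}(\omega')\,,
\]
so both factors are nonzero: in particular $\omega_2^{\ell_2}\cdots\omega_\mu^{\ell_\mu}\neq 1$, which means $\tau_\ell(\omega')=0$, and $\Delta_{L'}(\omega')\neq 0$, which by Lemma~\ref{lem:eta0} applied to the $(\mu-1)$-component link $L'$ gives $\eta_{L'}(\omega')=0$. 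Therefore the right-hand side of the inequality in Theorem~\ref{thm:limit-of-signature-inequality} is $0+0-\rank A(L)\le (\mu-1)-(\mu-1)=0$. Since the left-hand side is an absolute value, it must vanish, i.e.
\[
\lim_{\omega_1\to 1^\pm}\sigma_L(\omega_1,\omega')=\sigma_{L'}(\omega')\pm\rho_\ell(\omega')\,,
\]
which is exactly the claim.

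The main obstacle I anticipate is pinning down the exact value (or at least the correct lower bound $\mu-1$) of $\rank A(L)$ and making sure the conventions for ``$A(L)$'' match those in Theorem~\ref{thm:limit-of-signature-inequality}; one has to be careful about whether $A(L)$ denotes $H_1$ of the universal abelian cover as a $\Z[t_1^{\pm},\dots,t_\mu^{\pm}]$-module, and whether its rank is being measured generically or at the specific point $(1,\omega')$. If the theorem's inequality already builds in the correct normalisation, the argument is essentially immediate; if not, I would need a short lemma computing this rank, using either a Fox-calculus presentation matrix of the link group or the Chen-group/Blanchfield-pairing description, together with the observation that $\Delta_L(1,\omega')\neq 0$ kills the torsion. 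A secondary, easier point to double-check is that the Torres formula~\eqref{eq:Torres} is being applied with the sublink $L'=L\setminus L_1$ and that the exponent $\omega_2^{\ell_2}\cdots\omega_\mu^{\ell_\mu}$ there genuinely coincides with the quantity appearing in $\tau_\ell$; this is a matter of matching notation and poses no real difficulty.
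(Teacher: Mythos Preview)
Your second paragraph is exactly the paper's proof: Torres gives $\Delta_L(1,\omega')=(\omega_2^{\ell_2}\cdots\omega_\mu^{\ell_\mu}-1)\Delta_{L'}(\omega')$, so the hypothesis forces $\tau_\ell(\omega')=0$ and, via Lemma~\ref{lem:eta0}, $\eta_{L'}(\omega')=0$. That already finishes the argument, because the right-hand side of the inequality in Theorem~\ref{thm:limit-of-signature-inequality} is then $0+0-\rank A(L)\le 0$ (simply because $\rank A(L)\ge 0$), and an absolute value bounded above by $0$ vanishes.

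Your first paragraph, by contrast, is both unnecessary and incorrect. The claim $\rank A(L)\ge\mu-1$ is false under the hypothesis of the corollary: if $\Delta_L(1,\omega')\neq 0$ then in particular $\Delta_L\neq 0$ as a polynomial, and the multivariable Alexander module of a link with non-vanishing Alexander polynomial is \emph{torsion} over $\Lambda_\mu$, i.e.\ $\rank A(L)=0$. (The value $\mu-1$ you have in mind is the \emph{maximal} possible rank, attained for instance by boundary links, whose Alexander polynomial vanishes for $\mu\ge 2$; see Corollary~\ref{cor:boundary} and the discussion preceding it.) Fortunately you never actually use this bound: once $\eta_{L'}(\omega')=\tau_\ell(\omega')=0$, the trivial inequality $\rank A(L)\ge 0$ suffices. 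So drop the first paragraph entirely and the proof is clean and matches the paper's.
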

    \begin{proof}
    By the Torres formula~\eqref{eq:Torres} together with Equation~\eqref{eq:tau-rho} for~$\tau_\ell$
    and Lemma~\ref{lem:eta0}, the assumption~$\Delta_{L}(1,\omega')\neq 0$ ensures that~$\tau_\ell(\omega')$ vanishes as well as~$\eta_{L'}(\omega')$. The result now follows from Theorem~\ref{thm:limit-of-signature-inequality}.
    \end{proof}

\begin{example}
\label{ex:sign-T}
Consider the torus link~$L=T(2,2\ell)$.
If~$\ell=0$, then~$L$ is the unlink and Theorem~\ref{thm:limit-of-signature-inequality} implies
the obvious result, namely~$\lim_{\omega_1 \to 1^{\pm}} \sigma_L(\omega_1,\omega_2) = 0$.
For~$\ell\neq 0$, Corollary~\ref{cor:limit-of-signature-equality}
yields
\[
\lim_{\omega_1 \to 1^{\pm}} \sigma_L(\omega_1,\omega_2) = \pm\rho_{\ell}(\omega_2)
\]
for all~$\omega_2\in S^1$ such that~$\omega_2^\ell\neq 1$.
For these exceptional values of~$\omega_2$, the inequality of Theorem~\ref{thm:limit-of-signature-inequality},
which reads
\[
\left| \lim_{\omega_1 \to 1^{\pm}} \sigma_L(\omega_1,\omega_2) \mp \rho_{\ell}(\omega_2) \right| \leq 1\,,
\]
is sharp,
but does not determine the value of the limits (compare~\eqref{eq:rho} and~\eqref{eq:torus}).
This is for a good reason, since these limits are actually not well-defined:
if~$\omega_2^\ell=1$, then~$\lim_{\omega\to(1^{\pm},\omega_2)}\sigma_L(\omega)$
depends on the way~$\omega$ converges to~$(1^{\pm},\omega_2)$. On these examples, Theorem~\ref{thm:limit-of-signature-inequality} is therefore optimal: it determines the limits when they exist, and gives a sharp estimate on their possible values when they are not well-defined.
\end{example}

	\begin{example}
 \label{ex:nonequality-signature-limit}
	    Consider the link~\(L(k)\) depicted in Figure~\ref{fig:link-Lk}.
	    The components of \(L(k)\) being unknotted and unlinked (i.e.~$\ell=0$), Theorem~\ref{thm:limit-of-signature-inequality} simply reads
	    \[\left| \lim_{\omega_1 \to 1^{\pm}} \sigma_{L(k)}(\omega_1,\omega_2)\right| \leq 1-\rank A(L)\,.\]
	 Recall from Example~\ref{ex:twist} that the signature of~$L(k)$ is constant equal to~$\sgn(k)$, while the nullity is constant equal to~$\delta_{k0}$. Hence, we see that the inequality above is sharp on this family of examples.
  However, we also see that Theorem~\ref{thm:limit-of-signature-inequality} does not determine the limit of the signature unless~$k=0$.
  In particular, it fails to determine this limit in the cases~$k=\pm 1$ of the Whitehead links.
  
  As we shall see in Example~\ref{ex:Wh}, the results of Section~\ref{sec:4D} do determine these limits.  
	\end{example}

We conclude this section with a short discussion of
further consequences, restricting our attention
to the~$2$-component case for simplicity.
These results will be extended to an
arbitrary number of components in Section~\ref{sub:lim-mult-1} using different methods.
 
	\begin{corollary}
	\label{cor:LT}
	    Suppose that \(L = L_1 \cup L_2\) is a two-component link such that \(\ell = \lk(L_1,L_2) \neq 0\). Then, for any~$\epsilon_1,\epsilon_2=\pm$,
	    the limit of~$\sigma_L(\omega_1,\omega_2)$ as~$\omega_1$ tends to~$1^{\epsilon_1}$ and ~$\omega_2$ to~$1^{\epsilon_2}$ exists and is given by
	    \begin{equation}
     \label{eq:cor-2}
	        	       \lim_{\omega_1 \to 1^{\epsilon_1},\omega_2 \to 1^{\epsilon_2}} \sigma_L(\omega_1,\omega_2) = \epsilon_1\epsilon_2\,(\ell-\sgn(\ell))\,.
	    	    \end{equation}
	    If~$\ell=0$, then the inequality
	    \[
	    |\sigma_L(\omega_1,\omega_2)|\le 1-\rank A(L)
	    \]
	    holds for all~$(\omega_1,\omega_2)$ in some neighborhood of~$(1,1)$ in~$\mathbb{T}^2_{\ast}$.
	    In particular, if~$\Delta_L$ vanishes, then the four limits exist and are equal to zero.
	\end{corollary}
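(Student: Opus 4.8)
The plan is to specialise Theorem~\ref{thm:limit-of-signature-inequality} and Corollary~\ref{cor:limit-of-signature-equality} to $\mu=2$, where $L'=L_2$ is a knot, and then to upgrade the resulting statements about one-sided limits in $\omega_1$ (with $\omega_2$ fixed) to genuine two-variable limits near $(1,1)$. Throughout I would use the classical fact, recalled in the introduction, that the Levine-Tristram signature and nullity of the knot $L_2$ vanish on some punctured neighbourhood $U$ of $1$ in $S^1$ (ultimately because $\Delta_{L_2}(1)=\pm1$), so $\sigma_{L_2}(\omega_2)=\eta_{L_2}(\omega_2)=0$ for $\omega_2\in U$. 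Suppose first $\ell\ne0$. For $\omega_2\in U$ one has $\omega_2^\ell\ne1$, so $\tau_\ell(\omega_2)=0$, and by the Torres formula~\eqref{eq:Torres} the value $\Delta_L(1,\omega_2)$ agrees, up to a unit, with $(\omega_2^\ell-1)\Delta_{L_2}(\omega_2)\ne0$. Hence Corollary~\ref{cor:limit-of-signature-equality} applies and gives $\lim_{\omega_1\to1^{\pm}}\sigma_L(\omega_1,\omega_2)=\pm\rho_\ell(\omega_2)$ (feeding the vanishings back into Theorem~\ref{thm:limit-of-signature-inequality} also forces $\rank A(L)=0$). By Example~\ref{ex:correction-2}, $\rho_\ell$ is constant on each side of $1$, equal to $\epsilon_2(\ell-\sgn(\ell))$ near $1^{\epsilon_2}$, so $\lim_{\omega_1\to1^{\epsilon_1}}\sigma_L(\omega_1,\omega_2)=\epsilon_1\epsilon_2(\ell-\sgn(\ell))$ for all $\omega_2$ near $1^{\epsilon_2}$.

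To turn this into the joint limit~\eqref{eq:cor-2}, I would show that $\sigma_L$ is constant, equal to $\epsilon_1\epsilon_2(\ell-\sgn(\ell))$, on each of the four ``corner quadrants'' of a small product neighbourhood $N_1\times N_2$ of $(1,1)$ in $\mathbb{T}^2$. For this, combine the local constancy of $\sigma_L$ on $\mathbb{T}^2_*\setminus\{\Delta_L=0\}$ (\cite[Theorem~4.1]{C-F}) with the shape of $\{\Delta_L=0\}$ near $(1,1)$: because $\ell\ne0$, the Torres formula shows that $\Delta_L(t_1,1)$ and $\Delta_L(1,t_2)$ each have a \emph{simple} zero at $1$, so a real-analytic function cutting out $\{\Delta_L=0\}$ near $(1,1)$ on $\mathbb{T}^2$ (obtained by the standard symmetrisation of $\Delta_L$) has non-vanishing partial derivatives in both coordinate directions at $(1,1)$. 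By the implicit function theorem $\{\Delta_L=0\}$ is, near $(1,1)$, a single smooth arc through $(1,1)$ transverse to both circle factors; within each corner quadrant it is therefore an arc emanating from the corner $(1,1)$ (or absent), so every connected component of the complement of $\{\Delta_L=0\}$ in a corner quadrant has one of the two coordinate circles in its closure. Hence on such a component $\sigma_L$ equals $\lim_{\omega_1\to1^{\epsilon_1}}\sigma_L(\omega_1,\omega_2)$ or, by the $L_1\leftrightarrow L_2$ symmetry of the two-component situation, $\lim_{\omega_2\to1^{\epsilon_2}}\sigma_L(\omega_1,\omega_2)$, both equal to $\epsilon_1\epsilon_2(\ell-\sgn(\ell))$; this yields~\eqref{eq:cor-2}.

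For $\ell=0$ the same scheme runs with $\rho_\ell\equiv0$ and $\tau_\ell\equiv1$: Theorem~\ref{thm:limit-of-signature-inequality} now gives $\bigl|\lim_{\omega_1\to1^{\pm}}\sigma_L(\omega_1,\omega_2)\bigr|\le\eta_{L_2}(\omega_2)+1-\rank A(L)=1-\rank A(L)$ for $\omega_2\in U$ (in particular $\rank A(L)\le1$), and a local-constancy argument near $(1,1)$ as above promotes this to $|\sigma_L(\omega_1,\omega_2)|\le1-\rank A(L)$ throughout a neighbourhood of $(1,1)$ in $\mathbb{T}^2_*$ --- the difference being that $\Delta_L(t_1,1)$ and $\Delta_L(1,t_2)$ now vanish identically, so $\{\Delta_L=0\}$ contains the two coordinate circles through $(1,1)$ and one analyses $\Delta_L/\bigl((t_1-1)(t_2-1)\bigr)$ instead. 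If moreover $\Delta_L\equiv0$, then $\rank A(L)\ge1$, whence $\rank A(L)=1$ and the bound reads $\sigma_L\equiv0$ near $(1,1)$; the four limits then exist and vanish. I expect the real obstacle to be precisely this promotion step --- from ``$\omega_1\to1$ first, $\omega_2$ fixed'' to genuine two-variable statements --- which rests on controlling $\{\Delta_L=0\}$ near $(1,1)$: this is clean for $\ell\ne0$ (a transverse smooth arc, plus the $L_1\leftrightarrow L_2$ symmetry), more delicate for $\ell=0$, and most delicate when $\Delta_L\equiv0$, where $\sigma_L$ is locally constant nowhere and one likely needs a refined local-constancy statement or a direct input such as $\sigma_L(\omega_1,\omega_2)=\sigma_{L_1}(\omega_1)+\sigma_{L_2}(\omega_2)$ for two-component links admitting disjoint Seifert surfaces.
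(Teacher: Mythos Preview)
Your strategy—specialize Corollary~\ref{cor:limit-of-signature-equality} (resp.\ Theorem~\ref{thm:limit-of-signature-inequality}) to $\mu=2$ and then upgrade the one-sided $\omega_1\to1^\pm$ statements to joint limits via local constancy of $\sigma_L$—is exactly the paper's. The difference lies in how you obtain local constancy near $(1,1)$ when $\ell\neq0$, and there you work much harder than necessary because of a normalization subtlety in the Torres formula. Equation~\eqref{eq:Torres} is the generic form; when the sublink $L\setminus L_1=L_2$ is a \emph{knot}, the correct specialization carries an extra factor $(t_2-1)^{-1}$, namely $\Delta_L(1,t_2)\doteq\frac{t_2^\ell-1}{t_2-1}\,\Delta_{L_2}(t_2)$, so that $\Delta_L(1,1)=\pm\ell$. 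Thus for $\ell\neq0$ the polynomial $\Delta_L$ is \emph{nonzero} at $(1,1)$ and hence on a full neighbourhood $U$; each of the four corner quadrants of $U\cap\mathbb{T}^2_*$ is then already a connected piece of $\{\Delta_L\neq0\}$, and $\sigma_L$ is constant there by~\cite[Theorem~4.1]{C-F}. Your implicit-function-theorem step is therefore unnecessary, and its premise—that $\Delta_L(t_1,1)$ and $\Delta_L(1,t_2)$ have simple zeros at~$1$—is actually false. Once local constancy is in hand, the constant value on the $(\epsilon_1,\epsilon_2)$-corner is computed precisely as you do (and as the paper does): pick any $\omega_2$ in that corner, use $\sigma_{L_2}(\omega_2)=0$, and read off $\rho_\ell(\omega_2)=\epsilon_2(\ell-\sgn(\ell))$ from~\eqref{eq:rho}.

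For $\ell=0$ your outline and the paper's ``similar argument'' are essentially the same, and comparably terse; your worry about the promotion step when $\Delta_L\equiv0$ is reasonable, but note that the paper only claims the four \emph{limits} vanish in that case, and this already follows because $\rank A(L)\ge1$ makes the right-hand side of the inequality nonpositive.
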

	\begin{proof}
	    First, observe that the assumption \(\ell \neq 0\) guarantees that \(\Delta_L(1,1) \neq 0\). Hence, there exists a neighborhood \((1,1) \in U \subset S^1 \times S^1\) such that the signature function \(\sigma_L(\omega_1,\omega_2)\) is constant on each connected component of \(U \cap \mathbb{T}^2_{\ast} = U_1 \sqcup U_2 \sqcup U_3 \sqcup U_4\). 
	    These connected components correspond to the four possible limits
	    of~$\sigma_{L}(\omega_1,\omega_2)$, whose existence is now established.
	    Using Corollary~\ref{cor:limit-of-signature-equality} and the fact that the Levine-Tristram signature of a knot vanishes near~$\omega=1$,
	    we get
	    \[\lim_{\omega_1 \to 1^{\epsilon_1},\omega_2 \to 1^{\epsilon_2}} \sigma_L(\omega_1,\omega_2) = \lim_{\omega_1 \to 1^{\epsilon_1}} \left(\sigma_{L_1}(\omega_1)+
     \epsilon_2\rho_{\ell}(\omega_1)\right) = \lim_{\omega_1 \to 1^{\epsilon_1}}
     \epsilon_2\rho_{\ell}(\omega_1)\,.
	    \]
	    The result now follows from the explicit value of the correction term given in Equation~\eqref{eq:rho}.
	    
	    If~$\ell=0$, then a similar argument leads to the following fact: there exists a neighborhood~$U$ of~$(1,1)$ in~$\mathbb{T}^2_{\ast}$ such that the signature function satisfies
	    \[
	    |\sigma_L(\omega_1,\omega_2)|\le 1-\rank A(L)
	    \]
	    for all~$(\omega_1,\omega_2)\in U$. If the Alexander polynomial vanishes,
	    then the right-hand side of this inequality vanishes as well, leading to
	    the desired statement.
	\end{proof}
	
	\begin{remark}
 \label{rem:LT-2}
 \begin{enumerate}
     \item 
	In particular, using~\eqref{eq:multi-LT}, we obtain the fact that for a $2$-component link $L$ with non-vanishing linking number, or vanishing linking number and Alexander polynomial, the Levine-Tristram signature satisfies~$\lim_{\omega \to 1}\sigma_L(\omega)=-\sgn(\ell)$.
	For non-vanishing linking numbers, this coincides with the 2-component case of~\cite[Theorem 1.1]{BZ}. (See Remark~\ref{rem:B-Z} for a proof that the hypothesis are equivalent.).
	\item 
	The example of the twist links~$L(k)$ with~$k\neq 0$ given in Example~\ref{ex:twist}, whose signatures are constant equal to~$\sgn(k)$, shows that the equality~\eqref{eq:cor-2} does not hold in general when~$\ell=0$ and~$\Delta_L\neq 0$.
      \end{enumerate}
	\end{remark}

\subsection{Proof of Theorem~\ref{thm:limit-of-signature-inequality} and of the addendum}
\label{sub:proof}

    We wish to study the limits~$\omega_1\to 1^\pm$ of the signature~$\sigma_L$ of an arbitrary ordered link~\(L=L_{1}\cup\ldots\cup L_{\mu}=:L_1\cup L'\)
    for a fixed value of~$\omega'=(\omega_2,\dots,\omega_\mu)\in\mathbb{T}^{\mu-1}_*$, assuming~$\mu\ge 2$.

    \medskip
    
    Consider an associated C-complex~\(S=S_{1}\cup\ldots\cup S_{\mu}\).
    Without loss of generality (e.g. via the second move in~\cite[Lemma~2.2]{C-F}), it may be assumed that~$S':=S\setminus S_1$ is connected.
    Let~\(\mathcal{A}\)
    be a set of curves in~\(S'\) representing a basis of~\(H_{1}(S')\),
    and let~\(\mathcal{B}\) be a set of curves in~$S$ such that the classes of the elements of~\(\mathcal{A}\cup\mathcal{B}\) form a basis of~\(H_{1}(S)\). With respect to this basis, the Hermitian matrix
    \[
H(\omega_{1},\omega')=\sum_{\varepsilon\in\{\pm 1\}^\mu}\prod_{j=1}^\mu(1-\overline{\omega}_j^{\varepsilon_j})A^\varepsilon
\]
can be presented in a block form that we denote by
    \(H(\omega_{1},\omega')=\begin{bmatrix}
    C & D\\
    E & F\\
    \end{bmatrix}\).
   Observe that since 
    the curves~$x\in\mathcal{A}$ are disjoint from~$S_1$, the linking numbers~\(\lk(x^{\varepsilon},-)\)
    do not depend on~\(\varepsilon_{1}\). Therefore, the coefficients of the matrices~\(C,D\) and~\(E\) are multiples of~\((1-\omega_{1})(1-\overline{\omega}_{1})=\vert1-\omega_1\vert^2\)
    by polynomial functions of~$\omega_2,\dots,\omega_\mu$.
    Note also that~\(\frac{1}{\vert1-{\omega_{1}}\vert^2}C\) coincides with the matrix \(H'(\omega')\) obtained from the (connected) C-complex~\(S'\) with respect to the basis of~$H_1(S')$ represented by~$\mathcal{A}$.
    
    Now, consider the block-diagonal matrix~$P(\omega_1)=\begin{bmatrix}
    (1-\omega_{1})^{-1} \mathit{Id} & 0\\
    0 & (1-\omega_{1})^{-1/2}\mathit{Id}\\
     \end{bmatrix}$, and set
    \[
    \widehat{H}(\omega_1,\omega'):=
    P(\omega_1)H(\omega_1,\omega')P(\omega_1)^*\,.
    \] 
    By the considerations above, we get
    \[
    \lim_{\omega_{1}\rightarrow 1^{\pm}}\widehat{H}(\omega_{1},\omega')=
    \lim_{\omega_{1}\rightarrow 1^{\pm}}
    \begin{bmatrix}
    \frac{C}{\vert 1-\omega_1\vert^2} && \frac{D}{\vert 1-\omega_1\vert(1-\omega_1)^{1/2}} \\
    \frac{E}{\vert 1-\omega_1\vert(1-\overline{\omega}_1)^{1/2}} && \frac{F}{\vert 1-\omega_1\vert}
    \end{bmatrix}=
    \begin{bmatrix}
    H'(\omega') && 0 \\
    0 && F^\pm(\omega')
    \end{bmatrix}\,,
    \]
    with~$F^{\pm}(\omega'):=\lim_{\omega_1\to 1^\pm}\frac{F}{\vert 1-\omega_1\vert}$.
    The equality~$\lim_{\omega_1\to 1^\pm}\frac{1-\overline{\omega}_1}{\vert 1-\omega_1\vert}=\pm i$
    leads to
    \begin{equation}
       \label{eq:F}
       F^{\pm}(\omega')=\pm i\!\!\!\!\sum_{\varepsilon'\in\{\pm 1\}^{\mu-1}}
   \prod_{j=2}^{\mu}(1-\overline{\omega}_j^{\varepsilon_j})\big(A_{\mathcal{B}}^{(+1,\varepsilon')}-A_{\mathcal{B}}^{(-1,\varepsilon')}\big)\,,
    \end{equation}
    where~$A_{\mathcal{B}}^\varepsilon$ is the restriction
    of~\(A^{\varepsilon}\)
    to the subspace spanned by the classes of the curves in~$\mathcal{B}$.
    
Since~$\sign(\widehat{H}(\omega))=\sign(H(\omega))=\sigma_L(\omega)$ and~$\nul(\widehat{H}(\omega))=\nul(H(\omega))=\eta_L(\omega)$ for all~$\omega\in\mathbb{T}^\mu_*$,
Lemma~\ref{lemma:limit-signature} applied to~$H(t)=\widehat{H}(\exp(\pm 2\pi it),\omega')$ yields the inequality
\[
        \left|\lim_{\omega_{1}\rightarrow 1^{\pm}}\sigma_{L}(\omega_{1},\omega')-\sigma_{L'}(\omega')\mp\rho_{L}(\omega')\right|\le \eta_{L'}(\omega')+\tau_{L}(\omega')-\lim_{\omega_{1}\rightarrow 1}\eta_{L}(\omega_{1},\omega')\,,
        \]
  where
\begin{equation}
    \label{eq:def-rho-tau}
\rho_L(\omega'):=\sign\left(F^+(\omega')\right)\quad\text{and}\quad
\tau_L(\omega'):=\nul\left(F^+(\omega')\right)\,.
\end{equation}
By Lemma~\ref{lem:Alex}, it now only remains to show that the functions~$\rho_L$ and~$\tau_L$ defined via~\eqref{eq:F} and~\eqref{eq:def-rho-tau} coincide with the functions~$\rho_\ell$ and~$\tau_\ell$ defined via~\eqref{eq:tau-rho}, respectively.
(Here, we take the liberty to appeal to the forthcoming Lemma~\ref{lem:Alex}
based the four-dimensional point of view on the nullity; alternatively, the case~$\omega\in\mathbb{T}_*^\mu$ which suffices for our current purposes can
be obtained via the three-dimensional approach as a consequence of~\cite[Corollary~3.6]{C-F}.)

\medskip

Our demonstration of the equalities~$\rho_L=\rho_\ell$ and~$\tau_L=\tau_\ell$ rely on a sequence of lemmas. The proof of the first one is based on an observation of Cooper~\cite{Cooper}.

\begin{figure}[tbp]
        \centering
        \includegraphics[width=12.5cm]{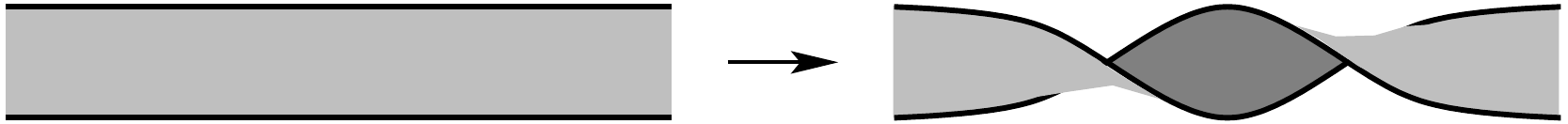}
        \caption{Twisting a band.}
        \label{band_twist}
    \end{figure}
    
\begin{lemma}
    \label{lemma:homotopy}
The functions~$\rho_{L}$ and~$\tau_{L}$ are invariant under link homotopy.   
\end{lemma}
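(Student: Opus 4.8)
The plan is to show that both $\rho_L(\omega')$ and $\tau_L(\omega')$, which are the signature and nullity of the matrix $F^+(\omega')$ built from the generalized Seifert matrices of the C-complex~$S$ restricted to the curves in~$\mathcal{B}$, depend only on the link-homotopy class of~$L$. Since link homotopy is generated by crossing changes of a single component with itself (together with ambient isotopy), it suffices to analyze how $F^+(\omega')$ changes under such a move.

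First I would fix a C-complex $S = S_1 \cup \dots \cup S_\mu$ for~$L$ and recall, following Cooper~\cite{Cooper}, that a self-crossing change of the component~$L_1$ can be realized by taking one of the bands of the Seifert surface~$S_1$ and inserting a full twist in it, as illustrated in Figure~\ref{band_twist}. This operation modifies the surface~$S_1$, hence modifies $H_1(S)$ and the generalized Seifert forms~$\alpha^\varepsilon$. The key point is to track exactly how the relevant matrix $A^{(+1,\varepsilon')}_{\mathcal B}-A^{(-1,\varepsilon')}_{\mathcal B}$ entering~\eqref{eq:F} is affected. Twisting a band of~$S_1$ adds (or removes) a generator to a basis of $H_1(S)$, but this new generator~$\gamma$ lies on~$S_1$ and can be chosen to be null-homologous in~$S'=S\setminus S_1$; moreover the push-offs~$\gamma^\varepsilon$ depend on~$\varepsilon_1$ in the usual way but the extra self-linking contributed by the twist is the \emph{same} for the two choices $\varepsilon_1 = +1$ and $\varepsilon_1 = -1$ of the normal direction along~$S_1$. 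Consequently, the difference $A^{(+1,\varepsilon')}-A^{(-1,\varepsilon')}$, and in particular its restriction to~$\mathcal B$, is unchanged by the twist: the twisting parameter drops out of the difference. This is precisely the observation of Cooper that makes self-crossing changes invisible at the level of~$F^\pm$.

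Next I would address the bookkeeping: a priori a self-crossing change alters the chosen basis $\mathcal A \cup \mathcal B$ of $H_1(S)$, so one must check that after a suitable change of basis (conjugating $\widehat H$ by an invertible matrix, which alters neither signature nor nullity) the limit matrix $F^+(\omega')$ is literally unchanged, not merely congruent. Here I would argue that the extra generator~$\gamma$ produced by twisting can be arranged to lie in the span of~$\mathcal A$ (it is null-homologous in~$S'$), so it contributes only to the block~$C$ and to the off-diagonal blocks, all of which are killed by the rescaling $P(\omega_1)$ in the limit $\omega_1\to 1$; the block~$F$, and hence $F^\pm$, is governed solely by~$\mathcal B$, which can be taken to be unaffected. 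Combined with the fact that ambient isotopy of~$L$ changes the C-complex only by S-equivalence (which preserves $\sigma_L,\eta_L$ globally, in particular the derived quantities $\rho_L,\tau_L$), this gives invariance of $\rho_L$ and $\tau_L$ under the generators of link homotopy, hence under link homotopy itself.

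The main obstacle I anticipate is the careful verification that the twist contribution genuinely cancels in the difference $A^{(+1,\varepsilon')}-A^{(-1,\varepsilon')}$ once one also accounts for the \emph{interaction} of the new generator~$\gamma$ with the old curves in~$\mathcal B$ whose push-offs may now thread through the twisted band: one must confirm that these interaction terms, too, are independent of~$\varepsilon_1$, which again follows because the curves of~$\mathcal B$ are pushed off along surfaces other than~$S_1$ where the twist occurs (or, if they meet~$S_1$, the sign~$\varepsilon_1$ enters only through a factor common to both $+1$ and $-1$ after the appropriate normalization). Making this precise will require a local picture of the band and its neighboring clasps analogous to Figure~\ref{band_twist}, but no genuinely new idea beyond Cooper's.
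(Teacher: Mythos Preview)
Your core idea — that Cooper's observation shows the difference $A^{(+1,\varepsilon')}-A^{(-1,\varepsilon')}$ is unchanged by a band twist — is exactly the right one, and it is what the paper uses. However, your implementation contains a genuine misconception that derails most of the argument.

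The error is the claim that ``twisting a band of $S_1$ adds (or removes) a generator to a basis of $H_1(S)$''. It does not. Inserting a full twist into an existing band of a surface does not change the abstract surface at all: the genus, the number of boundary components, and hence $H_1$ are preserved. The twisted C-complex $\widehat S$ is canonically homotopy equivalent to $S$, and one works with the \emph{same} basis $\mathcal A\cup\mathcal B$ throughout. All of your subsequent bookkeeping about a ``new generator $\gamma$'', its placement in the $\mathcal A$-block, and the rescaling by $P(\omega_1)$ is therefore addressing a nonexistent issue. The paper's proof simply identifies $H_1(S)\cong H_1(\widehat S)$ via $x\mapsto\widehat x$ and checks directly that
\[
\lk(x^{(+1,\varepsilon')},y)-\lk(x^{(-1,\varepsilon')},y)=\lk(\widehat x^{\,(+1,\varepsilon')},\widehat y)-\lk(\widehat x^{\,(-1,\varepsilon')},\widehat y)
\]
for all $x,y$ and all $\varepsilon'$; this is a local computation at the twist and is the whole content of the lemma.

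A secondary point: link homotopy permits self-crossing changes in \emph{any} component $L_i$, not only $L_1$. Your write-up treats only a twist in $S_1$. The paper's argument is phrased for a twist in an arbitrary band of the C-complex, and the identity above holds regardless of which $S_i$ carries the twist (a twist in $S_j$ with $j\neq 1$ affects both push-offs in the $\varepsilon_1$-direction identically, and a twist in $S_1$ contributes the same self-linking to both signs of $\varepsilon_1$). You should state and check this.
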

\begin{proof}
Any crossing change between two strands of the same link component can be realised by twisting a band in a C-complex, as illustrated in Figure~\ref{band_twist}. Therefore, consider a C-complex~\(S\) and another
C-complex~\(\widehat{S}\) obtained from~$S$ by twisting a band.
    Since~$S$ and~$\widehat{S}$ are homotopy equivalent in an obvious way, their first homology groups are canonically isomorphic.
    Let us denote this isomorphism by~$H_1(S)\to H_1(\widehat{S}),x\mapsto\widehat x$.
    Then, one easily checks the equality 
    \[\lk(x^{(+1,\varepsilon')},y)-\lk(x^{(-1,\varepsilon')},y)=\lk(\widehat{x}^{(+1,\varepsilon')},
    \widehat{y})-\lk(\widehat{x}^{(-1,\varepsilon')},\widehat{y})\]
    for all~$x,y\in H_1(S)$ and~\(\varepsilon'\in\{\pm 1\}^{\mu-1}.\)
    Given the form of~$F^+(\omega')$ described in Equation~\eqref{eq:F}, we see
    that this matrix is invariant under band
    twisting. This shows that its signature~$\rho_{L}$ and nullity~$\tau_{L}$ are invariant under link homotopy.
    \end{proof}

    By Lemma~\ref{lemma:homotopy}, we can
    assume without loss of generality that~$L_1$
    is the unknot. By~\cite[Lemma~1]{Cim}, we can then find a C-complex~\(S=S_1\cup\dots\cup S_\mu\) for~\(L\) such that~\(S_{1}\) is a disk.
    Recall that we also assume that~$S'$ is connected.
    
    Note that since~$S$ is connected with~$\mu\ge 2$,
the number~$n$ of clasps involving~$S_1$ is strictly positive.
Let us number these clasps linearly from~$1$ to~$n$, starting with an arbitrary one and following the cyclic order along the oriented boundary~\(\partial S_{1}\). Let~$c(1),\dots,c(n)\in\{2,\dots,\mu\}$ denote the corresponding colors, and~$s(1),\dots,s(n)\in\{-1,1\}$ denote the corresponding signs. (By the {\em sign of a clasp\/} of color~$j$, we mean
its contribution to the linking number~$\lk(L_1,L_j)$.)

\begin{lemma}
    \label{lem:comb}
    For any~$\omega'\in\mathbb{T}^{\mu-1}_*$, the terms~$\rho_{L}(\omega')$ and~$\tau_{L}(\omega')$ are given by the signature and nullity of the tridiagonal Hermitian matrix~$F$ of size~$n-1$
with non-vanishing coefficients equal to
\begin{equation}
\label{equ:F}
F_{k,k-1}=\overline{F}_{k-1,k}=\frac{i}{1-\omega^{s(k)}_{c(k)}},\qquad F_{k,k}=\frac{i\cdot(\omega^{s(k)}_{c(k)}\omega^{s(k+1)}_{c(k+1)}-1)}{(1-\omega^{s(k)}_{c(k)})(1-\omega^{s(k+1)}_{c(k+1)})}\,.
\end{equation}
\end{lemma}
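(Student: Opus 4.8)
The idea is to exploit the very economical C-complex at our disposal — recall that by Lemma~\ref{lemma:homotopy} and \cite[Lemma~1]{Cim} we may assume that \(S_1\) is a disk and that \(S'=S\setminus S_1\) is connected — together with a basis of \(H_1(S)\) adapted to the clasps on \(S_1\), so as to read off \(F^+(\omega')\) from \eqref{eq:F} clasp by clasp.

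\emph{Homological set-up.} Write \(S=S_1\cup S'\). Since \(S_1\) is a disk, the intersection \(S_1\cap S'\) is the disjoint union of the \(n\) clasp arcs \(\alpha_1,\dots,\alpha_n\) on \(S_1\) (disjointness uses condition~(3) of Definition~\ref{def:C-cplx}), so \(S_1\) contributes nothing to \(H_1\) while \(\widetilde H_0(S_1\cap S')\cong\Z^{n-1}\). As \(S_1\), \(S'\) and \(S\) are all connected, the Mayer--Vietoris sequence degenerates to a split short exact sequence
\[
0\longrightarrow H_1(S')\longrightarrow H_1(S)\longrightarrow\widetilde H_0(S_1\cap S')\cong\Z^{n-1}\longrightarrow 0\,.
\]
For \(k=1,\dots,n-1\) I would choose a simple closed curve \(\gamma_k\subset S\) meeting \(S_1\) only along the arcs \(\alpha_k,\alpha_{k+1}\) (``passing through clasps \(k\) and \(k+1\)'', as in the C-complexes of Examples~\ref{ex:twist} and~\ref{ex:torus}); the classes \([\gamma_k]\) then map onto a basis of \(\widetilde H_0(S_1\cap S')\), so we may take \(\mathcal{B}=\{\gamma_1,\dots,\gamma_{n-1}\}\) in the block decomposition of the ongoing proof.

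\emph{Computing \(F^+(\omega')\).} Next I would invoke Cooper's observation~\cite{Cooper}: reversing the \(S_1\)-push-off affects a linking number only locally, so for cycles \(x,y\) meeting \(S_1\) the integer \(\lk(x^{(+1,\varepsilon')},y)-\lk(x^{(-1,\varepsilon')},y)\) is a sum of contributions, one for each clasp arc on \(S_1\) met by both \(x\) and \(y\), each contribution being \(0\) or \(\pm1\) according to the sign of the clasp and the coordinate \(\varepsilon_{c(k)}\) of \(\varepsilon'\). Applied to \(x=\gamma_p,\ y=\gamma_q\), this shows at once that \(A_{\mathcal{B}}^{(+1,\varepsilon')}-A_{\mathcal{B}}^{(-1,\varepsilon')}\) is tridiagonal — \(\gamma_p\) and \(\gamma_q\) share a clasp only when \(|p-q|\le 1\), the shared clasp being \(\alpha_p\) when \(q=p-1\) and the clasps \(\alpha_p,\alpha_{p+1}\) when \(q=p\) — and it pins down each clasp's contribution as an explicit function of its sign \(s(k)\). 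Substituting into \eqref{eq:F} and using that a clasp of color \(j\) feels only the coordinate \(\varepsilon_j\), the sum over \(\varepsilon'\) factorizes: the contribution of clasp \(k\) produces a factor \(\bigl(1-\overline\omega_{c(k)}^{\,\pm1}\bigr)\) together with the positive scalar \(\prod_{j\ge 2,\ j\ne c(k)}|1-\omega_j|^2\) coming from the remaining summations. A short computation then identifies \(F^+(\omega')\) with the tridiagonal Hermitian matrix \(F\) of \eqref{equ:F} up to conjugation by an explicit invertible diagonal matrix — one whose entries absorb these positive scalars together with unit-modulus phases of the form \((-\overline\omega_{c(\cdot)}^{\,s(\cdot)})^{k}\), precisely as in the torus-link normalization carried out in Example~\ref{ex:torus}. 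Since a congruence preserves signature and nullity, we conclude \(\rho_L(\omega')=\sign F^+(\omega')=\sign F\) and \(\tau_L(\omega')=\nul F^+(\omega')=\nul F\), as claimed.

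\emph{Main obstacle.} The hard part is the local clasp computation: one must carefully track the two \(S_1\)-push-off directions, the \(S_{c(k)}\)-push-off direction, and the sign of the clasp, in order to determine exactly which value of \(\varepsilon_{c(k)}\) contributes and with which sign, and then to check that the resulting positive normalization factors are mutually compatible, so that a \emph{single} diagonal congruence does bring \(F^+(\omega')\) to the form \eqref{equ:F}. Everything else is routine bookkeeping.
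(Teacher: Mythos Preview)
Your approach is essentially the same as the paper's: same choice of~$\mathcal{B}$ via Mayer--Vietoris, same clasp-by-clasp localisation of~$A_{\mathcal{B}}^{(+1,\varepsilon')}-A_{\mathcal{B}}^{(-1,\varepsilon')}$, same tridiagonality argument.

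One place where you overcomplicate: the paper carries out the linking-number computation explicitly and finds that $F^+(\omega')=\bigl(\prod_{j\ge 2}|1-\omega_j|^2\bigr)\cdot F$, a \emph{single positive scalar multiple} of~$F$ --- no diagonal congruence and no unit-modulus phases are needed. The mechanism is that once you fix the coordinates~$\varepsilon_{c(k)}$ (and~$\varepsilon_{c(k+1)}$ for the diagonal entry) dictated by the clasp computation, the sum over the remaining free coordinates~$\varepsilon_j$ in~\eqref{eq:F} factorises, each contributing $(1-\overline\omega_j)+(1-\omega_j)=|1-\omega_j|^2$; the constrained factors then combine with this to give the \emph{same} scalar~$\prod_{j\ge 2}|1-\omega_j|^2$ in every entry, times the expression~\eqref{equ:F}. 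So the ``main obstacle'' you flag dissolves as soon as you actually write down the values of $\lk(x_k^{(+1,\varepsilon')}-x_k^{(-1,\varepsilon')},x_{k'})$ for $k'\in\{k-1,k,k+1\}$ --- this explicit table is the heart of the paper's proof and is what your sketch omits.
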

    \begin{proof}
Let~$S$ be a C-complex associated to a~$\mu$-component link~$L$, with~$S_1$ a disc, and~$S'$ connected. By definition of~$\rho_{L}$ and~$\tau_{L}$ (recall Equation~\eqref{eq:F}), we need to compute the subgroup of~$H_1(S)$ spanned by curves of~$\mathcal{B}$, as well as the numbers~$\lk(x^{(1,\varepsilon')},y)-\lk(x^{(-1,\varepsilon')},y)=:\lk(x^{(1,\varepsilon')}-x^{(-1,\varepsilon')},y)$
for all~$x,y$ in this subspace and all~$\varepsilon'\in\{\pm 1\}^{\mu -1}$.

Since~$S_1$ is a disc and~$S'$ is connected, an easy
homological computation shows that the family~$\mathcal{B}$ can be chosen to be~$n-1$ cycles, each passing through consecutive clasps around~$S_1$.
For definiteness, let us write~$\mathcal{B}=\{x_1,\dots,x_{n-1}\}$,
where for all~$1\le k\le n-1$, the cycle~$x_k$ enters~$S_1$ through the~$k^{\text{th}}$ clasp and exits~$S_1$ through the~$(k+1)^{\text{th}}$ one.

A straightforward computation leads to the following results,
valid for all~$\varepsilon'\in\{\pm 1\}^{\mu -1}$:
for any~$1\le k\le n-1$, we have
\[
\lk(x_k^{(1,\varepsilon')}-x_k^{(-1,\varepsilon')},x_k)=\begin{cases}
    -1&\text{if~$\varepsilon_k=s(k)$ and~$\varepsilon_{k+1}=s(k+1)$;}\cr
    +1&\text{if~$\varepsilon_k=-s(k)$ and~$\varepsilon_{k+1}=-s(k+1)$;}\cr
    0&\text{else}\,,
\end{cases}
\]
while for any~$2\le k\le n-1$, we have
\begin{align*}
    \lk(x_k^{(1,\varepsilon')}-x_k^{(-1,\varepsilon')},x_{k-1})&=\begin{cases}
    +1&\text{if~$\varepsilon_k=s(k)$;}\cr
    0&\text{else}\,,
\end{cases}\cr
\lk(x_{k-1}^{(1,\varepsilon')}-x_{k-1}^{(-1,\varepsilon')},x_{k})&=\begin{cases}
    -1&\text{if~$\varepsilon_k=s(k)$;}\cr
    0&\text{else}.
    \end{cases}
\end{align*}
One then checks that the coefficients of
the matrix~$F^+(\omega')$ defined by~\eqref{eq:F}
are equal to
\[
F^+(\omega')_{k,k'}:=i\!\!\!\!\sum_{\varepsilon'\in\{\pm 1\}^{\mu-1}}
   \prod_{j=2}^{\mu}(1-\overline{\omega}_j^{\varepsilon_j})\lk(x_k^{(1,\varepsilon')}-x_k^{(-1,\varepsilon')},x_{k'})=\prod_{j=2}^{\mu}\vert 1-\omega_j\vert^2\cdot F_{k,k'}
\]
for all~$1\le k,k'\le n-1$, with~$F_{k,k'}$ as in Equation~\eqref{equ:F}.
(This formula holds whether or not the involved clasps
have the same color.)
The scalar~$\prod_{j=2}^{\mu}\vert 1-\omega_j\vert^2$ being strictly positive for all~$\omega'\in\mathbb{T}_*^{\mu-1}$, the statement follows.
\end{proof}

\begin{lemma}
    \label{lem:invarianceF}
The functions~$\rho_{L}$ and~$\tau_{L}$ are invariant under the following transformations:
\begin{enumerate}
    \item removal of two adjacent clasps of the same color and opposite signs (as long as~$S$ remains connected);
    \item permutation of two adjacent clasps of different colors.
\end{enumerate}
\end{lemma}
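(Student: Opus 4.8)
The plan is to exploit the fact, established in Lemma~\ref{lem:comb}, that $\rho_L(\omega')$ and $\tau_L(\omega')$ are the signature and nullity of the explicit tridiagonal Hermitian matrix $F$ of size $n-1$ whose entries depend only on the cyclic sequence of colors $c(1),\dots,c(n)$ and signs $s(1),\dots,s(n)$ of the clasps meeting $S_1$. So the entire lemma becomes a purely linear-algebraic statement: one must show that the two combinatorial moves on the sequence of clasps (deleting an adjacent cancelling pair, or transposing two adjacent clasps of different colors) change the tridiagonal matrix $F$ by a congruence, hence preserve its signature and nullity. First I would fix notation: write $a_k:=\omega_{c(k)}^{s(k)}\in S^1\setminus\{1\}$ for the unit associated to the $k$-th clasp, so that by \eqref{equ:F} the off-diagonal entry is $F_{k,k-1}=\overline{F_{k-1,k}}=\dfrac{i}{1-a_k}$ and the diagonal entry is $F_{k,k}=\dfrac{i(a_ka_{k+1}-1)}{(1-a_k)(1-a_{k+1})}$. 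A useful preliminary observation, to be recorded, is that $F_{k,k}=-F_{k,k-1}-F_{k+1,k}\cdot$(correction) — more precisely the diagonal entry decomposes as $\dfrac{i(a_k-1)+i(a_ka_{k+1}-a_k)}{(1-a_k)(1-a_{k+1})}=\dfrac{-i}{1-a_{k+1}}+\dfrac{ia_k}{1-a_k}\cdot\dfrac{a_{k+1}-1}{1-a_{k+1}}$; it is this kind of splitting of the diagonal term into a ``left'' and a ``right'' contribution that makes $F$ look like a (weighted) graph Laplacian / Jacobi matrix, and that is the algebraic shadow of the geometric cancellation of two clasps.

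For part~(1), removal of an adjacent cancelling pair: if clasps $k$ and $k+1$ have the same color $j$ and opposite signs, then $a_{k+1}=\overline{a_k}$ (since $s(k+1)=-s(k)$), so $a_ka_{k+1}=1$ and by \eqref{equ:F} the diagonal entry $F_{k,k}$ vanishes while $F_{k-1,k}$ and $F_{k,k+1}$ are nonzero. One then performs a symmetric row/column operation: use the nonzero off-diagonal entries in row/column $k$ to clear $F_{k-1,k}$ and the entries in row $k+1$, i.e. conjugate $F$ by an elementary matrix $T$ supported near index $k$. Because $F_{k,k}=0$, this is exactly the ``hyperbolic pair splitting'' move: indices $k-1,k,k+1$ carry a $3\times 3$ block that is congruent to $\begin{bmatrix}F'_{k-1,k-1}&0&0\\0&0&*\\0&\overline{*}&0\end{bmatrix}$, where the new corner entry $F'_{k-1,k-1}$ is precisely the diagonal entry one gets for the shorter clasp sequence with clasps $k,k+1$ deleted (this is where the splitting identity above is used). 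The hyperbolic $2\times 2$ block contributes $0$ to the signature and $0$ to the nullity, and the residual matrix is exactly the $F$-matrix of the shorter sequence; I would verify this corner-entry computation by direct substitution. The genuine content here is bookkeeping the elementary congruence carefully; the hypothesis that $S$ remains connected just guarantees $n-1\ge 1$ after deletion so the statement is non-vacuous, and should be noted but requires nothing.

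For part~(2), transposition of two adjacent clasps $k,k+1$ of different colors: swapping them replaces $(a_k,a_{k+1})$ by $(a_{k+1},a_k)$ in the sequence, which only affects the $2\times 2$ principal block on indices $\{k-1,k\}$ together with the coupling to $k+1$ — actually it affects entries $F_{k-1,k-1},F_{k,k},F_{k-1,k},F_{k,k+1}$ and $F_{k,k}$. I would show directly that the new tridiagonal matrix $\widetilde F$ is obtained from $F$ by a congruence $\widetilde F = Q F Q^*$ with $Q$ supported on indices near $k$; in fact one expects $Q$ to be (a scalar multiple of) a transposition/permutation-type matrix, since geometrically the two cycles $x_{k-1}$ and $x_k$ through the relevant clasps get reorganized. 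The cleanest route may be to observe that both $F$ and $\widetilde F$ are congruent to the same ``direct sum over colors'' normal form: grouping the clasps by color and using the block structure coming from \eqref{eq:F}, one checks that the congruence class of $F$ depends on the multiset of color-blocks and their internal sign data but not on how blocks of distinct colors are interleaved. Either way, the claim reduces to a small explicit matrix identity.

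\emph{Main obstacle.} The conceptually routine but technically delicate point is the corner-entry computation in part~(1): one must verify that after the hyperbolic splitting the surviving diagonal entry at index $k-1$ is genuinely the Lemma~\ref{lem:comb} diagonal entry for the sequence with clasps $k,k+1$ excised — i.e. that $F'_{k-1,k-1}=\dfrac{i(a_{k-1}a_{k+2}-1)}{(1-a_{k-1})(1-a_{k+2})}$ — which is where all the specific structure of \eqref{equ:F} is used and where a sign or a reciprocal could easily go wrong. Correspondingly in part~(2), pinning down the exact congruence matrix $Q$ (and checking it is unitary up to positive scalars, so that nothing about signature/nullity is disturbed) is the only place demanding care. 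Everything else is formal manipulation of elementary congruences, and I would present it as such rather than grinding through every entry.
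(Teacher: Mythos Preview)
Your strategy---reduce everything to a congruence of the explicit tridiagonal matrix from Lemma~\ref{lem:comb}---is exactly the paper's, and your proposal would go through. But you are missing the one simplification that makes the argument short, and it is precisely the move that dissolves what you flag as the main obstacle.

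For part~(1): the clasps around $S_1$ are only \emph{cyclically} ordered; the linear numbering $1,\dots,n$ used in Lemma~\ref{lem:comb} begins at an arbitrary clasp. So renumber so that the cancelling pair sits at positions $n-1,n$. With this choice the matrix is literally
\[
F=\begin{bmatrix}F'&\xi&0\\ \xi^{*}&\alpha&\lambda\\ 0&\overline{\lambda}&0\end{bmatrix},
\]
where $F'$ is \emph{verbatim} the Lemma~\ref{lem:comb} matrix for the shorter clasp sequence (same starting point, two fewer clasps at the end), $\lambda\neq 0$, and the bottom-right diagonal entry vanishes because $a_{n-1}a_n=1$. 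The standard hyperbolic-stabilisation argument then gives $\sign(F)=\sign(F')$ and $\nul(F)=\nul(F')$ with no computation at all. Your ``corner entry'' verification $F'_{k-1,k-1}=\dfrac{i(a_{k-1}a_{k+2}-1)}{(1-a_{k-1})(1-a_{k+2})}$ never arises: by putting the pair at the end instead of the middle, the block $F'$ appears as a genuine principal submatrix rather than as the result of a congruence, and the new adjacency between clasps $k-1$ and $k+2$ is encoded by the choice of starting point, not by a matrix identity you have to check.

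For part~(2): your guess that $Q$ is ``permutation-type'' is wrong, and the normal-form route you sketch is more than is needed. The paper again uses the cyclic freedom to put the two clasps to be swapped at positions $n-1,n$, writes $z_1=a_{n-2}$, $z_2=a_{n-1}$, $z_3=a_n$, and then:
\begin{itemize}
\item if $z_2z_3=1$, both $F$ and the swapped $F'$ reduce to the same $F_0$ by the move from part~(1);
\item if $z_2z_3\neq 1$, one performs a single Gaussian elimination step using the nonzero bottom-right entry, i.e.\ conjugates $F$ by the lower-triangular
\(
P=\begin{bmatrix}\mathit{Id}&0&0\\0&1&0\\0&\frac{1-z_3}{1-z_2z_3}&1\end{bmatrix}
\)
and $F'$ by the analogous $P'$ with $z_2,z_3$ exchanged; a direct check shows $P^{*}FP=(P')^{*}F'P'$.
\end{itemize}
So the congruence is triangular, not a permutation, and the common target is the block-diagonal matrix with bottom-right entry $\dfrac{i(z_2z_3-1)}{(1-z_2)(1-z_3)}$ (symmetric in $z_2,z_3$) and upper block depending only on $z_1$ and the product $z_2z_3$.

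In short: your plan is correct, but exploit the cyclic renumbering to place the relevant clasps at the \emph{end} of the linear order. That single observation eliminates the bookkeeping you anticipated in part~(1) and points you to the right (triangular) congruence in part~(2).
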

\begin{proof}
By Lemma~\ref{lem:comb}, we only need to check that for all~$\omega'\in\mathbb{T}_*^{\mu-1}$, the signature and nullity of the
tridiagonal Hermitian matrix~$F$ given by Equations~\eqref{equ:F} are 
unchanged by these two transformations.

To show the invariance under the first transformation, observe that~$F_{k,k}$ vanishes if~$c(k)=c(k+1)$ and~$s(k)\neq s(k+1)$. Note also that~$F_{k,k-1}$ never vanishes for~$\omega'\in\mathbb{T}^{\mu-1}_*$. Renumbering the clasps starting with the~$(k+2)^\text{th}$ one (and thus ending with the~$(k+1)^\text{th}$ one), the corresponding matrix~$F$ is of the form
\[
F=\begin{bmatrix}
F'&\xi&0\cr \xi^*&\alpha&\lambda\cr 0&\overline{\lambda}&0
\end{bmatrix}\,,
\]
with~$\alpha\in\mathbb{R},\lambda\in\C^*$, and~$F'$ the matrix corresponding to the C-complex with both clasps removed. The fact that the signature and nullity of~$F$ and~$F'$ coincide is well-known, see e.g. the proof of the invariance of the Levine-Tristram signature in~\cite{Lic}.

We are left with the proof that the signature and nullity of~$F$ are unchanged when permuting two adjacent clasps
of different colors. Without loss of generality,
let us assume that these two clasps are the two last ones in the linear numbering~$1,\dots,n$, and let us denote the occurring variables by~$z_1:=\omega_{c(n-2)}^{s(n-2)},~z_2:=\omega_{c(n-1)}^{s(n-1)}$ and~$z_3:=\omega_{c(n)}^{s(n)}$. By Equation~\eqref{equ:F}, we thus need to compare two matrices of the form
\[
F=\begin{bmatrix}
F_0&\xi&0\cr
\xi^*&\frac{i\cdot(z_1z_2-1)}{(1-z_1)(1-z_2)} & \frac{-i}{1-\overline{z}_2} \cr
0&\frac{i}{1-z_2}&\frac{i\cdot(z_2z_3-1)}{(1-z_2)(1-z_3)}
\end{bmatrix}
\quad
\text{and}
\quad
F'=\begin{bmatrix}
F_0&\xi&0\cr \xi^*&\frac{i\cdot(z_1z_3-1)}{(1-z_1)(1-z_3)} & \frac{-i}{1-\overline{z}_3} \cr
0&\frac{i}{1-z_3}&\frac{i\cdot(z_2z_3-1)}{(1-z_2)(1-z_3)}
\end{bmatrix}\,.
\]
If~$z_2z_3=1$, then~$F$ and~$F'$ both have the signature and nullity of~$F_0$ by the first step, and the invariance holds.
If~$z_2z_3\neq 1$, then one can consider the matrices
\[
P=\begin{bmatrix}
\mathit{Id} &0 &0\cr
0 & 1 & 0\cr
0 & \frac{1-z_3}{1-z_2z_3} & 1
\end{bmatrix}\quad\text{and}\quad P'=\begin{bmatrix}
\mathit{Id} &0 &0\cr
0 & 1 & 0\cr
0 & \frac{1-z_2}{1-z_2z_3} & 1
\end{bmatrix}\,.
\]
A direct computation now leads to the equality
\[
P^*FP=\begin{bmatrix}
F_0&\xi&0\cr
\xi^*&\frac{i\cdot(z_1z_2z_3-1)}{(1-z_1)(1-z_2z_3)} & 0 \cr
0&0&\frac{i\cdot(z_2z_3-1)}{(1-z_2)(1-z_3)}
\end{bmatrix}=(P')^*F'P'\,,
\]
concluding the proof.
\end{proof}

The fact that~$\rho_L$ (resp.~$\tau_L$) coincides with~$\rho_\ell$ (resp.~$\tau_\ell$) of Equation~\eqref{eq:tau-rho} now follows from one last lemma.

\begin{lemma}
    \label{lem:signF}
    For any~$n\ge 1$ and~$z=(z_1,\dots,z_n)\in\mathbb{T}^n_*$, let~$G_n(z)$ denote the tridiagonal matrix of size~$n-1$ with non-vanishing coefficients equal to 
\begin{equation*}
    G_n(z)_{k,k-1}=\overline{G_n(z)}_{k-1,k}=\frac{i}{1-z_k}\quad\text{and}\quad G_n(z)_{k,k}=\frac{i\cdot(z_kz_{k+1}-1)}{(1-z_k)(1-z_{k+1})}\,.
\end{equation*}
Then, we have
\[
\sign(G_n(z))=\rho(z_1,\dots,z_n)\quad\text{and}\quad\nul(G_n(z))=\begin{cases} 1&\text{if~$z_1\cdots z_n=1$};\cr
0&\text{else},
\end{cases}
\]
with~$\rho$ defined by~\eqref{eq:def-rho} and~\eqref{eq:def-rho-mult}.
\end{lemma}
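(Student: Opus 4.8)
The plan is to prove both claims simultaneously by induction on $n$, expanding the tridiagonal matrix along its last row and column. For the base case $n=1$, the matrix $G_1(z)$ is empty, so $\sign(G_1(z))=0=\rho(z_1)$ (the sum in~\eqref{eq:def-rho-mult} is empty) and $\nul(G_1(z))=0$, while the condition $z_1=1$ is excluded since $z\in\mathbb{T}^1_*$; so there is nothing to check. The case $n=2$ is a direct computation: $G_2(z)$ is the $1\times 1$ matrix $\bigl[\frac{i(z_1z_2-1)}{(1-z_1)(1-z_2)}\bigr]$, whose sign is $\sgn\bigl[i(z_1z_2-1)(\overline z_1-1)(\overline z_2-1)\bigr]=\rho(z_1,z_2)$ after clearing the positive factor $|1-z_1|^{-2}|1-z_2|^{-2}$, and which vanishes precisely when $z_1z_2=1$.

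For the inductive step, I would distinguish two cases according to whether $z_{n-1}z_n=1$. If $z_{n-1}z_n=1$, then the bottom-right entry $G_n(z)_{n-1,n-1}$ vanishes while the subdiagonal entry $G_n(z)_{n-1,n-2}$ does not; exactly as in the first transformation of Lemma~\ref{lem:invarianceF}, one splits off a hyperbolic $\begin{bmatrix}\alpha&\lambda\\\overline\lambda&0\end{bmatrix}$ summand (contributing signature $0$, nullity $0$) and is left with the matrix of size $n-3$ obtained by deleting the last two indices, which one checks equals $G_{n-2}(z_1,\dots,z_{n-2})$ up to the adjustment of the last diagonal term — here one uses $z_{n-1}z_n=1$ to see the relevant entry is already in the right form. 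The inductive hypothesis then gives $\sign=\rho(z_1,\dots,z_{n-2})$, and one verifies $\rho(z_1,\dots,z_{n-2})=\rho(z_1,\dots,z_n)$ and $z_1\cdots z_{n-2}=1\iff z_1\cdots z_n=1$ when $z_{n-1}z_n=1$, both immediate from~\eqref{eq:def-rho-mult} and $\rho(z_{n-1},z_n)=\rho(z_{n-1},z_{n-1}^{-1})=\sgn[i(z_{n-1}^{-1}-1)\cdots]$, which vanishes. If $z_{n-1}z_n\neq 1$, I would perform the congruence that clears the entry $G_n(z)_{n-1,n-2}$ against the now-invertible corner entry $G_n(z)_{n-1,n-1}$: conjugating by a unipotent matrix $P$ supported in the last two rows (chosen exactly as the matrices $P,P'$ in Lemma~\ref{lem:invarianceF}), one splits $G_n(z)$ congruently as $G_{n-1}(z_1,\dots,z_{n-2},z_{n-1}z_n)\oplus\bigl[\frac{i(z_{n-1}z_n-1)}{(1-z_{n-1})(1-z_n)}\bigr]$. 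The one-dimensional block has signature $\rho(z_{n-1},z_n)$ and nullity $0$; by induction the other block has signature $\rho(z_1,\dots,z_{n-2},z_{n-1}z_n)$ and nullity equal to $1$ iff $z_1\cdots z_{n-2}(z_{n-1}z_n)=1$. Summing, and using the identity $\rho(z_1,\dots,z_{n-2},z_{n-1}z_n)+\rho(z_{n-1},z_n)=\rho(z_1,\dots,z_n)$ — which follows by inspecting~\eqref{eq:def-rho-mult} term by term, the only affected summand being $\rho(z_{n-2},z_{n-1}z_n)$ versus $\rho(z_{n-2},z_{n-1}z_n)$ (unchanged) together with the new last term $\rho(z_{n-1},z_n)$ — yields the signature claim, and the nullity claim is immediate since $z_1\cdots z_n=1\iff z_1\cdots z_{n-2}(z_{n-1}z_n)=1$.

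The main obstacle I anticipate is not any single computation but rather bookkeeping the reduction cleanly: verifying that deleting or merging the last indices genuinely produces $G_{n-1}$ or $G_{n-2}$ of the stated smaller tuples (the diagonal entry at the junction must be checked to match the prescribed formula for the merged variable $z_{n-1}z_n$), and checking the additivity identity for $\rho$ under this merge. Both are routine once one writes out~\eqref{eq:def-rho} and~\eqref{eq:def-rho-mult} explicitly, and in fact the congruence computation is the very one already carried out in the proof of Lemma~\ref{lem:invarianceF}, so it can be quoted rather than redone. It is worth noting that the degenerate sub-cases where some intermediate partial product $z_1\cdots z_k$ equals $1$ require no special treatment: the argument above never divides by such a quantity, since the only denominators introduced are $1-z_{n-1}z_n$, handled by the case split, and $1-z_j$ for individual $j$, which are nonzero by hypothesis $z\in\mathbb{T}^n_*$.
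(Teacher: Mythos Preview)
Your proposal is correct and follows essentially the same approach as the paper's proof: induction on $n$ with base cases $n=1,2$, and the inductive step split into the two cases $z_{n-1}z_n=1$ (reduce to $G_{n-2}(z_1,\dots,z_{n-2})$ via the hyperbolic-pair argument) and $z_{n-1}z_n\neq 1$ (conjugate by the same unipotent $P$ from Lemma~\ref{lem:invarianceF} to split off the $1\times 1$ block and obtain $G_{n-1}(z_1,\dots,z_{n-2},z_{n-1}z_n)$). One minor remark: in the first case there is in fact no ``adjustment of the last diagonal term'' to worry about, since the top-left $(n-3)\times(n-3)$ block of the tridiagonal $G_n(z)$ is literally $G_{n-2}(z_1,\dots,z_{n-2})$.
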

\begin{proof}
We proceed by induction on~$n\ge 1$. The case~$n=1$ holds with the right conventions (namely, that the signature and nullity of an empty matrix vanish), and the case~$n=2$ is straightforward. Hence, let us assume that the lemma holds up to~$n-1$, and consider~$G_n(z)$ with~$n\ge 3$
and~$z=(z_1,\dots,z_n)\in\mathbb{T}^n_*$.

If~$z_{n-1}$ and~$z_n$ satisfy~$z_{n-1}z_n=1$, then
the diagonal coefficient~$G_n(z)_{n-1,n-1}$ vanishes
while the off-diagonal ones~$G_n(z)_{n-1,n-2}=\overline{G_n(z)}_{n-2,n-1}$ do not vanish.
As a consequence, as in the first step of Lemma~\ref{lem:invarianceF}, the matrix~$G_n(z)$ has the same signature and nullity as~$G_{n-2}(z)$, which are known by the induction hypothesis. Since~$\rho(z_1,\dots,z_n)$ is easily seen to coincide with~$\rho(z_1,\dots,z_{n-2})$ if~$z_{n-1}z_n=1$ and similarly for the nullity, the lemma is checked in this case.

Let us now assume that~$z_{n-1}$ and~$z_n$ are such that~$z_{n-1}z_n\neq 1$. Then, as in the second step of
the proof of Lemma~\ref{lem:invarianceF}, one can consider the matrix
\[
P=\begin{bmatrix}
\mathit{Id} &0 &0\cr
0 & 1 & 0\cr
0 & \frac{1-z_n}{1-z_{n-1}z_{n}} & 1
\end{bmatrix}\,.
\]
A direct computation leads to the equality
\[
P^*G_n(z_1,\dots,z_n)P=G_{n-1}(z_1,\dots,z_{n-2},z_{n-1}z_n)\oplus\left(\frac{i\cdot(z_{n-1}z_n-1)}{(1-z_{n-1})(1-z_n)}\right)\,.
\]
By the induction hypothesis, we now get
\[
\sigma(G_n(z_1,\dots,z_n))=\rho(z_1,\dots,z_{n-2},z_{n-1}z_n)+\rho(z_{n-1},z_n)=\rho(z_1,\dots,z_n)
\]
and
\[
\eta(G_n(z_1,\dots,z_n))=\eta(G_{n-1}(z_1,\dots,z_{n-2},z_{n-1}z_n))=\begin{cases}
    1&\text{if~$z_1\cdots z_n=1$};\cr
    0&\text{else},
\end{cases}
\]
concluding the proof.
\end{proof}

We are now ready to conclude the proof of Theorem~\ref{thm:limit-of-signature-inequality}, i.e. to show  that~$\rho_L$ (resp.~$\tau_L$) coincides with~$\rho_\ell$ (resp.~$\tau_\ell$) of Equation~\eqref{eq:tau-rho}.

First note that, as a consequence of Lemma~\ref{lem:invarianceF}, the functions~$\rho_L$ and~$\tau_L$ only depend on the linking numbers~$\ell_2:=\lk(L_1,L_2),\dots,\ell_\mu:=\lk(L_1,L_\mu)$.
More precisely, if all these numbers vanish, then the transformations of Lemma~\ref{lem:invarianceF}
can be carried to the point where we are left with~$2$
claps, of the same color and opposite signs;
this leads to~$\rho_L=0=\rho_\ell$ and~$\tau_L=1=\tau_\ell$, so Theorem~\ref{thm:limit-of-signature-inequality} holds in such a case.
If the linking numbers do not all vanish, then via these two transformations, one can assume that the~$n:=\vert\ell\vert$ clasps are cyclically ordered around~$S_1$ as~$\vert\ell_2\vert$ clasps of color~$2$, followed by~$\vert\ell_3\vert$ clasps of color~$3$, and so on, ending
with~$\vert\ell_\mu\vert$ clasps of color~$\mu$.
Now, observe that~$\rho_L(\omega')$ and~$\eta_L(\omega')$
are the signature and nullity of the matrix~$F$ of Lemma~\ref{lem:comb}, which coincides with~$G_n(z)$ evaluated at
\[
z=(z_1,\dots,z_n)=(\omega_{c(1)}^{s(1)},\dots,\omega_{c(n)}^{s(n)})=(\underbrace{\omega^{s_2}_2,\dots,\omega^{s_2}_2}_{\vert\ell_2\vert},\dots,\underbrace{\omega^{s_\mu}_\mu,\dots,\omega^{s_\mu}_\mu}_{\vert\ell_\mu\vert})\,.
\]
The explicit form given by~\eqref{eq:tau-rho} now follows from Lemma~\ref{lem:signF},
concluding the proof of Theorem~\ref{thm:limit-of-signature-inequality}.

\medskip

Let us finally turn to the proof of the addendum yielding a more geometric description
of the function~$\rho_\ell$.

\begin{proof}[Proof of the addendum]
Fix~$\ell\in\Z^{\mu-1}\setminus\{0\}$. By Equation~\eqref{eq:tau-rho} and the lemmas above, the function~$\rho_\ell$ is equal to the signature of a matrix whose nullity is equal to~$1$ on~$\Sigma_\ell\subset\mathbb{T}^{\mu-1}_*$ and vanishes elsewhere.
This implies that~$\rho_\ell$ satisfies
the first point of the statement: it is constant
on the connected components of the complement of~$\Sigma_\ell$ in~$\mathbb{T}^{\mu-1}_*$,
and on the connected components of~$\Sigma_\ell\subset\mathbb{T}^{\mu-1}_*$. This also
implies that, when~$\omega'$ crosses a component
of~$\Sigma_\ell$, the function~$\rho_\ell(\omega')$
either jumps by~$\pm 2$ or stays constant, and always takes the average value on~$\Sigma_\ell$.

By the explicit form of~$\rho_\ell$ given in~\eqref{eq:def-rho},~\eqref{eq:def-rho-mult} and~\eqref{eq:tau-rho}, we see
that it satisfies the second point, i.e.
\[
\lim_{\omega_j\to 1^{s_j}}\rho_\ell(\omega_2,\dots,\omega_\mu)=\lim_{z_j\to 1^+}\rho(z_1,\dots,z_{\vert\ell\vert})=\sum_{k=1}^{\vert\ell\vert-1}\lim_{z\to 1^+}\rho(z,z^k)=\vert\ell\vert-1\,.
\]
We shall denote by~$\omega'=1^s$ this corner of the open torus~$\mathbb{T}^{\mu-1}_*$. Note that if some~$\ell_j$ vanishes, then this corner is not uniquely defined, but~$\rho_\ell$ being independent of~$\omega_j$, any value of~$s_j$ can be chosen.
By the symmetry property~$\rho_\ell(\overline{\omega}')=-\rho_\ell(\omega')$,
we obtain the fact that~$\rho_\ell$ takes the opposite value~$1-\vert\ell\vert$ at the opposite corner~$\omega'=1^{-s}$.

Now, consider the closed path~$\gamma\colon S^1\to\mathbb{T}^{\mu-1}$ defined by~$\gamma(z)=\left(z^{s_2},\dots,z^{s_\mu}\right)$,
which restricts to an open path in~$\mathbb{T}_*^{\mu-1}$ from the corner~$1^s$
to the opposite corner~$1^{-s}$.
Consider also for each~$\alpha\in S^1$ the hyperplan\new{e}
\[
\Sigma^{(\alpha)}_\ell:=\{(\omega_2,\dots,\omega_\mu)\in\mathbb{T}^{\mu-1}\,|\,\omega_2^{\ell_2}\cdots\omega_\mu^{\ell_\mu}=\alpha\}\,.
\]
This defines a foliation of the full torus~$\mathbb{T}^{\mu-1}$ by hyperplans,
with each leaf intersecting the path~$\gamma$
transversally in
\[
s_2\ell_2+\dots+s_\mu\ell_\mu=\vert\ell_2\vert+\dots+\vert\ell_\mu\vert=\vert\ell\vert
\]
points. Moreover, the hyperplan~$\Sigma^{(1)}_\ell$ is nothing but the closure of~$\Sigma_\ell\subset\mathbb{T}^{\mu-1}_*$ in the full torus~$\mathbb{T}^{\mu-1}$, and~$\Sigma_\ell$ intersects~$\gamma$ exactly~$\vert\ell\vert-1$ times.

As a consequence, the open path~$\gamma\colon S^1\setminus\{1\}\to\mathbb{T}^{\mu-1}_*$ 
meets each of the~$\vert\ell\vert$ connected components of the complement of~$\Sigma_\ell$ in~$\mathbb{T}^{\mu-1}_*$, and each of the~$\vert\ell\vert-1$ connected components of~$\Sigma_\ell$. Since~$\rho_\ell$ takes the value~$\vert\ell\vert-1$ near~$\omega'=1^s$, the value~$1-\vert\ell\vert$ near the opposite corner~$\omega'=1^{-s}$,
and jumps at most by~$\pm 2$ when crossing a
connected component of~$\Sigma_\ell$, it necessarily jumps by~$-2$ when crossing any
of these~$\vert\ell\vert-1$ components, thus determining its values on the full domain. This concludes the proof
of the addendum.
\end{proof}

\section{Torres-type formulas for the signature and nullity}
\label{sec:Torres}

The aim of this section is twofold. First,
in Section~\ref{sub:extension},
we extend the signature and nullity functions from~$\Tm=(S^1\setminus\{1\})^\mu$ to the full torus~$\mathbb{T}^{\mu}=(S^1)^\mu$.
Then, in Sections~\ref{sub:Torres-sign} and ~\ref{sub:Torres-null}, we devise Torres-type formulas
for these extended signatures and nullity, respectively. These results
\new{rely on several technical lemmas to be found in Appendix~\ref{ap:plumbed}, and}
are used in Section~\ref{sec:4D} to
study limits of signatures.

\subsection{Extension of the signature and nullity to the full torus}
\label{sub:extension}

Let~$L=L_1\cup\dots\cup L_\mu$ be a colored link in~$S^3$. Recall that the
associated signature~$\sigma_L$ and nullity~$\eta_L$ are~$\mathbb{Z}$-valued maps defined on~$\Tm=(S^1\setminus\{1\})^\mu$. The aim of this section is
to extend these maps in a natural way to the full torus~$\mathbb{T}^\mu=(S^1)^\mu$.

\medskip

\new{In a nutshell, these extended signature and nullity are defined as the twisted signature and nullity of a~$4$-dimensional~$\Z^\mu$-manifold~$W$ bounding the closed~$3$-manifold~$M_L$ endowed with a meridional homomorphism~$\varphi\colon H_1(M_L)\to\Z^\mu$ (recall Section~\ref{sec:plumbed}). There are several issues 
with this approach:
\begin{itemize}
    \item The meridional homomorphism~$\varphi\colon H_1(M_L)\to\Z^\mu$ is not unique (recall Remark~\ref{rems:M_L}.2).
    \item There exists~$\mu$-colored links~$L$ such that for any meridional homomorphism~$\varphi$, the~$\Z^\mu$-manifold $(M_L,\varphi)$ does not
    bound over~$\Z^\mu$ (recall Example~\ref{exs:meridional}.2).
\end{itemize}
However, these obstacles can be overcome as follows.}

\medskip

\new{
Given a~$\mu$-colored link~$L$, consider the associated closed~$3$-manifold~$M_L$ defined in Section~\ref{sec:plumbed}. By Lemma~\ref{lem:ML},
there exists a meridional homomorphism~$\varphi\colon H_1(M_L)\to\Z^\mu$. Let
us choose an arbitrary one, and consider the associated bordism class~$(M,\varphi)\in\Omega_3(\Z^\mu)$. The canonical isomorphism~$\Omega_3(\Z^\mu)\simeq\Z^{\mu\choose 3}$ given in~\eqref{eq:bordism} yields
\begin{align*}
    \Omega_3(\Z^\mu)&\longrightarrow \Z^{{\mu}\choose{3}}\\
    (M_L,\varphi)&\longmapsto \mu_L=\{\mu_L(ijk)\mid 1\le i<j<k\le \mu\}\,.
\end{align*}
Note that the integers~$\mu_L(ijk)$ are in general not invariants
      of the colored link~$L$, as they depend on the choice of the meridional homomorphism~$\varphi$. However, by Remark~\ref{rems:M_L}.2,
      these integers are well-defined if all linking numbers vanish.
      For example, the~$3$-colored Borromean rings~$B(123)$ endowed with the appropriate coloring and orientation yields~$\mu_{B(123)}(123)=1$ by Example~\ref{exs:meridional}.2.}
\new{
\begin{remark}
  \label{rem:Milnor}
  This brings to mind
      Milnor's triple linking numbers~\cite{Mil57}, whose notation we chose for a reason.
      Indeed, when each component of~$L$ is endowed with a different color, the fact that the integers~$\mu_L(ijk)$ coincide with (some refined version of) the aforementioned triple linking numbers can be extracted from~\cite{DNOP}.
\end{remark}
}

\new{
Now, consider the auxiliary~$\mu$-colored link~$L^\#$ defined as follows:
\begin{equation}
\label{eq:Lsharp}
    L^\#:=L\sqcup \bigsqcup_{i<j<k} -\mu_L(ijk)\cdot B(ijk)\,,
\end{equation}
where~$\sqcup$ denotes the distant sum,~$B(ijk)$ the Borromean rings endowed with orientations and colors so that~$\mu_{B(ijk)}(ijk)=1$, and~$n\cdot B(ijk)$ stands for the distant sum of~$\vert n\vert$ copies of~$B(ijk)$ (resp.~$B(jik)$) if~$n\ge 0$ (resp.~$n\le 0$). By construction, the corresponding manifold~$M_{L^\#}$ is given by the connected sum of~$M_L$
with three-dimensional tori endowed with color-induced homomorphisms~$H_1(\mathbb{T}^3)\hookrightarrow\Z^\mu$. Hence, the homomorphism~$\varphi\colon H_1(M_L)\to\Z^\mu$ extends uniquely to a meridional homomorphism~$\varphi^\#\colon H_1(M_{L^\#})\to\Z^\mu$, which by construction satisfies
\begin{equation}
\label{eq:MLbounds}
(M_{L^\#},\varphi^\#)=0\in\Omega_3(\Z^\mu)\,.
\end{equation}
Therefore, there exists a~$\Z^\mu$-manifold~$(W,\Phi)$ such that~$\partial W=M_{L^\#}$ and~$\varphi^\#$ factors through $\Phi\colon H_1(W)\to\Z^\mu$.}

\new{\begin{remark}
\label{rem:rho}
At this point, it would be possible to define our extended signature as the signature defect~$\sigma_\omega(W)-\sigma(W)$.
This would indeed be an invariant of~$L$, but it would in general
{\em not\/} extend the usual signatures.
For this to hold, we need to consider a specific type of~$4$-manifold bounding~$M_{L^\#}$.
\end{remark}}

To do so, let us consider a bounding surface~$F=F_{1}\cup\dots\cup F_{\mu}\subset B^4$ for~\new{$L^\#$} obtained by pushing a totally-connected C-complex for~$L^\#$ from~$S^3$ into~$B^4$. Let us write~$X_{L^\#}=S^3\setminus\nu(L^\#)$ and~$V_{F}=B^{4}\setminus\nu(F)$.
Note that we have~$\partial V_F=X_{L^\#}\cup-P(F)$, where~$P(F)$ is the boundary of a tubular neighborhood of \(F\) in \(B^{4}\).
Moreover, this latter manifold can be described as the plumbed manifold defined by the plumbing graph~$\Gamma_F$ with vertices given by the surfaces~$F_i$ and signed edges given by the signed intersections of these surfaces in~$B^4$ (or equivalently, the signed clasps of the C-complex). We refer the reader to~\cite[Section 4.3]{toffoli} for details.

Note that~$P(F)$ and~$P(L^\#)$ have the common boundary~$\partial\nu(L^\#)$.
Let us form the closed 3-manifold
\[
P(G)=P(F)\cup_{\partial}-P(L^\#)\new{\simeq P(F)\cup_{\partial}P(\overline{L^\#})}\,,
\]
where~$\overline{L^\#}$ denotes the mirror image of~$L^\#$.
Clearly,~$P(G)$ can be described as the plumbing manifold obtained
from the plumbing graph~$G$ given as follows. The vertices of~$G$ correspond to the colors~$\{1,\dots,\mu\}$, with the closed surface~$\widehat{F}_i$ associated to the color~$i$ obtained from~$F_i$ by capping it off with~$|L_i|$ 2-discs.
The edges of~$G$ are given by the signed intersections of the surfaces~$F_i$ in~$B^4$, and by~$\sum_{K\subset L_i,K'\subset L_j}|\lk(K,K')|$ edges between~$\widehat{F}_i$ and~$\widehat{F}_j$ with signs opposite to the signs of
the linking numbers.
By construction, the graph~$G$ is balanced \new{(recall the end of Section~\ref{sec:plumbed})}.

\new{Let us now focus on the meridional homomorphisms. By construction, the homomorphism $\varphi^\#\colon H_1(M_{L^\#})\to\Z^\mu$ extends~$\varphi_X\colon H_1(X_{L^\#})\to\Z^\mu$ and some~$\varphi_P\colon H_1(P(L^\#))\to\Z^\mu$, a fact that we will denote by~$\varphi^\#=\varphi_X\cup\varphi_P$. Also, we have an isomorphism~$H_1(V_F)\simeq\Z^\mu$ which extends~$\varphi_X$ (see e.g.~\cite[Lemma~3.1]{CNT}), and induces some~$\varphi_F\colon H_1(P(F))\to\Z^\mu$. Therefore, the maps~$\varphi_F$ and~$\varphi_P$
agree on~$H_1(\partial\nu(L^\#))$ and induce a meridional homomorphism~$\varphi_G\colon H_1(P(G))\to\Z^\mu$.
Moreover, since~$M_{L^\#}=X_{L^\#}\cup -P(L^\#)$ endowed with~$\varphi^\#=\varphi_X\cup\varphi_P$ bounds over~$\Z^\mu$ (recall~\eqref{eq:MLbounds}),
while~$\partial V_F=X_{L^\#}\cup-P(F)$ endowed with~$\varphi_X\cup\varphi_F$ bounds by construction, it follows that~$P(G)=P(F)\cup-P(L^\#)$ endowed with~$\varphi_G=\varphi_F\cup\varphi_P$ bounds as well. This is illustrated in Figure~\ref{fig:W}.}

\begin{figure}[tbp]
    \centering
    \begin{overpic}[width=3.5cm]{NW}
    \put (-40,30){$\scriptstyle{(X_{L^\#},\varphi_X)}$}
    \put (52,18){$\scriptstyle{(P(F),\varphi_F)}$}
    \put (103,30){$\scriptstyle{(P(L^\#),\varphi_P)}$}
    \put (23,35){$V_F$}
    \put (70,35){$Y_F$}
    \put (43,68){$\scriptstyle{\partial\nu(L^\#)}$}
    \end{overpic}
    \caption{\new{Construction of the~$\Z^\mu$-manifold~$W_F$.}}
    \label{fig:W}
\end{figure}

\new{By Lemma~\ref{lem:Y}, the~$\Z^\mu$-manifold~$(P(G),\varphi_G)$ bounds a compact connected oriented \(\Z^{\mu}\)-manifold \((Y_F,f)\) such that~\(\pi_{1}(Y_F) = \Z^{\mu}\),~\(f\) is an isomorphism and~$\sign_\omega(Y_F)=0$ for
      all~\(\omega \in \mathbb{T}^{\mu}\).}
Note that the manifolds~$V_F$ and~$Y_F$ both admit~$P(F)$ as part of their
boundary, \new{with the meridional isomorphisms~$H_1(V_F)\simeq\Z^\mu$ and~$H_1(Y_F)\stackrel{f}{\simeq}\Z^\mu$ both restricting to~$\varphi_F$ on~$H_1(P(F))$.} Therefore, one can consider the 4-manifold
\begin{equation}\label{eq:the-W-F-mfd}
    W_{F}=V_{F}\cup_{P(F)} Y_{F}
\end{equation}
\new{equipped with a meridional homomorphism~$\Phi\colon H_1(W_F)\to\Z^\mu$, whose boundary is~$(M_{L^\#},\varphi^\#)$.}

\bigbreak

We are finally ready to extend the signature and nullity to the full torus.

\begin{definition}
\label{def:extension}
For any~$\omega\in\mathbb{T}^\mu$, set
\[
\sigma_F(\omega)=\sign_\omega(W_F)\quad\text{and}\quad\eta_F(\omega)=\nul_\omega(W_F)\new{-\delta_L(\omega)}\,,
\]
\new{where~$
\delta_L(\omega)=\sum_{i<j<k}\vert\mu_L(ijk)\vert+2\sum_{i<j<k;\,\omega_i=\omega_j=\omega_k=1}\vert\mu_L(ijk)\vert$.}
\end{definition}

A priori, these extended signatures and nullity might depend on the choice of the bounding surface $F$, \new{and of the meridional homomorphism~$\varphi$.} This is not the case, as demonstrated by the following statement.

\begin{theorem}
\label{thm:extension}
The maps~$\sigma_F\colon\mathbb{T}^\mu\to\mathbb{Z}$ and~$\eta_F\colon\mathbb{T}^\mu\to\mathbb{Z}$
only depend on the colored link~$L$, and extend the multivariable
signature and nullity~$\sigma_L\colon\mathbb{T}_*^\mu\to\mathbb{Z}$ and~$\eta_L\colon\mathbb{T}_*^\mu\to\mathbb{Z}$, respectively.
\end{theorem}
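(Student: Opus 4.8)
The plan is to establish two facts: first, that for $\omega\in\mathbb{T}^\mu_*$ the quantities $\sign_\omega(W_F)$ and $\nul_\omega(W_F)$ recover $\sigma_L(\omega)$ and $\eta_L(\omega)$; and second, that for \emph{every} $\omega\in\mathbb{T}^\mu$ these quantities are independent of the bounding surface $F$. The first fact is the quicker one and rests on the Novikov--Wall theorem applied to the decomposition $W_F=V_F\cup_{P(F)}Y_F$ along the $3$-manifold $P(F)$, whose boundary is the disjoint union of tori $\partial\nu(L)=\bigsqcup_i\partial\nu(L_i)$. When $\omega\in\mathbb{T}^\mu_*$, the twisted coefficient system $\mathbb{C}^\omega$ sends the meridian $m_i$ to $\omega_i\neq 1$, hence restricts to a nontrivial character on each torus $\partial\nu(L_i)$, so $H_*(\partial\nu(L);\mathbb{C}^\omega)=0$ and in particular the Maslov index of Theorem~\ref{thm:NW}, which lives on $H_1(\partial\nu(L);\mathbb{C}^\omega)$, vanishes. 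Combining this with $\sign_\omega(Y_F)=0$ from Lemma~\ref{lemma:P(G)-bound} yields $\sign_\omega(W_F)=\sign_\omega(V_F)+\sign_\omega(Y_F)=\sign_\omega(V_F)=\sigma_L(\omega)$, the last step by Proposition~\ref{prop:twisted}. For the nullity I would use Poincar\'e--Lefschetz duality to identify the radical of the twisted intersection form on $H_2(W_F;\mathbb{C}^\omega)$ with the image of $H_2(M_L;\mathbb{C}^\omega)\to H_2(W_F;\mathbb{C}^\omega)$, then evaluate this image through the Mayer--Vietoris sequences of $W_F=V_F\cup_{P(F)}Y_F$ and $M_L=X_L\cup_{\partial\nu(L)}-P(\overline{L})$ (both greatly simplified by $H_*(\partial\nu(L);\mathbb{C}^\omega)=0$) together with the isomorphism $H_1(X_L;\mathbb{C}^\omega)\xrightarrow{\sim}H_1(V_F;\mathbb{C}^\omega)$, reaching the value $\dim H_1(X_L;\mathbb{C}^\omega)=\eta_L(\omega)$.

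For independence of $F$, let $F$ and $F'$ be two bounding surfaces obtained from totally connected C-complexes; since $\partial W_F=\partial W_{F'}=M_L$ depends only on $L$, we may form the closed oriented $\Z^\mu$-manifold $X:=W_F\cup_{M_L}-W_{F'}$. Novikov additivity across the closed $3$-manifold $M_L$ (the Maslov term now lying on $H_1(\partial M_L;\mathbb{C}^\omega)=0$) gives $\sign_\omega(X)=\sign_\omega(W_F)-\sign_\omega(W_{F'})$. Since the twisted signature of a closed oriented $4$-manifold with flat $1$-dimensional coefficients equals its ordinary signature (the coefficient bundle having rationally trivial Chern class), it suffices to produce a compact oriented $5$-manifold over $\Z^\mu$ bounding $X$: this forces $\sign(X)=0$, hence $\sign_\omega(X)=0$ for all $\omega$, hence $\sigma_F(\omega)=\sigma_{F'}(\omega)$. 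I would build this nullbordism by regrouping $X$ as $A\cup B$ with $A:=V_F\cup_{X_L}-V_{F'}$ and $B:=Y_F\cup_{-P(\overline{L})}-Y_{F'}$ glued along $P(F)\sqcup-P(F')$. Now $A$ is the exterior in $S^4$ of the union of closed, null-homologous surfaces $\widehat{\Sigma}_i:=F_i\cup_{L_i}-F_i'$; each $\widehat{\Sigma}_i$ bounds a compact oriented $3$-manifold pushed into $B^5$ (arranged to carry the prescribed mutual intersections), and deleting neighbourhoods of these yields a $\Z^\mu$-bordism rel boundary from $A$ to a plumbing-type $4$-manifold. One then caps off the $B$-side using the balanced-plumbing-graph construction of Lemma~\ref{lemma:construction-4-dim-bordims}, exactly as in the proof of Lemma~\ref{lemma:P(G)-bound}, and assembles the pieces into the desired $U$ with $\partial U=X$. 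The equality $\eta_F=\eta_{F'}$ would follow from the same geometric bordism, this time tracking its effect on twisted second homology and its radical rather than on the signature, since $\nul_\omega$, unlike $\sign_\omega$, is not by itself a bordism invariant.

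The main obstacle is precisely this nullbordism: two bounding surfaces for a link need not be concordant in $B^4\times[0,1]$, so there is no direct interpolation between $F$ and $F'$, and the argument must route through the fact that the associated \emph{closed} surfaces in $S^4$ are null-homologous and bound in $B^5$; after that, the delicate point is checking that Lemma~\ref{lemma:construction-4-dim-bordims} still applies to the leftover plumbing piece in this relative situation --- in effect re-running the proof of Lemma~\ref{lemma:P(G)-bound} in a one-parameter family. The nullity half of the independence statement, where bordism invariance is unavailable, is the secondary difficulty and would instead be handled by a direct twisted-homology computation expressing $\nul_\omega(W_F)$ in terms of $H_*(M_L;\mathbb{C}^\omega)$, which is a link invariant.
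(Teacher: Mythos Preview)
Your treatment of the extension to $\mathbb{T}^\mu_*$ matches the paper: Novikov--Wall on $W_F=V_F\cup_{P(F)}Y_F$ with $H_1(\partial\nu(L);\mathbb{C}^\omega)=0$ gives the signature, and for the nullity the paper first proves (Proposition~\ref{prop:nullity}) that $\nul_\omega(W_F)=\dim H_1(M_L;\mathbb{C}^\omega)$ for $\omega\neq(1,\dots,1)$ and then uses the acyclicity of $\partial\nu(L)$ and $P(\overline{L})$ to identify this with $\dim H_1(X_L;\mathbb{C}^\omega)$---which is what your Mayer--Vietoris sketch is aiming at.

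For independence of $F$, your route diverges from the paper and is substantially heavier. The paper never constructs a 5-manifold. Instead it invokes the signature defect $\dsign_\omega(W_F)=\sign_\omega(W_F)-\sign(W_F)$, cites \cite[Corollary~2.11]{CNT} for the fact that this depends only on $\partial W_F=M_L$ (the underlying reason being exactly your observation $\sign_\omega=\sign$ on closed $4$-manifolds), and then checks directly that the \emph{untwisted} $\sign(W_F)$ is a link invariant: Novikov--Wall on $W_F=V_F\cup Y_F$ gives $\sign(W_F)=\sign(V_F)+\sign(Y_F)+\text{Maslov}$, both signatures vanish, and the three Lagrangians in $H_1(\partial\nu(L);\C)$ are determined by linking numbers alone. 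This last step is the piece you are missing; it replaces your entire $B^5$ construction.

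Even within your own framework there is a shortcut you overlook. Once you have $\sign_\omega(X)=\sign(X)$ for the closed $X=W_F\cup_{M_L}-W_{F'}$, you do not need a nullbordism: by the extension step you already know $\sigma_F(\omega)=\sigma_L(\omega)=\sigma_{F'}(\omega)$ on $\mathbb{T}^\mu_*$, hence $\sign_\omega(X)=0$ there, hence $\sign(X)=0$, hence $\sign_\omega(X)=0$ everywhere. Your proposed $5$-dimensional construction---pushing the closed surfaces $F_i\cup -F_i'$ into $B^5$, bounding them, and re-running Lemma~\ref{lemma:construction-4-dim-bordims}---is plausible but is by far the vaguest part of your proposal; the paper's argument makes it unnecessary. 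For the nullity, your final remark that one should ``express $\nul_\omega(W_F)$ in terms of $H_*(M_L;\mathbb{C}^\omega)$'' is exactly right and is precisely the content of Proposition~\ref{prop:nullity}, whose proof uses that $\pi_1(W_F)\cong\Z^\mu$ (via Seifert--van~Kampen and Lemma~\ref{lemma:P(G)-bound}) together with the UCSS to get $H_1(W_F;\mathbb{C}^\omega)=0$.
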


The proof of this theorem relies on
\new{several technical results to be found in Appendix~\ref{ap:plumbed}, as well as on}
the following algebraic lemma.

\begin{lemma}\label{lemma:triviality-tor-groups}
  Let \(\Lambda_{\mu}\) denote the group ring \(\C[\Z^{\mu}]\).
  For any \(\omega \in \mathbb{T}^\mu \setminus \{(1,\ldots,1)\}\) and any \(i \geq 0\), we have $\Tor_{i}^{\Lambda_{\mu}}(\C^{\omega},\C) = 0$.
  Furthermore, for \(\omega=(1,\dots,1)\), we have \(\Tor_i^{\Lambda_{\mu}}(\C^{\omega},\C) = \C^{\binom{\mu}{i}}\).
\end{lemma}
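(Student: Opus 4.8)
The plan is to compute $\Tor_*^{\Lambda_\mu}(\C^\omega,\C)$ by means of the Koszul resolution of the trivial module $\C$ over $\Lambda_\mu=\C[\Z^\mu]=\C[t_1^{\pm1},\dots,t_\mu^{\pm1}]$. The key point is that $\Lambda_\mu$ is a Laurent polynomial ring in the $\mu$ variables $u_i:=t_i-1$, and $\C=\Lambda_\mu/(u_1,\dots,u_\mu)$ with $(u_1,\dots,u_\mu)$ a regular sequence; hence the Koszul complex $K_\bullet=\Lambda_\mu\otimes\bigwedge^\bullet \C^\mu$ with differentials given by multiplication by the $u_i$ is a free resolution of $\C$ as a $\Lambda_\mu$-module. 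First I would recall (or set up) this resolution explicitly: $K_i=\Lambda_\mu^{\binom{\mu}{i}}$, with the $i$-th differential $d_i$ the usual Koszul differential determined by the sequence $(u_1,\dots,u_\mu)$.

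Next I would apply $\C^\omega\otimes_{\Lambda_\mu}-$. Since $\C^\omega$ is the $\Lambda_\mu$-module structure on $\C$ in which $t_i$ acts by $\omega_i$, the element $u_i=t_i-1$ acts on $\C^\omega$ by the scalar $\omega_i-1$. Therefore $\C^\omega\otimes_{\Lambda_\mu}K_\bullet$ is the (finite-dimensional) Koszul complex over $\C$ of the scalar sequence $(\omega_1-1,\dots,\omega_\mu-1)$, i.e. $\bigl(\bigwedge^\bullet\C^\mu,\ \iota_{(\omega_1-1,\dots,\omega_\mu-1)}\bigr)$ where the differential is contraction against (equivalently, wedging with, up to conventions) the vector $v_\omega:=(\omega_1-1,\dots,\omega_\mu-1)\in\C^\mu$. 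Its homology in degree $i$ is by definition $\Tor_i^{\Lambda_\mu}(\C^\omega,\C)$.

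It then remains to compute the homology of the Koszul complex of a single vector $v_\omega\in\C^\mu$. This is a standard linear-algebra fact: if $v_\omega\neq 0$, wedging with $v_\omega$ makes $\bigwedge^\bullet\C^\mu$ into an acyclic complex (one exhibits a contracting homotopy, e.g. contraction against any covector $\xi$ with $\xi(v_\omega)=1$, using the identity $\iota_\xi(v_\omega\wedge -)+v_\omega\wedge\iota_\xi(-)=\xi(v_\omega)\cdot\mathrm{id}$), so all $\Tor_i$ vanish; this covers exactly the case $\omega\neq(1,\dots,1)$, since $v_\omega=0$ iff every $\omega_i=1$. If $v_\omega=0$, i.e. $\omega=(1,\dots,1)$, the differential is identically zero and the homology in degree $i$ is the whole space $\bigwedge^i\C^\mu\cong\C^{\binom{\mu}{i}}$, giving the stated answer.

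I do not expect a serious obstacle here; the only point requiring a little care is the bookkeeping that ensures $(u_1,\dots,u_\mu)$ is genuinely a regular sequence in the \emph{Laurent} ring $\Lambda_\mu$ (equivalently, that $\C[\Z^\mu]/(t_1-1,\dots,t_\mu-1)\cong\C$ with the $t_i-1$ forming a regular sequence), which follows since localizing $\C[t_1,\dots,t_\mu]$ at the $t_i$ does not affect regularity of a sequence supported at the point $t=(1,\dots,1)$, and the verification that the scalar-Koszul contracting homotopy above is indeed a chain homotopy (a one-line computation with the contraction identity). So the main body of the argument is simply: Koszul resolution, base change, and the acyclicity of the Koszul complex of a nonzero vector.
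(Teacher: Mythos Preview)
Your proposal is correct and follows essentially the same approach as the paper: both use the Koszul complex on $(t_1-1,\dots,t_\mu-1)$ as a free resolution of $\C$ over $\Lambda_\mu$, tensor with $\C^\omega$, and observe that the resulting complex is acyclic when some $\omega_j\neq 1$ and has zero differential otherwise. The only cosmetic difference is that the paper factors the Koszul complex as a tensor product $K(t_1-1)\otimes\cdots\otimes K(t_\mu-1)$ and notes that one factor becomes acyclic, while you write it in exterior-algebra form and produce an explicit contracting homotopy.
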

\begin{proof}
  This computation can be performed using Koszul resolutions, see e.g.~\cite[Chapter 4.5]{Weibel}.
  For any \(x \in \Lambda_{\mu}\) consider the chain complex~$K(x) \coloneqq \Lambda_{\mu} \xrightarrow{x} \Lambda_{\mu}$ concentrated in degrees~$1$ and~$0$.
  Let \(t_1,\ldots,t_{\mu}\) be the elements of \(\Lambda_{\mu}\) corresponding to the canonical basis of \(\Z^{\mu}\), so that \(\Lambda_{\mu} = \C[t_1^{\pm1},\dots,t_\mu^{\pm1}]\).
  Consider the Koszul complex
  \[
    K_\mu:= K(t_1-1)\otimes_{\Lambda_\mu} K(t_2-1)\otimes_{\Lambda_\mu}\cdots\otimes_{\Lambda_\mu} K(t_\mu-1)\,.
  \]
  By~\cite[Corollary~4.5.5]{Weibel}, the complex~\(K_{\mu}\) is a free resolution of~$\Lambda_{\mu}/(t_1-1,\dots,t_\mu-1)=\C$ over \(\Lambda_{\mu}\).
  Therefore, \(\Tor_i^{\Lambda_{\mu}}(\C^{\omega},\C) = H_{i}(\C^{\omega} \otimes_{\Lambda_{\mu}} K_{\mu})\).
  Since a tensor product of an acyclic complex with any other complex is again acyclic, we get that
  \(\Tor_{i}^{\Lambda_{\mu}}(\mathbb{C}^{\omega},\C)=0,\)
  for \(i\geq 0\) if there is some \(\omega_{j}\neq 1\).
  If~\(\omega_{j}=1\) for all~$j$, then all of the differentials in \(K_{\mu}\) vanish and we get~\(\Tor_i^{\Lambda_{\mu}}(\C^{\omega},\C) = \C^{\binom{\mu}{i}}\).  
\end{proof}

We will also use the following statement.

\begin{proposition}
\label{prop:nullity}
The nullity function~$\eta_F\colon\mathbb{T}^\mu\to\mathbb{Z}$ is given by
\[
\eta_F(\omega)=
\begin{cases}
\dim H_{1}(M_L;\C^{\omega})&\text{for~$\omega\neq(1,\dots,1)$};\\
\dim H_1(M_L;\mathbb{C})-\mu&\text{for~$\omega=(1,\dots,1)$}.
\end{cases}
\]
\end{proposition}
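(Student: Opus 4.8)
The plan is to relate the twisted homology of $W_F = V_F \cup_{P(F)} Y_F$ to that of its boundary $M_L = \partial W_F$, using the algebraic input of Lemma~\ref{lemma:triviality-tor-groups}. The nullity $\eta_F(\omega) = \nul_\omega(W_F)$ is by definition the nullity of the twisted intersection form $Q_\omega$ on $H_2(W_F;\C^\omega)$, which equals the dimension of the image of the map $H_2(W_F;\C^\omega) \to H_2(W_F, \partial W_F;\C^\omega)$ --- more precisely, the radical of $Q_\omega$ is the image of $j_*\colon H_2(M_L;\C^\omega) \to H_2(W_F;\C^\omega)$, so $\nul_\omega(W_F) = \dim \mathrm{im}(j_*)$. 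The first step is therefore to unwind this standard fact about intersection forms of $4$-manifolds with boundary (it follows from Poincar\'e--Lefschetz duality applied to the long exact sequence of the pair $(W_F, M_L)$), so that computing $\eta_F(\omega)$ reduces to computing the rank of $H_2(M_L;\C^\omega) \to H_2(W_F;\C^\omega)$.

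Next I would analyze $H_*(W_F;\C^\omega)$ via a Mayer--Vietoris argument for the decomposition $W_F = V_F \cup_{P(F)} Y_F$, combined with the computation of $H_*(Y_F;\C^\omega)$. Here Lemma~\ref{lemma:P(G)-bound} is crucial: since $f\colon \pi_1(Y_F) \to \Z^\mu$ is an isomorphism and $\sign_\omega(Y_F) = 0$ for all $\omega$ with at most one coordinate equal to $1$, the twisted homology of $Y_F$ is controlled by $\Tor^{\Lambda_\mu}_*(\C^\omega, H_*(\widetilde{Y_F}))$; and since $\pi_1(Y_F)\cong\Z^\mu$ acts so that $\widetilde{Y_F}$ has the homology type forced by the hypotheses, one expects $H_*(Y_F;\C^\omega)$ to vanish in the relevant degrees for $\omega \neq (1,\dots,1)$ (the group ring $\Lambda_\mu$ acting on $\C^\omega$ kills everything, by Lemma~\ref{lemma:triviality-tor-groups}), while for $\omega = (1,\dots,1)$ the Tor-groups $\C^{\binom{\mu}{i}}$ contribute. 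Feeding this into Mayer--Vietoris and into the long exact sequence of $(W_F, M_L)$, together with the half-lives-half-dies principle, should yield $\eta_F(\omega) = \dim H_1(M_L;\C^\omega)$ when $\omega \neq (1,\dots,1)$, and the correction by $-\mu$ when $\omega = (1,\dots,1)$ coming precisely from the extra $\Tor_1$ term $\C^{\binom{\mu}{1}} = \C^\mu$.

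For the case $\omega = (1,\dots,1)$, the twisted coefficients are just $\C$ with trivial action, so $H_*(W_F;\C) = H_*(W_F;\Z)\otimes\C$ and likewise for $M_L$; here one must carefully track how $b_1(W_F)$, $b_2(W_F)$ and the rank of $H_2(M_L;\C) \to H_2(W_F;\C)$ relate to $\dim H_1(M_L;\C)$. Since $Y_F$ has $\pi_1 = \Z^\mu$ and $V_F$ has $H_1(V_F;\Z) = \Z^\mu$ generated by meridians with the gluing along $P(F)$ respecting this, $H_1(W_F;\C) = \C^\mu$, and the $-\mu$ shift is exactly the defect between $\dim H_1(M_L;\C)$ and the rank of the boundary map into $H_2(W_F;\C)$.

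The main obstacle I anticipate is the careful bookkeeping in the case $\omega = (1,\dots,1)$: one has to chase the Mayer--Vietoris and long-exact sequences simultaneously while keeping track of which classes in $H_1$ and $H_2$ survive, and reconcile the algebraic Tor-computation of Lemma~\ref{lemma:triviality-tor-groups} (which gives $\C^{\binom{\mu}{i}}$ in each degree) with the geometric exact sequences. The non-$(1,\dots,1)$ case should be comparatively clean, since Lemma~\ref{lemma:triviality-tor-groups} forces $H_*(Y_F;\C^\omega) = 0$ outright, collapsing Mayer--Vietoris to an isomorphism $H_*(P(F);\C^\omega) \cong H_*(V_F;\C^\omega)$ in the relevant range and reducing everything to the already-understood homology of $V_F$ and $M_L$.
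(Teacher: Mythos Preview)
Your ingredients are right—Lemma~\ref{lemma:triviality-tor-groups}, the fact that $\pi_1(Y_F)\cong\Z^\mu$, and the long exact sequence of $(W_F,M_L)$—but you are working on the wrong side of the exact sequence, and this causes a real problem.

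The paper's route is shorter and avoids your obstacle. Starting from the same observation that the radical of $Q_\omega$ is $\ker\big(j_*\colon H_2(W_F;\C^\omega)\to H_2(W_F,M_L;\C^\omega)\big)$, it uses Poincar\'e--Lefschetz duality to note that source and target have the \emph{same} dimension, so $\dim\ker(j_*)=\dim\mathrm{coker}(j_*)$. The long exact sequence of the pair then gives
\[
\eta_F(\omega)=\dim\ker\big(i_*\colon H_1(M_L;\C^\omega)\to H_1(W_F;\C^\omega)\big).
\]
This reduces everything to computing $H_1(W_F;\C^\omega)$, which needs only $\pi_1$-level information. The paper gets $\pi_1(W_F)\cong\Z^\mu$ by Seifert--van Kampen (using $\pi_1(V_F)\cong\Z^\mu$ for a totally connected $F$, $\pi_1(Y_F)\cong\Z^\mu$, and surjectivity of the meridional map on $\pi_1(P(F))$). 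Then $\widetilde{W_F}$ is simply connected, so UCSS and Lemma~\ref{lemma:triviality-tor-groups} force $H_1(W_F;\C^\omega)=0$ for $\omega\neq(1,\dots,1)$, giving $\eta_F(\omega)=\dim H_1(M_L;\C^\omega)$. At $\omega=(1,\dots,1)$ one has $H_1(W_F;\C)\cong\C^\mu$ and $i_*$ is surjective (the meridional map from $M_L$ is), yielding the $-\mu$.

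Your plan to stay in degree~$2$ and run Mayer--Vietoris on $W_F=V_F\cup_{P(F)}Y_F$ hits a genuine gap: the hypothesis $\pi_1(Y_F)\cong\Z^\mu$ only controls $H_0$ and $H_1$ of the universal cover, so UCSS gives $H_0(Y_F;\C^\omega)=H_1(Y_F;\C^\omega)=0$ for $\omega\neq(1,\dots,1)$, but says nothing about $H_2(Y_F;\C^\omega)$—there is no reason $H_2(\widetilde{Y_F})$ should vanish. So your claim that ``$H_*(Y_F;\C^\omega)=0$ outright'' is unjustified in the degree you actually need, and the Mayer--Vietoris collapse you describe does not follow. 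Switching to the $H_1$ formulation via the $\dim\ker=\dim\mathrm{coker}$ trick sidesteps this entirely.
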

\begin{proof}
Recall that the twisted intersection form of~$W_F$ is defined as the composition of the maps
\[
 H_2(W_F;\mathbb{C}^\omega)\stackrel{j_*}{\longrightarrow} H_2(W_F,M_L;\mathbb{C}^\omega)\stackrel{\mathrm{PD}}{\longrightarrow} H^2(W_F;\mathbb{C}^\omega)\stackrel{\mathrm{ev}}{\longrightarrow} \hom(H_2(W_F;\mathbb{C}^\omega),\mathbb{C})^{\mathrm{tr}}\,,
\]
the second and third ones being the isomorphisms given by
Poincar\'e-Lefschetz duality and the universal coefficient theorem, see~Appendix~\ref{sec:algebr-preliminaries} for more details.
Therefore, we have
\[
\eta_F(\omega)\new{+\delta_L(\omega)}=\nul_\omega(W_F)=\dim\ker(j_*)=\dim\mathrm{coker}(j_*)
\]
since $H_2(W_F;\mathbb{C}^\omega)$ and $H_2(W_F,M_{\new{L^{\#}}};\mathbb{C}^\omega)$ have the same dimension.
By the exact sequence of the pair $(W_F,M_{L^\#})$,
we get
\begin{equation}
\label{eq:nullity}
    \eta_F(\omega)\new{+\delta_L(\omega)}=\dim\ker\left(H_1(M_{L^\#};\mathbb{C}^\omega)\stackrel{i_*}{\longrightarrow} H_1(W_F;\mathbb{C}^\omega)\right)
\end{equation}
for all $\omega\in\mathbb{T}^\mu$.

Now, recall from~\cite[Proposition 3.1]{CFT} that since~\(F\) is totally connected, we have~\(\pi_{1}(V_{F}) = \Z^{\mu}\).
  The homomorphism~$\pi_{1}(Y_{F})\to\Z^{\mu}$ being an isomorphism, and the meridional
  homomorphism~$\pi_{1}(P(F))\to\Z^{\mu}$ being onto,
the Seifert-van Kampen theorem implies that~\(\pi_{1}(W_{F}) \cong \Z^{\mu}\). Indeed, one easily checks that since the diagonal homomorphism~$\pi_{1}(P(F))\to\Z^{\mu}$ is onto and the other maps to~$\Z^\mu$ are isomorphisms, the following diagram satisfies the universal property of the pushout (or amalgamated product):
  \begin{center}
        \begin{tikzcd}
    \pi_1(V_F) \arrow[r, "\cong"] & \Z^\mu \\
    \pi_1(P(F))\arrow[u] \arrow[r] & \pi_1(Y_F)\,.\arrow[u, "\cong"]
    \end{tikzcd}
        \end{center}
As a consequence, the \(\Z^{\mu}\)-cover \(\widetilde{W_{F}}\) of \(W_{F}\) satisfies \(\pi_{1}(\widetilde{W_{F}}) \cong H_{1}(\widetilde{W_{F}}) = 0\).
    Therefore, the Universal Coefficient Spectral Sequence (see Theorem~\ref{thm:UCSS}) implies that for any \(\omega \in \mathbb{T}^\mu \setminus \{(1,\ldots,1)\}\), we have an exact sequence
    \[H_{1}(\widetilde{W_{F}};\C) \otimes_{\Lambda_{\mu}} \C^{\omega} \to H_{1}(W_{F};\C^{\omega}) \to \Tor_{1}^{\Lambda_{\mu}}(\C,\C^{\omega}) \to 0.\]
    Since \(\widetilde{W_{F}}\) is simply-connected, Lemma~\ref{lemma:triviality-tor-groups} implies that \(H_{1}(W_{F};\C^{\omega}) = 0\). This, together with Equation~\eqref{eq:nullity}, yields the result
    \[
    \new{\eta_F(\omega)+\delta_L(\omega)=\dim H_{1}(M_{L^\#};\C^{\omega})}
    \]
    for~$\omega\neq(1,\dots,1)$.
    
For~$\omega=(1,\dots,1)$, Equation~\eqref{eq:nullity} shows that~$\eta_F(\omega)$ is equal to the dimension of the kernel of the inclusion induced map~$i_*\colon H_1(M_{L^\#};\mathbb{C})\to H_1(W_F;\mathbb{C})$.
Since the homomorphism~$\pi_1(W_F)\to\mathbb{Z}^\mu$ is an isomorphism compatible with the surjective meridional homomorphism~$\pi_1(M_{L^\#})\to\mathbb{Z}^\mu$,
the map~$i_*$ is surjective onto~$H_1(W_F;\mathbb{C})\simeq\mathbb{C}^\mu$. \new{This yields
\[
\eta_F(\omega)+\delta_L(\omega)=\dim H_{1}(M_{L^\#};\C)-\mu
\]
for~$\omega=(1,\dots,1)$.}

\new{Finally, a straightforward Mayer-Vietoris argument shows that if~$\mathbb{T}(ijk)$ denotes the~3-torus corresponding to the 3-colored Borromean ring~$B(ijk)$, then
\[
\dim H_{1}(M_{L}\,\#\,\mathbb{T}(ijk);\C^{\omega})=
\begin{cases}
\dim H_{1}(M_L;\C^{\omega})+3&\text{if~$\omega_i=\omega_j=\omega_k=1$};\\
\dim H_1(M_L;\mathbb{C}^{\omega})+1&\text{else}.
\end{cases}
\]
The proposition now follows from the definition~\eqref{eq:Lsharp} of~$L^\#$ together with the three equations displayed above.}
\end{proof}

\begin{proof}[Proof of Theorem~\ref{thm:extension}]
Given a colored link~$L$ and~$\omega\in\mathbb{T}^\mu$,
\new{we have chosen a meridional homomorphism~$\varphi\colon H_1(M_L)\to\Z^\mu$, thus defining an element~$(M_L,\varphi)\in\Omega^3(\Z^\mu)\simeq\Z^{\mu\choose 3}$. This allowed
us to construct an auxiliary colored link~$L^\#$, and a~$\Z^\mu$-manifold~$(W_F,\Phi)$ with boundary~$(M_{L^\#},\varphi^\#)$.}
Consider the associated signature defect
\[
\dsign_\omega(W_F)\coloneqq\sign_\omega(W_F)-\sign(W_F)\,.
\]
By~\cite[Corollary~2.11]{CNT}, this number only depends on~$\partial W_F=M_{L^\#}=X_{L^\#}\cup -P(L^\#)$ if~\(\omega\in\Tm\)
\new{(and possibly on the choice of the meridional homomorphism).}
Moreover, this proof easily extends to general~\(\omega\in\mathbb{T}^{\mu}\) since the arguments hold for any twisted coefficient system, and if the twisted coefficient system is trivial, then the signature defect vanishes
by definition.

\new{
As explained in Appendix~\ref{ap:plumbed}, this invariant actually coincides with the opposite of the~$\rho$-invariant~$\rho(M_{L^\#},\chi_\omega\circ\varphi^\#)$, where~$\chi_\omega\colon\Z^\mu\to S^1$
is the homomorphism determined by~$\chi_\omega(t_i)=\omega_i$ (see e.g.~\cite[Proposition~4.1]{CNT}).
Moreover, this invariant is additive under connected sum (this follows from~\cite[Theorem~3.9]{toffoli}), and vanishes on 3-manifolds
of the form~$\Sigma\times S^1$ with~$\Sigma$ a closed oriented surface. In particular, it vanishes on the~3-torus~$M_{B(ijk)}$
associated with the 3-colored Borromean rings~$B(ijk)$. Hence, we get
\[
-\dsign_\omega(W_F)=\rho(M_{L^\#},\chi_\omega\circ\varphi^\#)=\rho(M_{L},\chi_\omega\circ\varphi)\,,
\]
which does not depend on the choice of~$\varphi$: this is one of our
technical results, namely Corollary~\ref{cor:rho-ind}.}
Since~$\dsign_\omega(W_F)=\sigma_F(\omega)-\sign(W_F)$ by definition,
it only remains to check that the untwisted signature~$\sign(W_F)$ only depends on~$L$.

This can be verified by applying the Novikov-Wall theorem to the decomposition
\[
W_{F}=V_{F}\cup_{P(F)}Y_{F}\,.
\]
Since the inclusion induced map~$H_2(\partial V_F)\to H_2(V_F)$ is surjective (see e.g. the proof of~\cite[Proposition~3.3]{CNT}) the intersection form on~$H_2(V_F)$ vanishes, and so does~$\sign(V_F)$. Also, we have~$\sign(Y_F)=0$ by Lemma~\ref{lem:Y}.
Hence, the signature of~$W_F$ coincides with the Maslov index associated to this gluing.
Using standard techniques, one easily checks that the three associated Lagrangians are
fully determined by the linking numbers \new{of~$L^\#$.
The fact that the resulting Maslov index only depends on (the linking numbers of)~$L$
is a consequence of Lemma~\ref{lem:Maslov-ind}, so~$\sigma_F$ is an invariant.}

Let us now assume that~$\omega$ lies in~$\Tm$. Then,
the manifold~$W_F$ is obtained by gluing~$V_F$ and~$Y_F$
    along the plumbed 3-manifold~$P(F)$, whose boundary~$\partial P(F)$
    is easily seen to be~$\mathbb{C}^\omega$-acyclic.
    As a consequence, Novikov-Wall additivity applies, and we get
    \[
    \sigma_F(\omega)=\sign_\omega(W_F)=\sign_\omega(V_F)+\sign_\omega(Y_F)
    \]
for all~$\omega\in\mathbb{T}^\mu_*$. Since~$\sign_\omega(Y_F)=0$ by Lemma~\ref{lem:Y} and~$\sign_\omega(V_F)=\sigma_L(\omega)$ by definition, we recover the
equality~$\sigma_F(\omega)=\sigma_L(\omega)$ for all~$\omega\in\mathbb{T}^\mu_*$.

We now turn to the nullity. Since $M_L$ only depends on $L$, Proposition \ref{prop:nullity} \new{and Lemma~\ref{lem:kernel-dim}} immediately imply that $\eta_F(\omega)$ is an invariant for all $\omega\in\mathbb{T}^\mu$.
    Let us finally assume that $\omega$ belongs to $\mathbb{T}^\mu_*$. In that case, the spaces $P(L)$ and $P(F)$ are clearly $\mathbb{C}^\omega$-acyclic,
    see e.g. the proof of Lemma~\ref{lem:P(G)}.
Since~$M_L$ is obtained by gluing~$X_L$ to~$P(L)$ along the~$\mathbb{C}^\omega$-acyclic space~$\partial X_L$,
the Mayer-Vietoris exact sequence implies that the inclusion of~$X_L$ in~$M_L$ induces isomorphisms in homology with coefficients in~$\mathbb{C}^\omega$. Hence,
Proposition \ref{prop:nullity} yields
\[
\eta_{F}(\omega)=\dim H_{1}(M_L; \C^{\omega})=\dim H_{1}(X_L; \C^{\omega})=\eta_L(\omega)\,.
\]
This completes the proof.
\end{proof}

Since~$\sigma_F$ and~$\eta_F$ are invariants of $L$ that extend~$\sigma_L$ and~$\eta_L$, we can denote them by these same symbols
\[
\sigma_L\colon\mathbb{T}^\mu\longrightarrow\mathbb{Z}\quad\text{and}\quad \eta_L\colon\mathbb{T}^\mu\longrightarrow\mathbb{Z}\,.
\]

The extension of these invariants raises a natural question,
namely: do the properties of the original invariant propagate to these extended versions~?
For example, Proposition~2.5 of~\cite{C-F} states that
if a~$\mu$-colored link~$L'$ is obtained from a~$(\mu+1)$-colored link~$L$ by identifying the colors of its sublinks~$L_\mu$ and~$L_{\mu+1}$, then the corresponding signatures and nullities are related by
\begin{align*}
\sigma_{L'}(\omega_1,\dots,\omega_\mu)&=\sigma_{L}(\omega_1,\dots,\omega_\mu,\omega_\mu)-\lk(L_\mu,L_{\mu+1})\,,\\
\eta_{L'}(\omega_1,\dots,\omega_\mu)&=\eta_{L}(\omega_1,\dots,\omega_\mu,\omega_\mu)\,
\end{align*}
for all~$(\omega_1,\dots,\omega_\mu)\in\mathbb{T}_*^\mu$.
We expect these formulas to extend to~$\mathbb{T}^\mu\setminus\{(1,\dots,1)\}$.
Also, Theorem~4.1 of~\cite{C-F} asserts that the signature and nullity of a~$\mu$-colored link~$L$ are piecewise continuous along strata of~$\mathbb{T}_*^\mu$ defined via the Alexander ideals of~$L$.
Once again, we expect such a result to hold using what could be
described as {\em multivariable Hosokawa ideals\/}. (We refer to Remark~\ref{rem:B-Z} for the appearance of the classical Hosokawa polynomial~\cite{Hos} in our theory.)
Finally, it is known the signature and nullity are invariant under concordance when restricted to some explicit dense subset of~$\mathbb{T}_*^\mu$, see~\cite[Corollary~3.13]{CNT}, a result that we
also expect to hold for the extended signatures.

However, we shall postpone the answer to these questions to later
study~\new{\cite{CFP}}, focusing in the present article on the object of its title.

\subsection{Torres formulas for the signature}
\label{sub:Torres-sign}

The aim of this section is to relate the signature of a colored link~$L=L_1\cup L_2\cup\dots\cup L_\mu=:L_1\cup L'$ evaluated at~$\omega=(1,\omega')\in\mathbb{T}^\mu$ with the signature of~$L'$ evaluated at~$\omega'\in\mathbb{T}^{\mu-1}$. On any given example, the techniques used below allow us to find a relation. However, such a fully general Torres formula does not admit an easily presentable closed form (see Remark~\ref{rem:Torres-general} below). For this reason, we shall make several natural assumptions.

First of all, we restrict ourselves to~$\omega'\in\mathbb{T}_*^{\mu-1}$.
Also, we assume that~$L$ belongs to one of the following three classes (which include all ordered links):
\begin{enumerate}
    \item oriented links (i.e. 1-colored links);
    \item $\mu$-colored links~$L=L_1\cup L'$ with~$\mu\ge 2$ and~$\lk(K,K')=0$ for all~$K\subset L_1$ and~$K'\subset L'$;
    \item $\mu$-colored links~$L=L_1\cup L'$ with no~$K\subset L_1$ such that~$\lk(K,K')=0$ for all~$K'\subset L'$.
\end{enumerate}

To state the corresponding Torres formulas, we need several preliminary
notations.
Let us assume that~$L=L_1\cup L'$ is \emph{algebraically split},
i.e. that~$\lk(K,K')=0$ for all~$K\subset L_1$ and~$K'\subset L'$,
and fix~$\omega=(1,\omega')\in\mathbb{T}^\mu$ with~$\omega'\in\mathbb{T}^{\mu-1}$.
Then, we have~$H_1(\partial\nu(L);\mathbb{C}^\omega)=H_1(\partial\nu(L_1);\mathbb{C})$,
so this space admits the natural basis~$\{m_K,\ell_K\}_{K\subset L_1}$, with~$m_K$ a meridian of~$\partial\nu(K)$ and~$\ell_K$ a longitude of~$\partial\nu(K)$, chosen so that~$\lk(L_1,\ell_K)=0$
and~$m_K\cdot\ell_K=-1$ (recall Remark~\ref{rem:NW}).
Since the kernel of the inclusion induced map~$H_1(\partial\nu(L);\mathbb{C}^\omega)\to H_1(X_L;\mathbb{C}^\omega)$
is half-dimensional, it is freely generated by~$n\coloneqq|L_1|$ elements~$x_1,\dots,x_n$ that can be expressed as
\begin{equation}
\label{eq:xj}
    x_j=\sum_{K\subset L_1}\alpha_{jK}m_K+\beta_{jK}\ell_K
\end{equation}
for some complex numbers~$\alpha_{jK},\beta_{jK}$ depending on~$L$ and on~$\omega$. Let~$\mathcal{F}=(f_{ij})$ be the~$n\times n$ matrix defined by
\begin{equation}
    \label{eq:defF}
    f_{ij}=-\sum_{K\subset L_1}\alpha_{iK}\overline{\beta_{jK}}\,.
\end{equation}
The kernel generated by the~$x_j$s being isotropic with respect to the sesquilinear intersection form, we have~$x_i\cdot x_j=0$ for all~$1\le i,j\le n$, implying that~$F$ is a Hermitian matrix.

\medskip

We are finally ready to state the main result of the section:
three Torres-type formulas for the three cases displayed above.

\begin{theorem}
\label{thm:Torres}
\begin{enumerate}
    \item If~$L$ is a (1-colored) oriented link, then~$\sigma_L(1)=\sgn(\mathit{Lk}_L)$, with~$\mathit{Lk}_L$ the linking matrix defined by~\eqref{eq:Lk}.
    \item If~$L=L_1\cup L'$ is an algebraically split~$\mu$-colored link with~$\mu\ge 2$, then for any~$\omega'\in\mathbb{T}_*^{\mu-1}$, we have~$\sigma_L(1,\omega')=\sigma_{L'}(\omega')+\sign(\mathcal{F})$, with~$\mathcal{F}$ the Hermitian matrix defined by~\eqref{eq:defF}.
    \item If $L=L_1\cup L'$ is a $\mu$-colored link with no~$K\subset L_1$ such that~$\lk(K,K')=0$ for all~$K'\subset L'$, then~$\sigma_L(1,\omega')=\sigma_{L'}(\omega')$ for all~$\omega'\in\mathbb{T}_*^{\mu-1}$.
\end{enumerate}
\end{theorem}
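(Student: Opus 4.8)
The three statements share a common engine: the four-dimensional description $\sigma_L(\omega)=\sign_\omega(W_F)$ from Definition~\ref{def:extension} and Theorem~\ref{thm:extension}, combined with the Novikov--Wall theorem (Theorem~\ref{thm:NW}) and the plumbing lemmas of Section~\ref{sub:plumbed}. Throughout I would fix a totally-connected C-complex $S=S_1\cup S'$ for $L$, push it into $B^4$ to obtain a bounding surface $F=F_1\cup F'$, and note that $F'$ is then a pushed-in totally-connected bounding surface for $L'$, so both $W_F$ and $W_{F'}$ are defined. Part~(1) is the quickest: here $\mu=1$ and $\omega=(1)$, so the coefficient system is trivial, $\sigma_L(1)=\sign(W_F)$, and the proof of Theorem~\ref{thm:extension} already gives $\sign(V_F)=\sign(Y_F)=0$. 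Hence Novikov--Wall applied to $W_F=V_F\cup_{P(F)}Y_F$ yields $\sigma_L(1)=\mathit{Maslov}(\mathcal{L}_-,\mathcal{L}_0,\mathcal{L}_+)$, where $\mathcal{L}_-,\mathcal{L}_0,\mathcal{L}_+$ are the kernels of the inclusion-induced maps of $H_1(\partial\nu(L);\C)$ into $H_1(X_L;\C)$, $H_1(P(F);\C)$ and $H_1(P(\overline L);\C)$. In the basis $\{m_K,\ell_K\}_{K\subset L}$ these are the zero-framed longitude space $\langle\ell_K-\sum_{K'}\lk(K,K')m_{K'}\rangle$, the Lagrangian read off from the Seifert surface, and the mirror partner $\langle\ell_K+\sum_{K'}\lk(K,K')m_{K'}\rangle$; evaluating this Maslov index is then a finite linear-algebra computation of the type carried out in the proof of~\cite[Lemma~5.4]{NP}, whose outcome is $\sgn(\mathit{Lk}_L)$.

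For parts~(2) and~(3), fix $\omega=(1,\omega')$ with $\omega'\in\mathbb{T}^{\mu-1}_*$ and exploit the key feature of the point $\omega_1=1$: the coefficient ring map $\Z[\Z^\mu]\to\C$ factors through $\Z[\Z^\mu]/(t_1-1)\cong\Z[\Z^{\mu-1}]$, so that $\C^{(1,\omega')}$ on $X_L\subset X_{L'}$ is pulled back from $\C^{\omega'}$ on $X_{L'}$. The plan is to compare $W_F$ and $W_{F'}$ directly: $V_{F'}$ is obtained from $V_F$ by gluing back the piece $N_1:=\nu(F_1)\cap V_{F'}$ of a tubular neighborhood of $F_1$, and $Y_{F'}$ differs from $Y_F$ by the corresponding modification of the plumbing attached to the vertices coming from $L_1$. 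Cutting $W_F$ along a suitable $\C^{(1,\omega')}$-acyclic separating $3$-manifold and applying Novikov--Wall additivity then expresses $\sigma_L(1,\omega')$ as $\sigma_{L'}(\omega')$ plus the twisted signature contribution of the "$F_1$-part'', which one identifies with the stated correction term in each case.

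In the algebraically split case~(2), every $K\subset L_1$ satisfies $m_K\mapsto 1$ and $[\ell_K]\mapsto\prod_j\omega_j^{\lk(K,L'_j)}=1$, hence $\C^{(1,\omega')}$ is trivial on $\partial\nu(L_1)$ and $H_1(\partial\nu(L);\C^{(1,\omega')})=H_1(\partial\nu(L_1);\C)$. The contribution of the $F_1$-part then reduces to an untwisted Maslov-index computation in this $2n$-dimensional symplectic space ($n=|L_1|$) built around the Lagrangian $\mathcal{L}=\ker\!\big(H_1(\partial\nu(L);\C^{(1,\omega')})\to H_1(X_L;\C^{(1,\omega')})\big)$; unwinding the definitions~\eqref{eq:xj}--\eqref{eq:defF} shows this value equals $\sign(\mathcal{F})$, so $\sigma_L(1,\omega')=\sigma_{L'}(\omega')+\sign(\mathcal{F})$. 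In case~(3), the hypothesis that no component of $L_1$ is algebraically split from $L'$ should, via the plumbing computations of Lemma~\ref{lem:P(G)} and Lemma~\ref{lemma:construction-4-dim-bordims}, force the $F_1$-part to carry no twisted signature (the relevant Lagrangian triple becoming transverse, or the corresponding piece being $\C^{(1,\omega')}$-acyclic in the generic range of $\omega'$), so that Novikov--Wall yields $\sigma_L(1,\omega')=\sigma_{L'}(\omega')$ with no correction.

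The main obstacles I anticipate are threefold. First, producing the correct separating hypersurface and checking the relevant $\C^{(1,\omega')}$-acyclicity so that Novikov--Wall actually applies: this is exactly where the plumbing lemmas of Section~\ref{sub:plumbed} are needed, since $P(F)$ and $P(\overline{L})$ are not acyclic when $\omega_1=1$. Second, in case~(2), matching the Novikov--Wall Maslov contribution on the nose with the Hermitian matrix $\mathcal{F}$, which demands careful bookkeeping of the meridian/longitude bases, of the orientation conventions of Remark~\ref{rem:NW}(3), and of the mirror image entering $P(\overline{L})$. Third, handling the non-generic values of $\omega'$ for which some monodromy $\prod_j\omega_j^{\lk(K,L'_j)}$ equals $1$: there the naive acyclicity fails and one must compute the degenerate Maslov contribution directly and check it vanishes (case~3) or still equals $\sign(\mathcal{F})$ (case~2).
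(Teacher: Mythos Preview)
Your overall architecture---Novikov--Wall on $W_F=V_F\cup_{P(F)}Y_F$, with the plumbing lemmas controlling the Lagrangians---is exactly the paper's, and your treatment of part~(1) is essentially correct.

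For parts~(2) and~(3), however, there is a real gap. You speak of ``cutting $W_F$ along a suitable $\C^{(1,\omega')}$-acyclic separating $3$-manifold'' and hope that in case~(3) ``the relevant Lagrangian triple becomes transverse, or the corresponding piece is $\C^{(1,\omega')}$-acyclic.'' Neither happens. The paper's route is three separate Novikov--Wall applications: first $V_{F'}=V_F\cup\nu(F_1^\circ)$, where the two Lagrangians coming from $F_1^\circ\times S^1$ and from $\nu(L_1)\cup\bigsqcup_e(D^2\times S^1)$ are \emph{both} the meridian space $\langle m_K\rangle_{K\in\mathcal{K}_1}$, so the Maslov term vanishes and $\sign_\omega(V_F)=\sign_{\omega'}(V_{F'})$; second $W_{F'}=V_{F'}\cup Y_{F'}$ with $\partial X_{L'}$ genuinely $\C^{\omega'}$-acyclic; third $W_F=V_F\cup Y_F$, whose Maslov index $\mathcal{M}$ is the entire correction. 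These combine to $\sigma_L(1,\omega')=\sigma_{L'}(\omega')+\mathcal{M}$.

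The crucial point you are missing is the identification of $\mathcal{M}$. By Lemma~\ref{lem:P(G)}, $\mathcal{L}_0=\ker\big(H_1(\partial\nu(L);\C^\omega)\to H_1(P(F);\C^\omega)\big)$ is spanned by the meridians $\{m_K\}_{K\in\mathcal{K}_1}$ (since $F$ is connected and $\mu\ge 2$), while $\mathcal{L}_-=\ker\big(\cdots\to H_1(P(\overline{L});\C^\omega)\big)$ has basis $\{c_K\}$ with $c_K=m_K$ whenever some $\lk(K,K')\neq 0$ and $c_K=\ell_K$ otherwise. In case~(3) this gives $\mathcal{L}_-=\mathcal{L}_0$ outright---coincidence, not transversality---so $\mathcal{M}=0$ for \emph{every} $\omega'\in\mathbb{T}^{\mu-1}_*$, with no genericity assumption needed; your third ``obstacle'' evaporates. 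In case~(2), $\mathcal{L}_-=\langle\ell_K\rangle$ and $\mathcal{L}_0=\langle m_K\rangle$ are complementary, so $(\mathcal{L}_-+\mathcal{L}_0)\cap\mathcal{L}_+=\mathcal{L}_+$, and evaluating the Maslov form on the basis $\{x_j\}$ of~\eqref{eq:xj} with the convention $m_K\cdot\ell_K=-1$ gives exactly the matrix $\mathcal{F}$ of~\eqref{eq:defF}.
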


Let us point out that this result
immediately implies the following (general) Torres-type formula in the case where~$L_1$ is a knot.

\begin{corollary}
\label{cor:Torres-sign}
Let~$L=L_1\cup L'$ be a~$\mu$-colored link with~$L_1$ a knot. If~$\mu=1$, then~$\sigma_L(1)=0$. If~$\mu\ge 2$, then for all~$\omega'\in\mathbb{T}^{\mu-1}_*$, we have
\[
\sigma_L(1,\omega')=
\begin{cases}
\sigma_{L'}(\omega')-\sgn(\alpha_L(\omega')\overline{\beta_L(\omega')})&\text{if $\lk(L_1,K')=0$ for all~$K'\subset L'$};\\
\sigma_{L'}(\omega')&\text{else},
\end{cases}
\]
where~$\alpha_L(\omega'),\beta_L(\omega')\in\mathbb{C}$ are such that~$\alpha_L(\omega')m+\beta_L(\omega')\ell$ generates
the kernel of the inclusion induced map~$H_1(\partial\nu(L_1);\mathbb{C}^\omega)\to H_1(X_L;\mathbb{C}^\omega)$.\qed
\end{corollary}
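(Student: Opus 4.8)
The plan is to deduce the corollary by specializing Theorem~\ref{thm:Torres} to the case in which $L_1$ is a knot, i.e. $n:=|L_1|=1$. Since $L_1$ is then connected, the three classes of links listed before Theorem~\ref{thm:Torres} reduce to three mutually exclusive and exhaustive cases: $\mu=1$ (a knot, which is an oriented link in the sense of case~(1)); $\mu\ge 2$ with $\lk(L_1,K')=0$ for all $K'\subset L'$ (case~(2), i.e. algebraically split); and $\mu\ge 2$ with $\lk(L_1,K')\neq 0$ for some $K'\subset L'$, which — because $L_1$ has a single component — is exactly the hypothesis "no $K\subset L_1$ with $\lk(K,K')=0$ for all $K'\subset L'$" of case~(3). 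So it suffices to treat each case in turn.

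For $\mu=1$ I would invoke Theorem~\ref{thm:Torres}(1): $\sigma_L(1)=\sgn(\mathit{Lk}_L)$. For a knot the linking matrix is the $1\times 1$ matrix whose unique entry is $-\sum_{k\neq 1}\lk(K_1,K_k)=0$, an empty sum, so $\sgn(\mathit{Lk}_L)=0$ and $\sigma_L(1)=0$, as claimed. In the algebraically split case with $\mu\ge 2$, Theorem~\ref{thm:Torres}(2) gives $\sigma_L(1,\omega')=\sigma_{L'}(\omega')+\sgn(\mathcal{F})$ with $\mathcal{F}$ the Hermitian matrix of~\eqref{eq:defF}. Here $n=1$, so $\mathcal{F}$ is the single real number $f_{11}=-\alpha_{1K}\overline{\beta_{1K}}$, where $K=L_1$ and $x_1=\alpha_{1K}m_K+\beta_{1K}\ell_K$ generates the one-dimensional kernel of $H_1(\partial\nu(L);\mathbb{C}^\omega)\to H_1(X_L;\mathbb{C}^\omega)$ by~\eqref{eq:xj}. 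Identifying $\alpha_L(\omega')=\alpha_{1K}$ and $\beta_L(\omega')=\beta_{1K}$ — which is precisely the data in the corollary's statement — we obtain $\sgn(\mathcal{F})=\sgn\bigl(-\alpha_L(\omega')\overline{\beta_L(\omega')}\bigr)=-\sgn\bigl(\alpha_L(\omega')\overline{\beta_L(\omega')}\bigr)$, hence the stated formula; note that the Hermitianity of $\mathcal{F}$, established in the discussion preceding Theorem~\ref{thm:Torres}, is exactly what guarantees $\alpha_L(\omega')\overline{\beta_L(\omega')}\in\mathbb{R}$, so that taking its sign is meaningful. Finally, in the remaining case ($\mu\ge 2$ with some $\lk(L_1,K')\neq 0$) I would simply quote Theorem~\ref{thm:Torres}(3), which gives $\sigma_L(1,\omega')=\sigma_{L'}(\omega')$.

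The argument is essentially bookkeeping. The only point requiring any care is the translation between the notation of Theorem~\ref{thm:Torres} (the generators $x_j$ and the $n\times n$ matrix $\mathcal{F}$, indexed by the components of $L_1$) and that of the corollary (the scalars $\alpha_L(\omega'),\beta_L(\omega')$), together with the verification that, when $L_1$ is knotted, the dichotomy "algebraically split / not" lines up exactly with cases~(2) and~(3) of the theorem. I do not anticipate any genuine obstacle beyond this.
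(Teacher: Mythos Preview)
Your proposal is correct and follows exactly the approach intended by the paper: the corollary is stated with a \qed and no proof because it is an immediate specialization of Theorem~\ref{thm:Torres} to the case $|L_1|=1$, and you have carried out precisely that specialization, including the observation that the three cases of the theorem become mutually exclusive and exhaustive when $L_1$ is a knot.
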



Before starting the proof of Theorem~\ref{thm:Torres}, several remarks are in order.

\begin{remark}
 \label{rem:slope}
 \begin{enumerate}
     \item In theory, the matrix~$\mathcal{F}$ appearing in the algebraically split case can be computed from a diagram for the link~$L$. Indeed, one can first compute the Wirtinger presentation of the link group from the diagram, then use Fox calculus
     to determine the homology group~$H_1(X_L;\mathbb{C}^\omega)$, and eventually
     find a basis of the kernel in~$H_1(\partial X_L;\mathbb{C}^\omega)$ of the inclusion induced map.
\item If~$L_1$ is a knot, then much more can be said.
In such a case indeed,
following the terminology of~\cite{DFL}, the correction term~$-\sgn(\alpha_L(\omega')\overline{\beta_L(\omega')})$ is nothing but the sign of the \emph{slope}
\[
(L_1/L')(\omega'):=-\frac{\alpha_L(\omega')}
{\beta_L(\omega')}\in\mathbb{R}\cup\{\infty\}\,,
\]
with the convention that~$\sgn(\infty)=0$. By~\cite[Theorem~3.2]{DFL}, it can be computed via
\begin{equation}
    \label{eq:slope}
    (L_1/L')(\omega')=-\frac{\frac{\partial\nabla_L}{\partial t_1}(1,\sqrt{\omega'})}{2\nabla_{L'}(\sqrt{\omega'})}
\end{equation}
as long as this fraction makes sense (i.e. as long as both the numerator and denominator do not both vanish), where~$\nabla$ stands
for the Conway function.
\item If~$L_1$ is a knot, then the resulting formula (stated in the introduction as Theorem~\ref{thm:intro3}) should be compared with~\cite[Lemma~4.9]{DFL} which deals with the
`literal' extension of the signature.
The latter statement illustrates how this `naive' extension is in general not well-defined.
 \end{enumerate}
\end{remark}

\begin{example}
\label{ex:Whitehead}
Consider the links~$L=L(k)=L_1\cup L_2$ given in Figure~\ref{fig:link-Lk}, and let us assume~$k\neq 0$ (i.e. that~$L(k)$ is non-trivial).
As was computed in Example~\ref{ex:twist}, we have~$\sigma_L(\omega)=\sgn(k)$ for all~$\omega\in\mathbb{T}^2_*$.
Since~$\lk(L_1,L_2)=0$ and~$L_2=L'$ is a trivial knot, Corollary~\ref{cor:Torres-sign} leads to
\[
\sigma_L(1,\omega')=-\sgn(\alpha_L(\omega')\overline{\beta_L(\omega')})=\sgn((L_1/L')(\omega'))
\]
for all~$\omega'=\omega_2\neq 1$.
The well-known value~$\nabla_{L'}(t)=(t-t^{-1})^{-1}$ 
together with Equation~\eqref{eq:nabla-twist} for~$\nabla_{L}$
then enable us to compute
\begin{equation}
\label{eq:slopeL(k)}
(L_1/L')(\omega')=-k\left(\sqrt{\omega'}-\frac{1}{\sqrt{\omega'}}\right)^2=4k\sin(\pi\theta)^2
\end{equation}
for~$\omega'=e^{2i\pi\theta}$. Hence, we have ~$\sigma_L(\omega)=\sgn(k)$ for all~$\omega\in\mathbb{T}^2\setminus\{(1,1)\}$: on these examples, the signature extends continuously across the axes.
\end{example}

\begin{example}
\label{ex:Torres-sign-T}
In the case of the torus link~$L=T(2,2\ell)$, Theorem~\ref{thm:torres-formula-nullity} simply stated that the signature extends to~$\sigma_L(1,\omega)=\sigma_L(\omega,1)=0$ for all~$\omega\in\mathbb{T}^1_*$. This extension is trivial, but nevertheless very natural.
Indeed, by Example~\ref{ex:torus},
it coincides with the average of the limits on either sides of the axes.
\end{example}

\new{
\begin{example}
    \label{ex:Bor}
    Let~$B=L_1\cup L'$ denote the 3-colored Borromean rings. Since it is amphicheiral, its (non-extended) signature vanishes on~$\mathbb{T}^3_*$. Since~$\nabla_B=(t_1-t_1^{-1})(t_2-t_2^{-1})(t_3-t_3^{-1})$ while~$\nabla_{L'}=0$, Equation~\eqref{eq:slope} implies that its slope is infinite. By Corollary~\ref{cor:Torres-sign}, the extended signature~$\sigma_B(1,\omega')$ vanishes as well for all~$\omega'\in\mathbb{T}^2_*$.
\end{example}}

\begin{proof}[Proof of Theorem~\ref{thm:Torres}]
Let us start with an arbitrary~$\mu$-colored link~$L=L_1\cup L_2\cup\dots\cup L_\mu=L_1\cup L'$ and an element~$\omega=(1,\omega')$
of~$\mathbb{T}^\mu$ with~$\omega'\in\mathbb{T}_*^{\mu-1}$.
\new{We fix a meridional homomorphism~$\varphi\colon H_1(M_L)\to\Z^\mu$, hence defining~$\mu_L=(M_L,\varphi)\in\Z^{\mu\choose 3}$ and~$L^\#=(L^\#)_1\cup(L^\#)'$
as in~\eqref{eq:Lsharp}. Note that~$(L^\#)_1$ is the distant union of~$L_1$ with a trivial 1-colored link, while~$(L^\#)'$ is the distant union of~$L'$ with a trivial link and Borromean rings.}
Let~$F=F_1\cup F'$ be a surface in~$B^4$ bounding~\new{$L^\#=(L^\#)_1\cup(L^\#)'$}, obtained by pushing a totally connected C-complex
inside~$B^4$,
and let~$W_F=V_F\cup Y_F$ and~$W_{F'}=V_{F'}\cup Y_{F'}$ be the corresponding~$4$-manifolds (recall Section~\ref{sub:extension}).
The idea is now to apply the Novikov-Wall theorem to the decompositions:
\begin{enumerate}
    \item $V_{F'}=V_{F}\cup \nu(F^\circ_1)$, yielding~$\sign_\omega(V_F)=\sign_{\omega'}(V_{F'})$;
    \item $W_{F'}=V_{F'}\cup Y_{F'}$, yielding~$\sign_{\omega'}(W_{F'})=\sign_{\omega'}(V_{F'})$;
    \item $W_F=V_F\cup Y_F$, yielding~$\sign_{\omega}(W_F)=\sign_{\omega}(V_{F})+\sign_{\omega}(Y_{F})+\mathcal{M}$ for some Maslov index~$\mathcal{M}$.
\end{enumerate}
Since we know that~$\sign_\omega(Y_F)=0$ by
Lemma~\ref{lem:Y}, these three claims imply
the equality
\[
\sigma_L(\omega)=\sigma_{L'}(\omega')+\mathcal{M}\,.
\]
 
 \medskip
 
We start with the first claim, namely the fact that the Novikov-Wall theorem
applied to the decomposition~$V_{F'}=V_{F}\cup \nu(F^\circ_1)$
yields to equality~$\sign_\omega(V_F)=\sign_{\omega'}(V_{F'})$.
First note that if~$\mu=1$, then this amounts to proving that~$\sign(V_F)$ vanishes, a well-known fact (see e.g.~\cite[Proposition~3.3]{CNT}). Therefore, we can assume~$\mu\ge 2$.
Since~$F^\circ_1$ is a surface with boundary, the 4-manifold~$\nu(F^\circ_1)\simeq F^\circ_1\times D^2$ has the homotopy type of a 1-dimensional CW-complex, and its signature vanishes.

To compute the correction term, first note that the 3-manifold~$M_1\coloneqq V_F\cap\nu(F^\circ_1)$ is equal to~$F_1^\circ\times S^1$, with boundary~$\Sigma\coloneqq\partial\nu(\new{(L^\#)_1})\cup\bigsqcup_e T_e$, where~$\{T_e\}_e$ denotes the
tori corresponding to the intersections of~$F_1$ with the other surfaces.
Since~$\omega'$ belongs to~$\mathbb{T}^{\mu-1}_*$, these tori are~$\mathbb{C}^\omega$-acyclic, leading to~$H_1(\Sigma;\mathbb{C}^\omega)=H_1(\partial\nu(\new{(L^\#)_1});\mathbb{C}^\omega)$.
Clearly, this space is freely generated by~$\{m_K,\ell_K\}_{K\in\new{\mathcal{K}^\#_1}}$
with indices ranging over the set
\begin{equation}
\label{eq:Ksharp}
\new{\mathcal{K}^\#_1}=\{K\subset \new{(L^\#)_1}\,|\,\omega_2^{\lk(K,\new{(L^\#)_2})}\cdots\omega_\mu^{\lk(K,\new{(L^\#)_\mu})}=1\}\,.
\end{equation}

Now, observe that since we assumed~$F$ connected and~$\mu\ge 2$, the surface~$F_1$ intersects the rest of the bounding surface, so~$H_0(F_1^\circ;\mathbb{C}^\omega)$ vanishes.
By the K\"unneth formula, we get~$H_1(M_1;\mathbb{C}^\omega)\simeq H_1(F_1^\circ;\mathbb{C}^\omega)$. This implies that the meridians~$\{m_K\}_{K\in\mathcal{K}_1}$ lie in
the kernel of the inclusion induced map~$H_1(\Sigma;\mathbb{C}^\omega)\to H_1(M_1;\mathbb{C}^\omega)$. Since the dimension of this kernel is equal to the cardinal of~$\new{\mathcal{K}_1^\#}$, these meridians freely generate this kernel.

To determine the second Lagrangian, observe that since~$\nu(F_1)$ is homeomorphic to~$F_1\times D^2$, we have
\[
M_2\coloneqq\partial\nu(F_1)\setminus M_1\simeq\left(\nu(\new{(L^\#)_1})\cup(F_1\times S^1)\right)\setminus(F_1^\circ\times S^1)=\nu(\new{(L^\#)_1})\cup\bigsqcup_e(D^2\times S^1)\,,
\]
where the solid tori are indexed by the double points in~$F_1$.
Since~$\omega'$ belongs to~$\mathbb{T}^{\mu-1}_*$, these tori are~$\mathbb{C}^\omega$-acyclic, and we have~$H_1(M_2;\mathbb{C}^\omega)=H_1(\nu(\new{(L^\#)_1});\mathbb{C}^\omega)$,
a space freely generated by~$\{\ell_K\}_{K\in\new{\mathcal{K}_1^\#}}$. As a consequence, 
the Lagrangian given by the kernel of the inclusion induced map~$H_1(\Sigma;\mathbb{C}^\omega)\to H_1(M_2;\mathbb{C}^\omega)$
admits the basis~$\{m_K\}_{K\in\new{\mathcal{K}_1^\#}}$, and coincides with
the first Lagrangian. Therefore, the Maslov correction term vanishes, completing the proof of the first claim.

\medskip

\new{The second claim is clear: since~$\omega'$ belongs to~$\mathbb{T}^{\mu-1}_*$,
the equality~$\sigma_{\omega'}(W_{F'})=\sigma_{\omega'}(V_{F'}) $ follows from Theorem~\ref{thm:extension}.}

\medskip

We now turn to the third and last step, i.e. the application of the Novikov-Wall theorem
to the decomposition~$W_F=V_F\cup Y_F$ along~$P(F)$.
Since the orientation on~$W_F$ induces an orientation on~$V_F$ and~$Y_F$ such that~$\partial Y_F=P(F)\cup -P(\new{L^\#})$ and~$\partial V_F=X_{\new{L^\#}}\cup-P(F)$, we have
\[
\sign_\omega(W_F)=\sign_\omega(V_F)+\sign_\omega(Y_F)+\mathit{Maslov}(\new{\mathcal{L}^\#_-,\mathcal{L}^\#_0,\mathcal{L}^\#_+})\,,
\]
where~\new{$\mathcal{L}^\#_-$ (resp.~$\mathcal{L}^\#_0,\mathcal{L}^\#_+$}) denotes the kernel of the inclusion induced maps from~$H_1(\partial X_{\new{L^\#}};\mathbb{C}^\omega)$ to~$H_1(P(\new{L^\#});\mathbb{C}^\omega)$ (resp.~$H_1(P(F);\mathbb{C}^\omega)$,~$H_1(X_{\new{L^\#}};\mathbb{C}^\omega)$).
As above \new{(recall in particular~\eqref{eq:Ksharp})}, the space~$H_1(\partial X_{\new{L^\#}};\mathbb{C}^\omega)$ is freely generated by~$\{m_K,\ell_K\}_{K\in\new{\mathcal{K}^\#_1}}$,
and it remains to compute the three Lagrangians.

\new{Before doing so, first note that~$\mathcal{K}^\#_1$ is the disjoint union of~$\mathcal{K}_1$ with the components of the trivial link~$(L^\#)_1\setminus L_1$, where
\[
\mathcal{K}_1=\{K\subset L_1\,|\,\omega_2^{\lk(K,L_2)}\cdots\omega_\mu^{\lk(K,L_\mu)}=1\}\,.
\]
This leads to the equality
\begin{equation*}
    H_1(\partial X_{L^\#};\mathbb{C}^\omega)=H_1(\partial X_{L};\mathbb{C}^\omega)\oplus\bigoplus_{K\in (L^\#)_1\setminus L_1}\left(\C m_K\oplus\C\ell_K\right)\,,
\end{equation*}
which will allow us to compare the Lagrangians~$\mathcal{L}^\#_-,\mathcal{L}^\#_0,\mathcal{L}^\#_+$ with their counterparts~$\mathcal{L}_-,\mathcal{L}_0,\mathcal{L}_+$ defined with~$L$ instead of~$L^\#$.}

By Lemma~\ref{lem:P(G)} applied to~$P(L)$, we know that~$\mathcal{L}_-$ admits
the basis~$\{c_K\}_{K\in\mathcal{K}_1}$, where
\[
c_K=\begin{cases}
\ell_K&\text{if~$\lk(K,K')=0$ for all~$K'\subset L'$;}\cr
m_K&\text{else.}
\end{cases}
\]
\new{The same argument applied to~$P(L^\#)$ yields the relation
\[
\mathcal{L}_-^\#=\mathcal{L}_-\oplus\bigoplus_{K\in (L^\#)_1\setminus L_1}\C\ell_K\,.
\]
S}ince~$F$ is connected, Lemma~\ref{lem:P(G)} applied to~$P(F)$ shows that~$\new{\mathcal{L}^\#_0}$ is freely generated by~$\{m_K\}_{K\in\new{\mathcal{K}^\#_1}}$ if~$\mu\ge 2$.
\new{Writing~$\mathcal{L}_0$ for the subspace freely generated by~$\{m_K\}_{K\in\mathcal{K}_1}$, we now get
\[
\mathcal{L}_0^\#=\mathcal{L}_0\oplus\bigoplus_{K\in (L^\#)_1\setminus L_1}\C m_K\,.
\]
F}or~$\mu=1$ \new{(in which case~$L^\#=L$)}, one last use of Lemma~\ref{lem:P(G)} shows that~$\new{\mathcal{L}^\#_0=\mathcal{L}_0}$ admits the vectors~$\sum_K\ell_K$ and~$\{m_K-m_{K_0}\}_{K\subset L}$ as
a basis, with~$K_0$ any fixed component of~$L=L_1$.
By definition, the third Lagrangian~$\mathcal{L}_+$
admits a basis~$\{x_j\}_j$, which can be described in coordinates as in Equation~\eqref{eq:xj} above.
\new{Finally, an easy Mayer-Vietoris argument shows that the Lagrangian~$\mathcal{L}^\#_+$ splits as
\begin{align*}
\mathcal{L}^\#_+&=\ker\left(H_1(\partial X_{L^\#};\mathbb{C}^\omega)\to H_1(X_{L^\#};\mathbb{C}^\omega)\right)\\
&=\ker\left(H_1(\partial X_{L};\mathbb{C}^\omega)\to H_1(X_{L};\mathbb{C}^\omega)\right)\oplus \ker\left(H_1(\partial X_{(L^\#)_1\setminus L_1};\mathbb{C})\to H_1(X_{L^\#\setminus L};\mathbb{C}^\omega)\right)\\
&=\mathcal{L}_+\oplus\bigoplus_{K\in (L^\#)_1\setminus L_1}\C m_K\,,
\end{align*}
where the last equality follows from the definition of~$\mathcal{L}_+$ together with the fact that the slope of the Borromean rings is infinite (recall Example~\ref{ex:Bor}).
By the four equalities displayed above, we get
\[
\mathit{Maslov}(\mathcal{L}^\#_-,\mathcal{L}^\#_0,\mathcal{L}^\#_+)=\mathit{Maslov}(\mathcal{L}_-,\mathcal{L}_0,\mathcal{L_+})+\sum_{K\in (L^\#)_1\setminus L_1}\underbrace{\mathit{Maslov}(\C\ell_K,\C m_K,\C m_K)}_{=0}\,,
\]
and we are left with the computation of~$\mathit{Maslov}(\mathcal{L}_-,\mathcal{L}_0,\mathcal{L_+})$.}

This is the point where a presentable closed formula becomes out of reach, and we focus on the three cases as in the statement of the theorem.
Let us first assume that~$\mu=1$. In this case, the computation
of the Maslov index can be performed as in the proof of~\cite[Lemma~5.4]{NP}, leading to~$\mathit{Maslov}(\mathcal{L}_-,\mathcal{L}_0,\mathcal{L}_+)=\sign(Lk_L)$. Let us now assume that~$L$ satisfies the condition of the third point. This precisely means that the Lagrangians~$\mathcal{L}_-$ and~$\mathcal{L}_0$ coincide, leading to the Maslov index vanishing.
Let us finally assume that~$L=L_1\cup L'$ is algebraically split with~$\mu\ge 2$, and recall the notation of Equation~\eqref{eq:xj}.
As explained in Section~\ref{sub:NW}, the Maslov index
is given by the signature of the form~$f$ on~$(\mathcal{L}_-+\mathcal{L}_0)\cap\mathcal{L}_+$ defined as follows:
if~$a=a_-+a_0\in(\mathcal{L}_-+\mathcal{L}_0)\cap\mathcal{L}_+$ with~$a_-\in\mathcal{L}_-,a_0\in\mathcal{L}_0$
and~$b\in(\mathcal{L}_-+\mathcal{L}_0)\cap\mathcal{L}_+$, then~$f(a,b)=a_0\cdot b$. Since~$L=L_1\cup L'$ is algebraically split,~$\mathcal{L}_-$ is freely generated by~$\{\ell_K\}_{K\subset L_1}$,~$\mathcal{L}_0$ is freely generated by~$\{m_K\}_{K\subset L_1}$, and we have~$(\mathcal{L}_-+\mathcal{L}_0)\cap\mathcal{L}_+=\mathcal{L}_+$.
Therefore, we get
\[
f(x_i,x_j)=\Big(\sum_{K\subset L_1}\alpha_{iK}m_K\Big)\cdot\Big(\sum_{K\subset L_1}\alpha_{jK}m_K+\beta_{jK}\ell_K\Big)=
\sum_{K\subset L_1}\alpha_{iK}\overline{\beta_{jK}}
\,(\overbrace{m_K\cdot\ell_K}^{-1})=f_{ij}\,,
\]
using the third part of Remark~\ref{rem:NW}. This concludes the proof.
\end{proof}

\begin{remark}
\label{rem:Torres-general}
\begin{enumerate}
    \item There is no obstacle to relating~$\sigma_L(\omega)$ and~$\sigma_{L'}(\omega')$ in the general setting of an arbitrary colored link~$L=L_1\cup L'$. Indeed,
    the proof above leads to the formula
    \[
\sigma_L(\omega)=\sigma_{L'}(\omega')+\mathit{Maslov}(\mathcal{L}_-,\mathcal{L}_0,\mathcal{L}_+)\,,
\]
where~$\mathcal{L}_-,\mathcal{L}_0,\mathcal{L}_+$ are explicit Lagrangians of an explicit symplectic vector space. The issue is that, outside of the three cases highlighted in Theorem~\ref{thm:Torres}, there does
not seem to be a self-contained closed formula for this Maslov index.
    \item The same can be said of the restriction to~$\omega'\in\mathbb{T}_*^{\mu-1}$: Lemma~\ref{lem:P(G)} \new{holds for}
    arbitrary values of~$\omega\in\mathbb{T}^{\mu}$,
    leading to formulas of the form displayed above
    valid for any~$\omega=(1,\omega')$ with~$\omega'\in\mathbb{T}^{\mu-1}$.
   Once again, it is not difficult to give explicit description of the corresponding Lagrangian subspaces, but their Maslov index does not admit
   a simple closed formula in general.
\end{enumerate}
\end{remark}

\subsection{Torres formulas for the nullity}
\label{sub:Torres-null}

As we did for the signature in the previous section, we now want to relate the nullity of a~$\mu$-colored link \(L=L_{1}\cup\ldots\cup L_{\mu}\) at \(\omega=(1,\omega')\) to the nullity of \(L'=L_{2}\cup\ldots\cup L_{\mu}\) at \(\omega'\).
As in Theorem~\ref{thm:Torres}, we will assume~$\omega'\in\mathbb{T}^{\mu-1}_{*}$, and
 will restrict our attention to the same three cases (the second case being slightly less general in the statement below).

\begin{theorem}
\label{thm:torres-formula-nullity}
    \begin{enumerate}
    \item If~$L$ is a (1-colored) oriented link, then~$\eta_L(1)=\nul(\mathit{Lk}_L)-1$.
    \item If~$L=L_1\cup L'$ is algebraically split with~$\mu\ge 2$ and~$L_1$ is a knot, then for all~\(\omega'\in\mathbb{T}^{\mu-1}_{*}\),
\[
		\eta_L(1,\omega')=
		\begin{cases}
			\eta_{L'}(\omega')+1&\text{if $(L_1/L')(\omega')=0$}\\
			\eta_{L'}(\omega')-1&\text{if $(L_1/L')(\omega')=\infty$}\\
			\eta_{L'}(\omega')&\text{else.}
		\end{cases}
  \]
        \item If there is no \(K\subset L_{1}\) such that \(\lk(K,K')=0\) for all \(K'\subset L'\), then we have
    \[
    \eta_L(1,\omega')=\eta_{L'}(\omega')-|L_1|+\sum_{K\subset L_1}\sum_{K'\subset L'} |\lk(K,K')|
    \]
    for all~\(\omega'\in\mathbb{T}^{\mu-1}_{*}\),
    where~$|L_1|$ denotes the number of components of~$L_1$, and the sums run over all components~$K$ of~\(L_{1}\) and~$K'$ of~\(L'\).
    \end{enumerate}
\end{theorem}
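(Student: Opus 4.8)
The plan is to reduce both sides of each identity to twisted Betti numbers and compare them via two Mayer--Vietoris sequences based on the exterior \(X_L\). Fix \(\omega=(1,\omega')\) with \(\omega'\in\mathbb{T}^{\mu-1}_{*}\); then \(\omega\neq(1,\dots,1)\), so Proposition~\ref{prop:nullity} gives \(\eta_L(1,\omega')=\dim H_1(M_L;\mathbb{C}^\omega)\), while Proposition~\ref{prop:twisted} gives \(\eta_{L'}(\omega')=\dim H_1(X_{L'};\mathbb{C}^{\omega'})=\dim H_1(X_{L'};\mathbb{C}^{\omega})\). I would then run Mayer--Vietoris for the splitting \(M_L=X_L\cup_{\partial\nu(L)}(-P(\overline L))\) of Section~\ref{sub:extension}, and for \(X_{L'}=X_L\cup_{\partial\nu(L_1)}\nu(L_1)\). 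The key preliminary observation is that, since every coordinate of \(\omega'\) differs from \(1\), each meridian torus \(\partial\nu(K)\) with \(K\subset L'\), each plumbing torus of \(P(\overline L)\), and each solid torus \(\nu(K)\) with \(K\subset L'\) carries a nontrivial rank-one coefficient system, hence is \(\mathbb{C}^\omega\)-acyclic. Consequently \(H_*(\partial\nu(L);\mathbb{C}^\omega)=H_*(\partial\nu(L_1);\mathbb{C}^\omega)\) is concentrated on the tori \(\partial\nu(K)\) with \(K\in\mathcal{K}_1:=\{K\subset L_1\mid\prod_{j\ge2}\omega_j^{\lk(K,L_j)}=1\}\), on each of which the system is trivial (so \(\dim H_*(\partial\nu(K);\mathbb{C}^\omega)=(1,2,1)\) and \(\dim H_*(\nu(K);\mathbb{C}^\omega)=(1,1,0)\)); and the boundary maps in the two sequences have kernels exactly the Lagrangians \(\mathcal{L}_+=\ker\bigl(H_1(\partial\nu(L);\mathbb{C}^\omega)\to H_1(X_L;\mathbb{C}^\omega)\bigr)\) and \(\mathcal{L}_-=\ker\bigl(H_1(\partial\nu(L);\mathbb{C}^\omega)\to H_1(P(\overline L);\mathbb{C}^\omega)\bigr)\) already determined in the proof of Theorem~\ref{thm:Torres} (with \(\mathcal{L}_-\) spanned by the \(\ell_K\) over the linking-trivial components of \(\mathcal{K}_1\) and the \(m_K\) over the rest, and \(\mathcal{L}_+=\langle x_j\rangle\) with the \(x_j\) of~\eqref{eq:xj}).

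The heart of the argument is the computation of \(H_*(P(\overline L);\mathbb{C}^\omega)\). The \(\mathbb{C}^\omega\)-acyclicity of the plumbing tori collapses the Mayer--Vietoris spectral sequence of \(P(\overline L)=\bigcup_v(F_v^\circ\times S^1)\) to \(H_*(P(\overline L);\mathbb{C}^\omega)\cong\bigoplus_v H_*(F_v^\circ\times S^1;\mathbb{C}^\omega)\). For a vertex \(v\) of color \(\ge2\) the \(S^1\)-factor is twisted nontrivially and the summand vanishes; for the color-\(1\) vertex one is left with a disjoint union of pieces \(\Pi_K\times S^1\), where \(\Pi_K\) is the disc with \(\sum_{K'\subset L'}|\lk(K,K')|\) open subdiscs removed and the coefficient system is trivial on the \(S^1\)-factor. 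A one-line Künneth computation then yields \(H_0(P(\overline L);\mathbb{C}^\omega)\cong\mathbb{C}^{r}\), with \(r\) the number of components \(K\subset L_1\) satisfying \(\lk(K,K')=0\) for all \(K'\subset L'\), and \(\dim H_1(P(\overline L);\mathbb{C}^\omega)=\sum_{K}\bigl(\sum_{K'}|\lk(K,K')|-1\bigr)\) summed over the remaining components of \(L_1\). Feeding this, together with the torus and solid-torus homology above, into the two Mayer--Vietoris sequences and chasing dimensions, the contributions of \(\dim H_1(X_L;\mathbb{C}^\omega)\) and of \(\dim(\mathcal{L}_+\cap\mathcal{L}_-)\) cancel between the two sequences, leaving an identity for \(\eta_L(1,\omega')-\eta_{L'}(\omega')\) in terms of \(\dim H_1(P(\overline L);\mathbb{C}^\omega)\), the integer \(r\), and the intersection of \(\mathcal{L}_+\) with one coordinate Lagrangian.

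It then remains to read this off in the three cases. In case~(3) no component of \(L_1\) is linking-trivial, so \(r=0\), \(H_0(P(\overline L);\mathbb{C}^\omega)=0\), the residual intersection term drops out, and \(\eta_L(1,\omega')-\eta_{L'}(\omega')=\dim H_1(P(\overline L);\mathbb{C}^\omega)=\sum_{K\subset L_1}\sum_{K'\subset L'}|\lk(K,K')|-|L_1|\). In case~(2) (with \(L_1=K\) a knot and \(L\) algebraically split) the color-\(1\) part of \(P(\overline L)\) is a single solid torus, so \(r=1\), \(\dim H_1(P(\overline L);\mathbb{C}^\omega)=0\), and \(\mathcal{K}_1=\{K\}\); the residual term is then the comparison of \(\langle m_K\rangle\cap\mathcal{L}_+\) (nonzero exactly when \(x_1\in\langle m_K\rangle\), i.e. when the slope \((K/L')(\omega')=\infty\)) with \(\langle\ell_K\rangle\cap\mathcal{L}_+\) (nonzero exactly when the slope is \(0\)), and Remark~\ref{rem:slope} together with~\eqref{eq:slope} identifies \((K/L')(\omega')=-\alpha_L(\omega')/\beta_L(\omega')\); this produces the three-line formula of the statement. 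Finally, case~(1) is the specialisation \(\mu=1\), \(\omega=(1,\dots,1)\), where Proposition~\ref{prop:nullity} gives \(\eta_L(1)=\dim H_1(M_L;\mathbb{C})-\mu\); here I would instead note directly that \(M_L\) is \(S^3\) surgered along \(L\) with framing matrix exactly \(\mathit{Lk}_L\) of~\eqref{eq:Lk} (the longitudes being chosen so that \(\lk(L,\ell_K)=0\)), whence \(H_1(M_L;\mathbb{Z})=\operatorname{coker}(\mathit{Lk}_L)\) and \(\eta_L(1)=\nul(\mathit{Lk}_L)-1\).

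The main obstacle will be the bookkeeping in the simultaneous dimension chase through the two Mayer--Vietoris sequences — in particular tracking the \(H_0\)-contributions and the connectivity of \(P(\overline L)\), which may be disconnected when \(L_1\) has linking-trivial components or \(L'\) is split — and verifying that the Lagrangians \(\mathcal{L}_\pm\) used here coincide literally with those from the proof of Theorem~\ref{thm:Torres}. A secondary point requiring care is the surgery description of \(M_L\), with its framing, in the case \(\mu=1\).
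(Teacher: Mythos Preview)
Your approach is essentially the paper's: both compute $\eta_L(1,\omega')=\dim H_1(M_L;\mathbb{C}^\omega)$ via Proposition~\ref{prop:nullity}, set up the same two Mayer--Vietoris sequences for $M_L=X_L\cup_{\partial\nu(L)}(-P(\overline L))$ and $X_{L'}=X_L\cup_{\partial\nu(L_1)}\nu(L_1)$, eliminate $\dim H_1(X_L;\mathbb{C}^\omega)$, and read off the three cases. The paper organises the chase via the Euler characteristic of the $M_L$ sequence and works with $\ker(V_m\to H_1(X_L))$ and $\ker(V_\ell\to H_1(X_L))$ rather than with~$\mathcal{L}_\pm$, but this is cosmetic. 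For case~(1) the paper derives the result as a degenerate instance of the algebraically split formula (noting that $V_\ell\to H_1(X_L)\cong V_m$ is presented by~$\mathit{Lk}_L$ once the longitude is chosen with $\lk(L,\ell_K)=0$); your direct surgery description of $M_L$ is a valid and slightly more geometric alternative, and your caveat about the framing is well-placed.

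There is, however, one concrete slip that would derail case~(2). For each linking-trivial component $K\subset L_1$, the piece $\Pi_K\times S^1=D^2\times S^1$ carries \emph{trivial} coefficients (the $S^1$-factor maps to $\omega_1=1$), so it contributes $\mathbb{C}$ to $H_1$, not zero. Hence
\[
\dim H_1(P(\overline L);\mathbb{C}^\omega)=r+\sum_{K\text{ not alg.\ split}}\Bigl(\sum_{K'}|\lk(K,K')|-1\Bigr),
\]
not just the sum over non-trivial components. In your case~(2) this gives $\dim H_1(P(\overline L);\mathbb{C}^\omega)=1$, not $0$, and using your stated value would shift the final answer by $-1$ in each slope sub-case. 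A related imprecision: the term $\dim(\mathcal{L}_+\cap\mathcal{L}_-)$ does \emph{not} cancel against anything in the $X_{L'}$ sequence in general; what survives after eliminating $\dim H_1(X_L)$ is precisely the difference $\dim(\mathcal{L}_+\cap\mathcal{L}_-)-\dim(\mathcal{L}_+\cap V_m)$ (this is the paper's equation~\eqref{eq:general-formula-nullity}), which only collapses to zero in case~(3) where $\mathcal{L}_-=V_m$.
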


\begin{example}
\label{ex:null-L(k)}
Consider the family of twist links~$L(k)=L_1\cup L_2$ of Figure~\ref{fig:link-Lk}. Since these links are algebraically split
with unknotted components and slope given by~\eqref{eq:slopeL(k)}, we find that~$\eta_{L(k)}$ extends continuously to the constant function equal to~$\delta_{k0}$ on the whole of~$\mathbb{T}^2\setminus\{(1,1)\}$.
\end{example}

\begin{example}
 \label{ex:null-T}
 Consider the torus link~$L=T(2,2\ell)$ studied in Example~\ref{ex:torus}, assuming~$\ell\neq 0$.
 By the third case of Theorem~\ref{thm:torres-formula-nullity},
 we get
 \[
\eta_{L}(1,\omega)=\eta_L(\omega,1)=\vert\ell\vert -1
 \]
 for all~$\omega\in\mathbb{T}^1_*=S^1\setminus\{1\}$.
 Such a value might seem surprising, as the nullity is at most~$1$
 on~$\mathbb{T}^2_*$. As we shall see, such a high number is necessary
 to account for the different values of the limits of the signatures
 when approaching~$1$ from different sides, see Remark~\ref{ex:diff-limits}.
\end{example}

\begin{proof}[Proof of Theorem~\ref{thm:torres-formula-nullity}]
Let~\(L=L_{1}\cup\ldots\cup L_{\mu}=:L_1\cup L'\) be a~$\mu$-colored link and let us fix~\(\omega=(1,\omega')\)~ with~\(\omega'\in\mathbb{T}^{\mu-1}_{*}\).
By Proposition~\ref{prop:nullity}, the nullity of~\(L\) at~\(\omega\) is equal to the dimension of~\(H_{1}(M_{L};\mathbb{C}^\omega)\) if~$\mu\ge 2$, and to~$\dim H_{1}(M_{L};\mathbb{C})-1$ if~$\mu=1$.
Recall also that by Theorem~\ref{thm:extension} and the assumption~\(\omega'\in\mathbb{T}^{\mu-1}_{*}\), we have
\begin{equation}
\label{eq:X_L'}
\eta_{L'}(\omega')=\dim H_{1}(M_{L'}; \C^{\omega'})=\dim H_{1}(X_{L'};\C^{\omega'})\,.
\end{equation}
Hence, we are left with the computation of the difference between the dimensions of~$H_{1}(M_{L};\mathbb{C}^\omega)$
and of~$H_{1}(X_{L'};\mathbb{C}^{\omega'})$.

To do so, we apply the Mayer-Vietoris exact sequence to
the decompositions
\[
X_{L'}=X_{L}\cup_{\partial\nu(L_1)} \nu(L_{1})
\quad\text{and}\quad M_{L}=X_{L}\cup_{\partial\nu(L)} -P(L)\,.
\]
Let us start with the first decomposition, which leads to the exact sequence
\begin{align*}
H_{1}(\partial\nu(L_{1});\mathbb{C}^\omega)&\rightarrow H_{1}(X_{L};\mathbb{C}^\omega)\oplus H_{1}(\nu(L_{1});\mathbb{C}^\omega)\rightarrow H_{1}(X_{L'};\mathbb{C}^{\omega'})\\
&\rightarrow H_{0}(\partial\nu(L_{1});\mathbb{C}^\omega)\rightarrow H_{0}(X_{L};\mathbb{C}^\omega)\oplus H_{0}(\nu(L_{1});\mathbb{C}^\omega)\,.
\end{align*}
Note that both spaces~$H_{0}(\partial\nu(L_{1});\mathbb{C}^\omega)$ and~$H_{0}(\nu(L_{1});\mathbb{C}^\omega)$ have dimension equal to the cardinal of
\[
\mathcal{K}_1=\{K\subset L_1\,|\,\omega_2^{\lk(K,L_2)}\cdots\omega_\mu^{\lk(K,L_\mu)}=1\}\,,
\]
so the last arrow above is injective. By exactness, the second is therefore surjective.
Similarly, the space~$H_{1}(\partial\nu(L_{1});\mathbb{C}^\omega)$ has dimension~$2|\mathcal{K}_1|$, with a natural
basis consisting of the meridiens and longitudes of
elements of~$\mathcal{K}_1$. Writing~$V_m$ and~$V_\ell$ for
the subspaces spanned by these meridiens and longitudes, respectively,
the map induced by the inclusion of~$\partial\nu(L_1)$ in~$\nu(L_1)$ restricts to an isomorphism~$V_\ell\simeq H_{1}(\nu(L_{1});\mathbb{C}^\omega)$ and to the zero map on~$V_m$.
As a consequence, we have the exact sequence
\begin{equation*}
V_{m}\rightarrow H_{1}(X_{L};\mathbb{C}^\omega)\rightarrow H_{1}(X_{L'};\mathbb{C}^{\omega'})\rightarrow 0\,,
\end{equation*}
which together with~\eqref{eq:X_L'}, yields the equality
\begin{equation}
\label{eq:X_L}
\dim H_{1}(X_{L};\mathbb{C}^\omega)=\eta_{L'}(\omega')+|\mathcal{K}_1|-\dim\ker(V_{m}\rightarrow H_{1}(X_{L};\mathbb{C}^\omega))\,.
\end{equation}
Next, consider the decomposition~$M_{L}=X_{L}\cup_{\partial\nu(L)} -P(L)$, which yields the exact sequence
\begin{align*}
0\rightarrow\ker{\iota}&\rightarrow H_{1}(\partial\nu(L);\mathbb{C}^\omega)\xrightarrow{\iota} H_{1}(X_{L};\mathbb{C}^\omega)\oplus H_{1}(P(L);\mathbb{C}^\omega)\rightarrow H_{1}(M_{L};\mathbb{C}^\omega) \cr
&\rightarrow H_{0}(\partial\nu(L);\mathbb{C}^\omega)\rightarrow H_{0}(X_{L};\mathbb{C}^\omega)\oplus H_{0}(P(L);\mathbb{C}^\omega)\rightarrow H_{0}(M_{L};\mathbb{C}^\omega)\rightarrow 0\,.
\end{align*}
Since the Euler characteristic of this sequence is 0, we can write
\[\beta_{1}(M_{L})=\dim\ker(\iota)-\beta_{1}(\partial\nu(L))+\beta_{1}(X_{L})+\beta_{1}(P(L))+\beta_{0}(\partial\nu(L))-\beta_{0}(X_{L})-\beta_{0}(P(L))+\beta_{0}(M_{L})\,,
\]
where~$\beta_i$ denotes the~$i^\mathrm{th}$ Betti number.
Now, we can simplify this equation via the following observations.
\begin{itemize}
    \item We have \(\beta_{0}(X_{L})=\beta_{0}(M_{L})\) since both of these spaces are connected and both are either trivially or non-trivially twisted.
    \item As already mentioned, we have~\(\beta_{0}(\partial\nu(L))=|\mathcal{K}_{1}|\) and~\(\beta_{1}(\partial\nu(L))=2|\mathcal{K}_{1}|\).
    \item The Betti number~\(\beta_{0}(P(L))\) is given by the number of components~\(K\subset L_{1}\) that are algebraically split from~$L'$, i.e. such that~$\lk(K,K')=0$ for all~$K'\subset L'$.
    \item We finally turn to~$\beta_{1}(P(L))$: using the assumption~$\omega'\in\mathbb{T}^{\mu-1}_*$
    together with the arguments of the proof of Lemma~\ref{lem:P(G)},
    we find that each disk in~$P(L)$ contributes the number of punctures on it minus~1 if there are punctures, and contributes~1 otherwise; in other words, we have
    \[
    \beta_{1}(P(L))=\beta_{0}(P(L))+\sum_{K\subset L_1}\Big(\big(\sum_{K'\subset L'} |\lk(K,K')|\big)-1\Big)\,,
    \]
    where the first sum runs over components~$K$ of \(L_{1}\) that are not algebraically split from~$L'$,
    and the second sum runs over all components~$K'$ of~\(L'\).
\end{itemize}
Using these observations together with Equation~\eqref{eq:X_L}, we get the following
general result:
\begin{equation}
\beta_{1}(M_{L})=\eta_{L'}(\omega') +\dim\ker(\iota) - \dim\ker(V_{m}\rightarrow H_{1}(X_{L}))+\sum_{K\subset L_1}\Big(\big(\sum_{K'\subset L'} |\lk(K,K')|\big)-1\Big)\,.\label{eq:general-formula-nullity}\end{equation}
We now consider the three particular cases appearing in the statement.

Let us first assume that for each~\(K\subset L_1\), there exists~$K'\subset L'$ with~$\lk(K,K')\neq 0$. Then, we know by Lemma~\ref{lem:P(G)} that~\(V_{m}\rightarrow H_{1}(P(L);\mathbb{C}^\omega)\) is trivial while~\(V_{\ell}\rightarrow H_{1}(P(L);\mathbb{C}^\omega)\) is an isomorphism. This implies the equality~\(\ker(\iota)=\ker(V_{m}\rightarrow H_{1}(X_{L};\mathbb{C}^\omega))\).  Putting this into formula~\eqref{eq:general-formula-nullity} yields
    \[
    \eta_{L}(\omega)=\beta_{1}(M_{L})=\eta_{L'}(\omega') +\sum_{K\subset L_1}\Big(\big(\sum_{K'\subset L'} |\lk(K,K')|\big)-1\Big)\,
    ,\]
with the first sum now running over all components of~$L_1$. This gives the third case in the statement.

Let us now assume that~$\lk(K,K')=0$ for all components~$K\subset L_1$ and~$K'\subset L'$.
In such a case, we know from Lemma~\ref{lem:P(G)} that~\(V_{\ell}\rightarrow H_{1}(P(L);\mathbb{C}^\omega)\) is trivial while~\(V_{m}\rightarrow H_{1}(P(L);\mathbb{C}^\omega)\) is an isomorphism, leading to~\(\ker(\iota)=\ker(V_{\ell}\rightarrow H_{1}(X_{L};\mathbb{C}^\omega))\).
    Therefore, the general formula~\eqref{eq:general-formula-nullity} simplifies to
    \begin{equation}
    \label{eq:split}
        \beta_{1}(M_{L})=\eta_{L'}(\omega')+\dim\ker(V_{\ell}\rightarrow H_{1}(X_{L};\mathbb{C}^\omega))-\dim\ker(V_m\rightarrow H_{1}(X_{L};\mathbb{C}^\omega))\,,
    \end{equation}
    as the final sum now runs over an empty set.
    In the special case where~\(L_{1}\) is a knot, the dimensions of these kernels are determined by the slope~\((L_{1}/L)(\omega')\) by definition (recall Remark~\ref{rem:slope}).
    This leads to the second case in the statement.
    
Finally, let us assume that~$L=L_1$ is a~1-colored link.
In that case, the link~$L'$ being empty and the coefficients trivial, we have~$\eta_{L'}=\dim H_{1}(X_{L'})=0$
and~\(V_{m}\rightarrow H_{1}(X_{L})\) is an
isomorphism. Therefore, the general formula~\eqref{eq:split} yields
\[
\eta_L(1)=\beta_{1}(M_{L})-1=\dim\ker(V_{\ell}\rightarrow H_{1}(X_{L}))-1\,.
\]
It remains to recall that the morphism~\(V_{\ell}\rightarrow H_{1}(X_{L})\) is presented by the matrix~\(\mathit{Lk}_L\),
leading to the first case in the statement,
and concluding the proof.
\end{proof}

\begin{remark}
\label{rem:Torres-general-nullity}
\begin{enumerate}
    \item As in the case of the signature, there is no obstacle to relating~$\eta_L(\omega)$ and~$\eta_{L'}(\omega')$ in the general setting of an arbitrary colored link~$L=L_1\cup L'$. Indeed,
    the proof above leads to Equation~\eqref{eq:general-formula-nullity},
    where the involved dimensions can be computed via Fox calculus
    on any given example. However, the general case does not yield a tractable closed formula.
    \item 
    Similarly, given any~$\omega'\in\mathbb{T}^{\mu-1}$ (and not necessarily in~$\mathbb{T}_*^{\mu-1}$), one could in theory relate the nullity of~\(L\) at~\((1,\omega')\) to the nullity of~\(L'\) at~$\omega'$. However, that general case does not yield any tractable closed formula, since the homology of the plumbed manifolds is decidedly more complicated in that case.
\end{enumerate}
\end{remark}

\section{Limits of signatures: the 4D approach}
\label{sec:4D}

The aim of this section is to use the 4-dimensional approach of Section~\ref{sec:Torres} to evaluate limits of signatures.
It is divided as follows. In Section~\ref{sub:prelim-lemma},
we give the general strategy together with two preliminary lemmas. Then, the case of the Levine-Tristram signature is studied in Section~\ref{sub:LT},
limits of multivariable signatures of colored links with all variables tending to~$1$ in Section~\ref{sub:lim-mult-1} and 
more general limits
of multivariable signatures in Section~\ref{sub:lim-mult}.
Finally, Section~\ref{sub:comparison} contains a discussion of the comparison of the three and four-dimensional approaches.

\bigbreak

\subsection{Preliminary lemmas, and the general strategy}
\label{sub:prelim-lemma}

The general idea of the 4D approach for evaluating limits of signatures is to apply Lemma~\ref{lemma:limit-signature} to a matrix
of the intersection form on~\(H_{2}(W_{F},\mathbb{C}^{\omega})\),
and then to use the Torres formulas for the signature and nullity.
For this idea to go through, we need to show that every element of~$\mathbb{T}^{\mu}\setminus\{(1,\cdots,1)\}$ admits an open neighborhood~\(U \subset \mathbb{T}^{\mu} \setminus \{(1,\cdots,1)\}\) such
that the intersection form on~\(H_{2}(W_{F},\mathbb{C}^{\omega})\) can be given by a common matrix~\(H_U(\omega)\) for all~$\omega\in U$.
This is a consequence of the following lemma.

\begin{lemma}\label{lemma:representing-int-forms}
  Let~\(\Lambda_{\mu}\) denote the group ring~\(\C[\Z^{\mu}]\), and let~\(Q(\Lambda_{\mu})\) be its fraction field.
  Suppose that~\((W,\psi)\) is a compact connected oriented~\(4\)-manifold over~\(\Z^{\mu}\) with connected boundary, such that the composition
  \[H_1(\partial W) \to H_1(W) \xrightarrow{\psi} \Z^{\mu}\]
  is surjective and~\(H_{1}(W;\Lambda_{\mu}) = 0\).
  Then, for any~\(j=1,\ldots,\mu\),
  there exists a Hermitian matrix~\(H_{j}\) over~\(Q(\Lambda_{\mu})\) such that for any \(\omega \in U_{j}\coloneqq\{\omega \in \mathbb{T}^{\mu} \colon \omega_{j} \neq 1\}\), the intersection form
  \[Q_{\omega} \colon H_{2}(W;\C^{\omega}) \times H_{2}(W;\C^{\omega}) \to \C\]
  is represented by \(H_{j}(\omega)\).
Furthermore, if~$\mu=1$, then~$Q_\omega$ is represented by a Hermitian matrix~$H(\omega)$ for all~$\omega\in S^1$.
\end{lemma}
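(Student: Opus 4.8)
The plan is to build the matrix $H_{j}$ from a handle decomposition of $W$ and then localise in the variable $t_{j}$. First I would fix a handle decomposition of $W$ with a single $0$-handle and no $4$-handle — possible since $\partial W$ is non-empty and connected — and pass to the $\Z^{\mu}$-cover $\widetilde W$ determined by $\psi$. This yields a finite complex $C_{*}=C_{*}(\widetilde W)$ of finitely generated free left $\Lambda_{\mu}$-modules concentrated in degrees $0\le *\le 3$, with $C_{0}=\Lambda_{\mu}$, computing $H_{*}(W;\C^{\omega})=H_{*}(\C^{\omega}\otimes_{\Lambda_{\mu}}C_{*})$ for every $\omega$; and the twisted intersection form $Q_{\omega}$ on $H_{2}(W;\C^{\omega})$ is the composite of $j_{*}$, Poincar\'e--Lefschetz duality, and evaluation, as recalled in Appendix~\ref{sec:algebr-preliminaries}. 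The goal is to realise this composite, uniformly for $\omega\in U_{j}$, over the subring $\Lambda_{j}\coloneqq\Lambda_{\mu}[(t_{j}-1)^{-1}]\subseteq Q(\Lambda_{\mu})$, using that an element of $\Lambda_{j}$ may be evaluated at any $\omega$ with $\omega_{j}\neq 1$.

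The key point is that the complex becomes acyclic outside degree $2$ after inverting $t_{j}-1$. The hypothesis that $H_{1}(\partial W)\to H_{1}(W)\xrightarrow{\psi}\Z^{\mu}$ is onto makes the $\Z^{\mu}$-covers of $W$ and of $\partial W$ connected, so $H_{0}(W;\Lambda_{\mu})=H_{0}(\partial W;\Lambda_{\mu})$ is the trivial module $\Lambda_{\mu}/(t_{1}-1,\dots,t_{\mu}-1)$, which is annihilated by $-\otimes_{\Lambda_{\mu}}\Lambda_{j}$; combined with the hypothesis $H_{1}(W;\Lambda_{\mu})=0$, a chase of the long exact sequence of the pair $(W,\partial W)$ together with Poincar\'e--Lefschetz duality over the field $Q(\Lambda_{\mu})$ and flatness of localisation gives $H_{i}(W;\Lambda_{j})=H_{i}(W,\partial W;\Lambda_{j})=0$ for all $i\neq 2$. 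I would then cancel the acyclic ``tails'' of $C_{*}\otimes_{\Lambda_{\mu}}\Lambda_{j}$ — the subcomplex carrying the $0$- and $1$-handles and the quotient complex carrying the $3$-handles, each an acyclic bounded complex of projectives — to obtain a chain-homotopy-equivalent two-term complex $0\to P_{3}\to P_{2}\to 0$ of finitely generated projective $\Lambda_{j}$-modules with $H_{2}(W;\Lambda_{j})=\operatorname{coker}(P_{3}\to P_{2})$. Poincar\'e--Lefschetz duality then identifies $H_{2}(W;\Lambda_{j})$ with the conjugate of $\hom_{\Lambda_{j}}(H_{2}(W,\partial W;\Lambda_{j}),\Lambda_{j})$; in particular it is reflexive, hence — automatic when $\mu=1$ since $\Lambda_{1}$ is a PID, and which I would deduce in general from reflexivity together with the finiteness of projective dimension coming from the two-term resolution — a free $\Lambda_{j}$-module. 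Choosing a $\Lambda_{j}$-basis, the duality pairing becomes a matrix $H_{j}$ over $\Lambda_{j}$, Hermitian for the involution $\overline{t_{i}}=t_{i}^{-1}$.

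It remains to check that $H_{j}(\omega)$ represents $Q_{\omega}$ for every $\omega\in U_{j}$. Since $\omega_{j}\neq 1$, the module $\C^{\omega}$ is a $\Lambda_{j}$-module, and as $\omega\neq(1,\dots,1)$, Lemma~\ref{lemma:triviality-tor-groups} and the universal coefficient spectral sequence (Theorem~\ref{thm:UCSS}) give $H_{2}(W;\C^{\omega})=H_{2}(W;\Lambda_{j})\otimes_{\Lambda_{j}}\C^{\omega}$, of dimension equal to the $\Lambda_{j}$-rank of $H_{2}(W;\Lambda_{j})$ and hence constant on $U_{j}$; the same flat base change applies to $H_{2}(W,\partial W;\C^{\omega})$ and to the duality isomorphism, so $H_{j}(\omega)$ is precisely the Gram matrix of $Q_{\omega}$ in the specialised basis. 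For the final assertion ($\mu=1$) no localisation is needed: over the PID $\Lambda_{1}=\C[t^{\pm 1}]$ the complex $C_{*}$ is chain homotopy equivalent to $M[2]\oplus(\Lambda_{1}\xrightarrow{\,t-1\,}\Lambda_{1})$ with $M=H_{2}(W;\Lambda_{1})$ free, the intersection pairing is a Hermitian matrix $H$ with Laurent-polynomial entries, and $M\otimes_{\Lambda_{1}}\C^{\omega}$ has dimension independent of $\omega\in S^{1}$ — including $\omega=1$, where instead $H_{1}(W;\C)$ becomes nonzero — so $H(\omega)$ represents $Q_{\omega}$ on the whole circle.

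The main obstacle is the uniformity of the cancellation step: one must eliminate the $1$- and $3$-handle contributions by a single change of basis defined over $\Lambda_{j}$ (equivalently, exhibit explicit acyclic subcomplexes over $\Lambda_{j}$ and contract them), not merely pointwise over each $\C^{\omega}$. This is exactly where the hypotheses $H_{1}(W;\Lambda_{\mu})=0$ and the surjectivity onto $\Z^{\mu}$ enter, and it is what makes $H_{j}$ a genuine matrix over $\Lambda_{j}$ and forces $\dim_{\C}H_{2}(W;\C^{\omega})$ to be constant on $U_{j}$. A secondary delicate point is the freeness of $H_{2}(W;\Lambda_{j})$ (reflexivity, via duality), which underlies the constant-dimension statement and hence the meaning of ``represented by'' $H_{j}(\omega)$.
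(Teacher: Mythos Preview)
Your overall strategy matches the paper's: work over the localization $\Lambda_{\mu,j}=\Lambda_\mu[(t_j-1)^{-1}]$, establish that $H_i(W;\Lambda_{\mu,j})=0$ for $i\neq 2$, show $H_2(W;\Lambda_{\mu,j})$ is free, represent the $\Lambda_{\mu,j}$-valued intersection form by a matrix $H_j$, and invoke naturality together with the UCSS to specialize to $\C^\omega$ for $\omega\in U_j$. The handle-decomposition framing, the acyclicity computations, and the $\mu=1$ case via the PID structure of $\Lambda_1$ are all in line with the paper's argument.

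The genuine gap is your freeness argument. You claim that $H_2(W;\Lambda_{\mu,j})$ is free because it is ``reflexive together with the finiteness of projective dimension coming from the two-term resolution.'' But reflexive with projective dimension $\le 1$ does \emph{not} imply projective over a regular ring of dimension $\ge 3$. Over $R=\C[x,y,z]$, for instance, the second syzygy $\Omega^2(R/\mathfrak m)$ at a maximal ideal $\mathfrak m$ is reflexive and has projective dimension exactly $1$, yet $\Ext^1_R(\Omega^2(R/\mathfrak m),R)\cong\Ext^3_R(R/\mathfrak m,R)\cong R/\mathfrak m\neq 0$; the same construction works over $\Lambda_{\mu,j}$ for $\mu\ge 3$ at any maximal ideal surviving the localization. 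What the paper actually does is first show that $H_2(W;\Lambda_{\mu,j})$ is \emph{projective}---by exhibiting it, via Poincar\'e--Lefschetz duality and a short diagram chase, as a kernel in an exact sequence of free modules---and then invoke Roitman's theorem, a Quillen--Suslin-type result asserting that finitely generated projectives over $\Lambda_{\mu,j}$ are free. You flag freeness as ``a secondary delicate point,'' but it is in fact the crux: without it $\dim_\C H_2(W;\C^\omega)$ need not be constant on $U_j$, and the phrase ``represented by $H_j(\omega)$'' loses its meaning.

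A minor remark: the mention of Poincar\'e--Lefschetz duality ``over the field $Q(\Lambda_\mu)$'' in your acyclicity step is misleading; vanishing over the fraction field does not by itself give vanishing over $\Lambda_{\mu,j}$. The paper obtains $H_3=H_4=0$ over $\Lambda_{\mu,j}$ directly, via the cohomological UCSS (using $H_0=H_1=0$) together with duality over $\Lambda_{\mu,j}$ itself.
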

We defer the proof of Lemma~\ref{lemma:representing-int-forms} to Appendix~\ref{sec:repr-inters-forms}.
We will also need the following lemma.

\begin{lemma}
\label{lem:Alex}
For any~$\mu$-colored link~$L$ and any~$\omega\in\mathbb{T}^\mu\setminus\{(1,\dots,1)\}$, the nullity~$\eta_L(\omega)$ is bounded below by the rank of the Alexander module~$H_1(X_L;\Lambda_\mu)$ of~$L$.  
\end{lemma}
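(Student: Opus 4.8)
The plan is to read off the nullity from the four-manifold $W_F$, transfer the computation to coefficients in the group ring $\Lambda_\mu=\C[\Z^\mu]$, and then apply a standard semicontinuity fact about ranks under specialisation. Concretely, fix $\omega\in\mathbb{T}^\mu\setminus\{(1,\dots,1)\}$. By Proposition~\ref{prop:nullity} together with Theorem~\ref{thm:extension}, $\eta_L(\omega)=\dim_\C H_1(M_L;\C^\omega)$. Since $M_L$ is connected and the meridional homomorphism $\pi_1(M_L)\to\Z^\mu$ is onto, $H_0(M_L;\Lambda_\mu)$ is the augmentation module $\Lambda_\mu/(t_1-1,\dots,t_\mu-1)\cong\C$, so Lemma~\ref{lemma:triviality-tor-groups} gives $\Tor^{\Lambda_\mu}_p\big(H_0(M_L;\Lambda_\mu),\C^\omega\big)=0$ for all $p\ge0$ because $\omega\neq(1,\dots,1)$. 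In the Universal Coefficient Spectral Sequence (Theorem~\ref{thm:UCSS}) converging to $H_*(M_L;\C^\omega)$, with $E^2$-terms $\Tor^{\Lambda_\mu}_p\big(H_q(M_L;\Lambda_\mu),\C^\omega\big)$, the whole $q=0$ row then vanishes; hence no differential enters or leaves the $(p,q)=(0,1)$ spot, and it is the only non-zero $E^\infty$-term in total degree $1$, so $H_1(M_L;\C^\omega)\cong H_1(M_L;\Lambda_\mu)\otimes_{\Lambda_\mu}\C^\omega$. Thus $\eta_L(\omega)=\dim_\C\big(H_1(M_L;\Lambda_\mu)\otimes_{\Lambda_\mu}\C^\omega\big)$.

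Next I would use the elementary inequality $\dim_\C\big(N\otimes_{\Lambda_\mu}\C^{\omega'}\big)\ge\rank_{\Lambda_\mu}N$, valid for every finitely generated $\Lambda_\mu$-module $N$ and every $\omega'\in\mathbb{T}^\mu$. Take a finite presentation $\Lambda_\mu^{\,b}\xrightarrow{\,A\,}\Lambda_\mu^{\,a}\to N\to0$: tensoring with the fraction field $Q(\Lambda_\mu)$, which is flat over $\Lambda_\mu$, gives $\rank_{\Lambda_\mu}N=a-\rank_{Q(\Lambda_\mu)}(A)$, and tensoring with $\C^{\omega'}$ gives $\dim_\C\big(N\otimes_{\Lambda_\mu}\C^{\omega'}\big)=a-\rank_\C\big(A(\omega')\big)$, where $A(\omega')$ is the complex matrix obtained by $t_i\mapsto\omega'_i$. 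Any square minor of $A$ that vanishes over $Q(\Lambda_\mu)$ already vanishes in $\Lambda_\mu$, hence stays zero after this substitution, so $\rank_\C\big(A(\omega')\big)\le\rank_{Q(\Lambda_\mu)}(A)$ and the inequality follows. Applied to $N=H_1(M_L;\Lambda_\mu)$, this yields $\eta_L(\omega)\ge\rank_{\Lambda_\mu}H_1(M_L;\Lambda_\mu)$.

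It then remains to identify $\rank_{\Lambda_\mu}H_1(M_L;\Lambda_\mu)$ with $\rank_{\Lambda_\mu}H_1(X_L;\Lambda_\mu)$. For this I would run the Mayer--Vietoris sequence of $M_L=X_L\cup_{\partial\nu(L)}-P(\overline L)$ with coefficients in the field $Q(\Lambda_\mu)$, using $H_n(\,\cdot\,;Q(\Lambda_\mu))\cong H_n(\,\cdot\,;\Lambda_\mu)\otimes_{\Lambda_\mu}Q(\Lambda_\mu)$ by flatness. The gluing locus $\partial\nu(L)$ is a disjoint union of tori, on each of which a meridian maps to some $t_i\neq1$, so $H_*(\partial\nu(L);Q(\Lambda_\mu))=0$ by a K\"unneth argument; and $P(\overline L)$ is $Q(\Lambda_\mu)$-acyclic, exactly as in the proof of Lemma~\ref{lem:P(G)} (it is assembled from pieces $F_v^\circ\times S^1$, each $Q(\Lambda_\mu)$-acyclic because its $S^1$-fibres carry the non-trivial character $t_i$, glued along tori). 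Mayer--Vietoris then gives $H_*(M_L;Q(\Lambda_\mu))\cong H_*(X_L;Q(\Lambda_\mu))$, whence the desired equality of ranks, and the lemma.

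The one point requiring genuine care is the collapse of the spectral sequence in the first step, which is what turns the passage from $\C^\omega$- to $\Lambda_\mu$-coefficients into an honest equality rather than merely an inequality; Lemma~\ref{lemma:triviality-tor-groups} is precisely what makes this work, and the remaining steps are routine homological algebra and Mayer--Vietoris bookkeeping. As a reality check, for the $\mu$-component unlink one has $\rank_{\Lambda_\mu}H_1(X_L;\Lambda_\mu)=\mu-1=\eta_L$ on $\mathbb{T}^\mu_*$, so the bound of the lemma is sharp.
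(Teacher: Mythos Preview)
Your proof is correct and follows essentially the same approach as the paper: use Proposition~\ref{prop:nullity} to identify $\eta_L(\omega)$ with $\dim_\C H_1(M_L;\C^\omega)$, apply the UCSS together with Lemma~\ref{lemma:triviality-tor-groups} to get $H_1(M_L;\C^\omega)\cong\C^\omega\otimes_{\Lambda_\mu}H_1(M_L;\Lambda_\mu)$, invoke the rank inequality under specialisation, and finally compare ranks via Mayer--Vietoris for $M_L=X_L\cup-P(\overline L)$. The paper phrases the last step as ``$H_*(P(L);\Lambda_\mu)$ and $H_*(\partial\nu(L);\Lambda_\mu)$ are torsion'', which is of course equivalent to your $Q(\Lambda_\mu)$-acyclicity statement; and it extracts the isomorphism from the five-term exact sequence of the UCSS rather than your observation that the entire $q=0$ row vanishes, but these are the same argument.
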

\begin{proof}
Let~$L$ be an arbitrary~$\mu$-colored link, and fix~$\omega\in\mathbb{T}^\mu\setminus\{(1,\dots,1)\}$.
Consider the Universal Coefficient Spectral Sequence from~Theorem~\ref{thm:UCSS}
\[E^{2}_{p,q} = \Tor_{p}^{\Lambda_{\mu}}\left(\C^{\omega}, H_{q}(\new{M_L};\Lambda_{\mu})\right) \Rightarrow H_{p+q}(\new{M_L}; \C^{\omega}).\]
Since~\(H_{0}(\new{M_L};\Lambda_{\mu}) \cong \C\),
this spectral sequence yields an exact sequence
\[\Tor_{2}^{\Lambda_{\mu}}(\C^{\omega},\C) \to \C^{\omega} \otimes_{\Lambda_{\mu}} H_{1}(\new{M_L}; \Lambda_{\mu}) \to H_{1}(\new{M_L};\C^{\omega}) \to \Tor_{1}^{\Lambda_{\mu}}(\C^{\omega},\C) \to
  0\,.\]
Since we assumed~$\omega\neq(1,\dots,1)$, the Tor terms vanish by Lemma~\ref{lemma:triviality-tor-groups} and we have an
isomorphism~$H_1(\new{M_L};\mathbb{C}^\omega) \cong \mathbb{C}^{\omega} \otimes_{\Lambda_\mu} H_1(\new{M_L};\Lambda_{\mu})$.
Therefore, Proposition~\ref{prop:nullity} leads to
\begin{equation*}
\eta_L(\omega)=\dim_\mathbb{C} H_1(M_L;\mathbb{C}^\omega)=\dim_\mathbb{C}\left(\mathbb{C}^{\omega} \otimes_{\Lambda_\mu}H_1(\new{M_L};\Lambda_{\mu})\right)\ge \rank_{\Lambda_\mu} H_1(\new{M_L};\Lambda_{\mu})\,.
\end{equation*}
Finally, since the modules~$H_*(P(L);\Lambda_\mu)$ and~$H_*(\partial\nu(L);\Lambda_\mu)$ are torsion, the Mayer-Vietoris exact sequence for~$\new{M_L}=X_L\cup P(L)$ implies that the ranks of~$H_1(\new{M_L};\Lambda_\mu)$ and~$H_1(X_L;\Lambda_\mu)$ coincide,
concluding the proof.
\end{proof}

Since the (rank of the) Alexander module will appear quite often,
we now fix a notation for it following~\cite{Hillman}.
For any given~$\mu$-colored link~$L$, let us denote by
\[
A(L)\coloneqq H_1(X_L;\Lambda_\mu)
\]
the associated Alexander module over the ring~$\Lambda_\mu$.

\medskip

We are now ready to prove a preliminary version of our main result.

\begin{proposition}
\label{prop:limit}
For any~$\mu$-colored link~$L$, the inequality
\[
\left|\lim_{\omega_1\to 1^\pm}\sigma_L(\omega_1,\omega')-\sigma_{L}(1,\omega')\right|\le
\eta_{L}(1,\omega')-\rank A(L)
\]
holds for all~$\omega'\in\mathbb{T}^{\mu-1}\setminus\{(1,\dots,1)\}$ \new{with at most one coordinate equal to~$1$}.
\end{proposition}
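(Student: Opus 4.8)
The plan is to combine the matrix-representability of the twisted intersection form on $W_F$ (Lemma~\ref{lemma:representing-int-forms}) with the elementary signature-limit estimate (Lemma~\ref{lemma:limit-signature}), and then rewrite the resulting bound using the Torres formulas of Section~\ref{sec:Torres} together with Lemma~\ref{lem:Alex}. Concretely, fix $\omega'\in\mathbb{T}^{\mu-1}\setminus\{(1,\dots,1)\}$ and work with the $4$-manifold $W_F$ constructed in Section~\ref{sub:extension}, which has connected boundary $M_L$, surjective boundary-to-$\mathbb{Z}^\mu$ map, and (as established in the proof of Proposition~\ref{prop:nullity}) satisfies $\pi_1(W_F)\cong\mathbb{Z}^\mu$, hence $H_1(W_F;\Lambda_\mu)=0$. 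So Lemma~\ref{lemma:representing-int-forms} (with $j=1$) produces a Hermitian matrix $H_1$ over $Q(\Lambda_\mu)$ such that $Q_\omega$ is represented by $H_1(\omega)$ for every $\omega=(\omega_1,\omega')$ with $\omega_1\neq 1$. The entries of $H_1$ are rational functions on $\mathbb{T}^\mu$ that are regular on $U_1\cap(\text{neighborhood of }(1,\omega'))$ and, by continuity of the intersection form under the extension (and the fact that $(1,\omega')\neq(1,\dots,1)$ so everything stays well-defined), extend continuously to $\omega_1=1$ with $H_1(1,\omega')$ representing $Q_{(1,\omega')}$.

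Next I would apply Lemma~\ref{lemma:limit-signature} to the continuous one-parameter families $t\mapsto H_1(\exp(\pm 2\pi i t),\omega')$ for small $t\ge 0$. Since $\sign(H_1(\omega))=\sign_\omega(W_F)=\sigma_L(\omega)$ and $\nul(H_1(\omega))=\nul_\omega(W_F)=\eta_L(\omega)$ for $\omega\in U_1$ near $(1,\omega')$, while $\sign(H_1(1,\omega'))=\sigma_L(1,\omega')$ and $\nul(H_1(1,\omega'))=\eta_L(1,\omega')$, the lemma gives directly
\[
\left|\lim_{\omega_1\to 1^\pm}\sigma_L(\omega_1,\omega')-\sigma_L(1,\omega')\right|\le \eta_L(1,\omega')-\lim_{\omega_1\to 1}\eta_L(\omega_1,\omega')\,.
\]
It remains to show $\lim_{\omega_1\to 1}\eta_L(\omega_1,\omega')\ge\rank A(L)$. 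For $\omega_1\neq 1$ (and near $1$), the element $\omega=(\omega_1,\omega')$ lies in $\mathbb{T}^\mu\setminus\{(1,\dots,1)\}$, so Lemma~\ref{lem:Alex} gives $\eta_L(\omega)\ge\rank A(L)$; since $\eta_L(\omega_1,\omega')$ is constant for $\omega_1$ in a punctured neighborhood of $1$ (by the rank of the locally-constant-rank matrix $H_1(\omega_1,\omega')$, exactly as in the proof of Lemma~\ref{lemma:limit-signature}), this constant value is the limit, and it is $\ge\rank A(L)$. Substituting yields the claimed inequality.

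The main obstacle I anticipate is the justification that $H_1(\omega)$ extends \emph{continuously} across $\omega_1=1$ while still representing $Q_{(1,\omega')}$: Lemma~\ref{lemma:representing-int-forms} as stated only guarantees representability on $U_1=\{\omega_1\neq 1\}$, so one must argue that the rational-function entries of $H_1$ have no pole at $(1,\omega')$ and that the value $H_1(1,\omega')$ really computes the intersection form of $W_F$ at $(1,\omega')$. This should follow from the construction of $H_1$ in Appendix~\ref{sec:repr-inters-forms} together with the fact that $H_2(W_F;\mathbb{C}^\omega)$ has locally constant dimension near $(1,\omega')$ (because $\eta_L$ and the Euler characteristic are controlled there, the latter since $(1,\omega')\neq(1,\dots,1)$ keeps $H_1(W_F;\mathbb{C}^\omega)=H_3=0$ by Lemma~\ref{lemma:triviality-tor-groups} applied to the simply-connected cover argument of Proposition~\ref{prop:nullity}); hence the pairing varies continuously and $H_1$ is regular at $(1,\omega')$. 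A clean way to package this, which I would adopt, is to state and use a version of Lemma~\ref{lemma:representing-int-forms} valid on a neighborhood of any point of $\mathbb{T}^\mu\setminus\{(1,\dots,1)\}$ — which is exactly the content referred to just before the Proposition (``every element of $\mathbb{T}^\mu\setminus\{(1,\dots,1)\}$ admits an open neighborhood $U$ such that the intersection form can be given by a common matrix $H_U(\omega)$'') — and then the rest is the routine application of the two lemmas above.
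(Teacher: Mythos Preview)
Your overall strategy---apply Lemma~\ref{lemma:representing-int-forms} to $W_F$, feed the resulting one-parameter family of matrices into Lemma~\ref{lemma:limit-signature}, and bound the right-hand side via Lemma~\ref{lem:Alex}---is exactly what the paper does. The only misstep is your choice of index: you take $j=1$, whose domain $U_1=\{\omega_1\neq 1\}$ excludes the very point $(1,\omega')$ you need, and this is precisely the ``extension obstacle'' you then worry about. The paper sidesteps this entirely by choosing some $j>1$ with $\omega_j\neq 1$ (such a $j$ exists because $\omega'\neq(1,\dots,1)$); then the whole path $t\mapsto(e^{\pm it},\omega')$, including $t=0$, lies in $U_j$, and $H_j$ is already defined and continuous there with no extension argument required. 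For $\mu=1$ the paper invokes the final clause of Lemma~\ref{lemma:representing-int-forms}, which furnishes a single matrix valid on all of $S^1$. With that one-line change of index your proof is complete and identical to the paper's; the speculative paragraph about poles and locally constant dimension can be deleted.
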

\begin{proof}
Fix an arbitrary~$\mu$-colored link~$L$, and in case~$\mu>1$, some~$\omega'\in\mathbb{T}^{\mu-1}\setminus\{(1,\dots,1)\}$ \new{with at most one coordinate equal to~$1$}.
Set~$\omega_t=(e^{\pm it},\omega')\in\mathbb{T}^\mu$, with~$t$ a non-negative real number. \new{Let~$\varphi\colon H_1(M_L)\to\Z^\mu$ be a meridional homomorphism, set~$\mu_L=(M_L,\varphi)\in\Z^{\mu\choose 3}$ and let}~$W_F$ be the manifold associated with~$\new{L^\#}$
as in Section~\ref{sub:extension}. Recall form the proof of Proposition~\ref{prop:nullity} that the meridional homomorphism~$\pi_1(W_F)\to\mathbb{Z}^\mu$ is an isomorphism, which
implies that we are in the setting of Lemma~\ref{lemma:representing-int-forms}.
Hence, the intersection form on~$H_2(W_F;\mathbb{C}^{\omega_t})$
can be given by a matrix~$H(t)$
for all~$t\ge 0$.
Indeed, following the notation of Lemma~\ref{lemma:representing-int-forms}, one can take~$H(t)=H(\omega_t)$ if~$\mu=1$ and~$H(t)=H_j(\omega_t)$ for \new{some}~$j>1$ if~$\mu>1$.
The statement
now follows from Lemma~\ref{lemma:limit-signature} applied to~$H(t)$, Definition~\ref{def:extension}, Lemma~\ref{lem:Alex},
\new{and from the equality~$\delta_L(1,\omega')=\lim_{\omega_1\to 1^\pm}\delta_L(\omega_1,\omega')$
which is a direct consequence of the assumption on~$\omega'$.}
\end{proof}

\subsection{Limits of the Levine-Tristram signature}
\label{sub:LT}

Given an oriented link, recall the definition of the associated linking matrix~$\mathit{Lk}_L$
from Equation~\eqref{eq:Lk}.

\begin{theorem}
\label{thm:LT}
For any oriented link~$L$, we have
\[
\left|\lim_{\omega\to 1}\sigma_L(\omega)-\sign(\mathit{Lk}_L)\right|\le
\nul(\mathit{Lk}_L)-1-\rank A(L)\,.
\]
\end{theorem}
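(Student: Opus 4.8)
The plan is to deduce Theorem~\ref{thm:LT} by specializing Proposition~\ref{prop:limit} to the case~$\mu=1$ and then identifying the quantities~$\sigma_L(1)$ and~$\eta_L(1)$ appearing there via the Torres-type formulas of Theorems~\ref{thm:Torres} and~\ref{thm:torres-formula-nullity}. Concretely, apply Proposition~\ref{prop:limit} to the~$1$-colored link~$L$; strictly speaking, Proposition~\ref{prop:limit} is stated with~$\omega'\in\mathbb{T}^{\mu-1}\setminus\{(1,\dots,1)\}$, so for~$\mu=1$ one should either note that its proof goes through verbatim (the~$\mu=1$ clause of Lemma~\ref{lemma:representing-int-forms} already provides a representing Hermitian matrix~$H(\omega)$ for \emph{all}~$\omega\in S^1$, so Lemma~\ref{lemma:limit-signature} applies directly to~$H(t)=H(e^{\pm it})$), or observe that this~$\mu=1$ case is exactly the content to be extracted. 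Either way one obtains
\[
\left|\lim_{\omega\to 1^{\pm}}\sigma_L(\omega)-\sigma_L(1)\right|\le\eta_L(1)-\rank A(L)\,.
\]

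Next I would substitute the values of~$\sigma_L(1)$ and~$\eta_L(1)$ for a~$1$-colored link. By Theorem~\ref{thm:Torres}~(1), we have~$\sigma_L(1)=\sgn(\mathit{Lk}_L)$, and by Theorem~\ref{thm:torres-formula-nullity}~(1), we have~$\eta_L(1)=\nul(\mathit{Lk}_L)-1$. Plugging these in turns the inequality into
\[
\left|\lim_{\omega\to 1^{\pm}}\sigma_L(\omega)-\sgn(\mathit{Lk}_L)\right|\le\nul(\mathit{Lk}_L)-1-\rank A(L)\,,
\]
which is the claimed bound, except that it is a priori stated for the two one-sided limits~$\omega\to 1^{\pm}$ separately.

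Finally I would reconcile the one-sided limits with the two-sided limit~$\lim_{\omega\to 1}\sigma_L(\omega)$ in the statement. In the~$1$-variable case the symmetry~$\sigma_L(\overline\omega)=\sigma_L(\omega)$ (noted in Section~\ref{sub:backgroundCF}, and immediate from the definition of~$H(\omega)$ via~$A^{-\varepsilon}=(A^\varepsilon)^T$) forces~$\lim_{\omega\to 1^+}\sigma_L(\omega)=\lim_{\omega\to 1^-}\sigma_L(\omega)$, so the two-sided limit is well-defined and equals either one-sided limit; the inequality above then holds verbatim for it. The only genuinely delicate point, and the one I would be most careful about, is this~$\mu=1$ edge-case bookkeeping in invoking Proposition~\ref{prop:limit}: one must make sure its hypotheses and its proof (in particular the applicability of Lemma~\ref{lemma:representing-int-forms} and Lemma~\ref{lem:Alex}, whose statement already permits~$\mu=1$ via~$\omega=-1\neq 1$ or indeed any~$\omega\neq 1$) really do cover~$\mu=1$; everything else is a direct substitution. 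No essentially new argument beyond the already-established Proposition~\ref{prop:limit} and the~$1$-colored cases of the two Torres formulas is needed.
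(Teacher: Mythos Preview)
Your proposal is correct and follows essentially the same approach as the paper: invoke the~$\mu=1$ case of Proposition~\ref{prop:limit}, then substitute~$\sigma_L(1)=\sign(\mathit{Lk}_L)$ and~$\eta_L(1)=\nul(\mathit{Lk}_L)-1$ from the first points of Theorems~\ref{thm:Torres} and~\ref{thm:torres-formula-nullity}. Your extra care about the~$\mu=1$ edge case is warranted but already handled in the paper's proof of Proposition~\ref{prop:limit} (note the clause ``in case~$\mu>1$'' and the explicit use of the~$\mu=1$ part of Lemma~\ref{lemma:representing-int-forms}).
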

\begin{proof}
Let~$L$ be an arbitrary oriented link. The~$\mu=1$ case of Proposition~\ref{prop:limit} reads
\[
\left|\lim_{\omega\to 1^\pm}\sigma_L(\omega)-\sigma_{L}(1)\right|\le
\eta_{L}(1)-\rank A(L)\,,
\]
and the statement now follows immediately from the first points of Theorems~\ref{thm:Torres} and~\ref{thm:torres-formula-nullity}.
\end{proof}

The following corollary is immediate.

\begin{corollary}
\label{cor:lim-equal}
For any oriented link~$L$, we have the inequality~$\rank A(L)\le \nul(\mathit{Lk}_L)-1$. Moreover,~$\lim_{\omega\to 1}\sigma_L(\omega)=\sign(\mathit{Lk}_L)$ whenever the equality holds.\qed
\end{corollary}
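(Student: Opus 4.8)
The plan is to deduce both assertions of the corollary directly from Theorem~\ref{thm:LT}, which states that
\[
\left|\lim_{\omega\to 1}\sigma_L(\omega)-\sign(\mathit{Lk}_L)\right|\le\nul(\mathit{Lk}_L)-1-\rank A(L)
\]
for any oriented link~$L$. Since~$L$ is $1$-colored here, the symmetry~$\sigma_L(\overline{\omega})=\sigma_L(\omega)$ discussed in the introduction guarantees that~$\lim_{\omega\to 1}\sigma_L(\omega)$ exists and is well-defined (the~$\omega\to 1^+$ and~$\omega\to 1^-$ limits agree), so the left-hand side above makes sense as a single non-negative integer.

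For the first claim, I would simply observe that the left-hand side of the displayed inequality, being an absolute value, is~$\ge 0$. Hence the right-hand side must be~$\ge 0$ as well, i.e.
\[
\nul(\mathit{Lk}_L)-1-\rank A(L)\ge 0\,,
\]
which is exactly the inequality~$\rank A(L)\le\nul(\mathit{Lk}_L)-1$.

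For the second claim, I would note that when the equality~$\rank A(L)=\nul(\mathit{Lk}_L)-1$ holds, the right-hand side of the inequality in Theorem~\ref{thm:LT} vanishes. Therefore the absolute value on the left vanishes too, which forces~$\lim_{\omega\to 1}\sigma_L(\omega)=\sign(\mathit{Lk}_L)$. There is no genuine obstacle in this argument: the entire content sits in Theorem~\ref{thm:LT} (and, through it, in the Torres formulas of Theorems~\ref{thm:Torres} and~\ref{thm:torres-formula-nullity} together with Lemma~\ref{lem:Alex}), and the corollary is a purely formal consequence of the non-negativity of an absolute value.
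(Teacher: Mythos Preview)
Your proposal is correct and matches the paper's approach exactly: the paper states that the corollary is immediate from Theorem~\ref{thm:LT} and gives no further proof, and your argument spells out precisely this immediate deduction via the non-negativity of the absolute value.
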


A first class of links for which the equality~$\rank A(L)=\nul(\mathit{Lk}_L)-1$ holds
is when the right-hand side vanishes, yielding the following result.

\begin{corollary}
\label{cor:BZ}
If~$L$ is an oriented link such that~$\nul(\mathit{Lk}_L)=1$, then~$\lim_{\omega\to 1}\sigma_L(\omega)=\sign(\mathit{Lk}_L)$.\qed
\end{corollary}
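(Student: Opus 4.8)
The plan is to obtain this as an immediate consequence of Corollary~\ref{cor:lim-equal}. Recall that the latter asserts two things: first, that $\rank A(L)\le \nul(\mathit{Lk}_L)-1$ for every oriented link $L$; and second, that $\lim_{\omega\to 1}\sigma_L(\omega)=\sign(\mathit{Lk}_L)$ as soon as this inequality is an equality. Under the standing hypothesis $\nul(\mathit{Lk}_L)=1$, the right-hand side $\nul(\mathit{Lk}_L)-1$ equals $0$. Since $\rank A(L)\ge 0$ always holds, the chain $0\le \rank A(L)\le 0$ forces $\rank A(L)=0$, so the equality case of Corollary~\ref{cor:lim-equal} applies verbatim and yields the claim.

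There is essentially no obstacle here: the statement is a formal specialization, and all of the substance has already been absorbed into Theorem~\ref{thm:LT}, which rests in turn on Proposition~\ref{prop:limit} (the four-dimensional limit estimate) together with the Torres-type formulas of Theorems~\ref{thm:Torres}(1) and~\ref{thm:torres-formula-nullity}(1) identifying $\sigma_L(1)$ with $\sign(\mathit{Lk}_L)$ and $\eta_L(1)$ with $\nul(\mathit{Lk}_L)-1$. If one prefers a self-contained argument that does not route through Corollary~\ref{cor:lim-equal}, one can plug $\nul(\mathit{Lk}_L)=1$ directly into Theorem~\ref{thm:LT}: its right-hand side becomes $\nul(\mathit{Lk}_L)-1-\rank A(L)=-\rank A(L)\le 0$, while the left-hand side $\bigl|\lim_{\omega\to 1}\sigma_L(\omega)-\sign(\mathit{Lk}_L)\bigr|$ is nonnegative; hence both sides vanish, giving simultaneously $\rank A(L)=0$ and the desired equality of limits.

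Finally, it is worth recording — as in Remark~\ref{rem:B-Z} — that the hypothesis $\nul(\mathit{Lk}_L)=1$ is equivalent to the Alexander polynomial $\Delta_L$ being nonzero and not divisible by $(t-1)^m$, so that this corollary recovers the theorem of Borodzik and Zarzycki~\cite{BZ}. No further input is needed for the proof itself; the only mild care required is to invoke the already-established lower bound $\rank A(L)\ge 0$ explicitly, so that the equality case is genuinely triggered.
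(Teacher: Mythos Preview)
Your proof is correct and follows the same route as the paper: the corollary is recorded as an immediate consequence of Corollary~\ref{cor:lim-equal}, observing that $\nul(\mathit{Lk}_L)=1$ forces the right-hand side of the inequality $\rank A(L)\le\nul(\mathit{Lk}_L)-1$ to vanish, hence equality holds and the limit is determined. Your alternative direct appeal to Theorem~\ref{thm:LT} is equally valid and amounts to the same computation.
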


\begin{remark}
\label{rem:B-Z}
As we now show, the condition~$\nul(\mathit{Lk}_L)=1$ is equivalent to~$(t-1)^m$ not dividing the non-vanishing Alexander polynomial~$\Delta_L(t)$ in~$\mathbb{Z}[t,t^{-1}]$, thus recovering the main result of~\cite{BZ}.
Since this is clearly true for knots, we assume without loss of generality that~$m\ge 2$. Recall that in such a case, the Hosokawa polynomial of~$L$~\cite{Hos} is defined by
\[
\nabla_L(t)=
\frac{\Delta_L(t,\dots,t)}{(t-1)^{m-2}}\in\mathbb{Z}[t,t^{-1}]\,.
\]
By~\cite[Theorem~2]{Hos}, the value of~$\nabla_L(1)$ is equal, up to a sign, to the determinant of the reduced linking matrix~$\widetilde{\mathit{Lk}_L}$ obtained from~$\mathit{Lk}_L$ by
deleting one row and the corresponding column. Therefore, we see that~$\mathit{Lk}_L$ has nullity~$1$ if and only if~$0\neq\det(\widetilde{\mathit{Lk}_L})=\pm\nabla_L(1)$, which is equivalent
to~$(t-1)$ not dividing~$\nabla_L(t)=\frac{\Delta_L(t,\dots,t)}{(t-1)^{m-2}}=\frac{\Delta_L(t)}{(t-1)^{m-1}}$, and to~$(t-1)^m$ not dividing~$\Delta_L(t)$.
\end{remark}

Another class of links for which the equality~$\rank A(L)=\nul(\mathit{Lk}_L)-1$ holds
is when the left-hand side is maximal, i.e. equal to~$m-1$. This is easily seen to be the case for boundary links (see e.g.~\cite{Hillman},
or~\cite[Corollary~3.6]{C-F}), immediately leading to the following result.

\begin{corollary}
\label{cor:boundary}
If~$L$ is a boundary link, then~$\lim_{\omega\to 1}\sigma_L(\omega)$ vanishes.\qed
\end{corollary}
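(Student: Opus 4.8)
The plan is to deduce this directly from Corollary~\ref{cor:lim-equal}, whose statement says that $\lim_{\omega\to 1}\sigma_L(\omega)=\sign(\mathit{Lk}_L)$ whenever the equality $\rank A(L)=\nul(\mathit{Lk}_L)-1$ holds. So two things need to be established for a boundary link $L$ with $m$ components: first, that $\mathit{Lk}_L$ is the zero matrix (hence $\sign(\mathit{Lk}_L)=0$ and $\nul(\mathit{Lk}_L)=m$), and second, that $\rank A(L)=m-1$, so that the equality case of Corollary~\ref{cor:lim-equal} applies and the limit is forced to be $\sign(\mathit{Lk}_L)=0$.

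The first point is elementary: a boundary link bounds a disjoint union of Seifert surfaces $S_1,\dots,S_m$, so for $i\neq j$ the component $L_i$ can be pushed off $S_j$ (which it is disjoint from, as $S_j$ has boundary $L_j$ only and is disjoint from $S_i\supset L_i$), giving $\lk(L_i,L_j)=L_i\cdot S_j=0$. Hence every off-diagonal entry of $\mathit{Lk}_L$ vanishes, and then every diagonal entry $-\sum_{k\neq i}\lk(L_i,L_k)$ vanishes too; so $\mathit{Lk}_L=0$, $\sign(\mathit{Lk}_L)=0$ and $\nul(\mathit{Lk}_L)=m$.

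For the second point I would invoke the fact, already pointed to in the excerpt (``see e.g.~\cite{Hillman}, or~\cite[Corollary~3.6]{C-F}''), that for a boundary link the Alexander module $A(L)=H_1(X_L;\Lambda_\mu)$ — here with $\mu=1$ — has rank exactly $m-1$; this is the ``maximal'' case, matching the general bound $\rank A(L)\le\nul(\mathit{Lk}_L)-1=m-1$ from Corollary~\ref{cor:lim-equal}. (Concretely, the disjoint Seifert surfaces allow one to build a Mayer–Vietoris or infinite-cyclic-cover computation showing $H_1$ of the cover has the stated rank; but one may simply cite the references.) Combining the two points, the equality $\rank A(L)=\nul(\mathit{Lk}_L)-1$ holds, so Corollary~\ref{cor:lim-equal} gives $\lim_{\omega\to 1}\sigma_L(\omega)=\sign(\mathit{Lk}_L)=0$, as claimed.

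The only genuine content is the rank computation for $A(L)$, and since that is a standard fact about boundary links available in the cited literature, there is no real obstacle — the corollary is essentially a packaging of Corollary~\ref{cor:lim-equal} together with the defining property of boundary links. If one wanted to be self-contained, the mild subtlety would be checking that the rank of the one-variable Alexander module of a boundary link is $m-1$ rather than something smaller, which follows because the Hosokawa polynomial $\nabla_L(t)=\Delta_L(t)/(t-1)^{m-1}$ is nonzero for a boundary link (indeed boundary links have nonzero multivariable, hence one-variable, Alexander polynomial of the expected degree behavior), forcing $(t-1)^m\nmid\Delta_L(t)$ — but this would actually put us in the situation of Corollary~\ref{cor:BZ} as well, giving a second route to the same conclusion.
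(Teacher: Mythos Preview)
Your main argument is correct and is exactly the paper's approach: the paper also cites the fact that boundary links have~$\rank A(L)=m-1$ (referring to \cite{Hillman} and \cite[Corollary~3.6]{C-F}) and then invokes Corollary~\ref{cor:lim-equal}. Your additional elementary verification that~$\mathit{Lk}_L=0$ is fine and makes explicit what the paper leaves implicit (namely that~$\nul(\mathit{Lk}_L)=m$, forced in the paper by the inequality~$\rank A(L)\le\nul(\mathit{Lk}_L)-1$ together with~$\rank A(L)=m-1$).

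However, your final paragraph proposing a ``second route'' via Corollary~\ref{cor:BZ} is mistaken and should be dropped. For a boundary link with~$m\ge 2$ components you have just shown~$\mathit{Lk}_L=0$, hence~$\nul(\mathit{Lk}_L)=m\neq 1$, so the hypothesis of Corollary~\ref{cor:BZ} is \emph{never} satisfied. Relatedly, the assertion that boundary links have nonzero (multivariable or one-variable) Alexander polynomial is false: any split link is a boundary link with vanishing Alexander polynomial, so the Hosokawa polynomial argument does not go through. The rank computation~$\rank A(L)=m-1$ really does require the cited references (or a direct argument via disjoint Seifert surfaces and the infinite cyclic cover), not a polynomial nonvanishing claim.
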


As another direct consequence of Theorem~\ref{thm:LT}, we obtain
the following corollary, which refines
the last part of~\cite[Theorem~2.1]{G-L},
namely the inequality~$\left|\lim_{\omega\to 1}\sigma_L(\omega)\right|\le m-1$.

\begin{corollary}
\label{cor:G-L}
For any~$m$-component oriented link~$L$, we have
\[
\left|\lim_{\omega\to 1}\sigma_L(\omega)\right|\le m-1-\rank A(L)\,.
\]
\end{corollary}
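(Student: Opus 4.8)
The plan is to deduce Corollary~\ref{cor:G-L} directly from Theorem~\ref{thm:LT} by combining it with the triangle inequality and the elementary bound $\rank A(L)\le\nul(\mathit{Lk}_L)-1$ established in Corollary~\ref{cor:lim-equal}. Concretely, for an arbitrary $m$-component oriented link $L$, Theorem~\ref{thm:LT} gives
\[
\left|\lim_{\omega\to 1}\sigma_L(\omega)-\sign(\mathit{Lk}_L)\right|\le\nul(\mathit{Lk}_L)-1-\rank A(L)\,,
\]
and the triangle inequality yields
\[
\left|\lim_{\omega\to 1}\sigma_L(\omega)\right|\le\left|\sign(\mathit{Lk}_L)\right|+\nul(\mathit{Lk}_L)-1-\rank A(L)\,.
\]
So the whole content of the proof is the purely linear-algebraic observation that $\left|\sign(\mathit{Lk}_L)\right|+\nul(\mathit{Lk}_L)\le m$.

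This inequality is immediate: $\mathit{Lk}_L$ is a symmetric $m\times m$ real matrix, so if it has $p$ positive, $q$ negative and $z$ zero eigenvalues then $p+q+z=m$, while $\left|\sign(\mathit{Lk}_L)\right|=|p-q|\le p+q$ and $\nul(\mathit{Lk}_L)=z$; hence $\left|\sign(\mathit{Lk}_L)\right|+\nul(\mathit{Lk}_L)\le p+q+z=m$. Substituting this into the displayed bound gives exactly
\[
\left|\lim_{\omega\to 1}\sigma_L(\omega)\right|\le m-1-\rank A(L)\,,
\]
which is the claim. (One should note that the limit $\lim_{\omega\to1}\sigma_L(\omega)$ is well-defined here by the symmetry $\sigma_L(\overline\omega)=\sigma_L(\omega)$ of the one-variable signature, as already recalled in the introduction, so there is no ambiguity in the $\pm$ limits; this is implicit in the statement of Theorem~\ref{thm:LT}.)

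There is essentially no obstacle: the corollary is a formal consequence of Theorem~\ref{thm:LT} and a one-line fact about symmetric matrices. The only point requiring a moment's care is the direction of the inequality — one wants an \emph{upper} bound on $\left|\lim_{\omega\to1}\sigma_L(\omega)\right|$, and the triangle inequality is applied in the form $|a|\le|a-b|+|b|$ with $a=\lim_{\omega\to 1}\sigma_L(\omega)$ and $b=\sign(\mathit{Lk}_L)$, after which the bound $|b|+\nul(\mathit{Lk}_L)\le m$ does the rest. The whole argument can be written in three or four lines.

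\begin{proof}
Since $\mathit{Lk}_L$ is a real symmetric $m\times m$ matrix with, say, $p$ positive, $q$ negative and $z$ vanishing eigenvalues, we have $p+q+z=m$ and therefore
\[
\left|\sign(\mathit{Lk}_L)\right|+\nul(\mathit{Lk}_L)=|p-q|+z\le p+q+z=m\,.
\]
Combining this with Theorem~\ref{thm:LT} and the triangle inequality, we obtain
\[
\left|\lim_{\omega\to 1}\sigma_L(\omega)\right|\le\left|\sign(\mathit{Lk}_L)\right|+\left|\lim_{\omega\to 1}\sigma_L(\omega)-\sign(\mathit{Lk}_L)\right|\le\left|\sign(\mathit{Lk}_L)\right|+\nul(\mathit{Lk}_L)-1-\rank A(L)\le m-1-\rank A(L)\,,
\]
as claimed.
\end{proof}
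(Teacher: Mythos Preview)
Your proof is correct and follows essentially the same approach as the paper's own proof: both apply the triangle inequality to Theorem~\ref{thm:LT} and then use the elementary bound $\left|\sign(\mathit{Lk}_L)\right|+\nul(\mathit{Lk}_L)\le m$ for the $m\times m$ symmetric matrix $\mathit{Lk}_L$. Your write-up is just slightly more explicit about this linear-algebra fact.
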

\begin{proof}
By the triangle inequality together with Theorem~\ref{thm:LT}, we get
\begin{align*}
\left|\lim_{\omega\to 1}\sigma_L(\omega)\right|&\le\left|\lim_{\omega\to 1}\sigma_L(\omega)-\sign(\mathit{Lk}_L)\right|+\left|\sign(\mathit{Lk}_L)\right|\\
&\le \nul(\mathit{Lk}_L)+\left|\sign(\mathit{Lk}_L)\right|-1-\rank A(L)\le m-1-\rank A(L)\,,
\end{align*}
yielding the proof.
\end{proof}

\begin{remark}
    \label{rem:3d-LT}
    Actually, it is not difficult to obtain Theorem~\ref{thm:LT} (and its corollaries) using the three-dimensional method of Section~\ref{sec:3D}.
\end{remark}

\subsection{Limits of multivariable signatures with all variables tending to 1}
\label{sub:lim-mult-1}

The results of Section~\ref{sub:LT} allow us to study and in some case,
determine, the limits of multivariable signatures of colored links with
all variables tending to~$1$ simultaneously. More involved limits are treated in Section~\ref{sub:lim-mult}.

\medskip

Let~$L=L_1\cup\dots\cup L_\mu$ be an arbitrary~$\mu$-colored link.
For any choice of signs~$\epsilon=(\epsilon_1,\dots,\epsilon_\mu)\in\{\pm 1\}^\mu$, let us write
\[
\lim_{\omega_j\to 1^{\epsilon_j}}\sigma_L(\omega_1,\dots,\omega_\mu)\coloneqq\lim_{\omega\to 1^+}\sigma_L(\omega^{\epsilon_1},\dots,\omega^{\epsilon_\mu})\,.
\]
Also, let us denote by~$L^\epsilon$ the oriented link given by~$\epsilon_1L_1\cup\dots\cup\epsilon_\mu L_\mu$,
where~$+L_i=L_i$ and~$-L_i$ stands for the link~$L_i$ endowed with the opposite orientation. 

\begin{theorem}
\label{thm:lim-all-1}
For any colored link~$L=L_1\cup\dots\cup L_\mu$ and any signs~$\epsilon_1,\dots,\epsilon_\mu\in\{\pm 1\}$, we have the inequality
\[
\Big|\lim_{\omega_j\to 1^{\epsilon_j}}\sigma_L(\omega_1,\dots,\omega_\mu)-\sign(\mathit{Lk}_L^\epsilon)-\sum_{i<j}\epsilon_i\epsilon_j \lk(L_i,L_j)\Big|\le
\nul(\mathit{Lk}_L^\epsilon)-1-\rank A(L^\epsilon)\,,
\]
where~$\mathit{Lk}_L^\epsilon$ is the linking matrix of the oriented link~$L^\epsilon=\epsilon_1L_1\cup\dots\cup\epsilon_\mu L_\mu$.
\end{theorem}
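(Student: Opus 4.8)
The plan is to reduce Theorem~\ref{thm:lim-all-1} to Theorem~\ref{thm:LT} applied to the oriented link~$L^\epsilon$, using the relation~\eqref{eq:multi-LT} between the multivariable signature and the Levine-Tristram signature. First I would deal with the reorientation: changing the orientation of a component~$L_i$ sends~$\omega_i$ to~$\overline{\omega}_i$ in the multivariable signature, and sends~$\omega\mapsto\omega^{\epsilon_i}$ in the diagonal variable. Concretely, $\sigma_L(\omega_1^{\epsilon_1},\dots,\omega_\mu^{\epsilon_\mu})$ is, up to the sign and linking-number corrections that appear when reorienting, the multivariable signature of the colored link underlying~$L^\epsilon$ (with the induced coloring). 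So it suffices to treat the case $\epsilon=(1,\dots,1)$ and then keep careful track of the correction terms.

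For $\epsilon=(1,\dots,1)$, the key is that the diagonal limit of the multivariable signature equals the limit of the Levine-Tristram signature of the underlying $1$-colored link $L^{\mathrm{or}}$, up to the explicit constant $\sum_{i<j}\lk(L_i,L_j)$, by Equation~\eqref{eq:multi-LT}. More precisely, $\sigma_L(\omega,\dots,\omega)=\sigma_{L^{\mathrm{or}}}(\omega)+\sum_{i<j}\lk(L_i,L_j)$ for all $\omega\in S^1\setminus\{1\}$, so $\lim_{\omega\to 1^+}\sigma_L(\omega,\dots,\omega)=\lim_{\omega\to1}\sigma_{L^{\mathrm{or}}}(\omega)+\sum_{i<j}\lk(L_i,L_j)$. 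Now apply Theorem~\ref{thm:LT} to $L^{\mathrm{or}}$: it gives $\big|\lim_{\omega\to1}\sigma_{L^{\mathrm{or}}}(\omega)-\sign(\mathit{Lk}_{L^{\mathrm{or}}})\big|\le\nul(\mathit{Lk}_{L^{\mathrm{or}}})-1-\rank A(L^{\mathrm{or}})$. Since $L^{\mathrm{or}}=L^\epsilon$ with $\epsilon=(1,\dots,1)$, and since the Alexander module $A(L)=H_1(X_L;\Lambda_\mu)$ of the colored link has the same underlying space $X_L$ as that of $L^{\mathrm{or}}$ (the coloring only affects the target $\Z^\mu$ versus $\Z$, and $X_L$ depends only on the underlying unoriented/oriented link), one checks $\rank A(L^{\mathrm{or}})=\rank A(L)$ — this is the point where I should be slightly careful: the module over $\Lambda_1$ versus $\Lambda_\mu$ have potentially different ranks, so I would instead phrase Theorem~\ref{thm:LT} so that $A(L)$ there denotes the one-variable Alexander module, matching the statement, and note that in Theorem~\ref{thm:lim-all-1} the module $A(L^\epsilon)$ is likewise the one-variable one (of the oriented link $L^\epsilon$). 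Combining the displayed estimate with the identity from~\eqref{eq:multi-LT} gives exactly the claimed inequality in the case $\epsilon=(1,\dots,1)$.

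For general $\epsilon$, I would first verify the reorientation formula for the multivariable signature: if $L'$ denotes $L$ with the orientation of $L_i$ reversed (keeping colors), then $\sigma_{L'}(\omega_1,\dots,\omega_i,\dots,\omega_\mu)=\sigma_L(\omega_1,\dots,\overline{\omega}_i,\dots,\omega_\mu)$ plus a correction involving $\sum_{j\ne i}\lk(L_i,L_j)$ coming from the change in the generalized Seifert form (this is the colored analogue of the classical reorientation behaviour of the Levine-Tristram signature; it can be read off from the C-complex definition in Section~\ref{sub:C-complex}, or bootstrapped from~\eqref{eq:multi-LT}). Iterating over all $i$ with $\epsilon_i=-1$ converts $\lim_{\omega_j\to1^{\epsilon_j}}\sigma_L(\omega_1,\dots,\omega_\mu)$ into $\lim_{\omega\to1^+}\sigma_{L^\epsilon\text{-colored}}(\omega,\dots,\omega)$ up to a bookkeeping constant, and then the first part applies to $L^\epsilon$. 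The main obstacle is precisely this bookkeeping: getting the correction terms to collapse to exactly $\sign(\mathit{Lk}_L^\epsilon)+\sum_{i<j}\epsilon_i\epsilon_j\lk(L_i,L_j)$ requires matching the linking-number corrections from each individual reorientation with the entries of $\mathit{Lk}_L^\epsilon$ (whose off-diagonal entries are $\epsilon_i\epsilon_j\lk(L_i,L_j)$ and whose diagonal entries are the negative row sums). I would organize this by noting that $\sum_{i<j}\epsilon_i\epsilon_j\lk(L_i,L_j)=\sum_{i<j}\lk(\epsilon_iL_i,\epsilon_jL_j)$ is just the analogue of $\sum_{i<j}\lk(L_i,L_j)$ for $L^\epsilon$, so once everything is rephrased in terms of $L^\epsilon$ the formula~\eqref{eq:multi-LT} for $L^\epsilon$ produces the stated constant automatically, and Theorem~\ref{thm:LT} for $L^\epsilon$ produces the stated bound with $\mathit{Lk}_L^\epsilon$ and $A(L^\epsilon)$. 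The residual care needed is only to confirm that the iterated reorientation identity for $\sigma$ has no leftover terms beyond those absorbed into passing from $L$ to $L^\epsilon$.
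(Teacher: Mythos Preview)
Your approach is correct and essentially identical to the paper's: apply Theorem~\ref{thm:LT} to the oriented link~$L^\epsilon$, then use~\eqref{eq:multi-LT} to relate~$\sigma_{L^\epsilon}(\omega)$ to the diagonal multivariable signature. The only place you hesitate unnecessarily is the reorientation step: the multivariable signature satisfies the clean identity $\sigma_{L^\epsilon}(\omega_1,\dots,\omega_\mu)=\sigma_L(\omega_1^{\epsilon_1},\dots,\omega_\mu^{\epsilon_\mu})$ with \emph{no} linking-number correction (this is Proposition~2.8 of~\cite{C-F}); the correction~$\sum_{i<j}\epsilon_i\epsilon_j\lk(L_i,L_j)$ arises solely from~\eqref{eq:multi-LT} applied to~$L^\epsilon$, so there is no iterated bookkeeping to do and no ``leftover terms'' to worry about. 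Your tangent about~$\rank A(L)$ versus~$\rank A(L^{\mathrm{or}})$ is also unneeded: the statement already writes~$A(L^\epsilon)$, which is precisely the one-variable Alexander module appearing in Theorem~\ref{thm:LT}.
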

\begin{proof}
Let us fix an arbitrary colored link~$L=L_1\cup\dots\cup L_\mu$ and signs~$\epsilon=(\epsilon_1,\dots,\epsilon_\mu)\in\{\pm 1\}^\mu$,
and let~$L^\epsilon$ be the associated oriented link defined above.
Applying Theorem~\ref{thm:LT} to~$L^\epsilon$, we get
\[
\left|\lim_{\omega\to 1}\sigma_{L^\epsilon}(\omega)-\sign(\mathit{Lk}_L^\epsilon)\right|\le
\nul(\mathit{Lk}_L^\epsilon)-1-\rank A(L^\epsilon)\,.
\]
By Propositions~2.5 and~2.8 of~\cite{C-F}, we have
\[
\sigma_{L^\epsilon}(\omega)=\sigma_{L^\epsilon}(\omega,\dots,\omega)-\sum_{i<j} \lk(\epsilon_iL_i,\epsilon_jL_j)=\sigma_L(\omega^{\epsilon_1},\dots,\omega^{\epsilon_\mu})-\sum_{i<j}\epsilon_i\epsilon_j \lk(L_i,L_j)\,,
\]
concluding the proof.
\end{proof}

The following corollary is an immediate consequence of Theorem~\ref{thm:lim-all-1}.

\begin{corollary}
\label{cor:lim-all-1}
Let~$L$ be an oriented link.
Then, for any coloring~$L_1\cup\dots\cup L_\mu$ of~$L$ and any signs~$\epsilon=(\epsilon_1,\dots,\epsilon_\mu)\in\{\pm 1\}^\mu$
such that the associated linking matrix~$\mathit{Lk}_L^\epsilon$ has nullity equal to~$1$, we have
\[
\lim_{\omega_j\to 1^{\epsilon_j}}\sigma_L(\omega_1,\dots,\omega_\mu)=\sign(\mathit{Lk}_L^\epsilon)+\sum_{i<j}\epsilon_i\epsilon_j \lk(L_i,L_j)\,.\qed
\]
\end{corollary}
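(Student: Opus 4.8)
The final statement, Corollary~\ref{cor:lim-all-1}, is an immediate consequence of Theorem~\ref{thm:lim-all-1}: if $\nul(\mathit{Lk}_L^\epsilon)=1$, then the right-hand side of the inequality in that theorem becomes $1-1-\rank A(L^\epsilon)=-\rank A(L^\epsilon)\le 0$, while the left-hand side is the absolute value of an integer and hence $\ge 0$. Therefore both sides vanish, which forces $\rank A(L^\epsilon)=0$ and
\[
\lim_{\omega_j\to 1^{\epsilon_j}}\sigma_L(\omega_1,\dots,\omega_\mu)=\sign(\mathit{Lk}_L^\epsilon)+\sum_{i<j}\epsilon_i\epsilon_j \lk(L_i,L_j)\,.
\]

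The plan for a self-contained write-up is thus extremely short. First I would invoke Theorem~\ref{thm:lim-all-1} applied to the link $L$ with the chosen coloring $L_1\cup\dots\cup L_\mu$ and the chosen signs $\epsilon=(\epsilon_1,\dots,\epsilon_\mu)$, quoting the displayed inequality verbatim. Second, I would substitute the hypothesis $\nul(\mathit{Lk}_L^\epsilon)=1$ into that inequality and observe that the upper bound is $-\rank A(L^\epsilon)$, which is nonpositive since the rank of a module is a nonnegative integer. Third, since the quantity inside the absolute value on the left-hand side is an integer (a difference of signatures and linking numbers), its absolute value is a nonnegative integer bounded above by a nonpositive integer, hence zero; equating it to zero gives exactly the claimed formula.

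There is essentially no obstacle here, which is why the excerpt simply states ``The following corollary is an immediate consequence.'' The only point requiring a half-sentence of care is noting that $\rank A(L^\epsilon)\ge 0$ so that the right-hand side is $\le 0$, and that integrality of the left-hand side then forces equality; one could even note as a byproduct that the hypothesis additionally implies $\rank A(L^\epsilon)=0$. No further computation is needed, and in particular one does not need to revisit the four-dimensional machinery, Novikov--Wall, or the Torres formulas, all of which are already packaged inside Theorem~\ref{thm:lim-all-1}.
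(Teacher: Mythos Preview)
Your proposal is correct and matches the paper's approach exactly: the paper states that this corollary is an immediate consequence of Theorem~\ref{thm:lim-all-1} and gives no further proof. Your unpacking of why the hypothesis $\nul(\mathit{Lk}_L^\epsilon)=1$ forces the right-hand side to be nonpositive, hence zero, is precisely the intended one-line deduction.
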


\begin{example}
\label{ex:LT-2}
Let us consider the case of a 2-component~2-colored link~$L=L_1\cup L_2$. Writing~$\ell\coloneqq\lk(L_1,L_2)$ and fixing~$\epsilon=(\epsilon_1,\epsilon_2)\in\{\pm 1\}^2$, the corresponding linking matrix is given by
\[
\mathit{Lk}_L^\epsilon=
\begin{bmatrix}
-\epsilon_1\epsilon_2\ell&\epsilon_1\epsilon_2\ell\cr \epsilon_1\epsilon_2\ell&-\epsilon_1\epsilon_2\ell
\end{bmatrix}\,,
\]
which has nullity~$1$ if and only if~$\ell\neq 0$. In such as case, we have~$\sign(\mathit{Lk}_L^\epsilon)=-\epsilon_1\epsilon_2\,\sgn(\ell)$, and Corollary \ref{cor:lim-all-1} leads to
\[
\lim_{\omega_1\to 1^{\epsilon_1}\omega_2\to 1^{\epsilon_2}}\sigma_L(\omega_1,\omega_2)=\epsilon_1\epsilon_2(\ell-\sgn(\ell))\,.
\]
On the other hand, if~$\ell=0$ and the Alexander polynomial~$\Delta_{L^\epsilon}(t)=(t-1)^{-1}\Delta_L(t^{\epsilon_1},t^{\epsilon_2})$ vanishes, then Theorem~\ref{thm:lim-all-1} yields that the limit vanishes as well.
Finally, if~$\ell=0$ but~$\Delta_L(t^{\epsilon_1},t^{\epsilon_2})\neq 0$, then we can only conclude that the limit belongs to~$\{-1,0,1\}$ (and to~$\{-1,1\}$ for parity reasons).
This recovers the results of Corollary~\ref{cor:LT}, obtained via C-complexes.
\end{example}

We conclude this section with one last result,
which is a multivariable extension of Corollary~\ref{cor:G-L}.
Its proof being almost identical, it is left to the reader.

\begin{corollary}
\label{cor:G-L-mutl}
For any~$m$-component colored link~$L=L_1\cup\dots\cup L_\mu$ and any signs~$\epsilon_1,\dots,\epsilon_\mu\in\{\pm 1\}$, we have
\[
\Big|\lim_{\omega_j\to 1^{\epsilon_j}}\sigma_L(\omega_1,\dots,\omega_\mu)\Big|\le m-1+\Big|\sum_{i<j}\epsilon_i\epsilon_j \lk(L_i,L_j)\Big|-\rank A(L^\epsilon)\,.\qed
\]
\end{corollary}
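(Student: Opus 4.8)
The plan is to mimic the proof of Corollary~\ref{cor:G-L}, which is the $\mu=1$ case of the statement, but now starting from Theorem~\ref{thm:lim-all-1} rather than from Theorem~\ref{thm:LT}. So first I would fix an arbitrary $m$-component colored link $L=L_1\cup\dots\cup L_\mu$ and signs $\epsilon=(\epsilon_1,\dots,\epsilon_\mu)\in\{\pm 1\}^\mu$, and recall that $L^\epsilon=\epsilon_1L_1\cup\dots\cup\epsilon_\mu L_\mu$ is an $m$-component oriented link, so that in particular $\rank A(L^\epsilon)=\rank A(L^{\epsilon})$ is the quantity appearing on the right-hand side (here one uses, as in the statement, that the relevant Alexander module is that of the oriented link $L^\epsilon$).

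Next I would invoke Theorem~\ref{thm:lim-all-1} to bound the distance between the limit and the ``expected'' value $\sign(\mathit{Lk}_L^\epsilon)+\sum_{i<j}\epsilon_i\epsilon_j\lk(L_i,L_j)$, and then apply the triangle inequality exactly as in the proof of Corollary~\ref{cor:G-L}:
\[
\Big|\lim_{\omega_j\to 1^{\epsilon_j}}\sigma_L(\omega)\Big|\le\Big|\lim_{\omega_j\to 1^{\epsilon_j}}\sigma_L(\omega)-\sign(\mathit{Lk}_L^\epsilon)-\textstyle\sum_{i<j}\epsilon_i\epsilon_j\lk(L_i,L_j)\Big|+\big|\sign(\mathit{Lk}_L^\epsilon)\big|+\Big|\textstyle\sum_{i<j}\epsilon_i\epsilon_j\lk(L_i,L_j)\Big|.
\]
Plugging in Theorem~\ref{thm:lim-all-1} and using $\nul(\mathit{Lk}_L^\epsilon)+|\sign(\mathit{Lk}_L^\epsilon)|\le m$ (because $\mathit{Lk}_L^\epsilon$ is an $m\times m$ Hermitian matrix, so the number of zero eigenvalues plus the absolute value of (positive minus negative eigenvalues) is at most the total number of eigenvalues $m$), the first two terms combine to at most $m-1-\rank A(L^\epsilon)$, and adding the linking-number term gives the claimed bound.

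There is essentially no obstacle here: the only things to be careful about are the combinatorial inequality $\nul(M)+|\sigma(M)|\le m$ for an $m\times m$ Hermitian $M$ (elementary, as the eigenvalue counts $n_+,n_-,n_0$ satisfy $n_++n_-+n_0=m$ and $|\sigma|=|n_+-n_-|\le n_++n_-$), and the bookkeeping that the ``$A(L)$'' in Corollary~\ref{cor:G-L-mutl} refers to $A(L^\epsilon)$, consistent with how it is stated. Since the argument is word-for-word that of Corollary~\ref{cor:G-L} with Theorem~\ref{thm:LT} replaced by Theorem~\ref{thm:lim-all-1}, it is reasonable — as the text does — to leave it to the reader; but if a proof is desired, the display above followed by the two-line estimate suffices.

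\begin{proof}
Fix an $m$-component colored link $L=L_1\cup\dots\cup L_\mu$ and signs $\epsilon=(\epsilon_1,\dots,\epsilon_\mu)\in\{\pm 1\}^\mu$, and set $\ell_{ij}=\lk(L_i,L_j)$. By the triangle inequality and Theorem~\ref{thm:lim-all-1},
\begin{align*}
\Big|\lim_{\omega_j\to 1^{\epsilon_j}}\sigma_L(\omega_1,\dots,\omega_\mu)\Big|
&\le\Big|\lim_{\omega_j\to 1^{\epsilon_j}}\sigma_L(\omega_1,\dots,\omega_\mu)-\sign(\mathit{Lk}_L^\epsilon)-\sum_{i<j}\epsilon_i\epsilon_j\ell_{ij}\Big|\\
&\qquad+\big|\sign(\mathit{Lk}_L^\epsilon)\big|+\Big|\sum_{i<j}\epsilon_i\epsilon_j\ell_{ij}\Big|\\
&\le\nul(\mathit{Lk}_L^\epsilon)+\big|\sign(\mathit{Lk}_L^\epsilon)\big|-1-\rank A(L^\epsilon)+\Big|\sum_{i<j}\epsilon_i\epsilon_j\ell_{ij}\Big|.
\end{align*}
Since $\mathit{Lk}_L^\epsilon$ is an $m\times m$ Hermitian matrix, its numbers $n_+,n_-,n_0$ of positive, negative and zero eigenvalues satisfy $n_++n_-+n_0=m$, whence $\nul(\mathit{Lk}_L^\epsilon)+|\sign(\mathit{Lk}_L^\epsilon)|=n_0+|n_+-n_-|\le n_0+n_++n_-=m$. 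Substituting this bound gives
\[
\Big|\lim_{\omega_j\to 1^{\epsilon_j}}\sigma_L(\omega_1,\dots,\omega_\mu)\Big|\le m-1+\Big|\sum_{i<j}\epsilon_i\epsilon_j\lk(L_i,L_j)\Big|-\rank A(L^\epsilon)\,,
\]
as claimed.
\end{proof}
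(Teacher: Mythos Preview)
Your proof is correct and follows exactly the approach the paper intends: it is the same triangle-inequality argument as in Corollary~\ref{cor:G-L}, with Theorem~\ref{thm:LT} replaced by Theorem~\ref{thm:lim-all-1} and the elementary bound $\nul(\mathit{Lk}_L^\epsilon)+|\sign(\mathit{Lk}_L^\epsilon)|\le m$.
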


\subsection{Limits of multivariable signatures}
\label{sub:lim-mult}

Everything is now in place to show the following theorem.
\begin{theorem}
\label{thm:main}
Let \(L=L_{1}\cup L_2\cup\ldots\cup L_{\mu}=:L_1\cup L'\) be a colored link with~$\mu\ge 2$ and~$L_1=:K$ a knot. Let us consider~\(\omega=(\omega_1,\omega')\in\mathbb{T}^{\mu}\) with~$\omega'\in\mathbb{T}^{\mu-1}_*$.
\begin{enumerate}
    \item 
If there exists a component~$K'\subset L'$ with~$\lk(K,K')\neq 0$, then we have:
\[
\left|\lim_{\omega_1\to 1^\pm}\sigma_L(\omega)-\sigma_{L'}(\omega')\right|\le
\eta_{L'}(\omega')-1+\sum_{K'\subset L'}|\lk(K,K')|-\rank  A(L)\,.
\]
\item If ~$\lk(K,K')=0$ for all components~$K'\subset L'$, then there is a well-defined slope~$(K/L)(\omega')\in\mathbb{R}\cup\{\infty\}$ for any~$\omega'\in\mathbb{T}^{\mu-1}_*$, and we have
\[
\left|\lim_{\omega_1\to 1^\pm}\sigma_L(\omega)-\sigma_{L'}(\omega')-s(\omega')\right|\le
\eta_{L'}(\omega')+\varepsilon(\omega')-\rank A(L)\,,
\]
where
\[
s(\omega')=
\begin{cases}
+1&\text{if $(K/L')(\omega')\in(0,\infty)$}\\
-1&\text{if $(K/L')(\omega')\in(-\infty,0)$}\\
0&\text{if $(K/L')(\omega')\in\{0,\infty\}$}
\end{cases}
\quad\text{and}\quad
\varepsilon(\omega')=
\begin{cases}
+1&\text{if $(K/L')(\omega')=0$}\\
-1&\text{if $(K/L')(\omega')=\infty$}\\
0&\text{else.}
\end{cases}
\]
\end{enumerate}
\end{theorem}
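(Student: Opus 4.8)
The plan is to combine the three main tools already established in the excerpt: the preliminary inequality of Proposition~\ref{prop:limit}, the Torres formula for the signature (Theorem~\ref{thm:Torres}), and the Torres formula for the nullity (Theorem~\ref{thm:torres-formula-nullity}). The key observation is that Proposition~\ref{prop:limit}, applied with the first variable tending to~$1$, gives
\[
\left|\lim_{\omega_1\to 1^\pm}\sigma_L(\omega_1,\omega')-\sigma_L(1,\omega')\right|\le \eta_L(1,\omega')-\rank A(L)
\]
for all~$\omega'\in\mathbb{T}^{\mu-1}\setminus\{(1,\dots,1)\}$, and in particular for all~$\omega'\in\mathbb{T}^{\mu-1}_*$. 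So the entire theorem reduces to substituting the closed-form expressions for~$\sigma_L(1,\omega')$ and~$\eta_L(1,\omega')$ supplied by the two Torres formulas, and then checking that the resulting bookkeeping matches the statement.

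For part~(1), the hypothesis is that~$K=L_1$ is a knot with~$\lk(K,K')\neq 0$ for some~$K'\subset L'$; since~$|L_1|=1$, this is exactly the hypothesis of case~(3) of both Torres theorems. Thus I would invoke Theorem~\ref{thm:Torres}(3) to get~$\sigma_L(1,\omega')=\sigma_{L'}(\omega')$, and Theorem~\ref{thm:torres-formula-nullity}(3) (with~$|L_1|=1$) to get~$\eta_L(1,\omega')=\eta_{L'}(\omega')-1+\sum_{K'\subset L'}|\lk(K,K')|$. Plugging these into the displayed consequence of Proposition~\ref{prop:limit} yields precisely the inequality of part~(1).

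For part~(2), the hypothesis is that~$L=K\cup L'$ is algebraically split with~$K$ a knot, which is the setting of case~(2) of both Torres theorems, and the slope~$(K/L')(\omega')\in\mathbb{R}\cup\{\infty\}$ is well-defined by the discussion in Remark~\ref{rem:slope}. Here Theorem~\ref{thm:Torres}(2) together with Corollary~\ref{cor:Torres-sign} gives~$\sigma_L(1,\omega')=\sigma_{L'}(\omega')+\sgn\!\big((K/L')(\omega')\big)=\sigma_{L'}(\omega')+s(\omega')$, since~$\sgn(\infty)=0$ matches the value~$s(\omega')=0$ assigned when~$(K/L')(\omega')\in\{0,\infty\}$ (note~$\sgn(0)=0$ too). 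Meanwhile Theorem~\ref{thm:torres-formula-nullity}(2) gives~$\eta_L(1,\omega')=\eta_{L'}(\omega')+\varepsilon(\omega')$ with~$\varepsilon$ exactly the function defined in the statement (the three cases "slope~$=0$", "slope~$=\infty$", "else" matching~$+1$,~$-1$,~$0$). Substituting both into Proposition~\ref{prop:limit} gives the claimed inequality.

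I do not expect a genuine obstacle here: the theorem is essentially an assembly of three already-proven results, and the only real work is verifying that the sign conventions for~$s$ and~$\varepsilon$ (in particular the~$\sgn(\infty)=0$ convention and the definition of the slope as~$-\alpha_L/\beta_L$) line up exactly with the case divisions in Theorems~\ref{thm:Torres} and~\ref{thm:torres-formula-nullity}. The one point requiring a little care is that Proposition~\ref{prop:limit} is stated for~$\omega'\in\mathbb{T}^{\mu-1}\setminus\{(1,\dots,1)\}$ while we only need it on~$\mathbb{T}^{\mu-1}_*$, which is a subset, so no extension is needed; and that the Torres formulas require~$\omega'\in\mathbb{T}^{\mu-1}_*$, which is precisely the hypothesis of the present theorem. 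Assembling these observations completes the proof.
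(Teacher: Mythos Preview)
Your proposal is correct and follows essentially the same approach as the paper: apply Proposition~\ref{prop:limit} to obtain the inequality with~$\sigma_L(1,\omega')$ and~$\eta_L(1,\omega')$, then substitute the Torres formulas (Corollary~\ref{cor:Torres-sign} for the signature and Theorem~\ref{thm:torres-formula-nullity}(2)--(3) for the nullity). The paper's proof is in fact even more terse than yours, simply citing these ingredients without spelling out the case-by-case matching that you carefully verify.
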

\begin{proof}
    Let~$L=L_1\cup L'$ be a~$\mu$-colored link with~$\mu\ge 2$ and~$L_1=K$ a knot. For any~$\omega'\in\new{\mathbb{T}^{\mu-1}_*}$, Proposition~\ref{prop:limit} yields
\[
\left|\lim_{\omega_1\to 1^\pm}\sigma_L(\omega_1,\omega')-\sigma_{L}(1,\omega')\right|\le
\eta_{L}(1,\omega')-\rank A(L)\,.
\]
The statement now follows from our Torres formulas, namely Corollary~\ref{cor:Torres-sign} (together with Remark~\ref{rem:slope}) and the second and third points of Theorem~\ref{thm:torres-formula-nullity}.
\end{proof}

This result is very powerful in the algebraically split case.

\begin{corollary}
\label{cor:alg-split-equality}
Let \(L=L_{1}\cup L_2\cup\ldots\cup L_{\mu}=:L_1\cup L'\) be a colored link with~$\mu\ge 2$ and~$L_1=:K$ a knot such that~$\lk(K,K')=0$ for all~$K'\subset L'$. Then, we have
\[
\lim_{\omega_1\to 1^+}\sigma_L(\omega_1,\omega')=\lim_{\omega_1\to 1^-}\sigma_L(\omega_1,\omega')=\sigma_{L'}(\omega')+\sgn\left(-\frac{\frac{\partial\nabla_L}{\partial t_1}(1,\sqrt{\omega'})}{\nabla_{L'}(\sqrt{\omega'})}\right)
\]
for all~$\omega'\in\mathbb{T}^{\mu-1}_*$ such that~$\nabla_{L'}(\sqrt{\omega'})\neq 0$ and~$\frac{\partial\nabla_L}{\partial t_1}(1,\sqrt{\omega'})\neq 0$.
\end{corollary}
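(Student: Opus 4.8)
The plan is to deduce Corollary~\ref{cor:alg-split-equality} from part~(2) of Theorem~\ref{thm:main} by showing that, under the stated genericity hypotheses, the right-hand side of the inequality vanishes, which forces both one-sided limits to equal the common central value. Concretely, I would first invoke Theorem~\ref{thm:main}~(2): since $\lk(K,K')=0$ for all $K'\subset L'$, for every $\omega'\in\mathbb{T}^{\mu-1}_*$ we have
\[
\left|\lim_{\omega_1\to 1^\pm}\sigma_L(\omega)-\sigma_{L'}(\omega')-s(\omega')\right|\le \eta_{L'}(\omega')+\varepsilon(\omega')-\rank A(L)\,.
\]
The goal is then to argue that the extra genericity assumptions $\nabla_{L'}(\sqrt{\omega'})\neq 0$ and $\frac{\partial\nabla_L}{\partial t_1}(1,\sqrt{\omega'})\neq 0$ make the right-hand side equal to zero and identify $s(\omega')$ with the claimed sign.

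The first step is to compute the slope. By Remark~\ref{rem:slope}~(2) (i.e. Equation~\eqref{eq:slope}, which applies precisely because numerator and denominator do not both vanish — here in fact both are individually nonzero), we have
\[
(K/L')(\omega')=-\frac{\frac{\partial\nabla_L}{\partial t_1}(1,\sqrt{\omega'})}{2\nabla_{L'}(\sqrt{\omega'})}\in\mathbb{R}\setminus\{0\}\,,
\]
so in particular $(K/L')(\omega')\notin\{0,\infty\}$. Therefore $\varepsilon(\omega')=0$, and $s(\omega')=\sgn\bigl((K/L')(\omega')\bigr)=\sgn\!\left(-\frac{\frac{\partial\nabla_L}{\partial t_1}(1,\sqrt{\omega'})}{\nabla_{L'}(\sqrt{\omega'})}\right)$, the positive factor $2$ being irrelevant to the sign. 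This already matches the correction term in the statement.

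The second step is to show $\eta_{L'}(\omega')-\rank A(L)=0$. The link $L'$ is obtained from $L$ by deleting the knot $K=L_1$; since $L$ is algebraically split, the Torres formula~\eqref{eq:Torres} gives $\Delta_L(1,t_2,\dots,t_\mu)=(1-1)\,\Delta_{L'}(t_2,\dots,t_\mu)=0$, so this route is uninformative, and instead I would argue as follows. The hypothesis $\nabla_{L'}(\sqrt{\omega'})\neq 0$ forces $\Delta_{L'}(\omega')\neq 0$ (up to the standard normalization relating $\nabla_{L'}$ and $\Delta_{L'}$), hence by Lemma~\ref{lem:eta0} we get $\eta_{L'}(\omega')=0$; consequently $\rank A(L)\le\eta_{L'}(\omega')=0$ by the inequality just displayed (the left-hand side being $\ge 0$), so in fact $\rank A(L)=0$ and the bound reads $\bigl|\lim_{\omega_1\to 1^\pm}\sigma_L(\omega)-\sigma_{L'}(\omega')-s(\omega')\bigr|\le 0$. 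This yields the claimed equality for both signs simultaneously, and in particular shows the two one-sided limits coincide. The main obstacle I anticipate is the bookkeeping around the square-root normalization in Equation~\eqref{eq:slope} and the precise relation between the non-vanishing of $\nabla_{L'}(\sqrt{\omega'})$ and that of $\Delta_{L'}(\omega')$ needed to apply Lemma~\ref{lem:eta0}; everything else is a direct substitution into Theorem~\ref{thm:main}~(2).
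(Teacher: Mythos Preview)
Your proposal is correct and follows essentially the same route as the paper: apply Theorem~\ref{thm:main}~(2), use $\nabla_{L'}(\sqrt{\omega'})\neq 0$ with Lemma~\ref{lem:eta0} to get $\eta_{L'}(\omega')=0$, and use Equation~\eqref{eq:slope} with both numerator and denominator nonzero to conclude that the slope lies in $\mathbb{R}\setminus\{0\}$, whence $\varepsilon(\omega')=0$ and $s(\omega')$ equals the claimed sign. The only cosmetic difference is that the paper simply notes the right-hand side is then $-\rank A(L)\le 0$, forcing equality, rather than first deducing $\rank A(L)=0$ as you do.
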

\begin{proof}
For~$L=K\cup L'$ and~$\omega'\in\mathbb{T}^{\mu-1}_*$ as in the statement,
we have~$\Delta_{L'}(\omega')=\pm\nabla_{L'}(\sqrt{\omega'})\neq 0$,
which implies that~$\eta_{L'}(\omega')=0$ via Lemma~\ref{lem:eta0}.
By~\cite[Theorem~3.2]{DFL} (recall Remark~\ref{rem:slope}), the 
associated slope can be computed via
    \[
    (K/L')(\omega')=-\frac{\frac{\partial\nabla_L}{\partial t_1}(1,\sqrt{\omega'})}{2\nabla_{L'}(\sqrt{\omega'})}\,.
    \]
The assumption that~$\frac{\partial\nabla_L}{\partial t_1}(1,\sqrt{\omega'})\neq 0$ (which is equivalent to~$\frac{\partial\Delta_L}{\partial t_1}(1,\omega')\neq 0$) thus implies that this slope does not vanish, yielding~$\varepsilon(\omega')=0$.
Therefore, the right-hand side of the inequality in the second case of Theorem~\ref{thm:main} vanishes, leading to the result.
\end{proof}

\begin{example}
\label{ex:Wh}
For the 2-component links~$L(k)=K\cup L'$ of
Example~\ref{ex:twist}, Corollary~\ref{cor:alg-split-equality} together with Equation~\eqref{eq:slopeL(k)} leads to
\[
\lim_{\omega_1\to 1^+}\sigma_L(\omega_1,\omega_2)=\lim_{\omega_1\to 1^-}\sigma_L(\omega_1,\omega_2)=\sgn(k)\,.
\]
Recall from Example~\ref{ex:nonequality-signature-limit} that this result can not be obtained via Theorem~\ref{thm:limit-of-signature-inequality}.
\end{example}

Theorem~\ref{thm:main} is also powerful in the special case of total
linking number equal to~1,
as it easily implies the following result (using Lemma~\ref{lem:eta0}).

\begin{corollary}
\label{cor:l=1-equality}
Let \(L=L_{1}\cup L_2\cup\ldots\cup L_{\mu}=:L_1\cup L'\) be a colored link with~$L_1=:K$ a knot such that~$\sum_{K'\subset L'}\vert \lk(K,K')\vert=1$. Then, we have
\[
\lim_{\omega_1\to 1^+}\sigma_L(\omega_1,\omega')=\lim_{\omega_1\to 1^-}\sigma_L(\omega_1,\omega')=\sigma_{L'}(\omega')
\]
for all~$\omega'\in\mathbb{T}^{\mu-1}_*$ such that~$\Delta_{L'}(\omega')\neq 0$.\qed
\end{corollary}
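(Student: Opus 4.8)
The plan is to deduce Corollary~\ref{cor:l=1-equality} directly from Theorem~\ref{thm:main} by checking that, under the hypothesis $\sum_{K'\subset L'}|\lk(K,K')|=1$ and $\Delta_{L'}(\omega')\neq 0$, every term on the right-hand side of the relevant inequality vanishes. There are two cases to separate, matching the dichotomy in Theorem~\ref{thm:main}.

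First I would observe that the total linking number being $1$ forces exactly one component $K'_0\subset L'$ with $\lk(K,K'_0)=\pm 1$ and $\lk(K,K')=0$ for all other $K'\subset L'$. In particular there does exist a component with nonzero linking number, so we are in case~(1) of Theorem~\ref{thm:main}. That inequality reads
\[
\left|\lim_{\omega_1\to 1^\pm}\sigma_L(\omega)-\sigma_{L'}(\omega')\right|\le
\eta_{L'}(\omega')-1+\sum_{K'\subset L'}|\lk(K,K')|-\rank A(L)\,.
\]
The assumption $\Delta_{L'}(\omega')\neq 0$ gives $\eta_{L'}(\omega')=0$ by Lemma~\ref{lem:eta0}. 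The sum $\sum_{K'\subset L'}|\lk(K,K')|$ equals $1$ by hypothesis. And $\rank A(L)\ge 0$ trivially. Hence the right-hand side is $0-1+1-\rank A(L)=-\rank A(L)\le 0$. Since the left-hand side is a nonnegative quantity bounded above by a nonpositive one, both must be zero, which forces $\rank A(L)=0$ and
\[
\lim_{\omega_1\to 1^+}\sigma_L(\omega_1,\omega')=\lim_{\omega_1\to 1^-}\sigma_L(\omega_1,\omega')=\sigma_{L'}(\omega')\,,
\]
as desired.

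I do not anticipate a genuine obstacle here: this is a short deduction, and the only subtlety is making sure one invokes the correct branch of Theorem~\ref{thm:main}. The one point worth stating carefully is why $\eta_{L'}(\omega')=0$: this is exactly Lemma~\ref{lem:eta0} applied to the $(\mu-1)$-colored link $L'$ at the point $\omega'\in\mathbb{T}^{\mu-1}_*$, using $\Delta_{L'}(\omega')\neq 0$. It is also worth remarking that the argument incidentally yields $\rank A(L)=0$, though this is not needed for the statement. One should double-check the edge case $\mu=2$ (so $L'$ is a knot and $\sigma_{L'}(\omega')\equiv 0$ near $\omega'=1$, consistent with the classical vanishing), but the general argument covers it uniformly. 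No separate treatment of the algebraically split case is required, since $\sum|\lk(K,K')|=1\neq 0$ rules it out.
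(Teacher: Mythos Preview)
Your proof is correct and takes essentially the same approach as the paper, which simply states that the corollary follows easily from Theorem~\ref{thm:main} using Lemma~\ref{lem:eta0}. Your observation that the argument also forces $\rank A(L)=0$ is a pleasant bonus, and your remark that case~(2) of Theorem~\ref{thm:main} is ruled out by the hypothesis is the right way to dispose of the apparent dichotomy.
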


On the other hand, Theorem~\ref{thm:main} is quite
weak in case of large linking numbers, as it does not distinguish between
the two possible limits. However, it does immediately provide the following upper bound on the difference of these limits.

\begin{corollary}
\label{cor:diff-lim}
Assuming the notation of Theorem~\ref{thm:main},
we have the inequalities
\[
\left|\lim_{\omega_1\to 1^+}\sigma_L(\omega)-\lim_{\omega_1\to 1^-}\sigma_L(\omega)\right|\le
2\Big(\eta_{L'}(\omega')-1+\sum_{K'\subset L'}|\lk(K,K')|-\rank  A(L)\Big)
\]
in case~1, and 
\[
\left|\lim_{\omega_1\to 1^+}\sigma_L(\omega)-\lim_{\omega_1\to 1^-}\sigma_L(\omega)\right|\le
2\Big(\eta_{L'}(\omega')+\varepsilon(\omega')-\rank A(L)\Big)
\]
in case~2.\qed
\end{corollary}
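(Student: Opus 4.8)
The plan is to obtain both inequalities as formal consequences of Theorem~\ref{thm:main} via the triangle inequality; since the corollary is stated with \qed, there is essentially nothing to do beyond this bookkeeping.

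In case~1, Theorem~\ref{thm:main}~(1) provides the single estimate
\[
\left|\lim_{\omega_1\to 1^\pm}\sigma_L(\omega)-\sigma_{L'}(\omega')\right|\le B_1,
\qquad
B_1:=\eta_{L'}(\omega')-1+\sum_{K'\subset L'}|\lk(K,K')|-\rank A(L),
\]
valid for \emph{both} choices of the sign $\pm$, since the right-hand side does not depend on the direction of approach. I would then write, inserting and subtracting $\sigma_{L'}(\omega')$,
\[
\left|\lim_{\omega_1\to 1^+}\sigma_L(\omega)-\lim_{\omega_1\to 1^-}\sigma_L(\omega)\right|
\le
\left|\lim_{\omega_1\to 1^+}\sigma_L(\omega)-\sigma_{L'}(\omega')\right|
+\left|\sigma_{L'}(\omega')-\lim_{\omega_1\to 1^-}\sigma_L(\omega)\right|
\le 2B_1,
\]
which is exactly the first claimed inequality.

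In case~2 the argument is identical, this time inserting and subtracting the quantity $\sigma_{L'}(\omega')+s(\omega')$ rather than $\sigma_{L'}(\omega')$: Theorem~\ref{thm:main}~(2) bounds $\bigl|\lim_{\omega_1\to 1^\pm}\sigma_L(\omega)-\sigma_{L'}(\omega')-s(\omega')\bigr|$ by $B_2:=\eta_{L'}(\omega')+\varepsilon(\omega')-\rank A(L)$ for both signs, and the triangle inequality yields the bound $2B_2$ on the difference of the two one-sided limits. No step here is delicate, and there is no real obstacle: the whole content sits in Theorem~\ref{thm:main}, the only feature being exploited is that its error terms are insensitive to whether $\omega_1$ tends to $1$ from above or from below, which is precisely why the factor $2$ is the only price paid. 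As noted in the surrounding discussion, this is also why the bound is weak for controlling the individual limits when the linking numbers are large, and why the three-dimensional approach of Section~\ref{sec:3D} is needed in that regime.
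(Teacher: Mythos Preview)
Your proof is correct and is exactly the argument the paper has in mind: the corollary is stated with a \qed\ precisely because it follows from Theorem~\ref{thm:main} by inserting $\sigma_{L'}(\omega')$ (respectively $\sigma_{L'}(\omega')+s(\omega')$) and applying the triangle inequality, using that the right-hand side in Theorem~\ref{thm:main} is independent of the sign in $\omega_1\to 1^\pm$.
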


Note that the inequalities of Corollary~\ref{cor:diff-lim} are often sharp.
In other (slightly vague) words, even in the case of large linking numbers, Theorem~\ref{thm:main} is often ``as good as it can possibly be without distinguishing the two different limits''. This is made more precise by the following remark.

\begin{remark}
    \label{ex:diff-limits}
Let us assume that~$L=L_1\cup L'$ is an ordered link with~$\Delta_L(1,t_2,\dots,t_\mu)\neq 0$. (By the Torres formula, this is equivalent to~$\lk(L_1,L_j)$ not all vanishing
and~$\Delta_{L'}\neq 0$). Then Corollary~\ref{cor:diff-lim} reads
\[
\left|\lim_{\omega_1\to 1^+}\sigma_L(\omega)-\lim_{\omega_1\to 1^-}\sigma_L(\omega)\right|\le
2\Big(\eta_{L'}(\omega')-1+\sum_{j=2}^\mu\vert\lk(L_1,L_j)\vert\Big)
\]
for all~$\omega'\in\mathbb{T}^{\mu-1}_*$.
By Corollary~\ref{cor:limit-of-signature-equality}, Lemma~\ref{lem:eta0}, and the addendum to Theorem~\ref{thm:limit-of-signature-inequality},
this is a sharp inequality for~$\omega_j$ close to~$1^{s_j}$, where~$s_j$ denotes the sign of~$\lk(L_1,L_j)$.
\end{remark}

We conclude this section with an application of these results to the
limit of the Levine-Tristram signature of 2-component links.
To do so, let us first recall that by
the Torres formula~\eqref{eq:Torres}
adapted to the Conway function (see e.g. Equation~(5.3) of~\cite{Har}), any 2-component link~$L$ with vanishing linking number has Conway function of the form
\[
\nabla_L(t_1,t_2)=(t_1-t^{-1}_1)(t_2-t^{-1}_2)f(t_1,t_2)
\]
for some~$f\in\mathbb{Z}[t^{\pm 1}_1,t^{\pm 1}_2]$.
\begin{corollary}
\label{cor:LT-2}
If~$L=L_1\cup L_2$ is a 2-component oriented link with linking number~$\ell$
and two-variable Conway function~$\nabla_L$, then its Levine-Tristram signature satisfies
\[
\lim_{\omega\to 1}\sigma_L(\omega)=\begin{cases}
-\sgn(\ell)&\text{if~$\ell\neq 0$, or if~$\nabla_L=0$ (in which case~$\ell=0$);}\\
\sgn(f(1,1))&\text{if~$\ell=0$,~$\nabla_L\neq 0$ and~$f(1,1)\neq 0$};\\
\pm 1\new{\text{ or $0$}}&\text{if~$\ell=0$,~$\nabla_L\neq 0$ and~$f(1,1)= 0$},
\end{cases}
\]
where in the last two cases, we have~$\nabla_L(t_1,t_2)=(t_1-t^{-1}_1)(t_2-t^{-1}_2)f(t_1,t_2)\in\Z[t^{\pm 1}_1,t^{\pm 1}_2]$.\qed
\end{corollary}
\begin{proof}
Recall that by Equation~\eqref{eq:multi-LT}, the Levine-Tristram
and 2-variable signatures of a 2-component link~$L$ are related via~$\sigma_L(\omega)=\sigma_L(\omega,\omega)-\ell$
for all~$\omega\in S^1\setminus\{1\}$. Therefore, we need to compute the limit of~$\sigma_L(\omega_1,\omega_2)$ with both variables tending to~$1$.

If~$\ell\neq 0$, then we know that~$\lim_{\omega\to 1}\sigma_L(\omega,\omega)=
\ell-\sgn(\ell)$ by Corollary~\ref{cor:lim-all-1} (see also Example~\ref{ex:LT-2}, Corollary~\ref{cor:LT}, and the first point of Remark~\ref{rem:LT-2}),
yielding the result. We can therefore assume~$\ell=0$, and use Theorem~\ref{thm:main}. In this case, it reads
\begin{equation}
\label{equ:4D-2}
\left|\lim_{\omega_1\to 1^\pm}\sigma_L(\omega_1,\omega_2)-\sigma_{L_2}(\omega_2)-s(\omega_2)\right|\le
\eta_{L_2}(\omega_2)+\varepsilon(\omega_2)-\rank A(L)\,,
\end{equation}
with~$s(\omega_2)$ and~$\varepsilon(\omega_2)$ determined by the slope~$(L_1/L_2)(\omega_2)$ as described in the statement.
Recall also that this slope is equal to
\begin{equation}
\label{eq:DFL}
        (L_1/L_2)(\omega_2)=-\frac{\frac{\partial\nabla_L}{\partial t_1}(1,\sqrt{\omega_2})}{2\nabla_{L_2}(\sqrt{\omega_2})}
\end{equation}
whenever this fraction is not~$\frac{0}{0}$.
Taking the limit~$\omega_2\to 1^{\pm}$ in~\eqref{equ:4D-2} yields
\[
\left|\lim_{\omega\to 1^\pm}\sigma_L(\omega,\omega)-\lim_{\omega\to 1^\pm}s(\omega)\right|\le
\lim_{\omega\to 1^\pm}\varepsilon(\omega)-\rank A(L)\,.
\]
As one easily sees, the limit of the slope can be computed using Equation~\eqref{eq:DFL} together with the Torres formula
for the Conway function: if~$L=L_1\cup L_2$ is a 2-component link
with vanishing linking number, then we have~$\nabla_L(t_1,t_2)=(t_1-t_1^{-1})(t_2-t_2^{-1})f(t_1,t_2)$ for some~$f\in\Z[t^{\pm 1}_1,t^{\pm 1}_2]$, and~$\lim_{\omega\to 1}(L_1/L_2)(\omega)=f(1,1)$ up to a positive multiple.
As a consequence, we have the equality
\[
\lim_{\omega\to 1}\sigma_L(\omega)=\lim_{\omega\to 1^{\pm}}\sigma_L(\omega,\omega)=\sgn(f(1,1))
\]
in all possible cases, except possibly if~$f(1,1)=0$ while~$\Delta_L\neq 0$. In this later case, the inequality reads~$|\lim_{\omega\to 1}\sigma_L(\omega)|\le 1$.
\end{proof}

\begin{remark}
\label{rem:Conway}
\begin{enumerate}
    \item A family of links of the second kind is given by links of the form of the Whitehead link (or any~$L(k)$ of Figure~\ref{fig:link-Lk} with~$k\neq 0$) connected summed with two arbitrary knots.
    \item The third and last case can also happen.
Indeed, it is known that the Torres conditions are sufficient for~$\ell=0$, see e.g.~\cite{Platt}. Hence, there is no additional condition, in particular on the possible values of~$f(1,1)$.
\item Given the fact that both~$\sigma_L$ and~$\nabla_L$ can be computed from generalized Seifert matrices, it is plausible that Corollary~\ref{cor:LT-2} can also be obtained using the methods of Section~\ref{sec:3D}.
\end{enumerate}
\end{remark}

\subsection{Comparison of the three and four-dimensional approaches}
\label{sub:comparison}

In this short final section, we compare the
4D-results of Section~\ref{sec:4D} with the 3D-results of Section~\ref{sec:3D} regarding
the limits of multivariable signatures, namely Theorem~\ref{thm:main} and Theorem~\ref{thm:limit-of-signature-inequality} together with their corollaries.

Throughout this section, we assume that~$L=L_1\cup\dots\cup L_\mu=:L_1\cup L'$
 is an ordered link.
 
\medskip

Let us first assume that the total linking number~$\vert\ell\vert\coloneqq\sum_{j=2}^\mu\vert\lk(L_1,L_j)\vert$ is equal to~$1$.
In such as case, the functions~$\rho_\ell$ and~$\tau_\ell$ of~\eqref{eq:tau-rho} are identically zero. Therefore, Theorem~\ref{thm:limit-of-signature-inequality} and Theorem~\ref{thm:main} yield precisely the same result, namely the inequality
\[
\left|\lim_{\omega_1\to 1^\pm}\sigma_L(\omega_1,\omega')-\sigma_{L'}(\omega')\right|\le
\eta_{L'}(\omega')-\rank A(L)
\]
for all~$\omega'\in\mathbb{T}^{\mu-1}_*$.
In particular, Corollary~\ref{cor:l=1-equality} should be understood as
special case of Corollary~\ref{cor:limit-of-signature-equality}.

\medskip

If the linking numbers satisfy~$|\ell|>1$, then Theorem~\ref{thm:main} is in general rather weak
for determining the limits of signatures. Indeed, and as already explained in Section~\ref{sub:lim-mult}, the inequality
\[
\left|\lim_{\omega_1\to 1^\pm}\sigma_L(\omega)-\sigma_{L'}(\omega')\right|\le
\eta_{L'}(\omega')-1+\sum_{j=2}^\mu|\lk(L_1,L_j)|-\rank  A(L)
\]
is plagued by the fact that it does not distinguish between
the limits~$\omega_1\to 1^+$ and~$\omega_1\to 1^-$. In that case,
Theorem~\ref{thm:limit-of-signature-inequality} is obviously stronger, as it determines the limits of signatures for generic~$\omega'$
(recall Corollary~\ref{cor:limit-of-signature-equality}).

\medskip

On the other hand, Theorem~\ref{thm:main} outcompetes its three-dimensional contender in the algebraically split case~$\lk(L_1,L_2)=\dots=\lk(L_1,L_\mu)=0$.
Indeed, Theorem~\ref{thm:limit-of-signature-inequality} reads
\[
\left| \lim_{\omega_1 \to 1^{\pm}} \sigma_L(\omega_1,\omega') - \sigma_{L'}(\omega')\right| \leq \eta_{L'}(\omega') + 1 -\rank A(L)
\]
for all~$\omega'\in\mathbb{T}^{\mu-1}_*$.
A much stronger statement is obtained via Theorem~\ref{thm:main}, as it determines the
limits of signatures for generic~$\omega'$
(recall Corollary~\ref{cor:alg-split-equality}).

\medskip

Therefore, and in our opinion quite remarkably,
the three- and four-dimensional approaches turn out
to give complementary results.

\appendix{}

\section{Plumbed three-manifolds}
\label{ap:plumbed}

The aim of this appendix is \new{state and prove several technical lemmas on plumbed manifolds that play a crucial role in establishing the invariance of
the extended signature and nullity (Theorem~\ref{thm:extension}).}

\medskip

\new{Let~$\Gamma$ be an arbitrary plumbing graph
with vertices decorated by surfaces~$F_1,\dots,F_\mu$
(recall Section~\ref{sec:plumbed}), and let us write~$F_i=\bigsqcup_j F_{i,j}$ for the connected components of~$F_i$. Recall that each (oriented) edge~$e$ of~$\Gamma$, say with source~$s(e)=F_i$, comes with the specification of a connected component of~$s(e)$, say~$F_{i,j}$; we shall denote this fact by~$s(e)=(i,j)$, or equivalently by~$t(\overline{e})=(i,j)$.}

\new{Let us fix~\(\omega=(\omega_1,\dots,\omega_\mu)\in\mathbb{T}^\mu\).
Without loss of generality, we assume that there exists~$0\le k\le\mu$ such that~$\omega_i=1$ for~$1\le i\le k$ and~$\omega_i\neq 1$ for~$k<i\le\mu$. Let us denote by~\(\varphi_{P,\omega}\colon H_{1}(P(\Gamma))\to\mathbb{C}^*\) the composition of a  meridional homomorphism~$\varphi_P\colon H_1(P(\Gamma))\to\Z^\mu$ with the homomorphism~$\chi_\omega\colon\mathbb{Z}^\mu\to\mathbb{C}^*$ determined by~\(t_{i}\mapsto \omega_{i}\).
This induces twisted coefficients that we denote by~$\C^\omega$.
}

\new{The aim of the first lemma is to describe
in full generality the kernel of the inclusion induced map~$H_1(\partial P(\Gamma);\mathbb{C}^{\omega})\to H_1(P(\Gamma);\mathbb{C}^{\omega})$. This extends~\cite[Lemma~4.7]{CNT}, which corresponds to the case where each~$F_i$ is connected and~$\omega=(1,\dots,1)$. To do so,}
consider the set~\new{$\mathcal{K}\coloneqq \mathcal{K}_1\sqcup\dots\sqcup \mathcal{K}_k$, where}
\[
\mathcal{K}_i\coloneqq\{K\subset\partial F_i\,|\,\varphi_{P,\omega}([K])=1\}
\]
is the set of boundary components of~$F_i$ mapped to~$1$ by~$\varphi_{P,\omega}$.
The decomposition~$F_i=\bigsqcup_j F_{i,j}$ yields a partition~$\mathcal{K}_i=\bigsqcup_j \mathcal{K}_{i,j}$ of these boundary components.
Finally, for any~$K\in\mathcal{K}_i$, we denote by~$m_K\in H_1(\partial P(\Gamma);\mathbb{C}^{\omega})$ the class of the corresponding meridian.

\begin{lemma}
\label{lem:P(G)}
The kernel of the inclusion induced map~$H_1(\partial P(\Gamma);\mathbb{C}^{\omega})\to H_1(P(\Gamma);\mathbb{C}^{\omega})$
is~$|\mathcal{K}|$-dimensional, freely generated by the union of:
\begin{enumerate}
    \item[$\bullet$] for each\new{~$1\le i\le k$ and}~$j$ such that~$\varphi_{P,\omega}(H_1(F_{i,j}^\circ))\neq\{1\}$, the set~$\{m_K\,|\,K\in\mathcal{K}_{i,j}\}$;
    \item[$\bullet$] for each\new{~$1\le i\le k$ and}~$j$ such that~$\varphi_{P,\omega}(H_1(F_{i,j}^\circ))=\{1\}$, the set
    \[
    \{m_K-m_{K^0_{i,j}}\,|\,K\in\mathcal{K}_{i,j}\setminus\{K^0_{i,j}\}\}\,,
    \]
    where~$K^0_{i,j}$ is any fixed element of~$\mathcal{K}_{i,j}$;
    \item[$\bullet$] for each\new{~$1\le i\le k$ and}~$j$ such that~$\varphi_{P,\omega}(H_1(F_{i,j}^\circ))=\{1\}$, the element
    \[
        [\partial F_{i,j}]\new{-\sum_{s(e)=(i,j)}\varepsilon(e)\,m_{K^0_{t(e)}}}\,,
        \]
    \new{where the sum is over all edges~$e$ of~$\Gamma$ with~$s(e)=(i,j)$.}
\end{enumerate} 
    \end{lemma}
    
\begin{proof}
By construction, the boundary of~$P(\Gamma)$ consists of disjoint tori indexed
by the boundary components~$K\subset \partial F$. These tori are~$\mathbb{C}^\omega$-acyclic, except
possibly the ones indexed by~$K\subset\partial F_i$ \new{with~$1\le i\le k$.}
For such a torus, its meridian~$m_K$ gets mapped to~$\omega_i=1$ by assumption,
and its longitude~$\ell_K$ to~$\varphi_{P,\omega}([K])$. Therefore, the space~$H_1(\partial P(\Gamma);\mathbb{C}^{\omega})$ is freely generated by~$\{m_K,\ell_K\}_{K\in\mathcal{K}}$.
By the standard Poincar\'e-Lefschetz duality argument, the kernel of the inclusion induced map~$\iota\colon H_1(\partial P(\Gamma);\mathbb{C}^{\omega})\to H_1(P(\Gamma);\mathbb{C}^{\omega})$
is~$|\mathcal{K}|$-dimensional, and it only remains to \new{check that the~$|\mathcal{K}|$
linearly independent elements in the statement belong to this kernel.}

By a Mayer-Vietoris argument applied to~$P(\Gamma)=\bigcup_i F_i^\circ\times S^1$ (see the proof~\cite[Lemma~4.7]{CNT} for the untwisted case), we have an exact sequence
\[
\bigoplus_{e\in E}H_1(T_e;\mathbb{C}^\omega)\stackrel{\iota_t-\iota_s}{\longrightarrow}\bigoplus_{i=1}^{\mu}H_1(F_i^\circ\times S^1;\mathbb{C}^\omega)\longrightarrow H_1(P(\Gamma);\mathbb{C}^{\omega})\longrightarrow\bigoplus_{e\in E}H_0(T_e;\mathbb{C}^\omega)\,,
\]
where~$T_e\subset P(\Gamma)$ is the torus corresponding to the edge~$e\in E$ \new{and~$\iota_t,\iota_s$ are induced by the inclusion
of~$T_e$ into~$F_{t(e)}\times S^1,F_{s(e)}\times S^1$, respectively.
As observed in the proof of~\cite[Lemma~4.7]{CNT}, the inclusion of~$\partial F_i\subset\partial P(\Gamma)$ into~$P(\Gamma)$ factors through the space~$\bigsqcup_{i} F_{i}^\circ \times S^1=\bigsqcup_{i,j} F_{i,j}^\circ \times S^1$, yielding the commutative diagram}
\[
\begin{tikzcd}
\displaystyle\bigoplus_{e\in E}H_1(T_e;\mathbb{C}^\omega)\arrow[r,"\iota_t-\iota_s"]&\displaystyle\bigoplus_{i,j}H_1(F_{i,j}^\circ\times S^1;\mathbb{C}^\omega)\arrow[r] &H_1(P(\Gamma);\mathbb{C}^{\omega})\\
& & H_1(\partial P(\Gamma);\mathbb{C}^{\omega})\arrow[ul,"f"]\arrow[u,"\iota"]\,.
\end{tikzcd}
\]
\new{By exactness, we have~$\ker(\iota)=\{x\in H_1(\partial P(\Gamma);\mathbb{C}^{\omega})\mid f(x)\in \operatorname{Im}(\iota_t-\iota_s)\}$,
and we are left with the proof that our~$|\mathcal{K}|$
elements belong to this subspace.}

If~$\varphi_{P,\omega}(H_1(F_{i,j}^\circ))\neq\{1\}$, we can use the K\"unneth theorem to get
\[
H_1(F_{i,j}^\circ\times S^1;\mathbb{C}^\omega)\simeq H_1(F_{i,j}^\circ;\mathbb{C}^\omega)\oplus\left(H_0(F_{i,j}^\circ;\mathbb{C}^\omega)\otimes\mathbb{C}[\ast\times S^1]\right)=H_1(F_{i,j}^\circ;\mathbb{C}^\omega)\,,
\]
since~$H_0(F_{i,j}^\circ;\mathbb{C}^\omega)=0$ in this case (see e.g.~\cite[Lemma~2.6]{CNT}).
\new{Since each~$m_K$ with~$K\subset \partial F_{i,j}$ satisfies~$f(m_K)=[\ast\times S^1]=0$,
the set~$\{m_K\,|\,K\in\mathcal{K}_{i,j}\}$
lies in~$\ker(f)\subset\ker(\iota)$.}

\new{Let us now assume that~$\varphi_{P,\omega}(H_1(F_{i,j}^\circ))=\{1\}$, and argue as in the proof of~\cite[Lemma~4.7]{CNT} once again. Since~$F_{i,j}$ is connected, all~$m_K$ with~$K\subset\partial F_{i,j}$ are mapped by~$f$ to the same element in $H_1(F_{i,j}^\circ\times S^1;\mathbb{C}^\omega)=H_1(F_{i,j}^\circ\times S^1;\mathbb{C})$. Therefore,
the set~$\{m_K-m_{K^0_{i,j}}\,|\,K\in\mathcal{K}_{i,j}\}$
belongs to~$\ker(f)\subset\ker(\iota)$. As for the last element, the isomorphism~$H_1(F_{i,j}^\circ\times S^1;\mathbb{C})\simeq H_1(F_{i,j}^\circ;\mathbb{C})\oplus\C$ ensures that its image by~$f$ satisfies
\[
f\Big([\partial F_{i,j}]-\sum_{s(e)=(i,j)}\varepsilon(e)\,m_{K^0_{{t(e)}}}\Big)=\sum_{s(e)=(i,j)} \big([\partial D_e]-\varepsilon(e)\,m_{K^0_{{t(e)}}}\big)\,,
\]
while the construction (in particular~\eqref{eq:plumbing}) yields
\[
[\partial D_e]-\varepsilon(e)\,m_{K^0_{{t(e)}}}=\iota_s([\partial D_e])-\iota_t([\partial D_e])=(\iota_t-\iota_s)([-\partial D_e])\,.
\]
In particular, the image under~$f$ of this last element lies in the image of~$\iota_t-\iota_s$. This concludes the proof.}
\end{proof}

\new{Let us now fix an arbitrary~\(\mu\)-colored link~\(L\).
Let~$P(L)$ be the plumbed~$3$-manifold associated with the plumbing graph~\(\Gamma_{L}\), and let~$M_L = X_{L} \cup_{\partial} -P(L)$ be the generalized Seifert surgery on~$L$ (recall Section~\ref{sec:plumbed}).
By Lemma~\ref{lem:ML}, we have meridional homomorphisms \(\varphi_P\colon H_{1}(P(L))\rightarrow\mathbb{Z}^\mu\) and $\varphi\colon H_1(M_L)\to\Z^\mu$, which allow to
define homomorphisms
\[\varphi_{P,\omega} \colon H_{1}(P(L)) \to \C^{\ast}, \quad \varphi_{\omega} \colon H_{1}(M_{L}) \to \C^{\ast}\]
for any~\(\omega \in \mathbb{T}^{\mu}\),
and twisted coefficients systems that we denote by~$\C^\omega$.
As noted in Remark~\ref{rems:M_L}, these meridional homomorphisms are generally not unique, so the notation~$\C^\omega$ might seem inappropriate. This abuse of notation is justified by the following lemma and its proof.}

\begin{lemma}
\label{lem:kernel-dim}
    \new{For any~$\omega\in\mathbb{T}^\mu$, the dimension of~$H_1(M_L;\mathbb{C}^{\omega})$ does not depend on the choice of the meridional homomorphism~$\varphi$.}
\end{lemma}
\begin{proof}
\new{The proof is split into three steps.
First, we prove that \(H_{1}(P(L);\C^\omega)\) is independent of the choice of the meridional homomorphism~\(\varphi_{P}\).
Next, we show that the same holds for the inclusion induced map~$H_{1}(\partial P(L);\C^\omega) \to H_{1}(P(L);\C^{\omega})$.
In the last step, we combine the information from the first two steps to prove the lemma.}

\new{Let \(\varphi_{P} \colon H_{1}(P(L)) \to \Z^{\mu}\) be a meridional homomorphism.
Consider the Mayer-Vietoris sequence associated to the presentation of~\(P(L)\) as~\(P(\Gamma_{L})\):
\[
\begin{tikzcd}[ampersand replacement=\&]
    \bigoplus_{e \in E} H_{1}(T_{e};\C^{\omega}) \arrow{d} \& \& \bigoplus_{K\subset L} H_{0}(D^{\circ}_{K} \times S^1;\C^{\omega})\,. \\
    \bigoplus_{K\subset L} H_{1}(D^{\circ}_{K} \times S^1;\C^{\omega}) \arrow{r} \& H_{1}(P(L);\C^{\omega}) \arrow{r} \& \bigoplus_{e \in E} H_{0}(T_{e};\C^{\omega}) \arrow{u}
\end{tikzcd}
\]
Observe that the terms in the left and right column do not depend on the particular choice of~\(\varphi_{P}\), as long as it is meridional.
By the exactness of the above sequence it follows that \(H_{1}(P(L);\C^{\omega})\) is independent of the choice of~\(\varphi_{P}\) (as long as it is meridional).}

\new{
We now show that the kernel of the inclusion induced map~$H_1(\partial P(L);\mathbb{C}^{\omega})\to H_1(P(L);\mathbb{C}^{\omega})$
does not depend on the choice of~$\varphi_P$.
Indeed, by definition of a meridional homomorphism (recall Remark~\ref{rems:M_L}.1), the value of any such homomorphism~$\varphi_P$ is determined on each class of the form~$[\partial D_e]$. Since the surfaces~$F_{i,j}$ attached to the vertices of~$\Gamma_L$ are discs~$D_K$, the value of~$\varphi_P$ on each~$[K]=[\partial D_K]$ and on each~$H_1(F^\circ_{i,j})=H_1(D^\circ_K)$ is determined as well.
Therefore, by Lemma~\ref{lem:P(G)}, the kernel of the inclusion induced map~$H_1(\partial P(L);\mathbb{C}^{\omega})\to H_1(P(L);\mathbb{C}^{\omega})$ is fully determined by~$\omega$.
}

\new{Finally, consider the Mayer-Vietoris sequence associated to~\(M_{L}=X_{L} \cup_{\partial} -P(L)\):
\[
    \begin{tikzcd}[ampersand replacement=\&]
        H_{1}(\partial X_{L};\C^{\omega}) \arrow{d} \& \& H_{0}(X_{L};\C^\omega) \oplus H_{0}(P(L);\C^\omega)\,. \\
        H_{1}(X_{L};\C^{\omega}) \oplus H_{1}(P(L);\C^{\omega}) \arrow{r} \& H_{1}(M_{L};\C^{\omega}) \arrow{r} \& H_{0}(\partial X_{L};\C^{\omega}) \arrow{u}
    \end{tikzcd}
\]
By the first two steps of the proof, the maps and entries in the left and right columns are independent of a particular choice of~\(\varphi\) (as long as it is meridional).
It follows that \(\dim_{\C} H_{1}(M_{L};\C^{\omega})\) does not depend on~\(\varphi\).} 
\end{proof}

\new{
Our proof of the invariance of the extended signature and nullity (Theorem~\ref{thm:extension}) makes use of the~$\rho$-invariant defined by Atiyah-Patodi-Singer in~\cite{atiyahSpectralAsymmetryRiemannian1975a},
whose main properties we now recall.}

\new{Given a closed oriented~\(3\)-manifold~$M$ endowed with a homomorphism \(\alpha \colon H_{1}(M) \to S^{1}\), we can assign, by analyzing the spectrum of the associated twisted odd signature operator, the \(\rho\)-invariant~\(\rho(M,\alpha) \in \R\).
From our point of view, the most important property of this invariant is its relation to the signature.
Namely, if~\(W\) is a compact and oriented \(4\)-manifold such that \(\partial W = M\) and the homomorphism~\(\alpha\) extends to a map \(\beta \colon H_{1}(W) \to S^{1}\), then
\begin{equation}\label{eq:rho-invariant}
    \rho(M,\alpha) = \sigma(W) - \sigma_{\beta}(W),
\end{equation}
see~\cite[Theorem~2.4]{atiyahSpectralAsymmetryRiemannian1975a}.
Another useful property is that the \(\rho\)-invariant is additive under disjoint sums, see~\cite[Theorem~1.2.1]{Neumann}.
If the~$3$-manifold~$M$ is endowed with a homomorphism~\(\varphi \colon H_{1}(M) \to \Z^{\mu}\), we will use the shortened notation
\[\rho_{\omega}(M) := \rho(M, \varphi_{\omega})\]
for any~$\omega\in\mathbb{T}^\mu$, where~$\varphi_\omega\colon H_1(M)\to S^1$ denotes the composition of~$\varphi$ with the homomorphism~$\chi_\omega\colon\mathbb{Z}^\mu\to S^1$ determined by~\(t_{i}\mapsto \omega_{i}\).}

\new{Recall that a plumbing graph is said to be}
{\em balanced\/} if for any pair of vertices~$v,w\in V$, we have~$\sum_{e=(v,w)}\varepsilon(e)=0$, where the sum is over the set of edges~$e\in E$ with~$s(e)=v$ and~$t(e)=w$.

\new{
\begin{lemma}
\label{lem:rho=0}
Let~$G$ be a balanced plumbing graph with vertices given by closed oriented surfaces. Then,
for any meridional homomorphism~\(\varphi\) on~$P(G)$, we have~$\rho_\omega(P(G))=0$
for all~$\omega\in\mathbb{T}^\mu$.
\end{lemma}}
\begin{proof}
    \new{Fix a balanced plumbing graph~$G$ and a meridional homomorphism~\(\varphi\colon H_1(P(G))\to\Z^\mu\).
    Recall from~\cite[Lemma~4.9]{CNT} that there exists a compact and oriented \(4\)-manifold~\(Z\) equipped with a map \(\psi \colon H_{1}(Z) \to \Z^{\mu}\), such that:
    \begin{enumerate}
        \item \(\partial Z = -P(G) \sqcup P\), where \(P = P(G')\) with~\(G'\) an appropriate plumbing graph with no edges; in other words,~\(P\) is a disjoint union of manifolds of the form \(\Sigma \times S^{1}\), with \(\Sigma\) closed, oriented and connected surfaces;
        \item the restriction of~\(\psi\) to~\(P(G)\) is~\(\varphi\), and the restriction of~\(\psi\) to~\(P\) is meridional;
        \item \(\sigma(Z) = \sigma_{\omega}(Z) = 0\) for all \(\omega \in (S^{1} \setminus \{1\})^{\mu}\), where~$\sigma_\omega(Z)$ denotes the signature with twisted coefficients induced by~$\psi$ and~$\omega$.
    \end{enumerate}
    We will prove that the conclusion of the quoted lemma is valid in more generality, i.e. that \(\sigma_{\omega}(Z) = 0\) for all \(\omega \in \mathbb{T}^{\mu}\).
    Once this fact is established, formula~\eqref{eq:rho-invariant} yields
    \[\rho_{\omega}(P) - \rho_{\omega}(P(G)) =\rho_\omega(\partial Z) = \sigma(Z) - \sigma_{\omega}(Z) = 0\]
    for all~$\omega\in\mathbb{T}^\mu$.
    From~\cite[Corollary~4.3]{CNT}, we have \(\rho_{\omega}(P) = 0\), hence \(\rho_{\omega}(P(G)) = 0\) for all~\(\omega \in \mathbb{T}^{\mu}\). (Note that~\cite[Corollary~4.3]{CNT} is stated only for~$\omega\in(S^1\setminus\{1\})^\mu$, but the proof applies to arbitrary~$\omega\in\mathbb{T}^\mu$.)}

    \new{For that matter, let us review the construction of~\(Z\) given in~\cite[Lemma~4.9]{CNT}, using the notation~$I:=[0,1]$.
    We start with \(Z_{0} = P(G) \times I\) and attach to \(Z_{0}\) a toral handle \(TH_{1} = I \times I \times S^1 \times S^1\) along its attaching region \(ATH_1 = \partial I \times I \times S^1 \times S^1\).
    The attaching of~\(TH_{1}\) is determined by a pair of edges~\(e,e'\) of~\(G\) with common initial and terminal vertices and different signs.
    The resulting \(4\)-manifold \(Z_{1} = Z_{0} \cup TH_{1}\) has boundary
    \[\partial Z_{1} = -P(G) \sqcup P(G_{1}),\]
    where \(G_{1}\) is the plumbing graph obtained from~\(G\) be removing the edges~\(e,e'\) and increasing the genus of the surfaces associated to the initial and terminal vertices of~\(e\) and~\(e'\).
    Since \(G\) is balanced, we can repeat this construction finitely many times to obtain
    \[Z_{n} = Z_{0} \cup TH_{1} \cup TH_{2} \cup \ldots \cup TH_{n}\]
    with
    \[\partial Z_{n} = -P(G) \sqcup P(G_{n})\]
    and~\(G_{n}\) a graph with no edge.
    We can then take \(Z = Z_{n}\) and \(G'=G_{n}\).
Additionally, it is shown in the proof of~\cite[Lemma~4.9]{CNT} that for each~\(0 \leq k \leq n\), the manifold~\(Z_{k}\) is equipped with a homomorphism \(\psi_{k} \colon H_{1}(Z_{k}) \to \Z^{\mu}\) whose restriction to~\(P(G)\) is~\(\varphi\) and whose restriction~\(\varphi_{k}\) to~\(P(G_{k})\) is meridional.
Finally, the authors argue inductively that for all \(0 \leq k \leq n\),
    \[\sigma_{\omega}(Z_{k}) = \sigma(Z_{k}) = 0\]
    for~\(\omega \in (S^{1} \setminus \{1\})^{\mu}\), where~$\sigma_\omega(Z_k)$ denotes the signature with twisted coefficients induced by~$\psi_k$ and~$\omega$.
    In order to complete the proof, we need to check that the equality
    \[\sigma_\omega(Z_{k}) = 0\]
    holds for any \(\omega \in \mathbb{T}^{\mu}\) and any \(0 \leq k \leq n\).}
    
    \new{Let us start with the \(k=0\) case.
    Since \(Z_{0} = P(G) \times I\), we can take \(\psi = \varphi \circ (\iota_{\ast})^{-1}\), where
    \[\iota \colon P(G) \times \{0\} \hookrightarrow Z_{0}\]
    is the inclusion of one of the components of the boundary.
    Since~\(\iota\) induces an isomorphism on twisted homology, the twisted intersection form of~\(Z_{0}\) is trivial, and~$\sigma_\omega(Z_{0}) = 0$ for all~\(\omega \in \mathbb{T}^{\mu}\).}
    
    \new{For the inductive step, we apply the twisted version of the Novikov-Wall theorem, as recalled in Section~\ref{sub:NW}, to the union
    \[Z_{k+1} = Z_{k} \cup TH_{k+1}.\]
    The existence of~\(\psi_{k+1}\) is proven in~\cite[Lemma~4.9]{CNT}.
    Using the notation from Section~\ref{sub:NW}, we can write
    \[X_{0} = ATH_{k+1}, \quad X_{+} = \overline{\partial TH_{k+1} \setminus ATH_{k+1}}, \quad X_{-} = -P(G)\sqcup (P(G_{k}) \setminus f(ATH_{k+1})),\]
    where \(f \colon ATH_{k+1} \to P(G_{k})\) is the gluing diffeomorphism, and
    \[\Sigma = X_{+} \cap X_{0} = \partial ATH_{k+1} = \partial I\times\partial I \times S^1 \times S^{1}.\]
    From Theorem~\ref{thm:NW} we obtain
    \[\sigma_\omega(Z_{k+1}) = \sigma_\omega(Z_{k}) + \sigma_\omega(TH_{k+1}) + \mathit{Maslov}(\mathcal{L}_{-}, \mathcal{L}_{0}, \mathcal{L}_{+}),\]
    where~$\sigma_\omega(TH_{k+1})$ stands for the twisted signature induced by the restriction~\(\theta\colon H_1(TH_{k+1})\to\Z^\mu\) of~\(\psi_{k+1}\) and by~$\omega$. 
    By inductive assumption, we have~\(\sigma_\omega(Z_{k}) = 0\) for all~$\omega\in\mathbb{T}^\mu$, and hence
    \[
    \sigma_\omega(Z_{k+1}) = \sigma_\omega(TH_{k+1}) + \mathit{Maslov}(\mathcal{L}_{-}, \mathcal{L}_{0}, \mathcal{L}_{+})
   \]
    for all~$\omega\in\mathbb{T}^\mu$.
    Observe that if~\(\theta_\omega:=\chi_\omega\circ\theta\) is nontrivial, then \(H_{\ast}(TH_{k+1};\C^\omega) = H_{\ast}(\Sigma;\C^\omega) = 0\), implying 
    $\sigma_\omega(Z_{k+1}) = 0$ as claimed.
    Therefore, to complete the proof, we only need to deal with the case of trivial~\(\theta_\omega\). 
    We then have
    \[H_{1}(\Sigma; \C^\omega) = H_{1}(\Sigma;\C) = H_{1}(\partial I\times \partial I \times S^{1} \times S^{1};\C) \cong H_{1}(S^{1} \times S^{1};\C)^{4}.\]
    Furthermore,~\(H_{2}(TH_{k+1};\C^\omega) = H_{2}(TH_{k+1};\C)\) is generated by the image of~\(H_{2}(\partial TH_{k+1};\C)\), yielding~\(\sigma_\omega(TH_{k+1}) = 0\)
    and
    \[
        \sigma_{\omega}(Z_{k+1}) = \mathit{Maslov}(\mathcal{L}_{-}, \mathcal{L}_{0}, \mathcal{L}_{+})\,,
    \]
    so we are left with the proof that this Maslov index vanishes.}

    \new{
    In order to compute it, we will write down explicit generators of \(\mathcal{L}_{-}\), \(\mathcal{L}_{0}\) and \(\mathcal{L}_{+}\) using notation from Section~\ref{sec:plumbed}.
    Let~\(e\) and~\(e'\) denote the edges of~\(G_{k}\) involved in the construction of~$Z_{k+1}$, which satisfy \(s(e) = s(e')\) and \(t(e) = t(e')\). Recall that there are four disks \(D_{e}, D_{e'} \subset F_{s(e)}\) and \(D_{\overline{e}}, D_{\overline{e}'} \subset F_{t(e)}\) which are removed in the construction of~$P(G_k)$. Moreover, the tori \((-\partial D_{e}) \times S^1\) and \((-\partial D_{\overline{e}}) \times S^{1}\) are identified in~\(P(G_{k})\) via~\eqref{eq:plumbing}, the image of these two identified tori being denoted by~\(T_{e}\), and similary for~$e'$.
    The gluing map \(f \colon ATH_{k+1} \to P(G_{k})\) appropriately identifies \(\{0\} \times I \times S^{1} \times S^{1} \subset ATH_{k+1}\) with a tubular neighborhood~\(\nu(T_{e})\) of~\(T_{e}\), and \(\{1\} \times I \times S^{1} \times S^{1} \subset ATH_{k+1}\) with a tubular neighborhood~\(\nu(T_{e'})\) of~\(T_{e'}\).
    Using this notation, we can write
    \[\Sigma = \underbrace{(-\partial D_{e}) \times S^{1}_{e} \;\sqcup\; (-\partial D_{\overline{e}}) \times S^{1}_{\overline{e}}}_{\partial \nu(T_{e})} \;\sqcup\; \underbrace{(-\partial D_{e'}) \times S^{1}_{e'} \;\sqcup\; (-\partial D_{\overline{e}'}) \times S^{1}_{\overline{e}'}}_{\partial \nu(T_{e'})}\,.\]
    Assuming without loss of generality that~$\varepsilon(e)=1$ and~$\varepsilon(e')=-1$, we get
    \begin{align*}
        \mathcal{L}_{+} &= \operatorname{span} \left\{ [-\partial D_{e}] + [-\partial D_{e'}], [S^{1}_{e}] - [S^{1}_{e'}], [-\partial D_{\overline{e}}] + [-\partial D_{\overline{e}'}], [S^{1}_{\overline{e}}] - [S^{1}_{\overline{e}'}] \right\}, \\  
        \mathcal{L}_{0} &= \operatorname{span} \left\{ [-\partial D_{e}] + [S^{1}_{\overline{e}}], [-\partial D_{\overline{e}}] + [S^{1}_{e}], [-\partial D_{e'}] - [S^{1}_{\overline{e}'}], [-\partial D_{\overline{e}'}] - [S^{1}_{e'}] \right\}\,. 
    \end{align*}
    In order to obtain the description of~\(\mathcal{L}_{-}\) and complete the proof, we need to consider three cases. Firstly, if the restriction of~\(\psi_{k,\omega}:=\chi_\omega\circ\psi_k\) to \(H_{1}(F^{\circ}_{s(e)})\) and \(H_{1}(F^{\circ}_{t(e)})\) is trivial, then the Maslov index vanishes as verified in the proof of~\cite[Lemma~4.9]{CNT}. Secondly, if the restriction of~\(\psi_{k,\omega}\) is trivial on \(H_{1}(F_{s(e)}^{\circ})\) and nontrivial on~\(H_{1}(F^{\circ}_{t(e)})\) (and similarly the other way around), then Lemma~\ref{lem:P(G)} implies that
    \[\mathcal{L}_{-} = \operatorname{span} \left\{ [S^{1}_{e}]-[S^{1}_{e'}], [-\partial D_{e}] + [-\partial D_{e'}], [S^{1}_{\overline{e}}], [S^{1}_{\overline{e}'}]\right\}.\]
    Using elementary but tedious calculations, one can check that
    \begin{equation}
        \label{eq:lag}
    (\mathcal{L}_{-} + \mathcal{L}_{0}) \cap \mathcal{L}_{+} = (\mathcal{L}_{-} \cap \mathcal{L}_{+}) + (\mathcal{L}_{0} \cap \mathcal{L}_{+})\,.
    \end{equation}
    By Remark~\ref{rem:NW}.4, this implies that~\(\mathit{Maslov}(\mathcal{L}_{-}, \mathcal{L}_{0}, \mathcal{L}_{+}) = 0\) as claimed.
    Lastly, assume that the restriction of~\(\psi_{k,\omega}\) is nontrivial both on \(H_{1}(F_{s(e)}^{\circ})\) and on~\(H_{1}(F^{\circ}_{t(e)})\).
    Lemma~\ref{lem:P(G)} then implies that
    \[\mathcal{L}_{-} = \operatorname{span} \left\{ [S^{1}_{e}], [S^{1}_{e'}], [S^{1}_{\overline{e}}], [S^{1}_{\overline{e}'}]\right\}.\]
    Once more, elementary but tedious calculations show that the equality~\eqref{eq:lag} holds in this case as well,    
    so the Maslov index vanishes once again. This completes the proof.}
\end{proof}

\new{\begin{corollary}
  \label{cor:surjective-ap}
Let~$G$ be a balanced plumbing graph with vertices given by closed oriented surfaces, and let~$\varphi_P\colon H_1(P(G))\to\Z^\mu$ be a meridional homomorphism such
that~$(P(G),\varphi_P)$ bounds over~$\Z^\mu$.
      Then, it bounds a compact connected oriented~\(\Z^{\mu}\)-manifold~\((Y,f)\) such that~\(\pi_{1}(Y) = \Z^{\mu}\),~\(f\) is an isomorphism and~$\sign_\omega(Y)=0$ for
      all~\(\omega \in \mathbb{T}^{\mu}\).
\end{corollary}}
\begin{proof}
\new{By hypothesis, the~$\Z^\mu$-manifold~$(P(G),\varphi_P)$ bounds a~$\Z^\mu$-manifold~$(Z,\psi_Z)$. By Lemma~\ref{lem:rho=0}
and~\eqref{eq:rho-invariant}, we have
\[
0=\rho_\omega(P(G))=\sigma(Z)-\sigma_\omega(Z)
\]
for all~$\omega\in\mathbb{T}^\mu$. Via connected sums with copies of~$\mathbb{CP}^2$ or~$\overline{\mathbb{CP}^2}$, which leave the first homology group unaffected, it can be assumed that~$\sigma(Z)$ vanishes, hence all twisted signatures as well.}

\new{It remains to transform this 4-manifold in order to have its fundamental group isomorphic to~$\Z^\mu$.} Note that the homomorphism~$\psi_Z\colon H_1(Z)\to\Z^\mu$ is surjective: indeed, the homomorphism~$\varphi\colon\pi(P(G))\to\mathbb{Z}^\mu$ being meridional, it is surjective; since it factors through~$\psi_Z$,
    this latter homomorphism is surjective as well.
    Observe that there exists a finite collection of group elements \(g_1,\ldots,g_l \in \ker \psi_{Z}\) such that the smallest normal subgroup of \(\pi_{1}(Z)\) containing these elements is equal to \(\ker \psi_{Z}\).
    In other words, all conjugates of \(g_1,\ldots,g_l\) in \(\pi_{1}(Z)\) generate \(\ker \psi_{Z}\).
    Indeed, let \(p \colon \widetilde{Z} \to Z\) be the \(\Z^{\mu}\)-covering determined by \(\psi_{Z}\).
    Observe that \((\ker \psi_{Z})^{ab} = H_1(\widetilde{Z})\).
    Since \(Z\) is compact, it follows that~\(H_{1}(\widetilde{Z})\) is a finitely-generated \(\Z[\Z^{\mu}]\)-module.
    Let \(x_1,\ldots, x_{l}\) denote the generators of \(H_{1}(\widetilde{Z})\) as a \(\Z[\Z^{\mu}]\)-module.
    We can choose, \(g_1,\ldots,g_{l}\) to be preimages of \(x_1,\ldots,x_{l}\) under the quotient map
    \[\ker \psi_{Z} \to (\ker \psi_{Z})^{ab} = H_{1}(\widetilde{Z})\,.\]
    The manifold \(Y\) will be constructed by performing surgery on loops representing \(g_{1},\ldots,g_{l}\).
    To be more precise, suppose that the map \(f_{1} \colon S^{1} \to Z\) represents \(g_{1}\).
    Without loss of generality, we can assume that \(f_{1}\) is a smooth embedding.
    Let~\(N_{1}\) denote a closed tubular neighborhood of~\(f_{1}(S^{1})\), together with the identification \(\alpha_{1} \colon N_{1} \xrightarrow{\cong} S^{1} \times D^{3}\), where \(\alpha_{1}\) maps \(f_{1}(S^{1})\) to \(S^{1} \times \{0\}\).
    Consider the manifold
    \[Y_{1} = \overline{Z \setminus N_{1}} \cup_{\partial N_{1}} (D^{2} \times S^{2})\,,\]
    where we use the map \(\alpha_{1}\) to identify the boundary of \(N_{1}\) with the boundary of \(D^{2} \times S^{2}\).
    By the Seifert-van Kampen theorem, \(\pi_{1}(Y_{1})\) is isomorphic to the quotient of \(\pi_{1}(Z)\) by the normal subgroup generated by \(g_{1}\).
  Since \(g_{1}\) is in the kernel of \(\psi_{Z}\), one easily shows that \(Y_{1}\) is \(\Z^{\mu}\)-bordant to \(Z\).
    In particular, Novikov additivity implies that \(\sign(Y_{1})\)
    coincides with \(\sign(Z)\), which vanishes by hypothesis.
    Similarly, for \(\omega \in \mathbb{T}^{\mu} \setminus \{(1,1,\ldots,1)\}\), the fact that \(Y_{1}\) and \(Z\) are \(\Z^{\mu}\)-bordant implies that
    \[0 = \sign_{\omega}(Z \cup_{\partial} \overline{Y_{1}}) = \sign_{\omega}(Z) - \sign_{\omega}(Y_{1}) = -\sign_{\omega}(Y_{1})\,,\]
    where the first equality follows from~\cite[Theorem D.B]{Viro}, the second inequality from Novikov additivity, and the last equality from our assumptions.
  We can iterate the above procedure to obtain manifolds \(Y_{1}, Y_{2}, \ldots, Y_{l} = Y\) with the desired properties.
\end{proof}

\new{
\begin{corollary}
    \label{cor:rho-ind}
    For any~$\mu$-colored link~$L$ and any~$\omega\in\mathbb{T}^\mu$, the integer~$\rho_\omega(M_{L})=\rho(M_L,\varphi_\omega)$ does not depend on the choice of the meridional homomorphism~$\varphi \colon H_1(M_L)\to\Z^\mu$.
\end{corollary}}
\begin{proof}
    \new{Let \(\varphi,\varphi' \colon H_{1}(M_{L}) \to \Z^{\mu}\) be two meridional homomorphisms. By definition (recall Remark~\ref{rems:M_L}.2), their restrictions to \(X_{L} \subset M_{L}\) coincide and are equal to~\(\varphi_{X}\colon H_1(X_L)\to\Z^\mu\).
    Let us denote by~\(\varphi_{P}\) and~\(\varphi_{P}'\) the restriction of~\(\varphi\) and~\(\varphi'\) to \(P(L) \subset M_{L}\), respectively. For any~$\omega\in\mathbb{T}^\mu$, we write~$\varphi_\omega$ for the composition of~$\varphi$ with the map~$\Z^\mu\to S^1$ given by~$t_i\mapsto \omega_i$, and similarly for the other meridional homomorphisms.}
    
    \new{To show that \(\rho(M_{L},\varphi_{\omega})\) and \(\rho(M_{L},\varphi_{\omega}')\) coincide, we will use~\cite[Theorem~3.9]{toffoli} and its notation. Consider the oriented~$3$-manifolds
    \[X_{0} = X_{L}, \quad X_{1} = -P(L)\quad\text{and}\quad X_{2} = P(L)\,,
    \]
    which have common boundary~$\Sigma:=\partial X_1=-\partial X_0=-\partial X_2$. Since the restrictions of~$\varphi_{P}$ and~$\varphi'_{P}$ coincide on~$H_1(\partial P(L))$, they induce a map
    \[\varphi_{P} \cup \varphi_{P}' \colon H_{1}(D(L)) \to \Z^\mu,\]
    where \(D(L)\) is the oriented closed~$3$-manifold~\(-P(L) \cup_{\partial} P(L)\). Obviously, the same holds true for~$\varphi_{P,\omega}$ and~$\varphi'_{P,\omega}$, which induce
    \begin{equation}
    \label{eq:varphi-omega}
        \varphi_{P,\omega} \cup \varphi_{P,\omega}'=(\varphi_{P} \cup \varphi_{P}')_\omega\colon H_{1}(D(L)) \to S^1\,.
    \end{equation}
    Note also that by assumption, this map extends to~$H_1(X_L)$ via~$\varphi_{X,\omega}$, thus producing the maps $\varphi_\omega=\varphi_{P,\omega}\cup\varphi_{X,\omega}$ and~$\varphi'_\omega=\varphi'_{P,\omega}\cup\varphi_{X,\omega}$ on~$H_1(M_L)$. Therefore, we are in the setting of~\cite[Theorem~3.9]{toffoli}. It gives the equality
    \[\rho(D(L), \varphi_{P,\omega} \cup \varphi_{P,\omega}') = \rho(M_{L},\varphi_{\omega}) + \rho(-M_{L},\varphi_{\omega}') + C= \rho(M_{L},\varphi_{\omega}) - \rho(M_{L},\varphi_{\omega}') +C\,,\]
    where~$C$ is the difference of the associated Maslov indices on~$H_1(\Sigma;\C)$ and on~$H_1(\Sigma;\C^\omega)$. 
    Consequently, we need to prove that~$C$ and~\(\rho(D(L), \varphi_{P,\omega} \cup \varphi_{P,\omega}')\) both vanish.}

    \new{The kernel of the inclusion induced maps~$H_1(\Sigma;\C)\to H_1(P(L);\C)$ and~$H_1(\Sigma;\C)\to H_1(-P(L);\C)$ obviously coincide, so the Maslov index on~$H_1(\Sigma;\C)$ vanishes. The same holds true with twisted coefficients, as we know that the kernel of the inclusion induced map~$H_1(\Sigma;\C^\omega)\to H_1(P(L);\C^\omega)$
    does not depend on the choice of the meridional homomorphism~$\varphi_P$ (recall the second step in the proof of Lemma~\ref{lem:kernel-dim}). Therefore, the difference~$C$ of these Maslov indices vanishes.}
    

    \new{For the last step, observe that by~\eqref{eq:P-of-the-mirror}, we have
    \[
D(L)=-P(L)\cup_\partial P(L)\simeq P(\overline{L})\cup_\partial P(L)=P(G)\,,
    \]
    where~$G$ is the plumbing graph defined as follows: the vertex set of~$G$ is given by the colors $\{1,\dots,\mu\}$, the vertex~$i$ being decorated with the disjoint union of~2-spheres indexed by~$K\subset L_i$; given two components~$K,K'$ of different colors,
    the corresponding spheres are linked by~$\vert\lk(K,K')\vert$ positive edges and~$\vert\lk(K,K')\vert$ negative edges.
    Since this plumbing graph is balanced and the homomorphism \(\varphi_{P} \cup \varphi_{P}'\) meridional, we can apply Lemma~\ref{lem:rho=0} to the pair \((D(L),\varphi_{P} \cup \varphi_{P}')\): together with~\eqref{eq:varphi-omega}, it gives 
    \[\rho(D(L), \varphi_{P,\omega} \cup \varphi_{P,\omega}') = \rho(D(L),(\varphi_{P} \cup \varphi_{P}')_{\omega}) = \rho_{\omega}(D(L)) = 0
    \]
    for all~\(\omega \in \mathbb{T}^{\mu}\). This concludes the proof.}
\end{proof}

\new{The final lemma of this appendix makes use of the notations of Section~\ref{sub:extension}.
\begin{lemma}
    \label{lem:Maslov-ind}
    Let~$\widetilde{L}$ be a~$\mu$-colored link obtained from a~$\mu$-colored link~$L$ by adding a component which has zero linking number with all the other components. Then, the associated 4-dimensional~$\Z^\mu$-manifolds~$W_{\widetilde{F}},W_F$ defined as in~\eqref{eq:the-W-F-mfd} satisfy~$\sigma(W_{\widetilde{F}})=\sigma(W_F)$.
\end{lemma}}
\begin{proof}
\new{Recall from the proof of Theorem~\ref{thm:extension} that
    the Novikov-Wall theorem applied to the decomposition~$W_F=V_F\cup_{P(F)} Y_F$ implies~$\sigma(W_{F}) = \mathit{Maslov}(\mathcal{L}_{-},\mathcal{L}_{0},\mathcal{L}_{+})$
    for appropriate Lagrangian subspaces~$\mathcal{L}_{-},\mathcal{L}_{0},\mathcal{L}_{+}$ of~$H_{1}(\partial X_{L};\C)$ which only depend
    on the colors and linking numbers of the components of~$L$.
    More precisely, if we denote by~$m_K$ (resp.~$\ell_K$) a meridian (resp. Seifert longitude) of the component~$K\subset L$ (recall Section~\ref{sec:plumbed}), then we have
    \begin{equation}
        \label{eq:XL}
    H_{1}(\partial X_{L};\C) = \bigoplus_{K\subset L}\left(\C m_{K}\oplus\C\ell_{K}\right)\,.
        \end{equation}
    Moreover, by definition of the Seifert longitude~\eqref{eq:Seifert}, we get
    \begin{equation}
        \label{eq:L+}
    \mathcal{L}_{+} \coloneqq \ker \left( H_{1}(\partial X_{L};\C) \to H_{1}(X_{L};\C) \right)=\operatorname{span} \Big\{ \ell_{K} - \sum_{K'\subset L} \lk(K,K')\,m_{K'} \;\Big\vert\; K\subset L\Big\}
     \end{equation}
    with~$\lk(K,K)\coloneqq-\sum_{K'\subset L_i\setminus K}\lk(K,K')$ if~$K\subset L_i$. Denoting this inclusion by~$c(K)=i$, Lemma~\ref{lem:P(G)} yields 
    \begin{align}
     \begin{split}
        \label{eq:L-}
    \mathcal{L}_{-} &\coloneqq \ker \left( H_{1}(\partial X_{L};\C) \to H_{1}(P(L);\C) \right)\\
    &=\operatorname{span} \Big\{ \ell_{K} - \sum_{K'\subset L,\,c(K')\neq c(K)} \lk(K,K')\,m_{K'} \;\Big\vert\; K\subset L\Big\}\,.
     \end{split}
    \end{align}
    Finally, one more application of Lemma~\ref{lem:P(G)} gives
     \begin{align}
     \begin{split}
     \label{eq:L0}
    \mathcal{L}_{0} &\coloneqq \ker \left( H_{1}(\partial X_{L};\C) \to H_{1}(P(F);\C) \right)\\
    &=\operatorname{span} \Big\{ \sum_{K\subset L_i}\ell_{K} - \sum_{K'\subset L\setminus L_i} \lk(K,K')\,m_{K'},\;\{m_K-m_{K_i^0}\,\vert\, K\subset L_i\setminus K_i^0\} \;\Big\vert\; 1\le i\le\mu\Big\}\,,
    \end{split}
    \end{align}
    where~$K^0_i$ is an arbitrary fixed component of~$L_i$.}
    
    \new{Let us now consider a link~$\widetilde{L}=L\cup\widetilde{K}$ with~$\lk(K,\widetilde{K})=0$ for all~$K\subset L$. The goal is to compare the Lagrangians~$\widetilde{\mathcal{L}}_-,\widetilde{\mathcal{L}}_0,\widetilde{\mathcal{L}}_+$ for~$\widetilde{L}$ with their counterparts for~$L$.
    By~\eqref{eq:XL},\eqref{eq:L+} and~\eqref{eq:L-}, we have
    \begin{align*}
H_1(\partial X_{\widetilde{L}};\C)&= H_1(\partial X_L;\C)\oplus\C  m_{\widetilde{K}}\oplus\C\ell_{\widetilde{K}}\,,\\
\widetilde{\mathcal{L}}_{+}&=\mathcal{L}_{+}\oplus \operatorname{span}\{\ell_{\widetilde{K}}\}\,,\\
\widetilde{\mathcal{L}}_{-}&=\mathcal{L}_{-}\oplus \operatorname{span}\{\ell_{\widetilde{K}}\}\,.
\end{align*}
Also, assuming without loss of generality that~$c(\widetilde{K})=1$, Equation~\eqref{eq:L0} yields
\[
\widetilde{\mathcal{L}}_{0}=\mathcal{L}'_{0}\oplus \operatorname{span}\{m_{\widetilde{K}}-m_{K_1^0}\}\,,
\]
where~$K_1^0$ is some fixed component of~$L_1$, and~$\mathcal{L}'_{0}$
denotes the Lagrangian subspace~$\mathcal{L}_{0}$ given by~\eqref{eq:L0}
with the basis vector~$x_1\coloneqq\sum_{K\subset L_1}\ell_{K} - \sum_{K'\subset L\setminus L_1} \lk(K,K') m_{K'}$ replaced by~$x_1'\coloneqq x_1+\ell_{\widetilde{K}}$.
Using these three equalities, a straightforward computation yields
\begin{equation}
\label{eq:Lag}
    \big(\widetilde{\mathcal{L}}_{-}+\widetilde{\mathcal{L}}_{0}\big)\cap\widetilde{\mathcal{L}}_{+}=\left(\left({\mathcal{L}}_{-}+{\mathcal{L}}_{0}\right)\cap{\mathcal{L}}_{+}\right)\oplus\operatorname{span}\{\ell_{\widetilde{K}}\}\,.
\end{equation}
To compare the corresponding Maslov indices, we now relate the form~$\widetilde{f}$ on this later space to the form~$f$ on~$\big({\mathcal{L}}_{-}+{\mathcal{L}}_{0}\big)\cap{\mathcal{L}}_{+}$,
writing~$(\ell,m,m_0)\coloneqq(\ell_{\widetilde{K}}, m_{\widetilde{K}}, m_{K_1^0})$ for simplicity.
By~\eqref{eq:Lag}, any~$\widetilde{a}\in\big(\widetilde{\mathcal{L}}_{-}+\widetilde{\mathcal{L}}_{0}\big)\cap\widetilde{\mathcal{L}}_{+}$
can be written~$\widetilde{a}=a+\lambda\ell$ with~$a\in\left({\mathcal{L}}_{-}+{\mathcal{L}}_{0}\right)\cap{\mathcal{L}}_{+}$ and~$\lambda\in\C$. It can also be written~$\widetilde{a}=\widetilde{a}_-+\widetilde{a}_0$ with~$\widetilde{a}_-=a_-+\lambda_-\ell\in\widetilde{\mathcal{L}}_{-}=\mathcal{L}_{-}\oplus \operatorname{span}\{\ell\}$,~$(a_-\in\mathcal{L}_-,~\lambda_-\in\C)$ and~$\widetilde{a}_0=a_0'+\lambda_0(m-m_0)\in\widetilde{\mathcal{L}}_{0}=\mathcal{L}'_{0}\oplus \operatorname{span}\{m-m_0\}$,~$(a_0'=a_0+\lambda_1\ell\in\mathcal{L}_0',~a_0\in\mathcal{L}_0,~\lambda_1\in\C)$. Gathering all these equalities yields
\[
a+\lambda\ell=\widetilde{a}=(a_-+a_0)+(\lambda_-+\lambda_1)\ell+\lambda_0(m-m_0)\in \left(\left({\mathcal{L}}_{-}+{\mathcal{L}}_{0}\right)\cap{\mathcal{L}}_{+}\right)\oplus\operatorname{span}\{\ell\}\,,
\]
which implies~$a=a_-+a_0$ and~$\lambda_0=0$. In particular, we have~$\widetilde{a}_0=a_0'=a_0+\lambda_1\ell$.
Hence, the form~$\widetilde{f}$ maps~$\widetilde{a}$ as above and~$\widetilde{b}=b+\kappa\ell$ with~$b\in\left({\mathcal{L}}_{-}+{\mathcal{L}}_{0}\right)\cap{\mathcal{L}}_{+}$,~$\kappa\in\C$ to
\[
\widetilde{f}(\widetilde{a},\widetilde{b})=\widetilde{a}_0\cdot\widetilde{b}=(a_0+\lambda_1\ell)\cdot(b+\kappa\ell)=a_0\cdot b=f(a,b)\,.
\]
In conclusion, we have~$\widetilde{f}=f\oplus(0)$, implying the desired equality
    \[
    \sigma(W_{\widetilde{F}})=\mathit{Maslov}(\widetilde{\mathcal{L}}_{-},\widetilde{\mathcal{L}}_{0},\widetilde{\mathcal{L}}_{+})=\sigma(\widetilde{f})=\sigma(f)=\mathit{Maslov}(\mathcal{L}_{-},\mathcal{L}_{0},\mathcal{L}_{+})=\sigma(W_{F})\,.
    \]
This concludes the proof.}
\end{proof}

\section{Representing intersection forms by matrices}
\label{sec:repr-inters-forms}

The purpose of this appendix is to prove Lemma~\ref{lemma:representing-int-forms}, whose statement we now repeat for the reader's convenience.

Set~\(\Lambda_{\mu}=\C[\Z^{\mu}]=\C[t_{1}^{\pm1},\ldots,t_{\mu}^{\pm1}]\) and let~\(Q(\Lambda_{\mu})\) be the quotient field of~\(\Lambda_{\mu}\).

  \begin{lemma}\label{lemma:representing-int-forms-2}
  Suppose that~\((W,\psi)\) is a compact connected oriented~\(4\)-manifold over~\(\Z^{\mu}\) with connected boundary, such that the composition
  \[H_1(\partial W) \to H_1(W) \xrightarrow{\psi} \Z^{\mu}\]
  is surjective and~\(H_{1}(W;\Lambda_{\mu}) = 0\).
  Then, for any~\(j=1,\ldots,\mu\),
  there exists a Hermitian matrix~\(H_{j}\) over~\(Q(\Lambda_{\mu})\) such that for any~\(\omega \in U_{j}\coloneqq\{\omega \in \mathbb{T}^{\mu} \colon \omega_{j} \neq 1\}\), the intersection form
  \[Q_{\omega} \colon H_{2}(W;\C^{\omega}) \times H_{2}(W;\C^{\omega}) \to \C\]
  is represented by~\(H_{j}(\omega)\).
  Furthermore, if~$\mu=1$, then~$Q_\omega$ is represented by a Hermitian matrix~$H(\omega)$ for all~$\omega\in S^1$.
\end{lemma}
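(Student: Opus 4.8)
The plan is to produce the matrix over the localized ring $R_j\coloneqq\Lambda_\mu[(t_j-1)^{-1}]\subseteq Q(\Lambda_\mu)$ and then to specialize it at the points of $U_j$. First I would reduce everything to linear algebra over $\Lambda_\mu$: fix a finite CW structure on $W$, let $\widetilde{W}\to W$ be the cover classified by $\psi\colon\pi_1(W)\to\Z^\mu$, and let $C_*=C_*(\widetilde{W})$ be the associated bounded complex of finitely generated free $\Lambda_\mu$-modules, so that $H_*(W;M)=H_*(M\otimes_{\Lambda_\mu}C_*)$ for every $\Lambda_\mu$-module $M$. By its very definition — cap product with the relative fundamental class of $(W,\partial W)$, followed by Poincar\'e--Lefschetz duality and evaluation — the intersection pairing on $H_2(W;\C^\omega)$ is induced from a chain-level pairing that is $\Lambda_\mu$-sesquilinear for the involution $t_i\mapsto t_i^{-1}$; hence it can be computed over any intermediate ring $\Lambda_\mu\subseteq R\subseteq Q(\Lambda_\mu)$ carrying the relevant coefficient module, and this is what will let us extract $H_j$ once we have a free module on which the form lives.

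The heart of the argument is to show that the homology of $W$, with the right coefficients, is concentrated in degree $2$. With $\Lambda_\mu$-coefficients: $H_4(W;\Lambda_\mu)=0$ since $W$ is a connected compact $4$-manifold with non-empty boundary; $H_1(W;\Lambda_\mu)=0$ by hypothesis; and $H_0(W;\Lambda_\mu)=\Lambda_\mu/(t_1-1,\dots,t_\mu-1)\cong\C$ since $W$ is connected. The surjectivity hypothesis enters exactly here: it gives $H_0(\partial W;\Lambda_\mu)\cong\C$, so that the long exact sequence of the pair $(W,\partial W)$, together with $H_1(W;\Lambda_\mu)=0$, forces $H_0(W,\partial W;\Lambda_\mu)=0$ and $H_1(W,\partial W;\Lambda_\mu)=0$. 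Poincar\'e--Lefschetz duality then identifies $H_3(W;\Lambda_\mu)$ with the conjugate of $H^1(W,\partial W;\Lambda_\mu)$, and the Universal Coefficient Spectral Sequence (Theorem~\ref{thm:UCSS}) shows this group vanishes because both $H_0$ and $H_1$ of $(W,\partial W)$ do. Hence $H_k(W;\Lambda_\mu)=0$ for $k\in\{1,3,4\}$. I expect this to be the main obstacle: pinning down the homology in a single relevant degree — in particular the vanishing $H_3(W;\Lambda_\mu)=0$ — is precisely what makes $H_2(W;R_j)$ projective with rank locally constant on $U_j$, and without local constancy of the dimension no single matrix could represent $Q_\omega$ throughout $U_j$.

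Next I would pass to $R_j$, which is flat over $\Lambda_\mu$: inverting $t_j-1$ turns it into a unit, so $H_0(W;R_j)=R_j\otimes_{\Lambda_\mu}\C=0$, while flatness gives $H_k(W;R_j)=0$ for $k\in\{1,3,4\}$. Therefore $C_*\otimes_{\Lambda_\mu}R_j$ is a bounded complex of finitely generated free $R_j$-modules with homology concentrated in degree $2$, which forces $H_2(W;R_j)$ to be a direct summand of $C_2\otimes_{\Lambda_\mu}R_j$, hence finitely generated projective. Since $R_j$ is a regular localization of a Laurent polynomial ring over $\C$ (so $K_0(R_j)\cong\Z$ and its Picard group is trivial), finitely generated projective $R_j$-modules are free — elementary when $\mu=1$, where $R_1$ is a PID, and a consequence of Quillen--Suslin-type results in general. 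Choosing an $R_j$-basis of $H_2(W;R_j)$, the chain-level sesquilinear pairing is then given by a Hermitian matrix $H_j$ with entries in $R_j\subseteq Q(\Lambda_\mu)$.

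It remains to specialize and to treat $\mu=1$. For $\omega\in U_j$ the evaluation $R_j\to\C$, $t_i\mapsto\omega_i$, is well defined (as $\omega_j\neq1$) and intertwines the involution with complex conjugation, so $\C^\omega$ is an $R_j$-module; applying the Universal Coefficient Spectral Sequence over $R_j$ and using that $H_*(W;R_j)$ is free and concentrated in degree $2$, we obtain $H_2(W;\C^\omega)\cong H_2(W;R_j)\otimes_{R_j}\C^\omega$ with no higher Tor contributions, and naturality of the cap product under $R_j\to\C$ shows that $Q_\omega$ is represented by the evaluation $H_j(\omega)$, which makes sense since the entries of $H_j$ lie in $R_j$. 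When $\mu=1$ one cannot localize without losing the point $\omega=1$, so instead I would work directly over the PID $\Lambda_1=\C[t^{\pm1}]$: here the only possible torsion in $H_2(W;\Lambda_1)$ is $(t-1)$-primary and, being annihilated by the intersection form, lies in the image of $H_2(\partial W;\Lambda_1)$, while the connecting homomorphism $H_3(W,\partial W;\Lambda_1)\to H_2(\partial W;\Lambda_1)$ maps isomorphically onto the torsion of $H_2(\partial W;\Lambda_1)$, so that $\operatorname{im}(H_2(\partial W;\Lambda_1)\to H_2(W;\Lambda_1))$ — and hence $H_2(W;\Lambda_1)$ itself — is torsion-free, therefore free; the degree-$0$ homology $\C$ contributes nothing in degree $2$ after any base change to $\C^\omega$, so the same specialization argument produces a Hermitian matrix $H$ over $\Lambda_1$ with $Q_\omega$ represented by $H(\omega)$ for every $\omega\in S^1$, including $\omega=1$.
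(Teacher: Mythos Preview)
Your overall strategy coincides with the paper's: pass to $R_j=\Lambda_\mu[(t_j-1)^{-1}]$, show that $H_*(W;R_j)$ is concentrated in degree $2$, prove $H_2(W;R_j)$ is free, and specialize via naturality of the intersection pairing. The preliminary vanishing computations---in particular your derivation of $H_3(W;\Lambda_\mu)=0$ from Poincar\'e--Lefschetz duality and the UCSS, and the role you assign to the surjectivity hypothesis---are correct and parallel the paper's Lemma~\ref{lemma:computations-relative-homology}.

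The gap is in the freeness step. You claim that homology concentrated in degree $2$ in a bounded complex $C_4\to\cdots\to C_0$ of free $R_j$-modules ``forces $H_2(W;R_j)$ to be a direct summand of $C_2\otimes_{\Lambda_\mu}R_j$''. This is false: over $R=\C[x,y]$, the Koszul complex $R\to R^2\to R$ placed in degrees $4,3,2$ (with $C_1=C_0=0$) has homology $R/(x,y)\cong\C$ concentrated in degree $2$, and $\C$ has projective dimension $2$ over $R$. What concentration in degree $2$ actually gives you is that $Z_2=\ker\partial_2$ is a summand of $C_2$ (from exactness below degree $2$), hence free by Roitman, and that $\mathrm{pd}(H_2)\le 2$; neither yields projectivity of $H_2$. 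The paper fills this gap by invoking duality a second time: from the UCSS and the vanishing of $H_i(W,\partial W;R_j)$ for $i\neq 2$ one obtains $H_2(W;R_j)\cong H^2(W,\partial W;R_j)\cong\hom_{R_j}\bigl(H_2(W,\partial W;R_j)^{\tr},R_j\bigr)$ (Corollary~\ref{cor:duality-H2}), and this dual is then exhibited as the \emph{leftmost} term of an exact sequence $0\to H_2(W,\partial W;R_j)^\bullet\to Z_2^\bullet\to C_3^\bullet\to C_4^\bullet\to 0$ of free modules (exactness on the right coming from $H^3(W,\partial W;R_j)=H^4(W,\partial W;R_j)=0$), where projectivity---hence freeness by Roitman's theorem---is automatic. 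Your $\mu=1$ argument is likewise more involved than needed and leans on unproved assertions about the torsion of $H_2(W;\Lambda_1)$ and of $H_2(\partial W;\Lambda_1)$; the paper simply observes that over the PID $\Lambda_1$ the vanishing $H_1(W,\partial W;\Lambda_1)=0$ and the Universal Coefficient Theorem give $H_2(W;\Lambda_1)\cong H^2(W,\partial W;\Lambda_1)\cong\hom_{\Lambda_1}(H_2(W,\partial W;\Lambda_1),\Lambda_1)$, which is torsion-free and hence free.
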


The proof of this lemma being rather technical,
we divide it into several steps.
In Section~\ref{sec:algebr-preliminaries}, we are concerned with naturality of twisted intersection forms, see Lemma~\ref{lemma:naturality-intersection-forms}, which is a key point of the proof.
Furthermore, we review all the ingredients needed to
prove this naturality
statement, namely twisted (co)homology, evaluation maps, the construction of twisted intersection forms following~\cite{conwayTwistedSignaturesFibered2021,FriedlKim,KirkLivington},
as well as
the Universal Coefficient Spectral Sequence
~\cite{Levine,mcclearyUserGuideSpectral2000}.
In Section~\ref{sec:homol-comp}, we compute the twisted homology module~\(H_{\ast}(W;\Lambda_{\mu,j})\),
where~\(\Lambda_{\mu,j} = \Lambda_{\mu} \left[ (t_{j}-1)^{-1}
\right]\) for \(j=1,\ldots,\mu\);
in particular, we prove that~$H_2(W;\Lambda_{\mu,j})$ is a free~\(\Lambda_{\mu,j}\)-module.
In Section~\ref{sec:proof-lemma-homology-computations} we combine results from Sections~\ref{sec:algebr-preliminaries} and~\ref{sec:homol-comp} to give a proof of Lemma~\ref{lemma:representing-int-forms-2}.
Roughly speaking, the desired Hermitian matrices~\(H_{j}\) can be taken to be matrices representing twisted intersection forms on~\(H_{2}(W;\Lambda_{\mu,j})\).

\subsection{Naturality of intersection forms}
\label{sec:algebr-preliminaries}

In this section, we recall the definition of twisted homology and cohomology, the statement of the Universal Coefficient Spectral Sequence, the definition of the twisted intersection form, and prove its naturality.

\subsubsection*{Twisted homology and cohomology}

Recall that the ring~\(\Lambda_{\mu}\) admits an involution
\[\overline{(-)} \colon \Lambda_{\mu} \to \Lambda_{\mu},\]
which acts by the complex conjugation on scalars and maps each indeterminate~\(t_{j}\) to its inverse.
If~\(N\) is a (left)~\(\Lambda_{\mu}\)-module, then we define the {\em transpose\/} of~\(N\), denoted by~\(N^{\tr}\), to be the
(right)~\(\Lambda_{\mu}\)-module with the same underlying~\(\C\)-vector space as~$N$,
but with the action of~\(\Lambda_{\mu}\)
given by
\[N^{\tr}\times\Lambda_{\mu} \owns (n,\lambda) \mapsto n\cdot\lambda=\overline{\lambda}\cdot n \in N^{\tr}\,.\]

Let \(X\) be a finite connected pointed CW-complex with \(\pi_{1}(X) \cong \Z^{\mu}\), and let \(p \colon \widetilde{X} \to X\) denote the universal covering of \(X\).
If \(Y \subset X\) is a subcomplex containing the basepoint, then the action of~\(\pi_{1}(X)\) equips the 
chain complex \(C_{\ast}(\widetilde{X}, p^{-1}(Y);\C)\) with the structure of a (left)~\(\Lambda_{\mu}\)-module.
Given any (right)~\(\Lambda_{\mu}\)-module \(M\),
let us define the chain and cochain complexes of \(\Lambda_{\mu}\)-modules
\begin{align*}
  C_{\ast}(X,Y;M) &= M \otimes_{\Lambda_{\mu}} C_{\ast}(\widetilde{X},p^{-1}(Y)), \\
  C^{\ast}(X,Y;M) &= \hom_{\Lambda_{\mu}}(C_{\ast}(\widetilde{X},p^{-1}(Y))^{\tr},M).
\end{align*}
The homology~\(H_{\ast}(X,Y;M)\) (resp. cohomology~\(H^{\ast}(X,Y;M)\))
of the above (co)chain complex is called the {\em twisted
(co)homology\/} of~$X$.
Note that both \(H_{\ast}(X,Y;M)\) and \(H^{\ast}(X,Y;M)\) are modules
over~\(\Lambda_{\mu}\).
Furthermore, if~\(M\) is an~\((R,\Lambda_{\mu})\)-bimodule for some ring~\(R\), then~\(H_{\ast}(X,Y;M)\) and~\(H^{\ast}(X,Y;M)\) inherit the structure of left~\(R\)-modules.

\begin{remark}
\begin{enumerate}
\item   The ring~$\Lambda_\mu=\C[\Z^\mu]$ being commutative, there is no problem with distinguishing left and right modules, hence the
parenthesis above around these words.
  In the general setting however,~$M$ is required to be an~\((R,\C[\pi_1(X)])\)-bimodule for some ring~\(R\), hence we tensor by~\(M\) from the left
  in the definition of~\(C_{\ast}(X,Y;M)\)
  to be consistent with sources~\cite{conwayTwistedSignaturesFibered2021,CFT,CNT}.
  \item
  It is for the same consistency reasons that we transpose
  the cellular chain complex in the definition of~\(C^{\ast}(X,Y;M)\), rather than the module~$M$.
  Note however that since~\(\Lambda_{\mu}\) is commutative, for any two~\(\Lambda_{\mu}\)-modules~\(M\) and~\(N\), we have
\[\hom_{\Lambda_{\mu}}(N^{\tr},M) = \hom_{\Lambda_{\mu}}(N,M^{\tr}) = \hom_{\Lambda_{\mu}}(N,M)^{\tr}\,,\]
which consists of the additive maps~$f \colon N \to M$ such that~$f(\lambda \cdot n) = \overline{\lambda} \cdot f(n)$ for all~$\lambda \in \Lambda_{\mu}$ and~$n\in N$.
  This leads to
  \[H^{\ast} \left( \hom_{\Lambda_{\mu}}(C_{\ast}(\widetilde{X},p^{-1}(Y))^{\tr},M)\right) = H^{\ast} \left( \hom_{\Lambda_{\mu}}(C_{\ast}(\widetilde{X},p^{-1}(Y)),M)\right)^{\tr}\,,\]
  so the transposed module in the definition of~\(C^{\ast}(X,Y;M)\) simply changes the resulting cohomology groups by a transposition.
\end{enumerate}
\end{remark}

Computations of twisted (co)homology modules are usually performed with the aid of the Universal Coefficient Spectral Sequence (UCSS) whose statement we now recall, referring the reader to~\cite[Theorem 2.3]{Levine} and~\cite[Theorem 2.20]{mcclearyUserGuideSpectral2000} for a proof.

\begin{theorem}[Universal Coefficient Spectral Sequence]\label{thm:UCSS}
Let \(R\) and \(S\) be associative rings with unit.
  Let \(C_{\ast}\) be a chain complex of finitely generated free left \(R\)-modules.
  If \(M\) is any \((S,R)\)-bimodule, then there are
  natural spectral sequences of left \(S\)-modules
  \begin{align*}
    E^{2}_{p,q} &= \Tor_{p}^{R}(M,H_{q}(C_{\ast})) \Rightarrow 
    H_{p+q}(M\otimes_R C_*), \\
    E_{2}^{p,q} &= \Ext^{q}_{R}(H_{p}(C_{\ast})^{\tr},M) \Rightarrow
    H^{p+q}(\hom_{\mathrm{right}-R}(C_*^{\tr},M))
  \end{align*}
  with differentials of degree \((-r,r-1)\) and \((1-r,r)\), respectively.
\end{theorem}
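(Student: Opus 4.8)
The plan is to identify both spectral sequences with the two hyperhomology spectral sequences attached to a Cartan--Eilenberg resolution of $C_*$, following the treatment in~\cite[Chapter~5.7]{Weibel}. First I would fix such a resolution $\epsilon\colon P_{*,*}\to C_*$: a double complex of projective left $R$-modules, with one differential ("horizontal") in the $C_*$-direction and one ("vertical") raising a resolution degree $q\ge 0$, together with an augmentation $P_{*,0}\to C_*$, arranged so that the complexes of horizontal cycles, horizontal boundaries and horizontal homology are vertical projective resolutions of $Z_*(C_*)$, $B_*(C_*)$ and $H_*(C_*)$ respectively, and so that the column over a degree $p$ vanishes whenever $C_p=0$. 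Such a resolution exists because $C_*$ is bounded below --- in the applications in this paper it is the (finite) cellular chain complex of a finite CW-pair, hence concentrated in finitely many degrees --- and this bounded-below-ness is exactly what makes the double complex first-quadrant (up to a shift) and hence guarantees convergence of the two spectral sequences used below.

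Next I would apply the additive right-exact functor $F=M\otimes_R(-)$ degreewise to obtain a double complex $D_{*,*}=M\otimes_R P_{*,*}$, form its total complex $\mathrm{Tot}(D)$, and run the two standard spectral sequences. Taking homology first in the resolution direction, the $E^1$-page in $C_*$-degree $p$ is $L_qF(C_p)=\Tor_q^R(M,C_p)$; since each $C_p$ is free, hence projective, this vanishes for $q>0$ and equals $M\otimes_R C_p$ for $q=0$. The spectral sequence therefore collapses at $E^1$, yielding $H_n(\mathrm{Tot}(D))\cong H_n(M\otimes_R C_*)$ for all $n$. Taking homology first in the $C_*$-direction, the decisive point is that in a Cartan--Eilenberg resolution every short exact sequence $0\to Z_p\to P_{p,q}\to B_{p-1}\to 0$ and $0\to B_p\to Z_p\to H_p\to 0$ (horizontal, for fixed $q$) is an exact sequence of projectives and hence splits; consequently $F$ commutes with passing to horizontal homology, so the $E^1$-page is $M\otimes_R$ applied to the horizontal homology of $P_{*,*}$, i.e. to the vertical projective resolution of $H_p(C_*)$. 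Taking homology once more in the resolution direction gives $E^2_{p,q}=\Tor_q^R(M,H_p(C_*))$, converging to $H_{p+q}(\mathrm{Tot}(D))\cong H_{p+q}(M\otimes_R C_*)$; after relabelling the two indices this is the asserted spectral sequence with differentials of degree $(-r,r-1)$. Naturality in $C_*$ and in $M$ is automatic, since a chain map (resp. a map of bimodules) lifts to a map of Cartan--Eilenberg resolutions, unique up to homotopy.

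For the cohomological statement I would run the same argument with $G=\hom_R(-,M)$ in place of $M\otimes_R(-)$, applied to the transposed cellular chain complex $C_*(\widetilde X,\dots)^{\tr}$ in order to keep the side on which $R$ acts under control; the transpose conventions recorded just above the statement are precisely what arrange that $\Ext^q_R(H_p(C_*)^{\tr},M)$ is the correct $E_2$-term. Again the ``resolution-direction-first'' spectral sequence collapses because the $C_p$ are projective, identifying the abutment with $H^{p+q}\big(\hom_{\mathrm{right}\text{-}R}(C_*^{\tr},M)\big)$, while the other one has $E_2^{p,q}=\Ext^q_R(H_p(C_*)^{\tr},M)$ with differentials of degree $(1-r,r)$.

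The content here is entirely the hyperhomology machine, so I do not expect a conceptual obstacle; the work is bookkeeping. The one step that is genuinely not automatic is the splitting of the short exact sequences of projectives inside the Cartan--Eilenberg resolution, which is what allows $M\otimes_R(-)$ (resp. $\hom_R(-,M)$) to pass through homology in the $C_*$-direction --- this is false for a general resolution, and verifying it is the crux. The remaining care is in (i) checking convergence, which is where the bounded-below hypothesis on $C_*$ is used, and (ii) tracking the $\tr$/opposite-ring conventions throughout the cohomological case. All of this is carried out in detail in~\cite[Theorem~2.3]{Levine} and~\cite[Theorem~2.20]{mcclearyUserGuideSpectral2000}, which is why we are content to cite them rather than reproduce the argument.
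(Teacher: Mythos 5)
Your sketch is correct: it is the standard Cartan--Eilenberg/hyperhomology argument (collapse of the ``resolution-first'' spectral sequence using projectivity of the $C_p$, splitting of the horizontal cycle/boundary sequences so that $M\otimes_R(-)$ and $\hom_R(-,M)$ pass to horizontal homology, then relabelling indices), which is precisely the proof given in the sources the paper cites. The paper itself supplies no argument beyond the references to Levine and McCleary, so your proposal matches the intended proof; just note that for convergence one should record the boundedness (or degreewise finiteness plus boundedness) hypothesis explicitly, as you do.
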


\subsubsection*{Twisted intersection forms}
We now turn to the definition of the twisted intersection form,
which requires two ingredients: twisted Poincar\'e-Lefschetz duality,
and the evaluation map.

\smallskip

Let \(X\) be a connected compact oriented smooth~\(4\)-manifold with \(\pi_{1}(X) \cong \Z^{\mu}\).
 For any \(\Lambda_{\mu}\)-module \(M\), one can define the {\em twisted Poincar\'e-Lefschetz duality}
\[\mathit{PD}_{M} \colon H_{k}(X,\partial X;M) \xrightarrow{\cong} H^{4-k}(X;M)\]
as the inverse of the isomorphism
\[(-) \cap [X,\partial X] \colon H^{4-k}(X;M) \xrightarrow{\cong} H_{k}(X,\partial X;M)\]
induced by the cap product with the fundamental class \([X,\partial X] \in H_{4}(X;\C)\), see~\cite[Section 2.4]{conwayTwistedSignaturesFibered2021}.

\smallskip

We now come to the definition of the evaluation map.
Let~\(A\) be a commutative~\(\C\)-algebra with unit, and involution 
denoted by~$a\mapsto\overline{a}$.
Let \(\psi \colon \Lambda_{\mu} \to A\) be a homomorphism of algebras with involutions which preserves units.
Observe that \(A\) becomes an \((A,\Lambda_{\mu})\)-bimodule via~\(\psi\).
Let \(M\) be an \((A,\Lambda_{\mu})\)-bimodule an let \(N\) be an \((A,A)\)-bimodule with involution.
Suppose that 
we are given a~\(\Z^{\mu}\)-equivariant, sesquilinear, nonsingular pairing
\[\theta \colon M \times M \to N\,.\]
In other words, this pairing~\(\theta\) satisfies the following conditions:
\begin{enumerate}
    \item for any \(g \in \Z^{\mu}\) and any \(m_{1},m_{2} \in M\), we have~\(\theta(m_{1} \cdot g,m_{2} \cdot g) = \theta(m_{1},m_{2})\);
    \item \(\theta\) is \(A\)-linear in the first variable
    and satisfies~\(\theta(m_{1},m_{2}) = \overline{\theta(m_{2},m_{1})}\) for all \(m_{1},m_{2} \in M\);
    \item the adjoint map
  \[\theta^{D} \colon M \to \hom_{\text{left-}A}(M,N)^{\tr}\]
  defined by~$\theta^D(m_1)(m_2)=\theta(m_1,m_2)$ is an isomorphism of left \(A\)-modules.
\end{enumerate}
Given this piece of data, we can construct an associated evaluation map
as follows. Firstly, consider the chain map
\begin{align*}
  \kappa \colon \hom_{\Lambda_{\mu}}(C_{\ast}(\widetilde{X})^{\tr},M) &\to \hom_{A}(M \otimes_{\Lambda_{\mu}} C_{\ast}(\widetilde{X}),N)^{\tr} \\
  f &\mapsto (m \otimes \sigma \mapsto \theta(m,f(\sigma)))\,,
\end{align*}
where \(m \in M\) and \(\sigma \in C_{\ast}(\widetilde{X})\).
By nonsingularity of \(\theta\), this is an isomorphism of cochain complexes of~$A$-modules.
Secondly, the edge homomorphism in the UCSS yields a map
\[E \colon H^{k}(\hom_{A}(M \otimes_{\Lambda_{\mu}} C_{\ast}(\widetilde{X}),N)^{\tr}) \to \hom_{A}(H_{k}(X;M),N)^{\tr}.\]
The {\em evaluation map\/} is defined as the composition
\[\ev(\theta) \colon H^{k}(X;M) \xrightarrow{\kappa_*} H^{k}(\hom_{A}(M \otimes_{\Lambda_{\mu}} C_{\ast}(\widetilde{X}),N)^{\tr}) \xrightarrow{E} \hom_{A}(H_{k}(X;M),N)^{\tr}\,,\]
where~$\kappa_*$ is the isomorphism of~$A$-modules induced by the chain map~$\kappa$.

\smallskip

We are finally ready to define the twisted intersection form.
By composing the evaluation map with Poincar\'e-Lefschetz duality and the map induced by the inclusion of~\((X,\emptyset)\) in~\((X,\partial X)\),
we obtain a homomorphism of~$A$-modules
\[Q(\theta)^{D} \colon H_{2}(X;M) \xrightarrow{} H_{2}(X,\partial X;M) \xrightarrow{\mathit{PD}_{M}} H^{2}(X;M) \xrightarrow{\ev(\theta)} \hom_{A}(H_{2}(X;M),N)^{\tr}\,.\]
The associated Hermitian form
\[Q(\theta) \colon H_{2}(X;M) \times H_{2}(X;M) \to N, \quad Q(\theta)(x,y) = Q(\theta)^{D}(x)(y)\]
is the {\em twisted intersection form\/} of~$X$.

In our setting, the most relevant examples of~\(\Z^{\mu}\)-equivariant sesquilinear pairings are the ones given below.

\begin{example}\label{example:Hermitian-pairings}
  \begin{enumerate}
    \item For any~\(j=1,2,\ldots,\mu\), set \(\Lambda_{\mu,j} = \Lambda_{\mu} \left[(t_{j}-1)^{-1} \right]\),
    i.e., \(\Lambda_{\mu,j}\) is constructed by adjoining the inverse of~\(t_{j}-1\) to \(\Lambda_{\mu}\).
    Note that the involution on~\(\Lambda_{\mu}\) extends naturally to an
    involution on~\(\Lambda_{\mu,j}\).
    Set~\(A=\Lambda_{\mu,j}\), let~\(\psi\colon\Lambda_\mu\to A\) be the localization map, and set~\(M=N=\Lambda_{\mu,j}\). Then, the pairing
    \[\theta_j \colon \Lambda_{\mu,j} \times \Lambda_{\mu,j} \to \Lambda_{\mu,j}, \quad \theta_j(\lambda_{1},\lambda_{2}) =\lambda_{1} \overline{\lambda_{2}}\]
    is clearly nonsingular, \(\Z^{\mu}\)-equivariant,
    and sesquilinear over~$\Lambda_{\mu,j}$.
    We denote the associated twisted intersection form by
    \[Q_{j}(X) \colon H_{2}(X;\Lambda_{\mu,j}) \times H_{2}(X;\Lambda_{\mu,j}) \to \Lambda_{\mu,j}\,.\]
      \item For any~\(\omega \in \mathbb{T}^{\mu}\),
    set~\(A=\C\), let
    \(\psi \colon \Lambda_{\mu} \to \C\) be given by~$t_i\mapsto\omega_i$, and set~$M=N=\C^{\omega}$.
    Then, the pairing
    \[\theta_{\omega} \colon \C^{\omega} \times \C^{\omega} \to \C^{\omega}, \quad \theta_{\omega}(z_{1},z_{2}) = z_{1}\overline{z_{2}}\]
    is nonsingular, \(\Z^{\mu}\)-equivariant and sesquilinear.
    We denote
    the associated Hermitian intersection form by
    \[Q_{\omega}(X) \colon H_{2}(X;\C^{\omega}) \times H_{2}(X;\C^{\omega}) \to \C^{\omega}\,.\]
  \end{enumerate}
\end{example}

These two intersection forms~$Q_j(X)$ and~$Q_{\omega}(X)$ are related in the following
way, a fact of crucial importance for the proof of Lemma~\ref{lemma:representing-int-forms-2}.

\begin{lemma}[Naturality of intersection forms]\label{lemma:naturality-intersection-forms}
  Fix \(j=1,\ldots,\mu\) and suppose that \(\omega \in \mathbb{T}^{\mu}\) satisfies~$\omega_j\neq 1$,
  yielding a homomorphism \(\phi_{\omega} \colon \Lambda_{\mu,j} \to \C^{\omega}\)
  via~$t_i\mapsto\omega_i$.
  Then, for any \(x,y \in H_{2}(X;\Lambda_{\mu,j})\), the following equality is satisfied
  \[Q_{\omega}(X)(\phi_{\omega,\ast}(x),\phi_{\omega,\ast}(y)) = (\phi_{\omega} \circ Q_{j}(X))(x,y),\]
  where \(\phi_{\omega,\ast} \colon H_{2}(X;\Lambda_{\mu,j}) \to H_{2}(X;\C^{\omega})\) is the map induced by \(\phi_{\omega}\).
\end{lemma}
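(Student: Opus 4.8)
The plan is to trace through the definition of the twisted intersection form term by term, checking that each of the three maps whose composition defines $Q(\theta)^D$ — the map to relative homology $j_*$, Poincar\'e--Lefschetz duality $\mathit{PD}_M$, and the evaluation map $\ev(\theta)$ — is compatible with the change-of-coefficients homomorphism $\phi_\omega\colon\Lambda_{\mu,j}\to\C^\omega$. Since the cellular chain complex $C_*(\widetilde X)$ is one of finitely generated free $\Lambda_\mu$-modules, and $\C^\omega\cong\C^\omega\otimes_{\Lambda_{\mu,j}}\Lambda_{\mu,j}$ as $(\C,\Lambda_{\mu,j})$-bimodules (using that $\phi_\omega$ factors through the localization because $\omega_j\neq 1$), there is a natural chain map $\C^\omega\otimes_{\Lambda_{\mu,j}}(-)$ applied to the $\Lambda_{\mu,j}$-twisted chain complex, and the induced map on homology is precisely $\phi_{\omega,*}$. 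This is the single formal fact that drives every compatibility below; I would state it first.

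First I would check the outer square: $j_*\colon H_2(X;M)\to H_2(X,\partial X;M)$ is induced by a chain-level inclusion, hence obviously commutes with $\phi_{\omega,*}$ by functoriality of $\C^\omega\otimes_{\Lambda_{\mu,j}}(-)$. Next, $\mathit{PD}_M$ is defined as the inverse of capping with the fundamental class $[X,\partial X]\in H_4(X;\C)$; the cap product $(-)\cap[X,\partial X]$ is defined at chain level using a diagonal approximation, and capping with a \emph{fixed} untwisted class is natural in the coefficient bimodule, so $\mathit{PD}$ commutes with $\phi_\omega$ as well (here one uses that the fundamental class is the same in both settings, living in untwisted homology). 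The only genuinely substantive step is the evaluation map $\ev(\theta)=E\circ\kappa_*$. One must observe that the pairings $\theta_j$ and $\theta_\omega$ of Example~\ref{example:Hermitian-pairings} are intertwined by $\phi_\omega$, i.e. $\phi_\omega(\theta_j(\lambda_1,\lambda_2))=\theta_\omega(\phi_\omega(\lambda_1),\phi_\omega(\lambda_2))$ — immediate since both are $(\lambda_1,\lambda_2)\mapsto\lambda_1\overline{\lambda_2}$ and $\phi_\omega$ respects the involutions. This makes the chain isomorphisms $\kappa$ compatible up to applying $\C^\omega\otimes_{\Lambda_{\mu,j}}(-)$ to the $\hom$-complex, and then one needs that the UCSS edge homomorphism $E$ is natural with respect to the change of coefficient ring $\Lambda_{\mu,j}\to\C^\omega$ — this is part of the naturality statement in Theorem~\ref{thm:UCSS}.

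The main obstacle, and the place where care is needed, is precisely the behaviour of the edge homomorphism $E$ under base change: a priori $\hom$ does not commute with $\otimes$, so one cannot naively push $\phi_\omega$ through the right-hand side of $\kappa$. The resolution is that $C_*(\widetilde X)$ is \emph{finitely generated free}, so $\hom_{\Lambda_{\mu,j}}(M\otimes_{\Lambda_\mu}C_*(\widetilde X),N)\otimes_{\Lambda_{\mu,j}}\C^\omega$ is naturally identified with $\hom_{\C}(\C^\omega\otimes_{\Lambda_\mu}C_*(\widetilde X),\C^\omega)$, and under this identification the $E$-maps for the two coefficient systems agree. I would assemble these identifications into one commuting diagram whose two composites along the boundary are $Q_\omega(X)^D\circ\phi_{\omega,*}$ and $(\phi_\omega)_*\circ Q_j(X)^D$, then evaluate both sides on $y$ to get the stated equality of forms. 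Everything else is bookkeeping with the standard naturality of these constructions, already implicit in~\cite{conwayTwistedSignaturesFibered2021}.
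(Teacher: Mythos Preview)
Your proposal is correct and follows essentially the same approach as the paper: reduce to commutativity of a diagram whose three pieces are the inclusion-induced map, Poincar\'e--Lefschetz duality, and the evaluation map, then check each by naturality, the key identity being $\phi_\omega(\theta_j(\lambda_1,\lambda_2))=\theta_\omega(\phi_\omega(\lambda_1),\phi_\omega(\lambda_2))$ together with naturality of the UCSS edge map. The only organizational difference is that where you invoke a tensor--hom compatibility using finite free generation of $C_*(\widetilde X)$, the paper instead routes the right-hand pentagon through the pullback map $\phi_\omega^*\colon\hom_\C(H_2(X;\C^\omega),\C)\to\hom_{\Lambda_{\mu,j}}(H_2(X;\Lambda_{\mu,j}),\C^\omega)$, which sidesteps that identification entirely.
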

\begin{proof}
  Observe that the
  statement is equivalent to the commutativity of the diagram
  \begin{center}
    \begin{tikzcd}
      H_{2}(X;\Lambda_{\mu,j}) \arrow[r] \arrow[dd,"\phi_{\omega,\ast}"] & H_{2}(X,\partial X;\Lambda_{\mu,j}) \arrow[r,"\mathit{PD}_{\Lambda_{\mu,j}}"] \arrow[dd,"\phi_{\omega,\ast}"] &
      H^{2}(X;\Lambda_{\mu,j}) \arrow[r,"\ev(\theta_j)"] \arrow[dd,"\phi_{\omega,\ast}"] & \hom_{\Lambda_{\mu,j}}\left( H_{2}(X;\Lambda_{\mu,j}),
        \Lambda_{\mu,j}\right)^{\tr} \arrow[d,"\phi_{\omega,\ast}"] \\
      & & & \hom_{\Lambda_{\mu,j}} \left( H_{2}(X;\Lambda_{\mu,j}),\C^{\omega} \right)^{\tr} \\
      H_{2}(X;\C^{\omega}) \arrow[r] & H_{2}(X,\partial X;\C^{\omega}) \arrow[r,"\mathit{PD}_{\C^{\omega}}"] & H^{2}(X;\C^{\omega}) \arrow[r,"\ev(\theta_\omega)"] & \hom_{\C}\left( H_{2}(X;\C^{\omega}), \C\right)^{\tr}\,, \arrow[u,"\phi_{\omega}^{\ast}"]
    \end{tikzcd}
  \end{center}
  with~$\theta_j$ and~$\theta_\omega$ as in Example~\ref{example:Hermitian-pairings}.
  The commutativity of the left square follows from naturality of the inclusion-induced map
  of the pair \((X,\partial X)\).
  The commutativity of the middle square follows from the naturality of the twisted Poincar\'e-Lefschetz duality isomorphism, see e.g.~\cite[Lemma 2.10]{conwayTwistedSignaturesFibered2021}.
  Therefore, we are left with the proof of the commutativity of the right pentagonal diagram.

  For that purpose, consider the following diagram of cochain complexes
  \begin{center}
    \begin{tikzcd}
      \hom_{\Lambda_{\mu,j}}\left( C_{\ast}(\widetilde{X}), \Lambda_{\mu,j} \right) \arrow[r,"\kappa"] \arrow[dd,"\phi_{\omega,\ast}"] & \hom_{\Lambda_{\mu,j}} \left( \Lambda_{\mu,j} \otimes_{\Lambda_{\mu}}
        C_{\ast}(\widetilde{X}), \Lambda_{\mu,j}\right) \arrow[d,"\phi_{\omega,\ast}"] \\
      & \hom_{\Lambda_{\mu,j}}\left( \Lambda_{\mu,j} \otimes_{\Lambda_{\mu}} C_{\ast}(\widetilde{X}), \C^{\omega} \right) \\
      \hom_{\Lambda_{\mu,j}} \left(C_{\ast}(\widetilde{X}), \C^{\omega} \right) \arrow[r,"\kappa_{\omega}"] & \hom_{\C} \left( \C^{\omega} \otimes C_{\ast}(\widetilde{X}), \C \right)\,. \arrow[u,"\phi_{\omega}^{\ast}"]
    \end{tikzcd}
  \end{center}
  For any \(f \in \hom_{\Lambda_{\mu,j}}\left( C_{\ast}(\widetilde{X}),\Lambda_{\mu,j}\right)\), the right-down composition yields
  \[(\phi_{\omega,\ast} \circ \kappa)(f)(\lambda \otimes \sigma) = \phi_{\omega} \left( \theta_j(\lambda, f(\sigma))\right)\]
  for all \(\lambda \in \Lambda_{\mu,j}\) and \(\sigma \in C_{\ast}(\widetilde{X})\).
  On the other hand, the down-right-up composition gives
  \[(\phi_{\omega}^{\ast} \circ \kappa_{\omega} \circ \phi_{\omega,\ast})(f)(\lambda \otimes \sigma) = (\kappa_{\omega} \circ \phi_{\omega}^{\ast})(\phi_{\omega} \circ f) (\lambda \otimes \sigma) =
    \theta_{\omega}(\phi_{\omega}(\lambda),\phi_{\omega}(f(\sigma)))\,.\]
  Since for any~\(\lambda,\lambda' \in \Lambda_{\mu,j}\) we have
  \[\phi_{\omega}(\theta_j(\lambda,\lambda')) = \theta_{\omega}(\phi_{\omega}(\lambda),\phi_{\omega}(\lambda')),\]
  it follows that
  the diagram is commutative. To conclude the proof, consider the
  following diagram
  \begin{center}
    \begin{tikzcd}
      H^{2}(X;\Lambda_{\mu,j}) \arrow[r,"\kappa_*"] \arrow[dd,"\phi_{\omega,\ast}"] & H^{2} \left( \hom_{\Lambda_{\mu,j}}(\Lambda_{\mu,j} \otimes_{\Lambda_{\mu}} C_{\ast}(\widetilde{X}), \Lambda_{\mu,j})  \right)
      \arrow[r,"E_{1}"] \arrow[d,"\phi_{\omega,\ast}"] & \hom_{\Lambda_{\mu,j}}
      (H_{2}(X;\Lambda_{\mu,j}),\Lambda_{\mu,j}) \arrow[d,"\phi_{\omega,\ast}"] \\
      & H^{2} \left( \hom_{\Lambda_{\mu,j}}(\Lambda_{\mu,j} \otimes_{\Lambda_{\mu}} C_{\ast}(\widetilde{X}), \C^{\omega})  \right) \arrow[r,"E_{2}"] & \hom_{\Lambda_{\mu,j}} (H_{2}(X;\Lambda_{\mu,j}),\C^{\omega}) \\
      H^{2}(X;\C^{\omega}) \arrow[r,"\kappa_{\omega,*}"] & H^{2} \left( \hom_{\C^{\omega}}(\C^{\omega} \otimes_{\Lambda_{\mu}} C_{\ast}(\widetilde{X}), \C^{\omega}) \right) \arrow[r,"E_{3}"] \arrow[u,"\phi_{\omega}^{\ast}"] & \hom_{\C}
      (H_{2}(X;\C^{\omega}),\C)\,. \arrow[u,"\phi_{\omega}^{\ast}"]
    \end{tikzcd}
  \end{center}
  By our previous considerations, the left pentagonal
  diagram is commutative.
  Furthermore, by naturality of the UCSS, the upper and lower right squares are commutative.
  Since the horizontal compositions are equal to the respective evaluation maps, the lemma follows.
\end{proof}

\begin{remark}
    \label{rem:naturality}
    Similarly to Example~\ref{example:Hermitian-pairings}, one can set~$A=M=N=\Lambda_\mu$ (with~$\psi=\mathit{id}_{\Lambda_\mu}$) and consider
    the non-singular sesquilinear pairing~$\theta\colon\Lambda_\mu\times\Lambda_\mu\to\Lambda_\mu$ given by~$\theta(\lambda_1,\lambda_2)=\lambda_1\overline{\lambda_2}$. The associated twisted intersection form
    \[
        Q(X) \colon H_{2}(X;\Lambda_{\mu}) \times H_{2}(X;\Lambda_{\mu}) \to \Lambda_{\mu}
    \]
    obviously enjoys the same naturality property as~$Q_j(X)$ with respect
    to~$Q_\omega(X)$, but without any restriction on~$\omega\in\mathbb{T}^\mu$.
\end{remark}

\subsection{Twisted homology of \textit{{W}}}
\label{sec:homol-comp}

Throughout this section, we fixed an index~\(j=1,2,\ldots,\mu\) and set \(\Lambda_{\mu,j} = \Lambda_{\mu}\left[ (t_{j}-1)^{-1} \right]\) as in Example~\ref{example:Hermitian-pairings}.
The purpose of this section is to prove the following lemma.

\begin{lemma}\label{lemma:freeness-second-homology}
  If~$W$ is a~\(4\)-manifold as in Lemma~\ref{lemma:representing-int-forms-2}, then \(H_{2}(W;\Lambda_{\mu,j})\) is a free~\(\Lambda_{\mu,j}\)-module.
\end{lemma}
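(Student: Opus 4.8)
The statement concerns the $\Lambda_{\mu,j}$-module $H_2(W;\Lambda_{\mu,j})$, where $\Lambda_{\mu,j}=\Lambda_\mu[(t_j-1)^{-1}]$ and $W$ is a compact connected oriented $4$-manifold over $\Z^\mu$ with connected boundary, the composition $H_1(\partial W)\to H_1(W)\xrightarrow{\psi}\Z^\mu$ surjective, and $H_1(W;\Lambda_\mu)=0$. The goal is to show $H_2(W;\Lambda_{\mu,j})$ is free. The strategy is to first pin down the $\Lambda_{\mu,j}$-homology of $W$ in all degrees, using localization exactness together with the vanishing hypothesis $H_1(W;\Lambda_\mu)=0$, then to invoke a structural fact: a finitely generated module over $\Lambda_{\mu,j}$ that is torsion-free and of projective (or homological) dimension controlled by the situation must be free. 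Since $\Lambda_{\mu,j}$ is a localization of $\Lambda_\mu=\C[t_1^{\pm1},\dots,t_\mu^{\pm1}]$, it is a Noetherian regular domain; it is \emph{not} a PID for $\mu\ge 2$, so freeness is not automatic from torsion-freeness and one must work a bit harder, which is where the main obstacle lies.

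\textbf{Step 1: compute $H_*(W;\Lambda_{\mu,j})$.} Localization is exact, so $H_*(W;\Lambda_{\mu,j})=H_*(W;\Lambda_\mu)\otimes_{\Lambda_\mu}\Lambda_{\mu,j}$. Now $H_0(W;\Lambda_\mu)=\C=\Lambda_\mu/(t_1-1,\dots,t_\mu-1)$, and localizing at $(t_j-1)$ kills this since $t_j-1$ becomes a unit; hence $H_0(W;\Lambda_{\mu,j})=0$. By hypothesis $H_1(W;\Lambda_\mu)=0$, so $H_1(W;\Lambda_{\mu,j})=0$. In degrees $\ge 3$, $W$ being a connected $4$-manifold with nonempty boundary, $W$ is homotopy equivalent to a $3$-complex, so $H_k(W;\Lambda_{\mu,j})=0$ for $k\ge 4$; and $H_3$ needs to be handled. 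The upshot I want is that the chain complex $C_*(W;\Lambda_{\mu,j})$ is, up to chain homotopy, a finite complex of finitely generated free $\Lambda_{\mu,j}$-modules whose only homology is in degrees $2$ and $3$ (and $3$ will turn out to behave well, or can be resolved away via duality). I would use Poincaré–Lefschetz duality with $\Lambda_{\mu,j}$-coefficients together with the UCSS (Theorem~\ref{thm:UCSS}) to relate $H_3(W;\Lambda_{\mu,j})$ and $H^1$-type data on $(W,\partial W)$, aiming to show $H_3(W;\Lambda_{\mu,j})$ is itself free (a second-syzygy / reflexive argument), or to isolate $H_2$ as a kernel of a map between frees.

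\textbf{Step 2: from the chain complex to freeness.} Once Step 1 gives a finite free $\Lambda_{\mu,j}$-complex with controlled homology, I would argue as follows. Since $\Lambda_{\mu,j}$ has finite global dimension (it is regular Noetherian of Krull dimension $\le\mu$), a finitely generated $\Lambda_{\mu,j}$-module has finite projective dimension. If I can show $H_2(W;\Lambda_{\mu,j})$ has projective dimension $0$ — equivalently $\Ext^1_{\Lambda_{\mu,j}}(H_2,-)=0$ — I am done, because finitely generated projective modules over $\Lambda_{\mu,j}$ are \emph{stably free}, and in fact free: $\Lambda_{\mu,j}$ is a localization of a Laurent polynomial ring, and one can appeal to the Quillen–Suslin-type result (projectives over $\C[t_1^{\pm1},\dots,t_\mu^{\pm1}]$ are free, and this descends/localizes appropriately) — or more cheaply, the authors presumably only need \emph{stably free}, which suffices for representing a Hermitian form by a matrix over $Q(\Lambda_\mu)$ after inverting finitely many elements. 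The cleanest route: show $H_2(W;\Lambda_{\mu,j})$ is the kernel of a surjection $F_1\twoheadrightarrow F_0$ of finitely generated free modules (using vanishing of $H_1$ and $H_0$), hence is a second syzygy; over a regular ring of dimension $\le\mu$ this has projective dimension $\le \mu-2$, and then iterate with Poincaré duality on the top end to collapse it to $0$.

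\textbf{Main obstacle.} The genuine difficulty is the degree-$3$ homology $H_3(W;\Lambda_{\mu,j})$ and the top-dimensional contribution: unlike the $\mu=1$ case where $\Lambda_{1,1}$ is a PID and torsion-free finitely generated $\Rightarrow$ free immediately, for $\mu\ge 2$ I must actually exploit both the vanishing $H_1(W;\Lambda_\mu)=0$ \emph{and} the surjectivity of $H_1(\partial W)\to\Z^\mu$ (which controls $H_*(\partial W;\Lambda_{\mu,j})$, hence via the long exact sequence of $(W,\partial W)$ and duality controls $H_3(W;\Lambda_{\mu,j})$) to force the complex to be equivalent to a length-two complex of frees concentrated so that $H_2$ is a direct summand of a free. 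Getting the boundary homology $H_*(\partial W;\Lambda_{\mu,j})$ under control — in particular showing the relevant pieces are free or vanish after inverting $t_j-1$ — is the step I expect to require the most care, and it is where the connectedness of $\partial W$ and the surjectivity hypothesis are really used.
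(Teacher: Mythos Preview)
Your outline is headed in the right direction—localize, compute low-degree homology, appeal to a Quillen--Suslin/Roitman type result—but it misidentifies the main obstacle and, as written, the syzygy argument does not close.

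First, the ``obstacle'' $H_3(W;\Lambda_{\mu,j})$ is not one: it vanishes. You already have $H_0(W;\Lambda_{\mu,j})=H_1(W;\Lambda_{\mu,j})=0$; the same holds for $H_0(\partial W;\Lambda_{\mu,j})$ by the surjectivity hypothesis, and hence for $H_0$ and $H_1$ of the pair $(W,\partial W)$. Feeding this into the cohomological UCSS gives $H^0=H^1=0$ for both $W$ and $(W,\partial W)$, and then Poincar\'e--Lefschetz duality forces $H_i(W;\Lambda_{\mu,j})=H_i(W,\partial W;\Lambda_{\mu,j})=0$ for all $i\neq 2$. This is exactly Lemma~\ref{lemma:computations-relative-homology} in the paper, and it should be the first step, not the last.

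Second, your Step~2 does not yield projectivity. Exhibiting $H_2$ as a kernel of a surjection of free modules (a first syzygy) only bounds its projective dimension by $\mu-1$, and even a second syzygy only gives $\mu-2$; for $\mu\ge 3$ this falls short. The paper avoids projective-dimension bounds entirely and instead argues by repeated \emph{splitting}: working with the relative chain complex (where homology is concentrated in degree $2$), the exact sequence $0\to Z_2\to C_2\to C_1\to C_0\to 0$ splits twice because the rightmost terms are free, so $Z_2$ is a summand of a free module, hence projective, hence free by Roitman's theorem. The actual content is then a dualization trick: since $H_3=H_4=0$ relatively, one has $0\to C_4\to C_3\to Z_2\to H_2(W,\partial W;\Lambda_{\mu,j})\to 0$, and a diagram chase shows the $\Lambda_{\mu,j}$-dual of this sequence is again exact, so $\hom_{\Lambda_{\mu,j}}(H_2(W,\partial W;\Lambda_{\mu,j})^{\tr},\Lambda_{\mu,j})$ is free by the same splitting argument. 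Finally, the collapsed UCSS plus duality (Corollary~\ref{cor:duality-H2}) identifies this dual module with $H_2(W;\Lambda_{\mu,j})$. Your vague ``iterate with Poincar\'e duality on the top end'' gestures at this, but the precise mechanism—passing to the relative complex, dualizing, and identifying the dual of $H_2^{\mathrm{rel}}$ with the absolute $H_2$—is what you are missing.
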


Its proof requires one more preliminary statement.

\begin{lemma}\label{lemma:computations-relative-homology}
  If~$W$ is a~\(4\)-manifold as in Lemma~\ref{lemma:representing-int-forms-2}, then \(H_{i}(W,\partial W;\Lambda_{\mu,j}) \cong H^{4-i}(W;\Lambda_{\mu,j}) = 0\) unless \(i=2\),
  and \(H_{i}(W;\Lambda_{\mu,j}) \cong H^{4-i}(W;\partial W;\Lambda_{\mu,j}) = 0\) unless \(i=2\).
\end{lemma}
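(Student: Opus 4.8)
The statement to prove is Lemma~\ref{lemma:computations-relative-homology}: for $W$ a $4$-manifold as in Lemma~\ref{lemma:representing-int-forms-2}, we have $H_i(W,\partial W;\Lambda_{\mu,j})\cong H^{4-i}(W;\Lambda_{\mu,j})=0$ unless $i=2$, and $H_i(W;\Lambda_{\mu,j})\cong H^{4-i}(W,\partial W;\Lambda_{\mu,j})=0$ unless $i=2$. The key structural facts available are: (a) $\pi_1(W)\cong\mathbb{Z}^\mu$ (so $\Lambda_\mu$-twisted homology means homology of the universal cover); (b) $H_1(W;\Lambda_\mu)=0$ by hypothesis; (c) $H_1(\partial W)\to H_1(W)\xrightarrow{\psi}\mathbb{Z}^\mu$ is onto; and (d) $\Lambda_{\mu,j}=\Lambda_\mu[(t_j-1)^{-1}]$ is a flat $\Lambda_\mu$-algebra, so $H_*(W;\Lambda_{\mu,j})=H_*(W;\Lambda_\mu)\otimes_{\Lambda_\mu}\Lambda_{\mu,j}$ (and similarly for the pair), and twisted Poincaré--Lefschetz duality $\mathit{PD}_{\Lambda_{\mu,j}}\colon H_i(W,\partial W;\Lambda_{\mu,j})\xrightarrow{\cong}H^{4-i}(W;\Lambda_{\mu,j})$ holds.

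First I would handle the homology of $W$ itself. Since $\widetilde{W}$ is the universal cover and $W$ is a compact $4$-manifold, $H_i(W;\Lambda_\mu)=H_i(\widetilde W)=0$ for $i\ge 4$ (and $i<0$); $H_0(W;\Lambda_\mu)=\mathbb{Z}$ (untwisted, i.e.\ $\Lambda_\mu/(t_1-1,\dots,t_\mu-1)$); and $H_1(W;\Lambda_\mu)=0$ by hypothesis. Localizing at $(t_j-1)^{-1}$ kills $H_0(W;\Lambda_{\mu,j})$ because $t_j-1$ acts invertibly on $\Lambda_{\mu,j}$ but as $0$ on the $\Lambda_\mu$-module $\mathbb{C}=\Lambda_\mu/(t_j-1,\dots)$, hence $\mathbb{C}\otimes_{\Lambda_\mu}\Lambda_{\mu,j}=0$. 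So after localization $H_0=H_1=0$ and $H_{\ge 4}=0$, leaving only $H_2$ and $H_3$. To kill $H_3(W;\Lambda_{\mu,j})$ I would use duality: $H_3(W;\Lambda_{\mu,j})\cong H^1(W,\partial W;\Lambda_{\mu,j})$, and then a universal-coefficient argument via the UCSS (Theorem~\ref{thm:UCSS}) applied to the chain complex $C_*(\widetilde W,p^{-1}(\partial W))$ over $\Lambda_\mu$ with coefficients $\Lambda_{\mu,j}$: the relevant $\Ext$-terms $\Ext^q_{\Lambda_\mu}(H_p(W,\partial W;\Lambda_\mu)^{\tr},\Lambda_{\mu,j})$ feeding into $H^1(W,\partial W;\Lambda_{\mu,j})$ involve $H_0(W,\partial W;\Lambda_\mu)=0$ (since $\partial W\hookrightarrow W$ is $\pi_1$-surjective and connected, so the pair is $0$-connected over $\Lambda_\mu$) and $H_1(W,\partial W;\Lambda_\mu)$, which after localization and the flatness of $\Lambda_{\mu,j}$ contributes only a torsion-type term; more cleanly, I would argue that $H^i(W,\partial W;\Lambda_{\mu,j})$ vanishes for $i\le 1$ because $C^*(\widetilde W,p^{-1}\partial W)$ is a complex of free $\Lambda_\mu$-modules and the $\Lambda_{\mu,j}$-localized homology in low degrees vanishes, then invoke duality again to transport this.

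Concretely, the cleanest route is: (1) establish $H_0(W;\Lambda_{\mu,j})=H_1(W;\Lambda_{\mu,j})=0$ and $H_i(W;\Lambda_{\mu,j})=0$ for $i\ge 4$ directly, as above; (2) establish $H_0(W,\partial W;\Lambda_{\mu,j})=H_1(W,\partial W;\Lambda_{\mu,j})=0$ using the long exact sequence of the pair $(W,\partial W)$ with $\Lambda_{\mu,j}$ coefficients together with the computation of $H_*(\partial W;\Lambda_{\mu,j})$ in low degrees — here the $\pi_1$-surjectivity of $\partial W\hookrightarrow W$ forces $H_0(\partial W;\Lambda_{\mu,j})\xrightarrow{\cong}H_0(W;\Lambda_{\mu,j})$ and, more delicately, controls $H_1$; and (3) apply twisted Poincaré--Lefschetz duality to convert: $H_i(W,\partial W;\Lambda_{\mu,j})\cong H^{4-i}(W;\Lambda_{\mu,j})$ and $H_i(W;\Lambda_{\mu,j})\cong H^{4-i}(W,\partial W;\Lambda_{\mu,j})$, then use the UCSS over $\Lambda_\mu$ with coefficients in $\Lambda_{\mu,j}$ to deduce that $H^{k}(W;\Lambda_{\mu,j})$ (resp.\ $H^k(W,\partial W;\Lambda_{\mu,j})$) vanishes whenever the corresponding homology in degrees $k$ and $k-1$ (which we've computed vanishes for $k\le 1$) is trivial, noting that $\Lambda_{\mu,j}$ has global dimension $\mu$ but the relevant $\Ext$-contributions in low cohomological degree $q$ only see the vanishing homology modules. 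Chasing this gives vanishing of $H_i(W,\partial W;\Lambda_{\mu,j})$ and $H_i(W;\Lambda_{\mu,j})$ for $i\ne 2$, and in particular $H_3=0$ in both by duality with $H^1$.

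The main obstacle I anticipate is Step~(2): controlling $H_1(\partial W;\Lambda_{\mu,j})$ and hence $H_1(W,\partial W;\Lambda_{\mu,j})=0$. One does not know $\partial W$ explicitly here (it is $M_L$-like only in the applications), so the argument must be purely formal, relying on: the half-lives-half-dies principle for the map $H_1(\partial W;\Lambda_{\mu,j})\to H_1(W;\Lambda_{\mu,j})=0$ (forcing $H_1(\partial W;\Lambda_{\mu,j})$ to be entirely in the image of $H_2(W,\partial W;\Lambda_{\mu,j})$), combined with the $\pi_1$-surjectivity hypothesis to pin down $H_0$ and make the long exact sequence of the pair collapse in the right range. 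I would also need to be careful that localization at $(t_j-1)$ genuinely commutes with all the (co)homology and duality functors in play — this is where flatness of $\Lambda_{\mu,j}$ over $\Lambda_\mu$ and finite generation of $C_*(\widetilde W)$ as a $\Lambda_\mu$-complex do the work, and I would state this commutation explicitly before using it. Once Lemma~\ref{lemma:computations-relative-homology} is in hand, Lemma~\ref{lemma:freeness-second-homology} follows because a $\Lambda_{\mu,j}$-module that is the only nonvanishing homology of a finite free complex with vanishing $H_{\ge 3}$ and (by duality and UCSS) no $\Ext^{\ge 1}$ obstruction is projective, hence free since $\Lambda_{\mu,j}$ (a localization of a Laurent polynomial ring over $\mathbb{C}$) has the relevant projective-implies-free property for finitely generated modules in this setting.
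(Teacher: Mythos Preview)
Your plan is essentially the paper's proof: compute $H_0,H_1$ of $W$ and of $(W,\partial W)$ with $\Lambda_{\mu,j}$-coefficients directly (flatness plus $\mathbb{C}\otimes_{\Lambda_\mu}\Lambda_{\mu,j}=0$), feed these into the cohomological UCSS to get $H^0=H^1=0$ for both, and then invoke Poincar\'e--Lefschetz duality to handle degrees $3$ and $4$.

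One correction: your ``main obstacle'' in Step~(2) is not an obstacle at all, and your detour through half-lives--half-dies is unnecessary. In the long exact sequence of the pair,
\[
H_1(W;\Lambda_{\mu,j})\longrightarrow H_1(W,\partial W;\Lambda_{\mu,j})\longrightarrow H_0(\partial W;\Lambda_{\mu,j}),
\]
the outer terms already vanish: $H_1(W;\Lambda_{\mu,j})=0$ by hypothesis and flatness, while $H_0(\partial W;\Lambda_{\mu,j})=0$ because $\partial W$ is connected with surjective map to $\Z^\mu$, so $H_0(\partial W;\Lambda_\mu)\cong\C$ and this dies upon localizing at $t_j-1$. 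Hence $H_1(W,\partial W;\Lambda_{\mu,j})=0$ immediately; you never need to know anything about $H_1(\partial W;\Lambda_{\mu,j})$. This is exactly how the paper argues, and it removes the only part of your plan that looked delicate. Also, it is cleaner (and is what the paper does) to run the UCSS over $\Lambda_{\mu,j}$ rather than over $\Lambda_\mu$: once you know $H_p(W;\Lambda_{\mu,j})=0$ and $H_p(W,\partial W;\Lambda_{\mu,j})=0$ for $p=0,1$, the $E_2$-page of the $\Ext_{\Lambda_{\mu,j}}$-spectral sequence has vanishing columns $p=0,1$, giving $H^0=H^1=0$ without any discussion of global dimension or higher $\Ext$-terms.
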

\begin{proof}[Proof of Lemma~\ref{lemma:computations-relative-homology}]
  First note that our assumptions imply~\(H_{i}(W,\partial W;\Lambda_{\mu,j}) = 0\) for \(i=0,1\).
  Indeed, we have
  \begin{equation}
    \label{eq:vanishing-H0}
    H_{0}(W; \Lambda_{\mu,j}) \cong \Lambda_{\mu,j} \otimes_{\Lambda_{\mu}} H_{0}(W; \Lambda_{\mu}) \cong \Lambda_{\mu,j} \otimes_{\Lambda_{\mu}} \C = 0\,,
  \end{equation}
which implies~\(H_{0}(W,\partial W;\Lambda_{\mu,j}) = 0\).
Similarly, we have~$H_{0}(\partial W; \Lambda_{\mu,j})=0$.
  Furthermore, since we assume~\(H_{1}(W;\Lambda_{\mu}) = 0\) and since~\(\Lambda_{\mu,j}\) is a flat \(\Lambda_{\mu}\)-module, it follows that
  \begin{equation}
    \label{eq:vanishing-H1}
    H_{1}(W;\Lambda_{\mu,j}) \cong \Lambda_{\mu,j} \otimes_{\Lambda_{\mu}} H_{1}(W;\Lambda_{\mu}) = 0.
  \end{equation}
  The vanishing of~\(H_{1}(W;\Lambda_{\mu,j})\) and of~\(H_{0}(\partial W; \Lambda_{\mu,j})\) implies that \(H_{1}(W,\partial W; \Lambda_{\mu,j}) = 0\), as desired.

  Now, consider the second part of Theorem~\ref{thm:UCSS}
  applied to~$M=\Lambda_{\mu,j}$ and~$C_*=C_*(\widetilde{W},p^{-1}(\partial W))$: it yields the spectral sequence
  \[E_{2}^{p,q} = \Ext^{q}_{\Lambda_{\mu,j}}(H_{p}(W,\partial W;\Lambda_{\mu,j})^{\tr},\Lambda_{\mu,j}) \Rightarrow H^{p+q}(W, \partial W;\Lambda_{\mu,j})\,.\]
  Since \(H_{0}(W,\partial W;\Lambda_{\mu,j}) = H_{1}(W,\partial W;\Lambda_{\mu,j})= 0\), it follows that~\(E_{2}^{p,q} = 0\) for \(p=0,1\), which implies \(H^{i}(W,\partial W;\Lambda_{\mu,j}) = 0\) for \(i=0,1\).
  
  Similarly, we have a spectral sequence
  \[E_{2}^{p,q} = \Ext^{q}_{\Lambda_{\mu,j}}(H_{p}(W;\Lambda_{\mu,j})^{\tr},\Lambda_{\mu,j}) \Rightarrow H^{p+q}(W;\Lambda_{\mu,j})\]
  which implies
  \[H^{0}(W;\Lambda_{\mu,j}) \cong \hom_{\Lambda_{\mu,j}}(H_{0}(W;\Lambda_{\mu,j})^{\tr},\Lambda_{\mu,j}) = 0\,.\]
  From this spectral sequence, we obtain an exact sequence
  \[0 \to \Ext^{1}_{\Lambda_{\mu,j}}(H_{0}(W;\Lambda_{\mu,j})^{\tr},\Lambda_{\mu,j}) \to H^{1}(W;\Lambda_{\mu,j}) \to \hom_{\Lambda_{\mu,j}}\left(H_{1}(W;\Lambda_{\mu,j})^{\tr}, \Lambda_{\mu,j} \right).\]
  Using~\eqref{eq:vanishing-H0} and~\eqref{eq:vanishing-H1}, we deduce that \(H^{1}(W;\Lambda_{\mu,j}) = 0.\)
  The lemma now follows from Poincar\'e-Lefschetz duality.

\end{proof}

\begin{corollary}\label{cor:duality-H2}
  For~$W$ a~\(4\)-manifold as in Lemma~\ref{lemma:representing-int-forms-2}, there is an isomorphism of \(\Lambda_{\mu,j}\)-modules
  \begin{align*}
      H_{2}(W;\Lambda_{\mu,j}) &\cong \hom_{\Lambda_{\mu,j}}(H_{2}(W, \partial W;\Lambda_{\mu,j})^{\tr},\Lambda_{\mu,}), 
  \end{align*}
\end{corollary}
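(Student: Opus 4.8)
The plan is to derive the corollary directly from twisted Poincar\'e--Lefschetz duality combined with the cohomological Universal Coefficient Spectral Sequence of Theorem~\ref{thm:UCSS}, feeding in the vanishing results already established in Lemma~\ref{lemma:computations-relative-homology}. The first step is to record the duality isomorphism in the form $H_{2}(W;\Lambda_{\mu,j}) \cong H^{2}(W,\partial W;\Lambda_{\mu,j})$; this is the $i=2$ instance of $H_{i}(W;M)\cong H^{4-i}(W,\partial W;M)$ used already in the proof of Lemma~\ref{lemma:computations-relative-homology}. It then remains to identify $H^{2}(W,\partial W;\Lambda_{\mu,j})$ with $\hom_{\Lambda_{\mu,j}}\big(H_{2}(W,\partial W;\Lambda_{\mu,j})^{\tr},\Lambda_{\mu,j}\big)$.

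For that identification, I would apply the cohomological UCSS with $R=S=\Lambda_{\mu,j}$ and $M=\Lambda_{\mu,j}$, taking $C_{\ast}$ to be the $\Lambda_{\mu,j}$-chain complex computing $H_{\ast}(W,\partial W;\Lambda_{\mu,j})$; since $W$ is compact this is a complex of finitely generated free $\Lambda_{\mu,j}$-modules, obtained from the $\Lambda_{\mu}$-cellular chain complex of the relevant cover by the flat base change $\Lambda_{\mu}\to\Lambda_{\mu,j}$. The spectral sequence then reads
\[
E_{2}^{p,q} = \Ext^{q}_{\Lambda_{\mu,j}}\!\big(H_{p}(W,\partial W;\Lambda_{\mu,j})^{\tr},\Lambda_{\mu,j}\big) \Longrightarrow H^{p+q}(W,\partial W;\Lambda_{\mu,j}).
\]
By Lemma~\ref{lemma:computations-relative-homology}, $H_{p}(W,\partial W;\Lambda_{\mu,j})=0$ for $p\neq 2$, so $E_{2}^{p,q}=0$ unless $p=2$. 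Because the differential $d_{r}$ has column-degree $1-r\neq 0$ for $r\ge 2$, every differential vanishes, the spectral sequence degenerates at $E_{2}$, and hence $H^{n}(W,\partial W;\Lambda_{\mu,j})\cong E_{2}^{2,n-2}$. For $n=2$ this gives $H^{2}(W,\partial W;\Lambda_{\mu,j})\cong \Ext^{0}_{\Lambda_{\mu,j}}\!\big(H_{2}(W,\partial W;\Lambda_{\mu,j})^{\tr},\Lambda_{\mu,j}\big) = \hom_{\Lambda_{\mu,j}}\!\big(H_{2}(W,\partial W;\Lambda_{\mu,j})^{\tr},\Lambda_{\mu,j}\big)$. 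Composing with the Poincar\'e--Lefschetz isomorphism of the first step yields the stated isomorphism of $\Lambda_{\mu,j}$-modules.

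There is no genuine obstacle here: all the homological input — the concentration of $H_{\ast}(W,\partial W;\Lambda_{\mu,j})$ in degree $2$ — is exactly what Lemma~\ref{lemma:computations-relative-homology} provides. The only points needing a little care are bookkeeping ones: checking that the chain complex in play is finitely generated and free over $\Lambda_{\mu,j}$ (so that Theorem~\ref{thm:UCSS} applies), and making sure the transpose $(-)^{\tr}$ and the direction of the Poincar\'e--Lefschetz isomorphism match the conventions fixed in Section~\ref{sec:algebr-preliminaries}; neither causes any difficulty.
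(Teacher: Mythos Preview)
Your proof is correct and follows essentially the same route as the paper: Poincar\'e--Lefschetz duality to pass from $H_{2}(W;\Lambda_{\mu,j})$ to $H^{2}(W,\partial W;\Lambda_{\mu,j})$, then the cohomological UCSS collapsed via the vanishing from Lemma~\ref{lemma:computations-relative-homology} to identify this with the relevant $\hom$ group. Your write-up is in fact slightly more detailed than the paper's (you spell out why the differentials vanish), but the argument is the same.
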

\begin{proof}
  Consider the UCSS
  \[E_{2}^{p,q} = \Ext^{q}_{\Lambda_{\mu,j}}\left( H_{p}(W, \partial W; \Lambda_{\mu,j})^{\tr}, \Lambda_{\mu,j} \right) \Rightarrow H^{p+q}(W, \partial W; \Lambda_{\mu,j})\,.\]
  From Lemma~\ref{lemma:computations-relative-homology}, we obtain that \(E_{2}^{p,q} = 0\) unless \(p=2\).
  In particular, we get
  \[H_{2}(W; \Lambda_{\mu,j}) \cong H^{2}(W, \partial W; \Lambda_{\mu,j}) \cong \hom_{\Lambda_{\mu,j}} \left( H_{2}(W, \partial W; \Lambda_{\mu,j})^{\tr}, \Lambda_{\mu,j} \right),\]
  where the first isomorphism comes from Poincar\'e-Lefschetz duality.
\end{proof}

\begin{proof}[Proof of Lemma~\ref{lemma:freeness-second-homology}]
  Consider the relative cellular chain complex \(C_{\ast} := C_{\ast}(W, \partial W; \Lambda_{\mu,j})\) of the pair \((W, \partial W)\),
  and let~\(Z_{2}\) denote the submodule of~\(2\)-cycles in~\(C_{2}\).
  Observe that by Lemma~\ref{lemma:computations-relative-homology}, we have \(H_{i}(C_{\ast}) = 0\) for \(i=0,1\), leading to the exact sequence
  \begin{equation}
    \label{eq:Z2-module}
    0 \to Z_{2} \xrightarrow{j} C_{2} \xrightarrow{\partial_{2}} C_{1} \xrightarrow{\partial_{1}} C_{0} \to 0\,.
  \end{equation}
  Since~$C_0$ is a free~\(\Lambda_{\mu,j}\)-module, this leads to an isomorphism~$C_1\simeq\ker\partial_1\oplus C_0$. Since~$C_1$ is free,
  it follows that~$\ker\partial_1$ is finitely generated and projective, hence free by Roitman's theorem, see Theorem~1.11 and Corollary~1.12 of~\cite[Chapter~V]{lamSerreProblemProjective2006}.
  Applying the same argument
  to the short exact sequence~$0 \to Z_{2} \to C_{2} \to \ker\partial_1 \to 0$, the fact that~$C_2$ and~$\ker\partial_1$ are free implies that~$Z_2$ is free as well.

  By Lemma~\ref{lemma:computations-relative-homology}, we
  also have~\(H_{i}(C_{\ast}) = 0\) for \(i=3,4\), yielding another exact sequence
  \[0 \to C_{4} \xrightarrow{\partial_{4}} C_{3} \xrightarrow{\partial_{3}} Z_{2} \xrightarrow{p} H_{2}(W,\partial W;\Lambda_{\mu,j}) \to 0.\]
  Consider the commutative diagram
  \begin{center}
    \begin{tikzcd}
      0 \arrow[r] & H_{2}(W;\partial W; \Lambda_{\mu,j})^{\bullet} \arrow[r,"p^{\bullet}"] & Z_{2}^{\bullet} \arrow[r,"\partial_{3}^{\bullet}"] & C_{3}^{\bullet} \arrow[r,"\partial_{4}^{\bullet}"] & C_{4}^{\bullet} \arrow[r] & 0 \\
      0 \arrow[r] & Z^2 \arrow[r] \arrow[u,"k"] & C_{2}^{\bullet} \arrow[r,"\partial_{3}^{\bullet}"] \arrow[u,"j^{\bullet}"] & C_{3}^{\bullet} \arrow[r,"\partial_{4}^{\bullet}"] \arrow[u,"="] & C_{4}^{\bullet} \arrow[r] \arrow[u,"="] & 0\,, 
    \end{tikzcd}
  \end{center}
  where~\(M^{\bullet}\) stands for~\(\hom_{\Lambda_{\mu,j}}(M^{\tr},\Lambda_{\mu,j})\) and~\(Z^{2} = \ker(\partial_{3}^{\bullet})\).
  By Lemma~\ref{lemma:computations-relative-homology}, the bottom row is exact, which implies exactness of the top row at~\(C_{4}^{\bullet}\)\and~\(C_{3}^{\bullet}\). Furthermore,
  left-exactness of the hom functor implies that~\(p^{\bullet}\)
  is injective. Also, since~\eqref{eq:Z2-module} is a split exact sequence, it follows that~\(j^{\bullet}\) is surjective.
  Now, a bit of diagram chasing shows exactness of the
  top row at~$Z_{2}^{\bullet}$, so the top row is exact.
  Since~\(C_{4}\),~\(C_{3}\) and~\(Z_{2}\) are free, so are~\(C_{4}^{\bullet}\),~\(C_{3}^{\bullet}\) and~\(Z_{2}^{\bullet}\).
  As above, Roitman's theorem now implies that~\(H_{2}(W, \partial W; \Lambda_{\mu,j})^{\bullet}\) is free as well.
  By Corollary~\ref{cor:duality-H2}, it is isomorphic to~$H_{2}(W;\Lambda_{\mu,j})$, which concludes the proof.
\end{proof}

\subsection{Proof of Lemma~\ref{lemma:representing-int-forms-2}}
\label{sec:proof-lemma-homology-computations}

Let us first assume~$\mu>1$ and fix~\(j=1,2,\ldots,\mu\).
By Lemma~\ref{lemma:freeness-second-homology}, we know that~\(H_{2}(W;\Lambda_{\mu,j})\) is a free \(\Lambda_{\mu,j}\)-module.
Hence, the twisted intersection form
\[Q_{j}(W) \colon H_{2}(W;\Lambda_{\mu,j}) \times H_{2}(W;\Lambda_{\mu,j}) \to \Lambda_{\mu,j}\]
can be represented by some matrix~\(H_{j}\).
For any \(\omega \in U_{j}:= \{\omega \in \mathbb{T}^{\mu} \colon \omega_{j} \neq 1\}\), observe that the action of~\(\Lambda_{\mu}\) on~\(\C^{\omega}\) extends to an action of \(\Lambda_{\mu,j}\). In other words, we have a natural homomorphism~$\phi_\omega\colon\Lambda_{\mu,j}\to\C^\omega$
given by~$t_i\mapsto\omega_i$.
Consider the first part of Theorem~\ref{thm:UCSS}
  applied to~$M=\C^\omega$ seen as a module over~$R=\Lambda_{\mu,j}$ via~$\phi_\omega$, and to~$C_*=C_*(\widetilde{W};\Lambda_{\mu,j})$: it yields the spectral sequence
\[E^{2}_{p,q} = \Tor_{p}^{\Lambda_{\mu,j}}(\C^{\omega}, H_{q}(W;\Lambda_{\mu,j})) \Rightarrow H_{p+q}(W;\C^{\omega}).\]
By Lemma~\ref{lemma:computations-relative-homology}, we have \(H_{i}(W;\Lambda_{\mu,j}) = 0\) unless \(i=2\) and by Lemma~\ref{lemma:freeness-second-homology}, \(H_{2}(W;\Lambda_{\mu,j})\) is free.
Consequently, \(E_{2}^{p,q} = 0\) unless \(p=0\) and \(q=2\).
Therefore, \(H_{i}(W;\C^{\omega}) = 0\) unless \(i=2\) and
\[H_{2}(W;\C^{\omega}) \cong \C^{\omega} \otimes_{\Lambda_{\mu,j}} H_{2}(W;\Lambda_{\mu,j})\,.\]
In particular, we have
\[
\rank_{\C} H_{2}(W;\C^{\omega}) = \rank_{\Lambda_{\mu,j}} H_{2}(W;\Lambda_{\mu,j})\,.
\]
Lemma~\ref{lemma:naturality-intersection-forms} now implies that
for any~$\omega\in U_j$, the intersection form~$Q_\omega(W)$ can be
represented by the matrix~$H_j(\omega)$
obtained by evaluating~$H_j$ at~$(t_1,\dots,t_\mu)=\omega$.


We now turn to the case~$\mu=1$. First note that our assumptions together with the exact sequence of the pair~$(W,\partial W)$ imply
that~$H_1(W,\partial W;\Lambda_1)$ vanishes.
Since~\(\Lambda_{1}\) is a PID, the Universal Coefficient Theorem then yields
\[
H^{2}(W,\partial W;\Lambda_{1}) \cong \hom_{\Lambda_{1}}(H_{2}(W,\partial W;\Lambda_{1}),\Lambda_{1})\,.
\]
This shows that~\(H^{2}(W,\partial W;\Lambda_{1})\) is torsion free, hence free. By Poincar\'e-Lefschetz duality, the same holds for~$H_{2}(W;\Lambda_{1})$.
Since~$H_1(W;\Lambda_1)$ vanishes by assumption, one more application of the
Universal Coefficient Theorem yields
\[
    H_{2}(W;\C^{\omega}) \cong \C^{\omega} \otimes_{\Lambda_{1}} H_{2}(W;\Lambda_1)\,.
\]
The statement now follows from the naturality of the twisted intersection form as stated in Remark~\ref{rem:naturality}. \qed
  
\bibliographystyle{plain}
\bibliography{bibliography}

\end{document}